\newcommand{\abs}[1]{\left| #1\right| }
\newcommand{\diag}{\mathop{\mathrm {diag}}\nolimits}
\newcommand{\Tr}{\mathop{\mathrm {Tr}}\nolimits}
\newcommand{\Ad}{\mathop{\mathrm {Ad}}\nolimits}
\newcommand{\Hom}{\mathop{\mathrm {Hom}}\nolimits}
\newcommand{\Ind}{\mathop{\mathrm {Ind}}\nolimits}
\newcommand{\id}{\mathop{\mathrm {id}}\nolimits}
\newcommand{\Wh}{\mathop{\mathrm {Wh}}\nolimits}
\newcommand{\ip}[2]{\langle #1,#2\rangle }
\newcommand{\efct}[2]{s(#1,#2)}
\newcommand{\evec}[2]{S_{#1}\left(#2\right)}
\newcommand{\eblock}[1]{\mS_\sigma (#1)}
\newcommand{\hatoplus}{\mathop{\widehat{\bigoplus}}}
\newcommand{\pmat}[3]{\mathcal{C}^{#1}_{#2 #3}}
\newcommand{\Isp}[1]{\mathcal{I}_{#1}}
\newcommand{\hs}{\hspace{0.5mm}}
\newcommand{\mhs}{\hspace{-2mm}}
\newcommand{\bkt}[1]{\left( #1 \right) }
\newcommand{\chid}[2]{\chi_{#1}^{\scriptscriptstyle (#2)} }
\newcommand{\wt}[1]{\gamma^{\scriptscriptstyle #1} }
\newcommand{\gpt}[6]{
\left( \begin{array}{c}
\\[-5.5mm]
\mhs 
\scriptscriptstyle #1 \hs #2 \hs #3 \mhs \\[-2.3mm]
\scriptscriptstyle #4 \hs #5 \\[-2.3mm]
\scriptscriptstyle #6 \\[-0.5mm]
\end{array} \right)}
\newcommand{\cgpt}[3]{
\left( \begin{array}{c}
\\[-6mm]
\mhs \scriptscriptstyle #1 \hs #2 \mhs \\[-2.3mm]
\scriptscriptstyle #3 \\[-1.0mm]
\end{array} \right)}
\newcommand{\lgpt}[6]{
\left( \begin{array}{c}
#1 \ #2 \ #3 \\
#4 \ #5 \\
#6 \\
\end{array} \right)}
\newcommand{\lcgpt}[3]{
\left( \begin{array}{c}
#1 \ #2 \\
#3 \\
\end{array} \right)}
\newcommand{\cp}[6]{
c^{\lambda }_{[#1 #2;#5 #4;#6]}\bkt{#3} 
}
\newcommand{\cpa}[4]{
#1^{#2}_{[#3]}\bkt{#4} 
}
\newcommand{\cpd}[6]{
c^{\widehat{\lambda }}_{[#1 #2;#5 #4;#6]}#3 
}
\newcommand{\cpr}[4]{
r_{[#1 #2;#3 #4]} 
}
\newcommand{\cpra}[2]{
r_{[#1;#2]} 
}
\newcommand{\epe}[2]{
[+#1 #2] 
}
\newcommand{\eme}[2]{
[-#1 #2]
}
\newcommand{\mC}{\mathbf{C}}
\newcommand{\mR}{\mathbf{R}}
\newcommand{\mS}{\mathbf{S}}
\newcommand{\mZ}{\mathbf{Z}}
\newcommand{\me}{\mathbf{e}}
\newcommand{\mm}{\mathbf{m}}
\newcommand{\mmu}{\mathbf{u}}
\newcommand{\ga}{\mathfrak{a}}
\newcommand{\g }{\mathfrak{g}}
\newcommand{\gh}{\mathfrak{h}}
\newcommand{\gk}{\mathfrak{k}}
\newcommand{\gl}{\mathfrak{l}}
\newcommand{\gn}{\mathfrak{n}}
\newcommand{\gp}{\mathfrak{p}}
\newcommand{\gu}{\mathfrak{u}}
\newtheorem{thm}{Theorem}[section]
\newtheorem{lem}[thm]{Lemma}
\newtheorem{prop}[thm]{Proposition}
\theoremstyle{definition}
\newtheorem{defn}[thm]{Definition}
\newtheorem{rem}[thm]{Remark}
\numberwithin{equation}{section}
\theoremstyle{remark}
\begin{document}

\title{The $(\g ,K)$-module structures of 
principal series representations of $Sp(3,\mR )$.}

\author[Tadashi Miyazaki]{Tadashi Miyazaki}
\address{Department of Mathematical Sciences, University of Tokyo}
\email{miyaza@ms.u-tokyo.ac.jp}

\allowdisplaybreaks

\maketitle
\begin{abstract}
We describe explicitly the whole structures of the $(\g ,K)$-modules 
of principal series representations of $Sp(3,\mR )$. 
We apply this result to determine the holonomic system 
characterizing those Whittaker functions.
\end{abstract}

\section{Introduction}
\label{sec:introduction}
In the investigation of a representation $\pi $ of a reductive 
Lie group $G$, it is a standard method to pass from the original 
$\pi $ to its associated $(\g ,K)$-module. 
Also in many applications, it is important to understand 
various `good' realizations of $\pi $ in some function spaces. 
These functions can be sometimes described as solutions of some differential 
equations, say, the Casimir equations derived from the $(\g ,K)$-module 
structure of the given representation $\pi $. 

For some `small' semisimple Lie groups $G$, the $(\g ,K)$-module 
structures of the standard representations are completely described. 
For example, the description of them for $SL(2,\mR)$ 
is found in standard textbooks, and there are rather complete results 
for some groups of real rank $1$, e.g.\ 
$SU(n,1)$ in \cite{MR0330355} and $Spin(1,2n)$ in \cite{MR0453925}. 
But, for Lie groups of higher rank there are few references 
as far as the author knows. 
It seems to be difficult to describe the whole 
$(\g ,K)$-module structures even for standard representations of 
classical groups of higher rank. 
However, in this paper we consider the case of 
the real symplectic group $Sp(3,\mR )$ of rank $3$, 
and solve this problem generalizing the result of 
the paper \cite{pre_standard_1_2006} 
for $Sp(2,\mR )$ of rank $2$. \nocite{pre_standard_2}

Before describing our situation for $Sp(3,\mR )$, let us explain 
the problem in a more precise form. 
Let $G$ be a real semisimple Lie group and $\g $ the Lie algebra of $G$. 
Fix a maximal compact subgroup $K$ of $G$. 
Since any standard $(\g ,K)$-modules are realized as subspaces of 
$L^2(K)$ as $K$-modules, we investigate the $K$-module structure of 
standard $(\g ,K)$-modules by the Peter-Weyl theorem. 
Because of the Cartan decomposition $\g =\gk \oplus \gp $, 
in order to describe the action of $\g $ or $\g_\mC =\g \otimes_\mR \mC $ 
it suffices to investigate the action of $\gp $ or $\gp_\mC $. 
Therefore, the investigation of the action of $\gp $ 
or $\gp_\mC $ is essential 
to give the explicit $(\g ,K)$-module structure of 
a standard representation. 
To investigate the action of $\gp_\mC $, we compute the linear map 
$\Gamma_{\tau ,i}$ defined as follows. 
Let $(\pi , H_\pi )$ be a standard representation of $G$ with the 
subspace $H_{\pi ,K}$ of $K$-finite vectors.
For  a $K$-type $(\tau ,V_\tau )$ of $\pi $, and 
a nonzero $K$-homomorphism $\eta \colon V_\lambda \to H_{\pi ,K}$, 
we define a linear map 
$\tilde{\eta }\colon \gp_\mC \otimes_\mC V_\lambda \to H_{\pi ,K}$ 
by $X\otimes v \mapsto X\cdot \eta (v) $. 
Then $\tilde{\eta }$ is a $K$-homomorphism with $\gp_\mC $ endowed with 
the adjoint action $\Ad $ of $K$. 
Let $V_\tau \otimes_\mC \gp_\mC \simeq \bigoplus_{i\in I}V_{\tau_i}$ 
be the decomposition into a direct sum of irreducible $K$-modules and 
$\iota_i$ an injective $K$-homomorphism from 
$V_{\tau_i}$ to $V_\tau \otimes_\mC \gp_\mC$ for each $i$. We define a 
linear map $\Gamma_{\tau ,i}\colon \Hom_K(V_\tau ,H_{\pi ,K})\to 
\Hom_K(V_{\tau_i},H_{\pi ,K})$ 
by $\eta \mapsto \tilde{\eta }\circ \iota_i$. 
These linear maps $\Gamma_{\tau ,i}\ (i\in I)$ characterize the action of 
$\gp_\mC $. Our purpose of this paper is to give the explicit expressions 
of $\Gamma_{\tau ,i}$ when $\pi $ is a principal series representation 
of $G=Sp(3,\mR )$. This is described in Theorem \ref{th:main}. 

As an application of the explicit expressions of $\Gamma_{\tau ,i}$, 
we get a system of differential equations satisfied 
by some types of spherical functions. Here we consider only the 
case of the Whittaker functions. 
In order to introduce this application, let us recall the general setting 
of the theory of the spherical functions. 
Fix a closed subgroup $R$ of $G$. 
Take a character $\xi $ of $R$ and consider its
$C^\infty$-induction $C^\infty \Ind_R^G(\xi )$. For an irreducible admissible 
representation $(\pi ,H_\pi )$ of $G$ with  the subspace $H_{\pi ,K}$ of 
$K$-finite vectors, we consider the subspace 
$\Hom_{(\g_\mC ,K)}(H_{\pi ,K},C^\infty \Ind_R^G(\xi ))$ of
 intertwining operators. 
Consider the restriction of elements in this space to specific $K$-type 
as follows. Let $(\tau,V_{\tau})$ be a multiplicity one $K$-type of 
$\pi$ and let $\iota \colon V_{\tau}\to H_\pi $ be a nonzero $K$-homomorphism. 
For $\Phi \in \Hom_{(\g_\mC ,K)}(H_{\pi ,K},C^\infty \Ind_R^G(\xi ))$, 
we can define the function $\phi_{\pi ,\tau^* }$ contained in the space 
$C_{\xi ,\tau^*}^\infty (R\backslash G/K)$ of $V_\tau^* $-valued smooth 
functions on $G$ satisfying $f(rgk)=\xi (r)\tau^* (k)^{-1}f(g)$ for 
all $(r,g,k)\in R\times G\times K$ by 
$\Phi (\iota (v))(g)=\ip{v}{\phi_{\pi ,\tau^* }}\ 
(g\in G,\ v^*\in V_{\tau})$. Here $\tau^*$ means the contragradient 
representation of $\tau $ and $\ip{}{}$ is the canonical pairing of 
$V_{\tau}\times V_{\tau^*} $. 
When $R$ is a maximal unipotent subgroup of $G$ and $\xi $ is a 
unitary character of $R$, the space 
$\Hom_{(\g_\mC ,K)}(H_{\pi ,K},C^\infty \Ind_R^G(\xi ))$
is called the space of Whittaker functionals and $\phi_{\pi ,\tau^* }$ 
is called a Whittaker function. 

In the case of $G=Sp(2,\mR )$, the explicit formulas of Whittaker functions 
for various standard representations are given in the papers \cite{MR2238640}, 
\cite{MR2149916}, \cite{MR1342321}, \cite{MR1362994}, 
\cite{MR1272882} as well as the generalized Whittaker functions 
in the papers \cite{MR2061467}, \cite{MR1768466}. 
In these papers, the explicit formulas of spherical functions are given 
as solutions of the system of differential equations which are obtained from 
shift operators and the Casimir element. Since the Casimir element 
is represented by a composite of the shift operators, 
the utilization of the shift operator is 
essential. Our operator $\Gamma_{\tau ,i}$ is compatible 
with the shift operator and we give the holonomic systems 
of differential equations characterizing Whittaker functions 
in Theorem \ref{th:submain}. 
Therefore, we can compute explicit formula of Whittaker functions of 
principal series representations of $G=Sp(3,\mR )$ by using 
the result of this paper. We hope that this interesting possibility 
will be considered in future work. 

We give the contents of this paper. In Section \ref{sec:preliminaries}, 
we recall the structure of $Sp(3,\mR )$ and define a principal 
series representation, that is, a standard representation obtained by 
a parabolic induction with respect to the minimal parabolic subgroup 
$P_{\min }$.  
In Section \ref{sec:K-modules}, we introduce 
the monomial basis of a finite dimensional 
irreducible representation of $\gk_\mC \simeq \g \gl (3,\mC )$ and 
investigate adjoint representation of $K$ on $\gp_\mC $. 
Here the monomial basis is an alias of Gelfand Zelevinsky basis, 
Which is twisted dual of the crystal basis of Kashiwara or 
the canonical basis of Lusztig. 
In Section \ref{sec:Clebsch-Gordan}, we give the explicit expressions 
of $\iota_i\colon  V_{\tau_i}\to V_\tau \otimes_\mC \gp_\mC $. 
Section \ref{sec:structure} 
is the main body of this paper and we give the matrix representation of 
$\Gamma_{\tau ,i}$ with respect to the induced basis 
from the monomial basis in Theorem \ref{th:main}. 
In Section \ref{sec:examples}, we introduce some examples of 
$\Gamma_{\tau ,i}$ and give the holonomic systems 
of differential equations characterizing Whittaker functions 
in Theorem \ref{th:submain}.

This is an enhanced version of the Master's thesis \cite{master_1_2006}. 
The author would like to express his gratitude to Takayuki Oda 
for valuable advice on this work. He also would like to thank Miki Hirano 
to provide a reference for the computation of Clebsch-Gordan coefficients.

\section{Preliminaries}\label{sec:preliminaries}
\subsection{Groups and algebras.}\label{subsec:groups_and_algebra}
We denote by $\mZ $,$\mR $ and $\mC $ the ring of rational integers, 
the real number field and the complex number field, respectively. 
Let $1_n$ (resp.\ $O_n$) be the unit (resp.\ the zero) matrix in the space 
$M_n (\mR )$ of real matrices of size $n$. 

The real symplectic group $G=Sp(3,\mR )$ of degree three is defined by 
\[
G=Sp(3,\mR )=\{g\in SL(6,\mR )\mid {}^t g J_3 g=J_3 \}, \quad 
J_3=\left( \begin{array}{cc}
O_3 & 1_3 \\
-1_3 & O_3
\end{array} \right) , 
\]
which is connected, semisimple, and split over $\mR $. Here ${}^t g$ and 
$g^{-1}$ mean the transpose and the inverse of $g$, respectively. 
Let 
$\theta \colon G\ni g \mapsto {}^t g^{-1}\in G$ be 
a Cartan involution of G. Then 
\begin{align*}
K & =\{ g \in G \mid \theta (g)=g \} \\
& =\left\{ \left. g = 
\left( \begin{array}{cc}
A & B \\
-B & A
\end{array} \right) 
\in G \ \right| \ A+\sqrt{-1} B\in U(3) \right\} 
\end{align*}
 is a maximal compact subgroup of $G$
which is isomorphic to the unitary group $U(3)$ of degree three.
The Lie algebra of $G$ is given by
\[
\g =\mathfrak{sp} (3,\mR )=\{ X \in M_6(\mR ) \mid J_3 X+{}^t X J_3 =0 \}.
\]
If we denote the differential of $\theta $ again by $\theta $, then we have 
$\theta (X)={-}^t X$ for $X\in \g $. Let $\gk$ and $\gp$ be the $+1$ and the 
$-1$ eigenspaces of $\theta \in \g $, respectively, that is, 
\begin{align*}
\gk &=\left\{ \left. X = 
\left( \begin{array}{cc}
A & B \\
-B & A
\end{array} \right) \in \g \ \right| \ 
A,B\in M_3 (\mR ),{}^t A=-A,{}^t B=B \right\} ,\\
\gp &=\left\{ \left.  X = 
\left( \begin{array}{cc}
A & B \\
B & -A
\end{array} \right) \in \g \ \right| \ 
A,B\in M_3 (\mR ),{}^t A=A,{}^t B=B \right\} .
\end{align*}
Then $\gk $ is the Lie algebra of $K$ which is isomorphic to 
the unitary algebra
\[
\gu (3)=\{ A+\sqrt{-1} B\in M_3(\mC )
\mid A,B\in M_3 (\mR ),{}^t A=-A,{}^t B=B \}
\]
of degree three, and $\g $ has the Cartan decomposition $\g =\gk \oplus \gp $.
We fix an isomorphism $\kappa \colon \gu (3)\to \gk $ which is given 
by the inverse of 
\[
\gk \ni \left( \begin{array}{cc}
A & B \\
-B & A
\end{array} \right) \mapsto A+\sqrt{-1} B\in \gu (3).
\]

For a Lie algebra $\gl $, we denote by $\gl_\mC =\gl \otimes_\mR \mC $ 
the complexification of $\gl $. For $1\leq i,j\leq 3$, we denote by $E_{ij}$
the matrix unit in $M_3(\mR )$ with entry $1$ at $(i,j)$-th component and $0$ 
at other entries. Take a compact Cartan subalgebra 
$\gh =\bigoplus_{i=1}^3 \mR T_i$ 
where $T_i =\kappa (\sqrt{-1} E_{ii})\in \gk $.
For $1\leq i\leq 3$, define a linear form $\beta_i $ on $\gh_\mC $
by $\beta_i (T_j)=\sqrt{-1} \delta_{ij} ,\ 1\leq j\leq 3$. Here $\delta_{ij} $
is the Kronecker's delta. Then the set $\Delta $ of the roots for 
$(\gh_\mC ,\g_\mC )$ is given by 
\[
\Delta =\Delta (\gh_\mC ,\g_\mC )=\{ \pm 2\beta_i \ (1\leq i\leq 3),\ 
\pm \beta_j \pm \beta_k \ (1\leq j<k\leq 3) \} ,
\]
and the subset $\Delta^+ =\{ 2\beta_i \ (1\leq i\leq 3),\ 
\beta_j \pm \beta_k \ (1\leq j<k\leq 3) \} $ form a positive root system. Let 
\begin{align*}
\Delta^+_c &=\{ \beta_j - \beta_k \ (1\leq j<k\leq 3) \}, \\
\Delta^+_n &=\{ 2\beta_i \ (1\leq i\leq 3),\ 
\beta_j +\beta_k \ (1\leq j<k\leq 3) \} 
\end{align*}
be the set of compact and non-compact positive roots, respectively.
If we denote the root space for $\beta \in \Delta $ by $\g_\beta $, then 
$\gk_\mC \simeq \mathfrak{gl} (3,\mC ) $ and $\gp_\mC $ have the decompositions
\begin{gather*}
\gk_\mC =\gh_\mC \oplus 
 \bigoplus_{\beta \in \Delta_c} \g_\beta ,\quad 
 \Delta_c=\Delta^+_c\cup (-\Delta^+_c), \\
\gp_\mC =\gp_+ \oplus \gp_- ,\quad
 \gp_\pm =\bigoplus_{\beta \in \Delta^+_n} \g_{\pm \beta } .
\end{gather*}
Now we take a basis of $\gk_\mC $ and $\gp_\pm $ consisting of root vectors. 
If we denote the extension of the isomorphism $\kappa $ to 
their complexifications again by $\kappa $, then we have 
$\kappa (E_{ij})\in \g_{\beta_i-\beta_j } $ for each $1\leq i\neq j\leq 3$ 
 and thus the set 
$\{ \kappa (E_{ij})\mid 1\leq i,j\leq 3\} $ forms a basis of $\gk_\mC $. 
On the other hand, if we define a map 
\[
p_\pm \colon \{ X\in M_3 (\mC ) \mid X={}^t X \} \ni X \mapsto 
\left( \begin{array}{cc}
X & \pm \sqrt{-1} X \\
\pm \sqrt{-1} X & -X
\end{array} \right) \in \gp_\pm ,
\]
then the element $X_{\pm ij} =p_\pm ((E_{ij}+E_{ji})/2)$ is a root vector in 
$\g_{\pm (\beta_i +\beta_j )}$ for $1\leq i\leq j\leq 3$ and the set 
$\{ X_{\pm ij} \mid 1\leq i\leq j\leq 3\} $ gives a basis of $\gp_\pm $.

Put $\ga =\bigoplus_{i=1}^3 \mR H_i$ with 
$H_1=\diag (1,0,0,-1,0,0),\ H_2=\diag (0,1,0,0,-1,0),
\ H_3=\diag (0,0,1,0,0,-1)$. 
Then $\ga $ is a maximal abelian subalgebra of $\gp $. 
For each $1\leq i\leq 3$, we define a linear form $e_i$ on $\ga$ by 
$e_i(H_j)=\delta_{ij} $ for $1\leq j\leq 3$. 
The set $\Sigma $ of the restricted roots for $(\ga ,\g )$ is given by 
\[
\Sigma =\Sigma (\ga ,\g )=\{ \pm 2e_i\ (1\leq i\leq 3) ,\ 
\pm e_j \pm e_k \ (1\leq j<k\leq 3) \} ,
\]
and the subset $\Sigma^+ =\{ 2e_i\ (1\leq i\leq 3) ,\ 
e_j \pm e_k \ (1\leq j<k\leq 3) \} $ forms a positive root system. 
For each $\alpha \in \Sigma $, we denote the restricted root space by 
$\g_\alpha $ and choose a restricted root vector $E_\alpha $ in $\g_\alpha $ 
as follows.
\begin{align*}
E_{2e_i }=\left( \begin{array}{cc}
O_3 & E_{ii} \\
O_3 & O_3  
\end{array} \right) ,\ 1\leq i\leq 3, & \\
E_{e_j +e_k }=\left( \begin{array}{cc}
O_3 & E_{jk}+E_{kj} \\
O_3 & O_3  
\end{array} \right) ,\ 
E_{e_j -e_k }=\left( \begin{array}{cc}
E_{jk} & O_3 \\
O_3 & -E_{kj} 
\end{array} \right) ,&\ 1\leq j<k \leq 3, 
\end{align*}
and $E_{-\alpha }=\theta (E_\alpha )$ for $\alpha \in \Sigma^+ $. 
If we put $\gn =\bigoplus_{\alpha \in \Sigma^+ } \g_\alpha $ then $\g $ has 
an Iwasawa decomposition $\g =\gn \oplus \ga \oplus \gk $. 
Also we have $G=N_{\min }A_{\min }K$, where $A_{\min }=\exp (\ga )$ 
and $N_{\min }=\exp (\gn )$.

\subsection{Definition of principal series representations of G}
\label{sec:principal-series}
Let $P_{\min }=M_{\min } A_{\min }N_{\min }$ be 
the minimal parabolic subgroup of $G$, where 
\[
M_{\min }=Z_K (A_{\min })=\{ \diag (\varepsilon_1, \varepsilon_2,\varepsilon_3,
\varepsilon_1 ,\varepsilon_2 ,\varepsilon_3 ) \mid \varepsilon_i \in \{ \pm 1\}
\ (1\leq i\leq 3)\} \simeq \{ \pm 1\}^{\oplus 3}. 
\]
For $\nu \in \Hom_\mR (\ga ,\mC)$, we define a coordinate 
$(\nu_1,\nu_2,\nu_3) \in \mC^3$ by $\nu_i=\nu (H_i),\ 1\leq i\leq 3$. 
Then the half sum 
$\rho =\frac{1}{2} \left( \sum_{\alpha \in \Sigma_+} \alpha \right)
=3e_1+2e_2+e_3$
of the positive roots has coordinate $(\rho_1,\rho_2,\rho_3)=(3,2,1)$.
We define a quasicharacter $e^\nu \colon A_{\min }\to \mC^\times $ by
\[
e^\nu (\diag (a_1,a_2,a_3,a_1^{-1},a_2^{-1},a_3^{-1}))
=a_1^{\nu_1}a_2^{\nu_2}a_3^{\nu_3} .
\]
Moreover, we fix a character $\sigma $ of $M_{\min }$. 
$\sigma $ is realized by 
$(\sigma_1,\sigma_2,\sigma_3)\in \{ 0,1\}^{\oplus 3} $ 
such that 
\[
\sigma (\diag (\varepsilon_1, \varepsilon_2,\varepsilon_3,
\varepsilon_1 ,\varepsilon_2 ,\varepsilon_3 ))
= \varepsilon_1^{\sigma_1} \varepsilon_2^{\sigma_2}
\varepsilon_3^{\sigma_3} \in \mC 
\ \text{ for } \ 
\varepsilon_i\in \{\pm 1\},\ 1\leq i\leq 3.
\]

With these data $(\sigma ,\nu )$, the parabolic induction 
\[
\pi_{(\sigma ,\nu )}=\Ind_{P_{\min }}^G (\sigma \otimes e^{\nu +\rho }
\otimes 1_{N_{\min }})
\]
is a Hilbert representation of $G$ by the right regular action on the 
Hilbert space $H_{(\sigma ,\nu )}$ which is the completion of 
\[
H_{(\sigma ,\nu )}^\infty=
\left\{ f\colon G\to \mC \text{ smooth } \left| 
\begin{array}{c} 
f(manx)=\sigma (m)e^{\nu +\rho } (a)f(x) \hphantom{=====} \\
\text{ for }\ m\in M_{\min },\ a\in A_{\min },\ n\in N_{\min },\ x\in G
\end{array} \right. \right\}
\]
with an inner product
\[
(f_1,f_2)=\int_K f_1(k) \overline{f_2(k)} dk\quad \text{ for } 
\quad f_1,f_2\in H_{(\sigma ,\nu )}^\infty .
\]
Here $dk$ is a Haar measure of $K$.

\section{The monomial basis for simple $\g \gl (3,\mC )$-modules and 
the adjoint representation of $\gk_\mC \simeq \g \gl (3,\mC )$ 
on $\gp_\pm $}
\label{sec:K-modules}
In this section, we recall some basic facts about the representations of 
$\gk_\mC \simeq \g \gl (3,\mC )$, and evaluate the adjoint 
representations of $\gk_\mC $ on $\gp_{\pm }$.

\subsection{The highest weight theory for $\g \gl (n,\mC )$}
\label{subsec:GZ-basis } 
We recall the highest weight theory for $\g \gl (n,\mC )$. 
A \textit{weight} of length $n$ is an 
integral vector $\gamma =(\gamma_1 , \gamma_2 ,\cdots ,\gamma_n )
\in \mZ^n $. The weight $\gamma $ is called 
\textit{dominant} if 
$\gamma_1 \geq \gamma_2 \geq \cdots \geq \gamma_n $. 
It is well-known that every irreducible finite dimensional 
representation $(\tau , V)$ of $\g \gl (n,\mC )$ 
has a \textit{weight space decomposition}
\[
V=\bigoplus_\gamma V(\gamma ),\quad 
V(\gamma )=\{ v\in V \mid E_{ii}v=\gamma_i v,\ 1\leq i\leq n \} .
\]
There is a dominant weight $\lambda $ which satisfies 
$\lambda \geq \gamma $ in the lexicographical order for any weight $\gamma $ 
such that $V(\gamma )\neq 0$. 
Such dominant weight is called 
\textit{the highest weight} and the representation $(\tau ,V)$ is labeled 
by the highest weight, i.e., $(\tau_\lambda ,V_\lambda )$. 

\subsection{Gelfand Tsetlin patterns} 
\label{subsec:G-pattern }
Recall that a \textit{Gelfand-Tsetlin pattern} 
(which simply we call \textit{G-pattern})
 of type $\mm_3 =(m_{13} ,m_{23} ,m_{33})$ is a triangular array
\[
M=\left( \begin{array}{c} \mm_3 \\ \mm_2 \\ \mm_1 \end{array} \right)
=\lgpt{m_{13}}{m_{23}}{m_{33}}{m_{12}}{m_{22}}{m_{11}}
\]
of integers satisfying the conditions
\begin{equation}\label{cdn:G-pattern}
m_{13}\geq m_{12}\geq m_{23}\geq m_{22}\geq m_{33},\quad 
m_{12}\geq m_{11}\geq m_{22}.
\end{equation}
The weight $\wt{M} =(\wt{M}_1,\wt{M}_2,\wt{M}_3)$ 
of a G-pattern $M$ is defined from the equations 
\[
\wt{M}_1=m_{11},\quad 
\wt{M}_1+\wt{M}_2=m_{12}+m_{22},\quad 
\wt{M}_1+\wt{M}_2+\wt{M}_3=m_{13}+m_{23}+m_{33}.
\]

For a G-pattern $M$, we define
\[
M\lgpt{i_{13}}{i_{23}}{i_{33}}{i_{12}}{i_{22}}{i_{11}}
=\lgpt{m_{13}+i_{13}\ }{m_{23}+i_{23}\ }{m_{33}+i_{33}}
{m_{12}+i_{12}\ }{m_{22}+i_{22}}{m_{11}+i_{11}} .
\]
If the vector $(i_{13},i_{23},i_{33})$ is zero, we omit the top row in the 
left hand side of the above defining equality. So the left hand side is 
written as
\[
M\lcgpt{i_{12}}{i_{22}}{i_{11}}.
\]
A convenient symbol is $M[k]$, which is defined by
\[
M\lcgpt{k}{-k}{0}.
\]
We note that G-patterns $M_1$ and $M_2$ have the same type and weight 
if and only if $M_1[k]=M_2$ for some $k\in \mZ $. 

We define some functions of G-patterns. We set 
\[
\delta (M)=\wt{M}_2-m_{23}=m_{12}+m_{22}-m_{11}-m_{23}.
\]
Let $\chi_+(M)$ and $\chi_-(M)$ be the characteristic functions of the sets 
$\{ M \mid \delta (M)>0 \} $ and $\{ M \mid \delta (M)<0 \} $, respectively. 
More generally we introduce functions $\chid{\pm }{i}(M)$ by
\[
\chid{+}{i}(M)=\left\{ \begin{array}{cc}
1, & \delta (M)>i \\
0, & \delta (M)\leq i
\end{array} \right. ,\quad 
\chid{-}{i}(M)=\left\{ \begin{array}{cc}
1, & \delta (M)<-i \\
0, & \delta (M)\geq -i
\end{array} \right. .
\]
Then we have $\chi_+(M)=\chid{+}{0}(M) $ and $\chi_-(M)=\chid{-}{0}(M)$.

We introduce `piecewise-linear' functions $C_1(M),\ \bar{C_1}(M)$ and 
$C_2(M)$ by
\[
C_1(M)=\left\{\begin{array}{ll}
m_{11}-m_{22},&\ \text{ if }\ \delta (M)\geq 0\\
m_{12}-m_{23},&\ \text{ if }\ \delta (M)\leq 0
\end{array}\right. ,\quad 
\bar{C_1}(M)=\left\{\begin{array}{ll}
m_{23}-m_{22},&\ \text{ if }\ \delta (M)\geq 0\\
m_{12}-m_{11},&\ \text{ if }\ \delta (M)\leq 0
\end{array}\right. ,
\]
and 
\[
C_2(M)=C_1(M)\bar{C_1} (M).
\]
Another expressions of $C_1(M)$ and $\bar{C_1}(M)$ are
\[
C_1(M)=\min \{ m_{11}-m_{22} ,\ m_{12}-m_{23} \} ,\quad
\bar{C_1} (M)=\min \{ m_{23}-m_{22} ,\ m_{12}-m_{11} \} .
\]
\subsection{The monomial basis in the sense of Gelfand-Zelevinsky}
\label{subsec:GZ-basis }

We recall the definition of the monomial basis in the sense of 
Gelfand-Zelevinsky. 

For a weight subspace $V_\lambda (\gamma ) $ and a dominant weight 
$\nu =(\nu_1,\nu_2,\cdots ,\nu_n)$, we set 
\[
V_\lambda (\gamma ,\nu )=\{ v\in V_\lambda (\gamma )\mid 
E^{\nu_i-\nu_{i+1}+1}_{i\hs i+1}v=0,\ 1\leq i\leq n-1 \} .
\]
A basis $B$ in $V_\lambda $ is called \textit{proper} if each of 
subspaces $V_\lambda (\gamma ,\nu )$ (for all possible $\gamma ,\nu $) 
is spanned by its subset $B\cap V_\lambda (\gamma ,\nu )$. 
It is known that the representation 
$(\tau_\lambda ,V_\lambda )$ of $\g \gl (3,\mC )$ has 
a proper basis, which is unique up to scalar multiple. 
Gelfand and Zelevinsky normalized the scalar factor somehow to get 
the formulas in Proposition \ref{prop:action_on_GZ-basis}. 
The normalized proper basis is called \textit{the monomial basis} 
because it is twisted dual of the crystal basis of Kashiwara or 
the canonical basis of Lusztig. 
For the representation 
$(\tau_{\mm_3}, V_{\mm_3})$, the monomial basis in $V_{\mm_3 } (\gamma )$ 
is parameterized by the G-patterns of type $\mm_3 $ whose weights are 
$\gamma $. 
We denote the monomial basis of $V_{\mm_3 } $ by 
$\{ f(M)\}_{M\in G(\mm_3)}$. Here $G(\mm_3)$ is the set of the G-patterns 
of type $\mm_3 $. 

The action of $\gk_\mC \simeq \g \gl (3,\mC )$ is given by following formulas.
\begin{prop}\label{prop:action_on_GZ-basis}
\textit 
(Gelfand-Zelevinsky) The action of simple root vectors on the 
monomial basis $\{ f(M)\}_{M\in G(\mm_3 )} $ of $V_{\mm_3}$ are 
given as follows. 
\begin{align*}
E_{12} f(M) & = (m_{12}-m_{11})f\bkt{M\cgpt{0}{0}{1}}
+(m_{23}-m_{22})\chi_+(M)f\bkt{M\cgpt{0}{0}{1}[-1]} \\
E_{21} f(M) & = (m_{11}-m_{22})f\bkt{M\cgpt{0}{0}{-1}}
+(m_{12}-m_{23})\chi_-(M)f\bkt{M\cgpt{0}{0}{-1}[-1]} \\
E_{23} f(M) & = (m_{13}-m_{12})f\bkt{M\cgpt{1}{0}{0}}
+\{ m_{13}-m_{12}-\delta (M) \} \chi_-(M)f\bkt{M\cgpt{1}{0}{0}[-1]} \\
E_{32} f(M) & = (m_{22}-m_{33})f\bkt{M\cgpt{0}{-1}{0}}
+\{ m_{22}-m_{33}+\delta (M) \} \chi_+(M)f\bkt{M\cgpt{0}{-1}{0}[-1]}
\end{align*}
In the right hand side of above formulas, we put $f(M')=0$ 
if $M'$ is a triangular array which does not 
satisfy the condition (\ref{cdn:G-pattern}) of G-patterns.
\end{prop}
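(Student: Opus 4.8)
The plan is to prove the formulas by realizing the monomial basis concretely as ordered monomials in the lowering Chevalley generators applied to the highest weight vector, and then computing the action of each simple root vector by commuting it through the monomial. Write $v_{\mm_3}$ for the highest weight vector of $V_{\mm_3}$, normalized as the top pattern. By the Poincar\'e--Birkhoff--Witt theorem and the uniqueness of the proper basis, each $f(M)$ is a scalar multiple of an ordered monomial $E_{21}^{\,p}E_{32}^{\,q}E_{21}^{\,r}v_{\mm_3}$, and the exponents $(p,q,r)$ are read off from $M$ through the string parametrization: the weight $\wt{M}$ forces $q=(m_{13}-m_{12})+(m_{23}-m_{22})$ and $p+r=m_{13}-m_{11}$, while the defining betweenness inequalities \eqref{cdn:G-pattern} cut out the cone of admissible $(p,q,r)$ and fix $p$ and $r$ individually. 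The first step is therefore to pin down this dictionary between pattern entries and monomial exponents, together with the Gelfand--Zelevinsky normalizing scalar $c(M)$ relating $f(M)$ to the raw monomial.

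With the dictionary in hand, I would compute $E_{12}f(M)$, $E_{21}f(M)$, $E_{23}f(M)$ and $E_{32}f(M)$ by repeatedly applying the $\g \gl (3,\mC )$ relations $[E_{ij},E_{kl}]=\delta_{jk}E_{il}-\delta_{li}E_{kj}$ to move the root vector to the right past the monomial, using $E_{12}v_{\mm_3}=E_{23}v_{\mm_3}=0$ and $E_{ii}v_{\mm_3}=m_{i3}v_{\mm_3}$ to annihilate or evaluate the terms that reach $v_{\mm_3}$. For the lowering operators $E_{21},E_{32}$ this simply lengthens the monomial and, after reordering into the standard PBW form, produces a combination of basis vectors whose patterns differ from $M$ by the shifts displayed in the Proposition; leading coefficients such as $m_{11}-m_{22}$ and $m_{22}-m_{33}$ come out as eigenvalue factors from the diagonal generators created by the commutators. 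For the raising operators $E_{12},E_{23}$ one uses in addition that they almost commute past the lowering part, the commutators again contributing the factors $m_{12}-m_{11}$ and $m_{13}-m_{12}$. At each stage the output must be re-expanded in the monomial basis, which is where the normalizing scalars $c(M)$ enter and convert the bare PBW coefficients into the stated piecewise-linear ones.

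The main obstacle is exactly the piecewise-linear behaviour encoded by $\delta (M)$ and the characteristic functions $\chi_\pm (M)$: when a monomial produced by the commutation falls on the far side of the wall $\delta (M)=0$ it lies outside the standard string cone and must be straightened back into the standard ordered form using the commutation relations, notably $[E_{32},E_{21}]=E_{31}$ together with the Serre relations $[E_{21},[E_{21},E_{32}]]=0$ and $[E_{32},[E_{32},E_{21}]]=0$, and it is precisely this straightening that splits the coefficients into the two cases defining $C_1,\bar{C_1}$ and introduces the extra $[-1]$-shifted term weighted by $\chi_\pm$. Controlling this wall-crossing uniformly, and checking that $c(M)$ is chosen so that the two chambers glue into the single formula stated, is the delicate part. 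A cleaner but less self-contained alternative is to invoke the identification of the monomial basis with the (dual) canonical basis and read the formulas off the $\mathfrak{sl}_2$-string structure of the associated crystal, in which the factors $\chi_\pm$ and $\delta$ are manifestly the crystal raising and lowering data. In either route I would finish by verifying directly that the four displayed formulas respect the Serre relations and the bracket $[E_{12},E_{21}]=E_{11}-E_{22}$, which confirms that they define the $\g \gl (3,\mC )$-action and, by the uniqueness of the proper basis, identifies it with the monomial one.
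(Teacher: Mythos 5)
Note first that the paper does not prove this proposition at all: it is quoted as a result of Gelfand and Zelevinsky, and the reader is referred to \cite{MR946886} for the existence, uniqueness and properties of the monomial basis (the normalization is even described only as being chosen ``somehow to get the formulas''). So your attempt to supply a proof is, by construction, a different route from the paper's; the question is whether your sketch would actually close.

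As a plan it follows the lines of the original Gelfand--Zelevinsky argument (ordered monomials in the lowering operators, straightening across the wall $\delta (M)=0$), but it has genuine gaps. The entire content of the proposition is the precise piecewise-linear coefficients --- the factors $m_{12}-m_{11}$, $m_{23}-m_{22}$, and the correction terms carrying $\chi_{\pm}(M)$ and $\delta (M)$ --- and these arise exactly in the wall-crossing/straightening step that you name as ``the delicate part'' but do not carry out; nothing in the write-up actually produces a single one of the stated coefficients. Your dictionary is also imprecise: a single ordered type $E_{21}^{p}E_{32}^{q}E_{21}^{r}v_{\mm_3}$ does not exhaust the basis (one needs the opposite ordering $E_{32}^{a}E_{21}^{b}E_{32}^{c}v_{\mm_3}$ on the other side of the wall, which is precisely where the $\chi_{\pm}$ dichotomy comes from), and the weight of $M$ determines only $q$ and $p+r$, not $p$ and $r$ separately, so inequalities alone cannot ``fix $p$ and $r$ individually'' --- the splitting must be read off from the individual entries $m_{12},m_{22},m_{11}$. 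Finally, your fallback --- verify that the displayed formulas satisfy the $\g \gl (3,\mC )$ relations and then invoke uniqueness of the proper basis --- cannot finish the argument on its own: the proper basis is unique only up to rescaling each vector, so this identifies the basis merely up to a diagonal change of normalization, whereas the coefficients in the proposition are exactly the normalization-dependent data. You would still have to pin down the Gelfand--Zelevinsky normalization (for instance through the explicit monomial construction, or compatibly with the duality of Proposition \ref{prop:sym_GZ-basis}) before the stated formulas are determined.
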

In the later section, 
we need the action of root vectors on the monomial basis of $V_{\mm_3}$. 
So we compute the action of $E_{13}$ and $E_{31}$. 
Since $E_{13}=[E_{12},E_{23}]$ and $E_{31}=[E_{32},E_{21}]$, 
we obtain 
\begin{align}
E_{13}f(M)=&[E_{12},E_{23}]f(M)
=(m_{13}-m_{12})f\bkt{M\cgpt{1}{0}{1}}
-\bar{C_1}(M)f\bkt{M\cgpt{1}{0}{1}[-1]},
\label{eqn:a_act_root_vec} \\
E_{31}f(M)=&[E_{32},E_{21}]f(M)
=(m_{33}-m_{22})f\bkt{M\cgpt{0}{-1}{-1}}
+C_1(M)f\bkt{M\cgpt{0}{-1}{-1}[-1]}.
\label{eqn:b_act_root_vec} 
\end{align}
Here $[,]$ is a Lie bracket, i.e., $[X,Y]=XY-YX$ for $X,Y\in \g \gl (3,\mC )$.

The monomial basis has a interesting symmetry property. 
For each G-pattern $M$, we define the dual pattern $\hat{M} $ by
\[
\hat{M} =\lgpt{-m_{33}}{-m_{23}}{-m_{13}}{-m_{22}}{-m_{12}}{-m_{11}}. 
\]
If $M$ is a G-pattern of type $\mm_3$ and 
weight $\wt{M} $ then $\hat{M} $ is a G-pattern of 
type $\hat{\mm}_3 =(-m_{33},-m_{23},-m_{13})$ and weight 
$-\wt{M} =(-\wt{M}_1, -\wt{M}_2, -\wt{M}_3)$.
\begin{prop}\label{prop:sym_GZ-basis}
\textit
Let $\omega$ be the automorphism of $\g \gl (3,\mC )$ defined by 
$\omega (E_{ii})=-E_{ii}$ and $\omega (E_{jk} )=E_{kj}$ for 
$i,j,k\in \{ 1,2,3\}$ such that $\abs{j-k}=1$.
Let $T_{\mm_3}\colon V_{\mm_3}\to V_{\hat{\mm}_3 } $ be the linear map 
defined by $T_{\mm_3} (f(M))=f(\hat{M} )$ for $M\in G(\mm_3)$. 
Then $X\circ T_{\mm_3} =T_{\mm_3} \circ \omega(X)$.
\end{prop}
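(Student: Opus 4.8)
The plan is to exploit that the asserted identity is compatible with the Lie algebra structure on both sides, so that it is enough to verify it on generators, and then to reduce the generator check to a direct substitution into the formulas of Proposition~\ref{prop:action_on_GZ-basis}. Set
\[
\mathcal{S}=\{ X\in \g \gl (3,\mC )\mid X\circ T_{\mm_3}=T_{\mm_3}\circ \omega (X)\}.
\]
This is a linear subspace, and I claim it is closed under the bracket. Indeed, for $X,Y\in \mathcal{S}$ one computes, using the defining relation first for $Y$ and then for $X$,
\[
XY\circ T_{\mm_3}=X\circ T_{\mm_3}\circ \omega (Y)=T_{\mm_3}\circ \omega (X)\omega (Y),
\]
and likewise $YX\circ T_{\mm_3}=T_{\mm_3}\circ \omega (Y)\omega (X)$; subtracting and using that $\omega $ is an algebra homomorphism gives $[X,Y]\circ T_{\mm_3}=T_{\mm_3}\circ \omega ([X,Y])$, so $[X,Y]\in \mathcal{S}$. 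Since the diagonal matrices together with $E_{12},E_{21},E_{23},E_{32}$ generate $\g \gl (3,\mC )$ as a Lie algebra, it suffices to show that these elements lie in $\mathcal{S}$.

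The diagonal generators are immediate from the weight bookkeeping. By construction $f(M)$ lies in $V_{\mm_3}(\wt{M})$, and the excerpt records that $\hat{M}$ has weight $-\wt{M}$, so $f(\hat{M})\in V_{\hat{\mm}_3}(-\wt{M})$. Hence $E_{ii}\circ T_{\mm_3}(f(M))=-\wt{M}_i f(\hat{M})$, while $T_{\mm_3}\circ \omega (E_{ii})(f(M))=-T_{\mm_3}(\wt{M}_i f(M))=-\wt{M}_i f(\hat{M})$, and the two coincide.

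For the simple root vectors I substitute the entries $\hat{m}_{13}=-m_{33}$, $\hat{m}_{23}=-m_{23}$, $\hat{m}_{33}=-m_{13}$, $\hat{m}_{12}=-m_{22}$, $\hat{m}_{22}=-m_{12}$, $\hat{m}_{11}=-m_{11}$ of $\hat{M}$ into Proposition~\ref{prop:action_on_GZ-basis}. Two observations organize the check. First, a one-line substitution yields $\delta (\hat{M})=-\delta (M)$, whence the characteristic functions are interchanged, $\chi_+(\hat{M})=\chi_-(M)$ and $\chi_-(\hat{M})=\chi_+(M)$. Second, duality intertwines the elementary pattern shifts: the dual of $M\cgpt{0}{0}{-1}$ is $\hat{M}\cgpt{0}{0}{1}$ and the dual of $M\cgpt{0}{-1}{0}$ is $\hat{M}\cgpt{1}{0}{0}$, with the same correspondence after appending the auxiliary shift $[-1]$ to both sides. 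Feeding these into the formula for $E_{12}f(\hat{M})$ converts its coefficients $\hat{m}_{12}-\hat{m}_{11}$ and $\hat{m}_{23}-\hat{m}_{22}$ into $m_{11}-m_{22}$ and $m_{12}-m_{23}$, so that $E_{12}f(\hat{M})=T_{\mm_3}\bkt{E_{21}f(M)}$, which is the relation for $E_{12}$; the parallel substitution turns $E_{23}f(\hat{M})$ into $T_{\mm_3}\bkt{E_{32}f(M)}$, using $\hat{m}_{13}-\hat{m}_{12}-\delta (\hat{M})=m_{22}-m_{33}+\delta (M)$ for the correction term. Throughout, the convention that $f$ vanishes on arrays violating \eqref{cdn:G-pattern} is preserved, since $M'$ satisfies \eqref{cdn:G-pattern} if and only if its dual does.

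Finally, the generators $E_{21}$ and $E_{32}$ require no new computation. Because $\hat{\hat{M}}=M$ gives $T_{\hat{\mm}_3}\circ T_{\mm_3}=\id $ and $\omega ^2=\id $, the relation for $E_{12}$ applied at the type $\hat{\mm}_3$ rearranges into $E_{21}\circ T_{\mm_3}=T_{\mm_3}\circ E_{12}$, i.e.\ the relation for $E_{21}=\omega (E_{12})$ at type $\mm_3$, and symmetrically for $E_{32}$; alternatively one repeats the substitution above verbatim. I expect the only delicate point to be the second observation above, namely tracking how the elementary shifts and the auxiliary $[-1]$ shifts behave under the dual, since a single misplaced sign or index there would spoil the matching of the $\chi_\pm $-weighted correction terms, whereas the reduction to generators and the diagonal case are routine.
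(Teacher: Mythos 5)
Your proof is correct. Note, however, that the paper itself does not prove Proposition \ref{prop:sym_GZ-basis} at all: it is stated as a known property of the monomial basis and delegated to the reference \cite{MR946886}, so there is no argument in the text to compare yours against line by line. What you supply is a self-contained verification, and its skeleton is sound: the set $\mathcal{S}$ of elements satisfying the intertwining relation is closed under brackets because $\omega$ is a Lie algebra automorphism and the actions on $V_{\mm_3}$, $V_{\hat{\mm}_3}$ are representations, so it suffices to treat the Cartan elements and the four simple root vectors. The delicate points you flag are exactly the right ones, and they check out: $\delta(\hat{M})=-\delta(M)$ so $\chi_{\pm}(\hat{M})=\chi_{\mp}(M)$; the dual of $M\cgpt{0}{0}{-1}$ is $\hat{M}\cgpt{0}{0}{1}$ and the dual of $M\cgpt{0}{-1}{0}$ is $\hat{M}\cgpt{1}{0}{0}$ (since dualization swaps $m_{12}\leftrightarrow m_{22}$ with a sign); and $\widehat{M'[-1]}=\widehat{M'}[-1]$ because $[-1]$ shifts $m_{12}$ down and $m_{22}$ up, which dualization carries to the same shift on $\hat{M'}$. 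With these, the coefficients $\hat{m}_{12}-\hat{m}_{11}=m_{11}-m_{22}$ and $\hat{m}_{23}-\hat{m}_{22}=m_{12}-m_{23}$ turn the formula for $E_{12}f(\hat{M})$ into $T_{\mm_3}(E_{21}f(M))$, and similarly $\hat{m}_{13}-\hat{m}_{12}=m_{22}-m_{33}$ and $\hat{m}_{13}-\hat{m}_{12}-\delta(\hat{M})=m_{22}-m_{33}+\delta(M)$ handle $E_{23}$; the vanishing convention is preserved because $M'$ satisfies (\ref{cdn:G-pattern}) if and only if $\hat{M'}$ does, and your $T_{\hat{\mm}_3}\circ T_{\mm_3}=\id$ trick disposes of $E_{21}$ and $E_{32}$. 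What your approach buys over the paper's citation is an explicit, checkable derivation from Proposition \ref{prop:action_on_GZ-basis} alone; what it costs is nothing beyond the two bookkeeping observations you already isolate.
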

For the existence, uniqueness and properties of the monomial basis, 
we refer to the paper \cite{MR946886}.

In the later setions, we give the explicit expressions of 
various $K$-homomorphisms in terms of the monomial basis. 
However, the monomial basis have the ambiguity of scalar multiple. 
Thus, we have to fix the monomial basis for simple $K$-modules. 
\begin{defn}
\textit{ 
A simple $K$-module $V_\lambda$ equipped with the fixed monomial basis 
$\{f(M)\}_{M\in G(\lambda )}$
is called a simple $K$-module with the marking $\{f(M)\}_{M\in G(\lambda )}$. }
\end{defn}

\subsection{The adjoint representations of $\gk_\mC $ on $\gp_{\pm }$}
\label{subsec:K-modules}
We denote by $\me_i $ the unit vector of degree three 
with its $i$-th component 1 and the remaining component 0. 
It is known that both of $\gp_\pm $ become $K$-modules via the adjoint action 
of $K$. Concerning this, we have the following lemma.
\begin{lem}\label{lem:K-action}
\textit
We have isomorphisms $i_{\gp_+}\colon \gp_+ \to V_{2\me_{1}}$ and 
$i_{\gp_-}\colon \gp_- \to V_{-2\me_{3}}$ by the correspondences 
between their basis 
\begin{align*}
& (X_{+11},X_{+12},X_{+13},X_{+22},X_{+23},X_{+33}) \\
& \leftrightarrow (f\gpt{2}{0}{0}{2}{0}{2} ,f\gpt{2}{0}{0}{2}{0}{1} ,
f\gpt{2}{0}{0}{1}{0}{1} ,f\gpt{2}{0}{0}{2}{0}{0} ,f\gpt{2}{0}{0}{1}{0}{0} ,
f\gpt{2}{0}{0}{0}{0}{0} ), \\[5mm]
& (X_{-33},-X_{-23},X_{-22},X_{-13},-X_{-12},X_{-11}) \\
& \leftrightarrow ( f\gpt{0}{0}{-2}{0}{0}{0},f\gpt{0}{0}{-2}{0}{-1}{0} ,
f\gpt{0}{0}{-2}{0}{-2}{0} ,f\gpt{0}{0}{-2}{0}{-1}{-1} ,
f\gpt{0}{0}{-2}{0}{-2}{-1},f\gpt{0}{0}{-2}{0}{-2}{-2} )
\end{align*}
\end{lem}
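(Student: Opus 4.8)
The plan is to check that each displayed correspondence intertwines the $\gk_\mC\simeq\g\gl(3,\mC)$-action, from which the asserted $K$-equivariance follows because $K\simeq U(3)$ is connected. Both maps are linear bijections by construction, so everything reduces to verifying equivariance on a generating set. The four simple root vectors $E_{12},E_{21},E_{23},E_{32}$ generate $\mathfrak{sl}(3,\mC)$, and equivariance for the diagonal matrices is precisely the statement that the maps preserve weights, so I would first record the weights. A short computation of $[\kappa(E_{jj}),X_{\pm kl}]$ yields $[\kappa(E_{jj}),X_{+kl}]=(\delta_{jk}+\delta_{jl})X_{+kl}$ and $[\kappa(E_{jj}),X_{-kl}]=-(\delta_{jk}+\delta_{jl})X_{-kl}$, so $X_{+kl}$ has $\g\gl(3,\mC)$-weight $\me_k+\me_l$ and $X_{-kl}$ has weight $-\me_k-\me_l$. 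Comparing with the weight formula for G-patterns, the pattern assigned to each $X_{\pm kl}$ carries exactly this weight, and $X_{+11}$, $X_{-33}$ are sent to the highest weight vectors of $V_{2\me_1}$, $V_{-2\me_3}$.

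The computational heart is an explicit formula for the bracket. Setting $S_{kl}=(E_{kl}+E_{lk})/2$, so that $X_{\pm kl}=p_\pm(S_{kl})$, and using that the definition of $\kappa$ gives $\kappa(E_{ij})=\left(\begin{smallmatrix}(E_{ij}-E_{ji})/2 & -\sqrt{-1}S_{ij}\\ \sqrt{-1}S_{ij} & (E_{ij}-E_{ji})/2\end{smallmatrix}\right)$, I would multiply out the $2\times2$ block products and collapse the result to
\[
[\kappa(E_{ij}),X_{+kl}]=p_+\bkt{E_{ij}S_{kl}+S_{kl}E_{ji}},\qquad
[\kappa(E_{ij}),X_{-kl}]=-p_-\bkt{E_{ji}S_{kl}+S_{kl}E_{ij}}.
\]
Each argument of $p_\pm$ is symmetric, so the brackets land in $\gp_\pm$ as they must; moreover these are exactly the formulas for the action of $E_{ij}\in\g\gl(3,\mC)$ on the symmetric square of the standard representation and on its dual, respectively. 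This identifies $\gp_+\simeq V_{2\me_1}$ and $\gp_-\simeq V_{-2\me_3}$ as abstract modules and, since all weight spaces of these modules are one dimensional, reduces the proof to pinning down the scalars in the basis correspondence.

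Finally I would expand the two bracket formulas on the four simple root vectors and match them termwise against the action of $E_{12},E_{21},E_{23},E_{32}$ on the monomial basis recorded in Proposition \ref{prop:action_on_GZ-basis}. For example $[\kappa(E_{21}),X_{+11}]=p_+(E_{21}E_{11}+E_{11}E_{12})=2X_{+12}$ agrees with $E_{21}f\gpt{2}{0}{0}{2}{0}{2}=2f\gpt{2}{0}{0}{2}{0}{1}$, while $[\kappa(E_{32}),X_{-33}]=-p_-(E_{23}E_{33}+E_{33}E_{32})=-2X_{-23}$ agrees with $E_{32}f\gpt{0}{0}{-2}{0}{0}{0}=2f\gpt{0}{0}{-2}{0}{-1}{0}$ exactly because it is $-X_{-23}$, and not $X_{-23}$, that is assigned to that pattern. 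Running through the remaining generator-basis pairs fixes all six assignments in each case and completes the verification. I expect the only real difficulty to be bookkeeping: the monomial basis is normalized in the specific Gelfand-Zelevinsky way of Proposition \ref{prop:action_on_GZ-basis}, and one must keep its coefficients consistent with the computed structure constants — it is this matching that forces the signs $-X_{-23}$ and $-X_{-12}$ in the second correspondence. As a shortcut one could treat only $\gp_+$ directly and deduce the $\gp_-$ statement from the duality $T_{\mm_3}$ of Proposition \ref{prop:sym_GZ-basis} under complex conjugation $X_{+kl}\mapsto X_{-kl}$, but the direct check is about as quick.
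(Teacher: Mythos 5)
Your proposal is correct and follows essentially the same route as the paper: the paper's proof consists of tabulating the adjoint actions of $\kappa(E_{ij})$ on the bases $\{X_{\pm ij}\}$ by direct computation and then comparing with the Gelfand--Zelevinsky action formulas of Proposition \ref{prop:action_on_GZ-basis}. Your closed-form bracket identities $[\kappa(E_{ij}),X_{+kl}]=p_+(E_{ij}S_{kl}+S_{kl}E_{ji})$ and $[\kappa(E_{ij}),X_{-kl}]=-p_-(E_{ji}S_{kl}+S_{kl}E_{ij})$ are a clean way to organize exactly the entries of the paper's Tables 1 and 2, and your sample checks (including the sign forcing $-X_{-23}$, $-X_{-12}$) match the paper's data.
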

\begin{proof}
By direct computation, 
we have the following tables of the adjoint actions of Cartan subalgebra 
and simple root vectors of $\gk_\mC $ on 
the basis $\{ X_{\pm ij}\}_{1\leq i\leq j\leq 3} $
of $\gp_{\pm }$.\\
\begin{gather*}
\begin{array}{|c|c|c|c|c|c|c|c|c|c|}
\hline
&\kappa (E_{11})&\kappa (E_{22})&\kappa (E_{33})
&\kappa (E_{12})&\kappa (E_{21})&\kappa (E_{23})&\kappa (E_{32})
&\kappa (E_{13})&\kappa (E_{31})
\\ \hline
X_{+11}&2X_{+11}&0&0&0&2X_{+12}&0&0&0&2X_{+13}\\ \hline
X_{+12}&X_{+12}&X_{+12}&0&X_{+11}&X_{+22}&0&X_{+13}&0&X_{+23}\\ \hline
X_{+13}&X_{+13}&0&X_{+13}&0&X_{+23}&X_{+12}&0&0&0\\ \hline
X_{+22}&0&2X_{+22}&0&2X_{+12}&0&0&2X_{+23}&X_{+11}&X_{+33}\\ \hline
X_{+23}&0&X_{+23}&X_{+23}&X_{+13}&0&X_{+22}&X_{+33}&X_{+12}&0\\ \hline
X_{+33}&0&0&2X_{+33}&0&0&2X_{+23}&0&2X_{+13}&0\\ \hline
\end{array}
\\
\text{TABLE 1. The adjoint actions of $\gk_\mC$ on 
the basis $\{ X_{+ij}\}_{1\leq i\leq j\leq 3} $
of $\gp_+$ }
\\\\
\begin{array}{|c|c|c|c|c|c|c|c|c|c|}
\hline
&\kappa (E_{11})&\kappa (E_{22})&\kappa (E_{33})
&\kappa (E_{12})&\kappa (E_{21})&\kappa (E_{23})&\kappa (E_{32})
&\kappa (E_{13})&\kappa (E_{31})
\\ \hline
-X_{-11}&2X_{-11}&0&0&2X_{-12}&0&0&0&2X_{-13}&0\\ \hline
-X_{-12}&X_{-12}&X_{-12}&0&X_{-22}&X_{-11}&X_{-13}&0&X_{-23}&0\\ \hline
-X_{-13}&X_{-13}&0&X_{-13}&X_{-23}&0&0&X_{-12}&0&0\\ \hline
-X_{-22}&0&2X_{-22}&0&0&2X_{-12}&2X_{-23}&0&X_{-33}&X_{-11}\\ \hline
-X_{-23}&0&X_{-23}&X_{-23}&0&X_{-13}&X_{-33}&X_{-22}&0&X_{-12}\\ \hline
-X_{-33}&0&0&2X_{-33}&0&0&0&2X_{-23}&0&2X_{-13}\\ \hline
\end{array}
\\
\text{TABLE 2. The adjoint actions of $\gk_\mC$ on 
the basis $\{ X_{-ij}\}_{1\leq i\leq j\leq 3} $
of $\gp_-$ }
\end{gather*}
Comparing the actions in above tables with the actions 
in Proposition \ref{prop:action_on_GZ-basis}, we have the assertion. 
\end{proof}
\begin{rem}
\textit{
The above lemma tells that 
$\gp_+$ and $\gp_-$ are simple $K$-modules with 
the markings $\{ X_{+ij}\}_{1\leq i\leq j\leq 3}$ 
and $\{ X_{-ij}\}_{1\leq i\leq j\leq 3}$, respectively. 
From now on we always take these markings for $\gp_\pm$.}
\end{rem}

\section{Clebsch-Gordan coefficients for the representations of 
$\g \gl (3,\mC ) $ with respect to the monomial basis}
\label{sec:Clebsch-Gordan}

In the later sections, we need irreducible decompositions of 
the tensor products $V\otimes_\mC \gp_{+} $ and 
$V\otimes_\mC \gp_{-} $ as $K$-modules 
for a $K$-type $(\tau ,V)$ of $\pi_{(\sigma ,\nu )}$. 
Since $\gp_+\simeq V_{2\me_{1}},\ \gp_-\simeq V_{-2\me_{3}}$ and 
$\gk_\mC \simeq \g \gl (3,\mC )$, it suffices to consider 
the irreducible decomposition of $V_{\lambda }\otimes_\mC V_{2\me_{1}}$ 
and $V_{\lambda }\otimes_\mC V_{-2\me_{3}}$ as $\g \gl (3,\mC )$-modules 
for arbitrary dominant weight $\lambda $. 
In this section, we take the marking $\{f(M)\}_{M\in G(\lambda )}$ for 
a simple $K$-module $V_\lambda $.

\subsection{The irreducible decomposition of 
$V_{\lambda }\otimes_\mC V_{\me_{1}}$}
\label{subsec:clebsh(1,0,0)}

Generically the tensor product $V_{\lambda }\otimes_\mC V_{\me_{1}}$ 
has three irreducible components: 
$V_{\lambda +\me_{1}},\ V_{\lambda +\me_{2}}$ and $V_{\lambda +\me_{3}}$. 
If $\lambda +\me_{i}\ (i=2,3)$ is not dominant, 
the corresponding irreducible component does not occur. 

For $1\leq i\leq 3$, let $i^{\lambda }_{\me_{i}}$ 
be a non-zero generator of 
$\Hom_K (V_{\lambda +\me_{i}},V_{\lambda }\otimes_\mC V_{\me_{1}})$, 
which is unique up to scalar multiple 
if $V_{\lambda +\me_{i}}$ is non-zero. 
Our purpose of this subsection is to give explicit expressions of 
these injectors $i^{\lambda }_{\me_{1}},\ i^{\lambda }_{\me_{2}}$ and 
$i^{\lambda }_{\me_{3}}$ in terms of the monomial basis. 
For this purpose, we prepare following equations of 
the functions of G-patterns.
\begin{lem}
\label{lem:eqn_G-pat_fct} 
\textit (i) We have another expressions of 
$C_1(M),\ \bar{C_1}(M)$ and $C_2(M)$, which is suitable for computation:
\begin{align}
C_1(M)	&=m_{11}-m_{22}+\delta (M)\chi_-(M)\label{eqn:G-fct001}\\
	&=m_{12}-m_{23}-\delta (M)\chi_+(M),\nonumber \\
\bar{C_1}(M)&=m_{23}-m_{22}+\delta (M)\chi_-(M)\label{eqn:G-fct002}\\
	&=m_{12}-m_{11}-\delta (M)\chi_+(M),\nonumber \\
C_2(M)	&=(m_{12}-m_{23})(m_{12}-m_{11})-(m_{12}-m_{22})\delta (M)\chi_+(M)
	\label{eqn:G-fct001s}\\
	&=(m_{11}-m_{22})(m_{23}-m_{22})+(m_{12}-m_{22})\delta (M)\chi_-(M).
	\nonumber
\end{align}
(ii) We have relations of the values of the functions 
$\delta $ and $\chid{\pm}{r} $ 
for another G-patterns as follows: 
\begin{align}
\delta \bkt{M\gpt{i_{13}}{i_{23}}{i_{33}}{i_{12}}{i_{22}}{i_{11}}[-l]}
&=\delta (M)+d,\label{eqn:G-fct003}\\
\chid{+}{r}(M\gpt{i_{13}}{i_{23}}{i_{33}}{i_{12}}{i_{22}}{i_{11}}[-l])
&=\chid{+}{r-d}(M),\label{eqn:G-fct004}\\
\chid{-}{r}(M\gpt{i_{13}}{i_{23}}{i_{33}}{i_{12}}{i_{22}}{i_{11}}[-l])
&=\chid{-}{r+d}(M).\label{eqn:G-fct005}
\end{align}
Here 
$d =\delta \gpt{i_{13}}{i_{23}}{i_{33}}{i_{12}}{i_{22}}{i_{11}}.$\\
(iii) We have shift relations of $\chid{\pm}{r}(M)$ as follows:
\begin{gather}
(\delta (M)-r)\chid{+}{r}(M)=(\delta (M)-r)\chid{+}{r-1}(M),
\label{eqn:G-fct006}\\
(\delta (M)+r)\chid{-}{r}(M)=(\delta (M)+r)\chid{-}{r-1}(M),
\label{eqn:G-fct007}\\
\chid{+}{r}(M)+\chid{-}{-r-1}(M)=1,\label{eqn:G-fct008}\\
\chid{+}{r_1}(M)\chid{-}{r_2}(M)=0\ \text{ if }\ r_1+r_2>-2,
\label{eqn:G-fct009}\\
\chid{+}{r_1}(M)\chid{+}{r_2}(M)=\chid{+}{r_1}(M)\ \text{ if }\ r_1>r_2,
\label{eqn:G-fct010}\\
\chid{-}{r_1}(M)\chid{-}{r_2}(M)=\chid{-}{r_1}(M)\ \text{ if }\ r_1>r_2.
\label{eqn:G-fct011}
\end{gather}
(iv) We have convenient relations of $C_1(M)\chid{\pm }{r}(M)$ 
and $\bar{C_1}(M)\chid{\pm}{r}(M)$ as follows:
\begin{align}
C_1(M)\chid{+}{r}(M)&=(m_{11}-m_{22})\chid{+}{r}(M)
&& (r\geq -1),\label{eqn:pf_clebsh(1,0,0)_003}\\
C_1(M)\chid{-}{r}(M)&=(m_{12}-m_{23})\chid{-}{r}(M) 
&& (r\geq -1),\label{eqn:pf_clebsh(1,0,0)_004}\\
\bar{C_1}(M)\chid{+}{r}(M)&=(m_{23}-m_{22})\chid{+}{r}(M)\ 
&& (r\geq -1),\label{eqn:pf_clebsh(1,0,0)_001}\\
\bar{C_1}(M)\chid{-}{r}(M)&=(m_{12}-m_{11})\chid{-}{r}(M)
&& (r\geq -1).\label{eqn:pf_clebsh(1,0,0)_002}
\end{align} 
\end{lem}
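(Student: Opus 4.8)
The plan is to reduce every identity to an elementary case analysis on the sign of the integer $\delta(M)=m_{12}+m_{22}-m_{11}-m_{23}$, since each of $\chi_\pm(M)$, $\chid{\pm}{r}(M)$, $C_1(M)$ and $\bar{C_1}(M)$ is defined piecewise according to whether $\delta(M)$ is positive, zero, or negative. For part (i), I would substitute the definitions directly. In the first identity, when $\delta(M)\geq 0$ one has $\chi_-(M)=0$, so the right-hand side collapses to $m_{11}-m_{22}=C_1(M)$; when $\delta(M)<0$ one has $\chi_-(M)=1$ and $m_{11}-m_{22}+\delta(M)$ simplifies to $m_{12}-m_{23}=C_1(M)$. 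The remaining linear identities for $C_1$ and $\bar{C_1}$ are settled the same way. For the two expressions of $C_2(M)=C_1(M)\bar{C_1}(M)$, I would expand the product of the already-established forms $C_1=m_{12}-m_{23}-\delta\chi_+$ and $\bar{C_1}=m_{12}-m_{11}-\delta\chi_+$, using $\chi_+^2=\chi_+$ and simplifying the middle coefficient $(m_{12}-m_{23})+(m_{12}-m_{11})-\delta(M)$ to $m_{12}-m_{22}$; the $\chi_-$ version follows from the companion forms $C_1=m_{11}-m_{22}+\delta\chi_-$, $\bar{C_1}=m_{23}-m_{22}+\delta\chi_-$ with the identical coefficient simplification.

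For part (ii), the key observation is that the operation $[-l]$ adds $-l$ to $m_{12}$ and $+l$ to $m_{22}$, hence leaves the sum $m_{12}+m_{22}$ and therefore $\delta$ unchanged; only the outer shift with increments $i_{13},\dots ,i_{11}$ contributes. A direct computation then gives the $\delta$-value of the shifted pattern as $\delta(M)+(i_{12}+i_{22}-i_{11}-i_{23})=\delta(M)+d$, where $d$ is by definition the $\delta$-value of the increment triangle. The two characteristic-function identities follow by unwinding the definitions: $\chid{+}{r}$ of the shifted pattern equals $1$ exactly when $\delta(M)+d>r$, i.e.\ when $\delta(M)>r-d$, which is $\chid{+}{r-d}(M)$; the computation for $\chid{-}{r}$ is identical except that $\delta(M)+d<-r$ rearranges to $\delta(M)<-(r+d)$, producing $\chid{-}{r+d}(M)$.

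For parts (iii) and (iv), I would use integrality of $\delta$ and $r$ throughout. The first two shift relations hold because $\chid{+}{r}$ and $\chid{+}{r-1}$ differ only at $\delta(M)=r$, where the prefactor $\delta(M)-r$ vanishes (and symmetrically for $\chi_-$). The complementarity $\chid{+}{r}(M)+\chid{-}{-r-1}(M)=1$ is just the statement that the events $\delta(M)>r$ and $\delta(M)\leq r$ partition all cases. The product relation reduces to observing that $\delta(M)>r_1$ and $\delta(M)<-r_2$ are simultaneously satisfiable by some integer $\delta(M)$ precisely when $r_1+r_2\leq -2$, so the product vanishes when $r_1+r_2>-2$; the idempotent-type relations are immediate from monotonicity of the thresholds. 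For part (iv) I would note that $\chid{+}{r}(M)=1$ with $r\geq -1$ forces $\delta(M)>-1$, hence $\delta(M)\geq 0$ by integrality, so $C_1(M)=m_{11}-m_{22}$ and $\bar{C_1}(M)=m_{23}-m_{22}$ on that set; dually $\chid{-}{r}(M)=1$ with $r\geq -1$ forces $\delta(M)\leq 0$, pinning $C_1$ and $\bar{C_1}$ to their other branch.

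The only places demanding genuine care rather than pure bookkeeping are the $C_2$ computation in part (i), where the cancellation yielding the coefficient $m_{12}-m_{22}$ must be tracked, and the integrality steps in parts (iii) and (iv)—namely passing from $\delta(M)>-1$ to $\delta(M)\geq 0$, and determining exactly when $r_1<\delta(M)<-r_2$ admits an integer solution. Everywhere else the assertions are immediate from the piecewise definitions.
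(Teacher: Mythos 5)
Your proposal is correct, and it fills in exactly the case analysis on the sign of the integer $\delta(M)$ that the paper leaves implicit when it says the identities are "easily checked by direct computation." The key simplifications you flag — the coefficient $m_{12}-m_{22}$ in the $C_2$ expansion, and the integrality arguments passing from $\delta(M)>-1$ to $\delta(M)\geq 0$ and deciding when $r_1<\delta(M)<-r_2$ has an integer solution — are all handled correctly.
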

\begin{proof}
We can easily check these equations by direct computation. 
\end{proof}
The explicit expressions of 
the injectors $i^{\lambda }_{\me_{1}},\ i^{\lambda }_{\me_{2}}$ and 
$i^{\lambda }_{\me_{3}}$ are given as follows. 
\begin{prop}\label{prop:clebsh(1,0,0)}
\textit 
For $1\leq i\leq 3$, the image of the monomial basis by 
the injector $i^{\lambda }_{\me_{i}}\colon  
V_{\lambda +\me_{i}}\to V_{\lambda }\otimes_\mC V_{\me_{1}}$ 
is given by the form
\[
i^{\lambda }_{\me_{i}} (f(M)) 
=\sum_{0\leq j\leq k\leq 1}
\left\{ \sum_{l=0}^{\cpra{i}{jk}}
\cpa{c}{\lambda }{i;jk;l}{M} 
f\bkt{M\gpt{}{-\me_{i}}{}{0}{-k}{-j} [-l]} \right\} 
\otimes f\gpt{1}{0}{0}{k}{0}{j} 
\]
for a G-pattern $M$ of type $\lambda +\me_{i}$. 
In the right hand side of the above equation, we put $f(M')=0$ 
if $M'$ is a triangular array which does not 
satisfy the condition (\ref{cdn:G-pattern}) of G-patterns.

The explicit expressions of the coefficients are given by following formulas.\\
\noindent {\bf Formula 1:} The coefficients of the injector 
$i^{\lambda }_{\me_{1}}\colon  
V_{\lambda +\me_{1}}\to V_{\lambda }\otimes_\mC V_{\me_{1}}$ 
are given as follows:\\
$(\cpra{1}{11},\cpra{1}{01},\cpra{1}{00})=(1,2,1)$ and 
\begin{align*}
\cpa{c}{\lambda }{1;11;0}{M}
&=(m_{13} -m_{12})(m_{22}-m_{33}),&
\cpa{c}{\lambda }{1;11;1}{M}
&=-\bar{E} (M),\\
\cpa{c}{\lambda }{1;01;0}{M}
&=-(m_{13} -m_{12})(m_{22}-m_{33}),&
\cpa{c}{\lambda }{1;01;1}{M}
&=\bar{F} (M),\\
\cpa{c}{\lambda }{1;01;2}{M}
&=-C_2 (M)\chi_+ (M),&
\cpa{c}{\lambda }{1;00;0}{M}
&=-(m_{13} -m_{12})(m_{13}-m_{22}+1),\\
\cpa{c}{\lambda }{1;00;1}{M}
&=C_2 (M) .
\end{align*}
Here
\begin{align*}
& \bar{E} (M) =C_1 (M)\{ m_{13}-m_{33}+1-\bar{C_1} (M) \} ,\\
& \bar{F}(M) =-C_2 (M) -\chi_+ (M)\{ (m_{13}-m_{12})(m_{22}-m_{33})
+(m_{13}-m_{33}+1)\delta (M) \} .
\end{align*}
\noindent {\bf Formula 2:} The coefficients of the injector 
$i^{\lambda }_{\me_{2}}\colon  
V_{\lambda +\me_{2}}\to V_{\lambda }\otimes_\mC V_{\me_{1}}$ 
are given as follows: \\
$(\cpra{2}{11},\cpra{2}{01},\cpra{2}{00})=(1,1,1)$ and
\begin{align*}
\cpa{c}{\lambda }{2;11;0}{M}
&=m_{22} -m_{33},&
\cpa{c}{\lambda }{2;11;1}{M}
&=-\bar{D} (M)\chi_- (M),\\
\cpa{c}{\lambda }{2;01;0}{M}
&=-(m_{22} -m_{33}),&
\cpa{c}{\lambda }{2;01;1}{M}
&=\bar{C_1} (M),\\
\cpa{c}{\lambda }{2;00;0}{M}
&=-(m_{23} -m_{22}),&
\cpa{c}{\lambda }{2;00;1}{M}
&=-\bar{C_1} (M)\chi_- (M) .
\end{align*} 
Here $ \bar{D} (M) =-m_{22} +m_{33} +\delta (M).$ \\
\noindent {\bf Formula 3:} The coefficients of 
the injector $i^{\lambda }_{\me_{3}}\colon  
V_{\lambda +\me_{3}}\to V_{\lambda }\otimes_\mC V_{\me_{1}}$ are 
given as follows:\\
$(\cpra{2}{11},\cpra{2}{01},\cpra{2}{00})=(0,1,0)$ and
\begin{align*}
\cpa{c}{\lambda }{3;11;0}{M}
&=1,&
\cpa{c}{\lambda }{3;01;0}{M}
&=-1,&
\cpa{c}{\lambda }{3;01;1}{M}
&=-\chi_+ (M),&
\cpa{c}{\lambda }{3;00;0}{M}
&=1.
\end{align*}  
\end{prop}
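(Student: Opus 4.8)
The plan is to determine the Clebsch--Gordan injectors $i^{\lambda}_{\me_i}$ by exploiting the two constraints they must satisfy: equivariance under $\g\gl(3,\mC)$ and the fact that each target $V_{\lambda+\me_i}$ is a \emph{highest weight} module, so the image of its highest weight vector must be a highest weight vector of the corresponding weight inside $V_\lambda\otimes_\mC V_{\me_1}$. First I would pin down the highest weight vector. The generator $f(M^\circ_i)$ of $V_{\lambda+\me_i}$, where $M^\circ_i$ is the G-pattern of maximal weight, must map to the unique (up to scalar) vector in $(V_\lambda\otimes_\mC V_{\me_1})(\lambda+\me_i)$ killed by $E_{12}$ and $E_{23}$. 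Since $V_{\me_1}$ has monomial basis indexed by the three G-patterns appearing in the displayed $f\gpt{1}{0}{0}{k}{0}{j}$ factors, the weight-$(\lambda+\me_i)$ subspace is low-dimensional, and solving the two linear equations $E_{12}(\cdot)=0$, $E_{23}(\cdot)=0$ via Proposition~\ref{prop:action_on_GZ-basis} (together with the comultiplication rule $X(v\otimes w)=Xv\otimes w+v\otimes Xw$) fixes the highest weight image up to the overall normalization.

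Next I would propagate this highest weight vector down to a general G-pattern $M$ by repeatedly applying the lowering operators $E_{21}$, $E_{32}$, $E_{31}$, again using the comultiplication and the explicit action formulas, including the derived brackets (\ref{eqn:a_act_root_vec}) and (\ref{eqn:b_act_root_vec}) for $E_{13},E_{31}$. Because the monomial basis is proper, the coefficient $\cpa{c}{\lambda}{i;jk;l}{M}$ of each term $f\bkt{M\gpt{}{-\me_i}{}{0}{-k}{-j}[-l]}\otimes f\gpt{1}{0}{0}{k}{0}{j}$ is a piecewise-polynomial function of the entries of $M$; the job is to read off these functions. Here the identities of Lemma~\ref{lem:eqn_G-pat_fct} do the heavy lifting: the shift relations (\ref{eqn:G-fct003})--(\ref{eqn:G-fct005}) track how $\delta$ and the cutoff functions $\chid{\pm}{r}$ transform under $M\mapsto M\gpt{}{}{}{}{}{}[-l]$, and the collapse relations (\ref{eqn:pf_clebsh(1,0,0)_003})--(\ref{eqn:pf_clebsh(1,0,0)_002}) let me rewrite products like $C_1(M)\chid{\pm}{r}(M)$ as genuine polynomials times a single characteristic function, which is exactly the shape in which the stated coefficients $\bar{E}(M),\bar{F}(M),\bar{C_1}(M)$ appear.

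An alternative, and probably cleaner, verification route is to bypass the inductive descent and instead \emph{guess-and-check}: take the claimed closed forms for the coefficients and verify directly that the resulting map $i^{\lambda}_{\me_i}$ intertwines the four simple root vectors $E_{12},E_{21},E_{23},E_{32}$. By Schur's lemma, any map into the irreducible $V_\lambda\otimes_\mC V_{\me_1}$ that commutes with these generators (and lands in the right isotypic piece) is automatically the injector up to scalar, so it suffices to check equivariance on an arbitrary $f(M)$. This reduces the whole proposition to a finite collection of polynomial identities in $m_{ij}$, separately on each of the regions $\delta(M)>0$, $\delta(M)=0$, $\delta(M)<0$ where the piecewise-linear functions $C_1,\bar{C_1}$ are linear; the region-splitting is handled uniformly by the $\chid{\pm}{r}$ algebra in Lemma~\ref{lem:eqn_G-pat_fct}(iii).

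The main obstacle I anticipate is neither conceptual nor the highest weight computation but the bookkeeping in the descent/verification: when a lowering operator acts on $f(M)\otimes f(N)$, both tensor factors move, and the resulting G-patterns $M\gpt{}{}{}{}{}{}[-l]$ for different shifts $l$ must be matched against the single index $[-l]$ in the stated sum, forcing repeated use of the transformation laws (\ref{eqn:G-fct003})--(\ref{eqn:G-fct005}) to realign indices. In particular the appearance of the $[-1]$-shifted, $\chi_\pm$-weighted second terms in Proposition~\ref{prop:action_on_GZ-basis} means each application of a root vector splits into a ``generic'' and a ``boundary'' contribution, and ensuring these recombine into the compact polynomial coefficients $\bar E,\bar F,\bar D$ is where the delicate cancellations (controlled by (\ref{eqn:G-fct006})--(\ref{eqn:G-fct009})) occur. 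Once the equivariance identities are organized region-by-region, each reduces to an elementary polynomial check, so I expect no essential difficulty beyond careful algebra.
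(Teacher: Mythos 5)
Your second route --- taking the stated closed forms and directly verifying that the resulting map intertwines the four simple root vectors $E_{12},E_{21},E_{23},E_{32}$ (the Cartan part being automatic from weights), with the check reduced to piecewise-polynomial identities organized by the $\chid{\pm}{r}$ relations of Lemma \ref{lem:eqn_G-pat_fct} --- is exactly the paper's proof, so your proposal takes essentially the same approach. One small correction: $V_{\lambda}\otimes_\mC V_{\me_{1}}$ is not irreducible, so the uniqueness you invoke is not Schur's lemma applied to the target but the multiplicity-one fact $\dim\Hom_K(V_{\lambda +\me_{i}},V_{\lambda}\otimes_\mC V_{\me_{1}})\leq 1$, together with the observation that the constructed map is nonzero.
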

\begin{proof}
For $1\leq i \leq 3$, let $i^{\lambda }_{\me_{i}} $ be a linear map 
which is defined by the equations in the statement of this proposition. 
In order to prove that $i^{\lambda }_{\me_{i}} $ is a $K$-homomorphism, 
it suffices to check the actions of 
the basis $E_{mm}\ (1\leq m\leq 3)$ of Cartan subalgebra and 
simple root vectors $E_{n\hs n+1},\ E_{n+1\hs n}\ (n=1,2)$ 
by direct computation. 

Since 
\begin{align*}
E_{mm}(f(M_1)\otimes f(M_2))
&=(E_{mm}f(M_1))\otimes f(M_2)+f(M_1)\otimes (E_{mm}f(M_2))\\
&=(\wt{M_1}_m+\wt{M_2}_m)f(M_1)\otimes f(M_2),
\end{align*}
we easily check 
$E_{mm}\circ i^{\lambda }_{\me_{i}} (f(M))
 =i^{\lambda }_{\me_{i}}\circ E_{mm} (f(M))$
for $1\leq m\leq 3$ and a G-pattern $M$ of type $\lambda +e_i$. 

Therefore the essential computation is those of simple root vectors. 
We have to confirm that 
$E_{mn}\circ i^{\lambda }_{\me_{i}}
=i^{\lambda }_{\me_{i}}\circ E_{mn}$
for four simple root vectors $E_{mn}\ (\abs{m-n}=1)$ and each $i=1,2,3$. 
We set $\cpa{c}{\lambda }{i;jk;l}{M}=0$ if $l>\cpra{i}{jk}$ or $l<0$.

First, we compute the image of 
the monomial basis $f(M)$ by $i^{\lambda }_{\me_{i}}\circ E_{mn}$. 
By using the equations in Proposition \ref{prop:action_on_GZ-basis}, we have
\begin{align*}
&i^{\lambda }_{\me_{i}}\bkt{E_{12}f(M)}
=(m_{12}-m_{11})i^{\lambda }_{\me_{i}}
\bkt{f\bkt{M\cgpt{0}{0}{1}}}
+(m_{23}-m_{22})\chi_+(M)i^{\lambda }_{\me_{i}}
\bkt{f\bkt{M\cgpt{0}{0}{1}[-1]}} \\
&=(m_{12}-m_{11})\sum_{0\leq j\leq k\leq 1}
\left\{ \sum_{l=0}^{\cpra{i}{jk}}
\cpa{c}{\lambda }{i;jk;l}{M\cgpt{0}{0}{1}} 
f\bkt{M\gpt{}{-\me_{i}}{}{0}{-k}{1-j} [-l]} \right\} 
\otimes f\gpt{1}{0}{0}{k}{0}{j} \\
&\hphantom{=}+(m_{23}-m_{22})\chi_+(M)
\sum_{0\leq j\leq k\leq 1}
\left\{ \sum_{l=0}^{\cpra{i}{jk}}
\cpa{c}{\lambda }{i;jk;l}{M\cgpt{0}{0}{1}[-1]} 
f\bkt{M\gpt{}{-\me_{i}}{}{0}{-k}{1-j} [-l-1]} \right\} \\
&\hphantom{==}\otimes f\gpt{1}{0}{0}{k}{0}{j} \\
&=\sum_{0\leq j\leq k\leq 1}
\left\{ \sum_{l=0}^{\cpra{i}{jk}+1}
\cpa{A}{\lambda }{12;i;jk;l}{M} 
f\bkt{M\gpt{}{-\me_{i}}{}{0}{-k}{1-j} [-l]} \right\} 
\otimes f\gpt{1}{0}{0}{k}{0}{j} ,
\end{align*}
where 
\begin{align*}
\cpa{A}{\lambda }{12;i;jk;l}{M}=
&(m_{12}-m_{11})\cpa{c}{\lambda }{i;jk;l}{M\cgpt{0}{0}{1}}\\
&+(m_{23}-m_{22})\chi_+(M)
\cpa{c}{\lambda }{i;jk;l-1}{M\cgpt{0}{0}{1}[-1]} .
\end{align*}
Similarly, we obtain the following equations:
\[
i^{\lambda }_{\me_{i}}\bkt{E_{21}f(M)}
=\sum_{0\leq j\leq k\leq 1}
\left\{ \sum_{l=0}^{\cpra{i}{jk}+1}
\cpa{A}{\lambda }{21;i;jk;l}{M} 
f\bkt{M\gpt{}{-\me_{i}}{}{0}{-k}{-1-j} [-l]} \right\} 
\otimes f\gpt{1}{0}{0}{k}{0}{j} ,
\]
where
\begin{align*}
\cpa{A}{\lambda }{21;i;jk;l}{M}=
&(m_{11}-m_{22})\cpa{c}{\lambda }{i;jk;l}{M\cgpt{0}{0}{-1}}\\
&+(m_{12}-m_{23})\chi_-(M)
\cpa{c}{\lambda }{i;jk;l-1}{M\cgpt{0}{0}{-1}[-1]} ,
\end{align*}
\[
i^{\lambda }_{\me_{i}}\bkt{E_{23}f(M)}
=\sum_{0\leq j\leq k\leq 1}
\left\{ \sum_{l=0}^{\cpra{i}{jk}+1}
\cpa{A}{\lambda }{23;i;jk;l}{M} 
f\bkt{M\gpt{}{-\me_{i}}{}{1}{-k}{-j} [-l]} \right\} 
\otimes f\gpt{1}{0}{0}{k}{0}{j} ,
\]
where
\begin{align*}
\cpa{A}{\lambda }{23;i;jk;l}{M}=
&(m_{13}-m_{12})\cpa{c}{\lambda }{i;jk;l}{M\cgpt{1}{0}{0}}\\
&+\{ m_{13}-m_{12}-\delta (M) \} \chi_-(M)
\cpa{c}{\lambda }{i;jk;l-1}{M\cgpt{1}{0}{0}[-1]} ,
\end{align*}
and 
\[
i^{\lambda }_{\me_{i}}\bkt{E_{32}f(M)}
=\sum_{0\leq j\leq k\leq 1}
\left\{ \sum_{l=0}^{\cpra{i}{jk}+1}
\cpa{A}{\lambda }{32;i;jk;l}{M} 
f\bkt{M\gpt{}{-\me_{i}}{}{0}{-1-k}{-j} [-l]} \right\} 
\otimes f\gpt{1}{0}{0}{k}{0}{j} ,
\]
where
\begin{align*}
\cpa{A}{\lambda }{32;i;jk;l}{M}=
&(m_{22}-m_{33})\cpa{c}{\lambda }{i;jk;l}{M\cgpt{0}{-1}{0}}\\
&+\{ m_{22}-m_{33}+\delta (M) \} \chi_+(M)
\cpa{c}{\lambda }{i;jk;l-1}{M\cgpt{0}{-1}{0}[-1]} .
\end{align*}

Next, we compute the image of the
the monomial basis $f(M)$ by $E_{mn}\circ i^{\lambda }_{\me_{i}}$ as follows: 
\begin{align*}
&E_{12}\circ i^{\lambda }_{\me_{i}} (f(M)) 
=\sum_{0\leq j\leq k\leq 1}
\sum_{l=0}^{\cpra{i}{jk}}
\cpa{c}{\lambda }{i;jk;l}{M} 
E_{12}\left\{ f\bkt{M\gpt{}{-\me_i}{}{0}{-k}{-j} [-l]}  
\otimes f\gpt{1}{0}{0}{k}{0}{j} \right\}\\
&=\sum_{0\leq j\leq k\leq 1}
\Biggl\{ \sum_{l=0}^{\cpra{i}{jk}}
(m_{12}-m_{11}-l+j)\cpa{c}{\lambda }{i;jk;l}{M} 
f\bkt{M\gpt{}{-\me_i}{}{0}{-k}{1-j} [-l]} \\
&\hphantom{==}+(m_{23}-m_{22}+k-l-\delta_{2i})
\chi_+\bkt{M\gpt{}{-\me_i}{}{0}{-k}{-j} [-l]}\\
&\hphantom{===}\times \cpa{c}{\lambda }{i;jk;l}{M} 
f\bkt{M\gpt{}{-\me_i}{}{0}{-k}{1-j} [-l-1]}  
 \Biggl\}\otimes f\gpt{1}{0}{0}{k}{0}{j}\\
&\hphantom{=}+\left\{\sum_{l=0}^{2}
\cpa{c}{\lambda }{i;01;l}{M} 
 f\bkt{M\gpt{}{-\me_i}{}{0}{-1}{0} [-l]}\right\}  
\otimes f\gpt{1}{0}{0}{1}{0}{1} \\
&=\sum_{0\leq j\leq k\leq 1}
\left\{ \sum_{l=0}^{\cpra{i}{jk}+1}
\cpa{B}{\lambda }{12;i;jk;l}{M} 
f\bkt{M\gpt{}{-\me_i}{}{0}{-k}{1-j} [-l]} \right\} 
\otimes f\gpt{1}{0}{0}{k}{0}{j} ,
\end{align*}
where
\begin{align*}
\cpa{B}{\lambda }{12;i;jk;l}{M}=
&(m_{12}-m_{11}-l+j)\cpa{c}{\lambda }{i;jk;l}{M}\\
&+(m_{23}-m_{22}+k-l+1-\delta_{2i})
\chi_+^{(k-j-\delta_{2i})}(M)
\cpa{c}{\lambda }{i;jk;l-1}{M}\\
&+\left\{\begin{array}{ll}
\cpa{c}{\lambda }{i;01;l}{M} &\text{if }\ (j,k)=(1,1),\\
0 & \text{otherwise}.
\end{array}\right. 
\end{align*}
Here we use the relation \ref{eqn:G-fct004} in Lemma \ref{lem:eqn_G-pat_fct}.

Similarly, we obtain the following equations:
\begin{align*}
&E_{21}\circ i^{\lambda }_{\me_{i}} (f(M)) 
=\sum_{0\leq j\leq k\leq 1}
\left\{ \sum_{l=0}^{\cpra{i}{jk}+1}
\cpa{B}{\lambda }{i;jk;l}{M} 
f\bkt{M\gpt{}{-\me_i}{}{0}{-k}{-1-j} [-l]} \right\} 
\otimes f\gpt{1}{0}{0}{k}{0}{j} ,
\end{align*}
where
\begin{align*}
\cpa{B}{\lambda }{21;i;jk;l}{M}=
&(m_{11}-m_{22}+k-l-j)\cpa{c}{\lambda }{i;jk;l}{M}\\
&+(m_{12}-m_{23}-l+1+\delta_{2i})
\chi_-^{(-k+j+\delta_{2i})}(M)
\cpa{c}{\lambda }{i;jk;l-1}{M}\\
&+\left\{\begin{array}{ll}
\cpa{c}{\lambda }{i;11;l}{M} &\text{if }\ (j,k)=(0,1),\\
0 & \text{otherwise},
\end{array}\right. 
\end{align*}

\begin{align*}
&E_{23}\circ i^{\lambda }_{\me_{i}} (f(M)) 
=\sum_{0\leq j\leq k\leq 1}
\left\{ \sum_{l=0}^{\cpra{i}{jk}+1}
\cpa{B}{\lambda }{23;i;jk;l}{M} 
f\bkt{M\gpt{}{-\me_i}{}{1}{-k}{-j} [-l]} \right\} 
\otimes f\gpt{1}{0}{0}{k}{0}{j} ,
\end{align*}
where
\begin{align*}
\cpa{B}{\lambda }{23;i;jk;l}{M}=
&(m_{13}-m_{12}+l-\delta_{1i})\cpa{c}{\lambda }{i;jk;l}{M}\\
&+\{ m_{13}-m_{12}-\delta (M)+k-j+l-1-\delta_{1i}-\delta_{2i} \} \\
&\hphantom{=}\times \chi_-^{(-k+j+\delta_{2i})}(M)
\cpa{c}{\lambda }{i;jk;l-1}{M}\\
&+\left\{\begin{array}{ll}
\cpa{c}{\lambda }{i;00;l-1}{M} &\text{if }\ (j,k)=(0,1),\\
0 & \text{otherwise},
\end{array}\right. 
\end{align*}
and

\begin{align*}
&E_{32}\circ i^{\lambda }_{\me_{i}} (f(M)) 
=\sum_{0\leq j\leq k\leq 1}
\left\{ \sum_{l=0}^{\cpra{i}{jk}+1}
\cpa{B}{\lambda }{32;i;jk;l}{M} 
f\bkt{M\gpt{}{-\me_i}{}{0\hs }{-1-k}{-j} [-l]} \right\} 
\otimes f\gpt{1}{0}{0}{k}{0}{j} ,
\end{align*}
where
\begin{align*}
\cpa{B}{\lambda }{32;i;jk;l}{M}=
&(m_{22}-m_{33}-k+l+\delta_{3i})\cpa{c}{\lambda }{i;jk;l}{M}\\
&+\{ m_{22}-m_{33}+\delta (M)+j-2k+l-1+\delta_{2i}+\delta_{3i} \} \\
&\hphantom{=}\times \chi_+^{(k-j-\delta_{2i})}(M)
\cpa{c}{\lambda }{i;jk;l-1}{M}\\
&+\left\{\begin{array}{ll}
\cpa{c}{\lambda }{i;01;l}{M} &\text{if }\ (j,k)=(0,0),\\
0 & \text{otherwise}.
\end{array}\right. 
\end{align*}
Here we use the relations in Lemma \ref{lem:eqn_G-pat_fct} (ii). 

In order to complete the proof, we check the equations 
\begin{equation}
\cpa{A}{\lambda }{mn;i;jk;l}{M}=\cpa{B}{\lambda }{mn;i;jk;l}{M}
\label{eqn:pf_clebsh(1,0,0)}
\end{equation}
by direct computation. 

First, we check the equations (\ref{eqn:pf_clebsh(1,0,0)}) 
 for $i=3$, that is, 
the case of formula $3$.

\noindent $\bullet $ the proof of 
$E_{12}\circ i^{\lambda }_{\me_{3}}
=i^{\lambda }_{\me_{3}}\circ E_{12}$. 

We have 
\begin{align*}
\cpa{A}{\lambda }{12;3;11;0}{M}=&m_{12}-m_{11},\\
\cpa{A}{\lambda }{12;3;11;1}{M}=&(m_{23}-m_{22})\chi_+(M),\\
\cpa{A}{\lambda }{12;3;01;0}{M}=&-(m_{12}-m_{11}),\\
\cpa{A}{\lambda }{12;3;01;1}{M}=&-(m_{12}-m_{11})\chi_+\bkt{M\cgpt{0}{0}{1}}
	-(m_{23}-m_{22})\chi_+(M),\\
\cpa{A}{\lambda }{12;3;01;2}{M}=&-(m_{23}-m_{22})\chi_+(M)
	\chi_+\bkt{M\cgpt{0}{0}{1}[-1]},\\
\cpa{A}{\lambda }{12;3;00;0}{M}=&m_{12}-m_{11},\\ 
\cpa{A}{\lambda }{12;3;00;1}{M}=&(m_{23}-m_{22})\chi_+(M),
\end{align*}
and
\begin{align*}
\cpa{B}{\lambda }{12;3;11;0}{M}=&(m_{12}-m_{11}+1)-1,\\
\cpa{B}{\lambda }{12;3;11;1}{M}=&(m_{23}-m_{22}+1) \chi_+(M) -\chi_+ (M),\\
\cpa{B}{\lambda }{12;3;01;0}{M}=&-(m_{12}-m_{11}),\\ 
\cpa{B}{\lambda }{12;3;01;1}{M}=&-(m_{12}-m_{11}-1)\chi_+ (M)
	-(m_{23}-m_{22}+1) \chid{+}{1}(M),\\ 
\cpa{B}{\lambda }{12;3;01;2}{M}=&-(m_{23}-m_{22}) \chid{+}{1}(M)\chi_+ (M),\\ 
\cpa{B}{\lambda }{12;3;00;0}{M}=&m_{12}-m_{11},\\ 
\cpa{B}{\lambda }{12;3;00;1}{M}=&(m_{23}-m_{22}) \chi_+(M).
\end{align*}

By direct computation, we have
\begin{align*}
\cpa{A}{\lambda }{12;3;01;2}{M}-\cpa{B}{\lambda }{12;3;01;2}{M}
&=(m_{23}-m_{22})\left(\chid{+}{1}(M)\chi_+ (M)
	-\chi_+(M)\chi_+\bkt{M\cgpt{0}{0}{1}[-1]}\right)\\
&=(m_{23}-m_{22})(\chid{+}{1}(M)-\chid{+}{1}(M))=0,\\[3mm]
\cpa{A}{\lambda }{12;3;01;1}{M}-\cpa{B}{\lambda }{12;3;01;1}{M}
&=-(m_{12}-m_{11})\chi_+\bkt{M\cgpt{0}{0}{1}}-(m_{23}-m_{22})\chi_+(M)\\
&\hphantom{=}+(m_{12}-m_{11}-1)\chi_+ (M)+(m_{23}-m_{22}+1) \chid{+}{1}(M)\\
&=(\delta (M)-1)\chi_+(M)-(\delta (M)-1)\chid{+}{1}(M)=0.
\end{align*}
Hence we obtain the equations (\ref{eqn:pf_clebsh(1,0,0)}) 
 for $(j,k,l)=(0,1,1)$ and $(0,1,2)$. 
Here we use the relations (\ref{eqn:G-fct004}) and (\ref{eqn:G-fct006}). 

It is trivial that the equations (\ref{eqn:pf_clebsh(1,0,0)}) 
 hold for other $(j,k,l)$. \\

\noindent $\bullet $ the proof of 
$E_{21}\circ i^{\lambda }_{\me_{3}}
=i^{\lambda }_{\me_{3}}\circ E_{21}$. 

We have
\begin{align*}
\cpa{A}{\lambda }{21;3;11;0}{M}=&m_{11}-m_{22},\\
\cpa{A}{\lambda }{21;3;11;1}{M}=&(m_{12}-m_{23})\chi_-(M) ,\\
\cpa{A}{\lambda }{21;3;01;0}{M}=&-(m_{11}-m_{22}),\\ 
\cpa{A}{\lambda }{21;3;01;1}{M}=&-(m_{11}-m_{22})
	\chi_+ \bkt{M\cgpt{0}{0}{-1}}-(m_{12}-m_{23})\chi_-(M),\\
\cpa{A}{\lambda }{21;3;01;2}{M}=&-(m_{12}-m_{23})\chi_-(M) 
	\chi_+\bkt{M\cgpt{0}{0}{-1}[-1]},\\
\cpa{A}{\lambda }{21;3;00;0}{M}=&m_{11}-m_{22},\\ 
\cpa{A}{\lambda }{21;3;00;1}{M}=&(m_{12}-m_{23})\chi_-(M),
\end{align*}
and
\begin{align*}
\cpa{B}{\lambda }{21;3;11;0}{M}=&m_{11}-m_{22},\\ 
\cpa{B}{\lambda }{21;3;11;1}{M}=&(m_{12}-m_{23}) \chi_-(M),\\ 
\cpa{B}{\lambda }{21;3;01;0}{M}=&-(m_{11}-m_{22}+1)+ 1,\\
\cpa{B}{\lambda }{21;3;01;1}{M}=&-(m_{11}-m_{22}) \chi_+ (M)
	-(m_{12}-m_{23}) \chid{-}{-1}(M) ,\\
\cpa{B}{\lambda }{21;3;01;2}{M}=&-(m_{12}-m_{23}-1) 
	\chid{-}{-1}(M) \chi_+ (M),\\ 
\cpa{B}{\lambda }{21;3;00;0}{M}=&m_{11}-m_{22},\\ 
\cpa{B}{\lambda }{21;3;00;1}{M}=&(m_{12}-m_{23}) \chi_-(M) .
\end{align*}

We have
\begin{align*}
\cpa{A}{\lambda }{21;3;01;1}{M}
&=-(m_{11}-m_{22})\chid{+}{-1}(M)-(m_{12}-m_{23})(1-\chid{+}{-1}(M))\\
&=-m_{12}+m_{23}+\delta (M)\chid{+}{-1}(M)\\
&=-m_{12}+m_{23}+\delta (M)\chi_+(M),\\
\cpa{B}{\lambda }{21;3;01;1}{M}
&=-(m_{11}-m_{22})\chi_+ (M)-(m_{12}-m_{23}) (1-\chi_+ (M))\\
&=-m_{12}+m_{23}+\delta (M).
\end{align*}
Hence we obtain 
$\cpa{A}{\lambda }{21;3;01;1}{M}=\cpa{B}{\lambda }{21;3;01;1}{M}$.
Here we use the relations (\ref{eqn:G-fct004}), (\ref{eqn:G-fct006}) 
and (\ref{eqn:G-fct008}). 

We have
\begin{align*}
\cpa{A}{\lambda }{21;3;01;2}{M}
&=-(m_{12}-m_{23})\chi_-(M) \chid{+}{-1}(M)=0,\\
\cpa{B}{\lambda }{21;3;01;2}{M}
&=-(m_{12}-m_{23}-1) \chid{-}{-1}(M) \chi_+ (M)=0.
\end{align*}
Hence we obtain 
$\cpa{A}{\lambda }{21;3;01;2}{M}=\cpa{B}{\lambda }{21;3;01;2}{M}$.
Here we use the relations (\ref{eqn:G-fct004}) 
and (\ref{eqn:G-fct009}). 

It is trivial that the equations (\ref{eqn:pf_clebsh(1,0,0)}) 
 hold for other $(j,k,l)$. \\

\noindent $\bullet $ the proof of 
$E_{23}\circ i^{\lambda }_{\me_{3}}
=i^{\lambda }_{\me_{3}}\circ E_{23}$. 

We have
\begin{align*}
\cpa{A}{\lambda }{23;3;11;0}{M}=&(m_{13}-m_{12}),\\
\cpa{A}{\lambda }{23;3;11;1}{M}=&\{ m_{13}-m_{12}-\delta (M) \} \chi_-(M),\\
\cpa{A}{\lambda }{23;3;01;0}{M}=&-(m_{13}-m_{12}),\\ 
\cpa{A}{\lambda }{23;3;01;1}{M}=&-(m_{13}-m_{12})\chi_+\bkt{M\cgpt{1}{0}{0}}
	-\{ m_{13}-m_{12}-\delta (M) \} \chi_-(M),\\
\cpa{A}{\lambda }{23;3;01;2}{M}=&-\{ m_{13}-m_{12}-\delta (M) \} 
	\chi_-(M)\chi_+\bkt{M\cgpt{1}{0}{0}[-1]},\\
\cpa{A}{\lambda }{23;3;00;0}{M}=&m_{13}-m_{12},\\ 
\cpa{A}{\lambda }{23;3;00;1}{M}=&\{ m_{13}-m_{12}-\delta (M) \} \chi_-(M), 
\end{align*}
and
\begin{align*}
\cpa{B}{\lambda }{23;3;11;0}{M}=&m_{13}-m_{12},\\ 
\cpa{B}{\lambda }{23;3;11;1}{M}=&\{ m_{13}-m_{12}-\delta (M) \} \chi_-(M),\\ 
\cpa{B}{\lambda }{23;3;01;0}{M}=&-(m_{13}-m_{12}),\\
\cpa{B}{\lambda }{23;3;01;1}{M}=&-(m_{13}-m_{12}+1)\chi_+ (M)
	-\{ m_{13}-m_{12}-\delta (M)+1 \} \chid{-}{-1}(M)+1,\\
\cpa{B}{\lambda }{23;3;01;2}{M}=&-\{ m_{13}-m_{12}-\delta (M)+2\}
	\chid{-}{-1}(M)\chi_+ (M),\\ 
\cpa{B}{\lambda }{23;3;00;0}{M}=&m_{13}-m_{12},\\ 
\cpa{B}{\lambda }{23;3;00;1}{M}=&\{ m_{13}-m_{12}-\delta (M) \} \chi_-(M). 
\end{align*}

We have
\begin{align*}
\cpa{A}{\lambda }{23;3;01;1}{M}
&=-(m_{13}-m_{12})(\chid{+}{-1}(M)+\chi_-(M))+\delta (M)\chi_-(M)\\
&=-m_{13}+m_{12}+\delta (M)\chi_-(M),\\
\cpa{B}{\lambda }{23;3;01;1}{M}
&=-(m_{13}-m_{12}+1)(\chi_+(M)+\chid{-}{-1}(M))+1+\delta (M)\chid{-}{-1}(M)\\
&=-m_{13}+m_{12}+\delta (M)\chi_-(M).
\end{align*}
Hence $\cpa{A}{\lambda }{23;3;01;1}{M}=\cpa{B}{\lambda }{23;3;01;1}{M}$.
Here we use the relations (\ref{eqn:G-fct004}) and (\ref{eqn:G-fct008}). 

We have
\begin{align*}
\cpa{A}{\lambda }{23;3;01;2}{M}&=-\{ m_{13}-m_{12}-\delta (M) \} 
	\chi_-(M)\chid{+}{-1}(M)=0,\\
\cpa{B}{\lambda }{23;3;01;2}{M}&=-\{ m_{13}-m_{12}-\delta (M)+2\}
	\chid{-}{-1}(M)\chi_+ (M)=0.
\end{align*}
Hence $\cpa{A}{\lambda }{23;3;01;2}{M}=\cpa{B}{\lambda }{23;3;01;2}{M}$. 
Here we use the relations (\ref{eqn:G-fct004}) and (\ref{eqn:G-fct009}). 

It is trivial that the equations (\ref{eqn:pf_clebsh(1,0,0)}) 
hold for other $(j,k,l)$. \\

\noindent $\bullet $ the proof of 
$E_{32}\circ i^{\lambda }_{\me_{3}}
=i^{\lambda }_{\me_{3}}\circ E_{32}$. 
\begin{align*}
\cpa{A}{\lambda }{32;3;11;0}{M}=&m_{22}-m_{33},\\
\cpa{A}{\lambda }{32;3;11;1}{M}=&\{ m_{22}-m_{33}+\delta (M) \} \chi_+(M),\\
\cpa{A}{\lambda }{32;3;01;0}{M}=&-(m_{22}-m_{33}),\\ 
\cpa{A}{\lambda }{32;3;01;1}{M}=&-(m_{22}-m_{33})\chi_+\bkt{M\cgpt{0}{-1}{0}}
	-\{ m_{22}-m_{33}+\delta (M) \} \chi_+(M),\\
\cpa{A}{\lambda }{32;3;01;2}{M}=&-\{ m_{22}-m_{33}+\delta (M) \} 
	\chi_+(M) \chi_+\bkt{M\cgpt{0}{-1}{0}[-1]},\\
\cpa{A}{\lambda }{32;3;00;0}{M}=&(m_{22}-m_{33}),\\ 
\cpa{A}{\lambda }{32;3;00;1}{M}=&\{ m_{22}-m_{33}+\delta (M) \} \chi_+(M),
\end{align*}
and
\begin{align*}
\cpa{B}{\lambda }{32;3;11;0}{M}=&m_{22}-m_{33},\\ 
\cpa{B}{\lambda }{32;3;11;1}{M}=&\{ m_{22}-m_{33}+\delta (M)\} \chi_+(M) ,\\
\cpa{B}{\lambda }{32;3;01;0}{M}=&-(m_{22}-m_{33}),\\ 
\cpa{B}{\lambda }{32;3;01;1}{M}=&-(m_{22}-m_{33}+1)\chi_+ (M)
	-\{ m_{22}-m_{33}+\delta (M)-1 \}\chid{+}{1}(M) ,\\ 
\cpa{B}{\lambda }{32;3;01;2}{M}=&-\{ m_{22}-m_{33}+\delta (M)\} 
	\chid{+}{1}(M)\chi_+ (M),\\ 
\cpa{B}{\lambda }{32;3;00;0}{M}=&(m_{22}-m_{33}+1)-1,\\
\cpa{B}{\lambda }{32;3;00;1}{M}=&\{ m_{22}-m_{33}+\delta (M)+1 \} 
	\chi_+(M)-\chi_+ (M).
\end{align*}

We have
\begin{align*}
&\cpa{A}{\lambda }{32;3;01;1}{M}-\cpa{B}{\lambda }{32;3;01;1}{M}\\
&=-(m_{22}-m_{33})\chid{+}{1}(M)-\{ m_{22}-m_{33}+\delta (M) \} \chi_+(M)\\
&\phantom{=}+(m_{22}-m_{33}+1)\chi_+ (M)
	+\{ m_{22}-m_{33}+\delta (M)-1 \}\chid{+}{1}(M) \\
&=-(\delta (M)-1)\chi_+ (M)+(\delta (M)-1)\chid{+}{1}(M)=0.
\end{align*}
Hence we obtain the equation (\ref{eqn:pf_clebsh(1,0,0)}) 
for $(j,k,l)=(0,1,1)$. 
Here we use the relations (\ref{eqn:G-fct004}) and (\ref{eqn:G-fct006}). 

We have
\begin{align*}
&\cpa{A}{\lambda }{32;3;01;2}{M}-\cpa{B}{\lambda }{32;3;01;2}{M}\\
&=-\{ m_{22}-m_{33}+\delta (M) \} \left(
\chi_+(M) \chi_+\bkt{M\cgpt{0}{-1}{0}[-1]}-\chid{+}{1}(M)\chi_+ (M)\right)\\
&=-\{ m_{22}-m_{33}+\delta (M) \} 
\left(\chid{+}{1}(M)-\chid{+}{1}(M)\right)=0.
\end{align*}
Hence we obtain the equation (\ref{eqn:pf_clebsh(1,0,0)}) 
for $(j,k,l)=(0,1,2)$. 
Here we use the relations (\ref{eqn:G-fct004}) and (\ref{eqn:G-fct010}). 

It is trivial that the equations (\ref{eqn:pf_clebsh(1,0,0)}) 
 hold for other $(j,k,l)$. \\
\\
In these computations, we use the relations in 
Lemma \ref{lem:eqn_G-pat_fct}, frequently. 
So we use these relations without notice in the proof of formula 1 and 
formula 2. 
Next, we check the equations \ref{eqn:pf_clebsh(1,0,0)} for $i=2$, that is, 
the case of formula $2$.

\noindent $\bullet $ the proof of 
$E_{12}\circ i^{\lambda }_{\me_{2}}
=i^{\lambda }_{\me_{2}}\circ E_{12}$. 

We have
\begin{align*}
\cpa{A}{\lambda }{12;2;11;0}{M}=&(m_{12}-m_{11})(m_{22} -m_{33}),\\
\cpa{A}{\lambda }{12;2;11;1}{M}= 
&-(m_{12}-m_{11})\bar{D}\bkt{M\cgpt{0}{0}{1}}\chi_-\bkt{M\cgpt{0}{0}{1}}\\ 
&+(m_{23}-m_{22})\chi_+(M)(m_{22} -m_{33}+1),\\
\cpa{A}{\lambda }{12;2;11;2}{M}=&-(m_{23}-m_{22})\chi_+(M)
\bar{D}\bkt{M\cgpt{0}{0}{1}[-1]}\chi_-\bkt{M\cgpt{0}{0}{1}[-1]},\\
\cpa{A}{\lambda }{12;2;01;0}{M}=&-(m_{12}-m_{11})(m_{22} -m_{33}),\\ 
\cpa{A}{\lambda }{12;2;01;1}{M}= 
&(m_{12}-m_{11})\bar{C_1}\bkt{M\cgpt{0}{0}{1}}
-(m_{23}-m_{22})\chi_+(M)(m_{22} -m_{33}+1),\\
\cpa{A}{\lambda }{12;2;01;2}{M}= 
&(m_{23}-m_{22})\chi_+(M)\bar{C_1}\bkt{M\cgpt{0}{0}{1}[-1]},\\
\cpa{A}{\lambda }{12;2;00;0}{M}=&-(m_{12}-m_{11})(m_{23} -m_{22}),\\ 
\cpa{A}{\lambda }{12;2;00;1}{M}= 
&-(m_{12}-m_{11})\bar{C_1}\bkt{M\cgpt{0}{0}{1}}\chi_-\bkt{M\cgpt{0}{0}{1}}\\ 
&-(m_{23}-m_{22})\chi_+(M)(m_{23} -m_{22}-1),\\
\cpa{A}{\lambda }{12;2;00;2}{M}= 
&-(m_{23}-m_{22})\chi_+(M) 
\bar{C_1}\bkt{M\cgpt{0}{0}{1}[-1]}\chi_-\bkt{M\cgpt{0}{0}{1}[-1]},
\end{align*}
and
\begin{align*}
\cpa{B}{\lambda }{12;2;11;0}{M}= 
&(m_{12}-m_{11}+1)(m_{22} -m_{33})-(m_{22} -m_{33}),\\
\cpa{B}{\lambda }{12;2;11;1}{M}= 
&-(m_{12}-m_{11}) \bar{D} (M)\chi_- (M)\\ 
&+(m_{23}-m_{22}) \chid{+}{-1}(M)(m_{22} -m_{33})+ \bar{C_1} (M),\\
\cpa{B}{\lambda }{12;2;11;2}{M}= 
&-(m_{23}-m_{22}-1)\chid{+}{-1}(M)\bar{D} (M)\chi_- (M),\\ 
\cpa{B}{\lambda }{12;2;01;0}{M}=&-(m_{12}-m_{11})(m_{22} -m_{33}),\\ 
\cpa{B}{\lambda }{12;2;01;1}{M}= 
&(m_{12}-m_{11}-1)\bar{C_1} (M)-(m_{23}-m_{22}) \chi_+(M)(m_{22} -m_{33}),\\ 
\cpa{B}{\lambda }{12;2;01;2}{M}=&(m_{23}-m_{22}-1) \chi_+(M)\bar{C_1} (M),\\ 
\cpa{B}{\lambda }{12;2;00;0}{M}=&-(m_{12}-m_{11})(m_{23} -m_{22}),\\ 
\cpa{B}{\lambda }{12;2;00;1}{M}=&-(m_{12}-m_{11}-1)\bar{C_1} (M)\chi_- (M)\\ 
&-(m_{23}-m_{22}-1) \chid{+}{-1}(M)(m_{23} -m_{22}),\\ 
\cpa{B}{\lambda }{12;2;00;2}{M}= 
&-(m_{23}-m_{22}-2) \chid{+}{-1}(M)\bar{C_1} (M)\chi_- (M).
\end{align*}

We have
\begin{align*}
\cpa{A}{\lambda }{12;2;11;1}{M}= 
&-(m_{12}-m_{11})(-m_{22} +m_{33} +\delta (M)-1)(1-\chi_+(M))\\ 
&+(m_{23}-m_{22})\chi_+(M)(m_{22} -m_{33}+1)\\
=&-(m_{12}-m_{11})(\bar{D}(M)-1)-(m_{12}-m_{11}+m_{22} -m_{33}+1)
\delta (M)\chi_+(M),\\[3mm]
\cpa{B}{\lambda }{12;2;11;1}{M}= 
&-(m_{12}-m_{11})(-m_{22}+m_{33}+\delta (M))(1-\chid{+}{-1}(M))\\ 
&+(m_{23}-m_{22}) \chid{+}{-1}(M)(m_{22} -m_{33})
+(m_{12}-m_{11}-\delta (M)\chi_+(M))\\
=&-(m_{12}-m_{11})(\bar{D}(M)-1)-(m_{12}-m_{11}+m_{22} -m_{33})
\delta (M)\chid{+}{-1}(M)\\
&-\delta (M)\chi_+(M)\\
=&-(m_{12}-m_{11})(\bar{D}(M)-1)-(m_{12}-m_{11}+m_{22} -m_{33}+1)
\delta (M)\chid{+}{-1}(M).
\end{align*}
Hence we obtain 
$\cpa{A}{\lambda }{12;2;11;1}{M}=\cpa{B}{\lambda }{12;2;11;1}{M}$.

We have
\begin{align*}
\cpa{A}{\lambda }{12;2;11;2}{M}=&-(m_{23}-m_{22})
\bar{D}\bkt{M\cgpt{0}{0}{1}[-1]}\chi_+(M)\chid{-}{-1}(M)=0,\\
\cpa{B}{\lambda }{12;2;11;2}{M}= 
&-(m_{23}-m_{22}-1)\bar{D} (M)\chid{+}{-1}(M)\chi_- (M)=0.
\end{align*}
Hence we obtain 
$\cpa{A}{\lambda }{12;2;11;2}{M}=\cpa{B}{\lambda }{12;2;11;2}{M}$.

We have
\begin{align*}
\cpa{A}{\lambda }{12;2;01;1}{M}= 
&(m_{12}-m_{11})\{m_{12}-m_{11}-1-(\delta (M)-1)\chi_+(M)\}\\
&-(m_{23}-m_{22})\chi_+(M)(m_{22} -m_{33}+1)\\
=&(m_{12}-m_{11})(m_{12}-m_{11}-1)\\
&-\chi_+(M)\{(m_{23}-m_{22})(m_{22} -m_{33}+1)+
(m_{12}-m_{11})(\delta (M)-1)\},\\[3mm]
\cpa{B}{\lambda }{12;2;01;1}{M}=
&(m_{12}-m_{11}-1)(m_{12}-m_{11}-\delta (M)\chi_+(M))\\
&-(m_{23}-m_{22}) \chi_+(M)(m_{22} -m_{33})\\
=& (m_{12}-m_{11})(m_{12}-m_{11}-1)\\
&-\chi_+(M)((m_{23}-m_{22})(m_{22} -m_{33})+(m_{12}-m_{11}-1)\delta (M)).
\end{align*}
Therefore
\begin{align*}
\cpa{A}{\lambda }{12;2;01;1}{M}-\cpa{B}{\lambda }{12;2;01;1}{M}
=-\chi_+(M)\{(m_{23}-m_{22})-(m_{12}-m_{11})+\delta (M)\}=0.
\end{align*}
Hence we obtain 
$\cpa{A}{\lambda }{12;2;01;1}{M}=\cpa{B}{\lambda }{12;2;01;1}{M}$.

We have
\begin{align*}
\cpa{A}{\lambda }{12;2;01;2}{M}= 
&(m_{23}-m_{22})\chi_+(M)\{m_{23}-m_{22}-1+(\delta (M)-1)\chid{-}{-1}(M)\}\\
=&(m_{23}-m_{22})(m_{23}-m_{22}-1)\chi_+(M),\\
\cpa{B}{\lambda }{12;2;01;2}{M}=& (m_{23}-m_{22}-1)(m_{23}-m_{22})\chi_+(M).
\end{align*}
Hence we obtain 
$\cpa{A}{\lambda }{12;2;01;2}{M}=\cpa{B}{\lambda }{12;2;01;2}{M}$.

We have
\begin{align*}
\cpa{A}{\lambda }{12;2;00;1}{M}= 
&-(m_{12}-m_{11})(m_{12}-m_{11}-1)\chid{-}{-1}(M)\\ 
&-(m_{23}-m_{22})(m_{23} -m_{22}-1)\chi_+(M)\\
=&-(\bar{C_1}(M))(\bar{C_1}(M)-1)(\chid{-}{-1}(M)+\chi_+(M))\\
=&-(\bar{C_1}(M))(\bar{C_1}(M)-1),\\[3mm]
\cpa{B}{\lambda }{12;2;00;1}{M}=&-(m_{12}-m_{11}-1)(m_{12}-m_{11})\chi_- (M)\\ 
&-(m_{23} -m_{22})(m_{23}-m_{22}-1) \chid{+}{-1}(M)\\
=&-(\bar{C_1}(M))(\bar{C_1}(M)-1)(\chi_-(M)+\chid{+}{-1}(M))\\
=&-(\bar{C_1}(M))(\bar{C_1}(M)-1).
\end{align*}
Hence we obtain 
$\cpa{A}{\lambda }{12;2;00;1}{M}=\cpa{B}{\lambda }{12;2;00;1}{M}$.

We have
\begin{align*}
\cpa{A}{\lambda }{12;2;00;2}{M}= 
&-(m_{23}-m_{22}) \bar{C_1}\bkt{M\cgpt{0}{0}{1}[-1]}\chi_+(M)\chid{-}{-1}(M)
=0,\\
\cpa{B}{\lambda }{12;2;00;2}{M}= 
&-(m_{23}-m_{22}-2) \bar{C_1} (M)\chid{+}{-1}(M)\chi_- (M)=0.
\end{align*}
Hence we obtain 
$\cpa{A}{\lambda }{12;2;00;2}{M}=\cpa{B}{\lambda }{12;2;00;2}{M}$. 

It is trivial that the equations (\ref{eqn:pf_clebsh(1,0,0)}) 
 hold for $(j,k,l)=(1,1,0),\ (0,1,0)$ and $(0,0,0)$. \\

\noindent $\bullet $ the proof of 
$E_{21}\circ i^{\lambda }_{\me_{2}}
=i^{\lambda }_{\me_{2}}\circ E_{21}$. 

We have
\begin{align*}
\cpa{A}{\lambda }{21;2;11;0}{M}=&(m_{11}-m_{22})(m_{22} -m_{33})\\
\cpa{A}{\lambda }{21;2;11;1}{M}=&-(m_{11}-m_{22})
	\bar{D}\bkt{M\cgpt{0}{0}{-1}}\chi_-\bkt{M\cgpt{0}{0}{-1}}\\ 
	&+(m_{12}-m_{23})\chi_-(M)(m_{22} -m_{33}+1),\\
\cpa{A}{\lambda }{21;2;11;2}{M}=&-(m_{12}-m_{23})\chi_-(M) 
	\bar{D}\bkt{M\cgpt{0}{0}{-1}[-1]}\chi_-\bkt{M\cgpt{0}{0}{-1}[-1]},\\
\cpa{A}{\lambda }{21;2;01;0}{M}=&-(m_{11}-m_{22})(m_{22} -m_{33}),\\ 
\cpa{A}{\lambda }{21;2;01;1}{M}=&(m_{11}-m_{22})\bar{C_1}\bkt{M\cgpt{0}{0}{-1}}
	-(m_{12}-m_{23})\chi_-(M)(m_{22} -m_{33}+1),\\
\cpa{A}{\lambda }{21;2;01;2}{M}=
	&(m_{12}-m_{23})\chi_-(M)\bar{C_1}\bkt{M\cgpt{0}{0}{-1}[-1]},\\
\cpa{A}{\lambda }{21;2;00;0}{M}=&-(m_{11}-m_{22}) (m_{23} -m_{22}),\\ 
\cpa{A}{\lambda }{21;2;00;1}{M}=&-(m_{11}-m_{22})
	\bar{C_1}\bkt{M\cgpt{0}{0}{-1}}\chi_-\bkt{M\cgpt{0}{0}{-1}}\\ 
	&-(m_{12}-m_{23})\chi_-(M)(m_{23} -m_{22}-1),\\
\cpa{A}{\lambda }{21;2;00;2}{M}=&-(m_{12}-m_{23})\chi_-(M) 
	\bar{C_1}\bkt{M\cgpt{0}{0}{-1}[-1]}\chi_-\bkt{M\cgpt{0}{0}{-1}[-1]},
\end{align*}
and
\begin{align*}
\cpa{B}{\lambda }{21;2;11;0}{M}=&(m_{11}-m_{22})(m_{22} -m_{33}),\\ 
\cpa{B}{\lambda }{21;2;11;1}{M}=&-(m_{11}-m_{22}-1)\bar{D} (M)\chi_- (M)\\ 
	&+(m_{12}-m_{23}+1) \chid{-}{1}(M)(m_{22} -m_{33}),\\ 
\cpa{B}{\lambda }{21;2;11;2}{M}=
	&-(m_{12}-m_{23}) \chid{-}{1}(M)\bar{D} (M)\chi_- (M),\\ 
\cpa{B}{\lambda }{21;2;01;0}{M}=
	&-(m_{11}-m_{22}+1)(m_{22} -m_{33})+ (m_{22} -m_{33}),\\
\cpa{B}{\lambda }{21;2;01;1}{M}=&(m_{11}-m_{22})\bar{C_1} (M)\\ 
	&-(m_{12}-m_{23}+1) \chi_-(M)(m_{22} -m_{33})-\bar{D} (M)\chi_- (M),\\
\cpa{B}{\lambda }{21;2;01;2}{M}=&(m_{12}-m_{23}) \chi_-(M)\bar{C_1} (M),\\ 
\cpa{B}{\lambda }{21;2;00;0}{M}=&-(m_{11}-m_{22})(m_{23} -m_{22}),\\ 
\cpa{B}{\lambda }{21;2;00;1}{M}=&-(m_{11}-m_{22}-1)\bar{C_1} (M)\chi_- (M)\\ 
	&-(m_{12}-m_{23}+1) \chid{-}{1}(M)(m_{23} -m_{22}),\\ 
\cpa{B}{\lambda }{21;2;00;2}{M}=
	&-(m_{12}-m_{23}) \chid{-}{1}(M)\bar{C_1}(M)\chi_-(M).
\end{align*}

We have
\begin{align*}
\cpa{A}{\lambda }{21;2;11;1}{M}=&-(m_{11}-m_{22})
	(-m_{22}+m_{33}+\delta (M)+1)\chid{-}{1}(M)\\ 
	&+(m_{12}-m_{23})\chi_-(M)(m_{22} -m_{33}+1)\\
=&-(m_{11}-m_{22})(\delta (M)+1)\chid{-}{1}(M)+(m_{12}-m_{23})\chi_-(M)\\
&+(m_{22} -m_{33})\{(m_{11}-m_{22})\chid{-}{1}(M)+(m_{12}-m_{23})\chi_-(M)\}\\
=&-(m_{11}-m_{22}-1)\delta (M)\chi_-(M)\\
&+(m_{22} -m_{33})\{(m_{11}-m_{22})\chid{-}{1}(M)+(m_{12}-m_{23})\chi_-(M)\},
\\[3mm]
\cpa{B}{\lambda }{21;2;11;1}{M}=&-(m_{11}-m_{22}-1)
	(-m_{22}+m_{33}+\delta (M))\chi_- (M)\\ 
	&+(m_{12}-m_{23}+1) \chid{-}{1}(M)(m_{22} -m_{33})\\
=&-(m_{11}-m_{22}-1)\delta (M)\chi_-(M)\\
&+(m_{22} -m_{33})\{(m_{11}-m_{22}-1)\chi_- (M)
+(m_{12}-m_{23}+1) \chid{-}{1}(M)\}.
\end{align*}
Therefore
\begin{align*}
&\cpa{A}{\lambda }{21;2;11;1}{M}-\cpa{B}{\lambda }{21;2;11;1}{M}\\
&=(m_{22} -m_{33})\{(\delta (M)+1)\chi_- (M)-(\delta (M)+1)\chid{-}{1}(M)\}=0.
\end{align*}
Hence we obtain 
$\cpa{A}{\lambda }{21;2;11;1}{M}=\cpa{B}{\lambda }{21;2;11;1}{M}$.

We have
\begin{align*}
\cpa{A}{\lambda }{21;2;11;2}{M}=&-(m_{12}-m_{23})\chi_-(M) 
	(-m_{22}+m_{33}+\delta (M))\chid{-}{1}(M)\\
=&-(m_{12}-m_{23})\bar{D}(M)\chid{-}{1}(M)\chi_- (M),\\[3mm]
\cpa{B}{\lambda }{21;2;11;2}{M}=
	&-(m_{12}-m_{23}) \bar{D} (M)\chid{-}{1}(M)\chi_- (M).
\end{align*}
Hence we obtain 
$\cpa{A}{\lambda }{21;2;11;2}{M}=\cpa{B}{\lambda }{21;2;11;2}{M}$.

We have
\begin{align*}
\cpa{A}{\lambda }{21;2;01;1}{M}=
&(m_{11}-m_{22})\{m_{23}-m_{22}+(\delta (M)+1)\chi_-(M)\}\\
	&-(m_{12}-m_{23})\chi_-(M)(m_{22} -m_{33}+1),\\[3mm]
\cpa{B}{\lambda }{21;2;01;1}{M}=
&(m_{11}-m_{22})(m_{23}-m_{22}+\delta (M)\chi_-(M))\\ 
	&-(m_{12}-m_{23}+1) \chi_-(M)(m_{22} -m_{33})
	-\bar{D}(M)\chi_- (M).
\end{align*}
Therefore
\begin{align*}
&\cpa{A}{\lambda }{21;2;01;1}{M}-\cpa{B}{\lambda }{21;2;01;1}{M}\\
&=(m_{11}-m_{22})\chi_-(M)-(m_{12}-m_{23})\chi_-(M)
+(m_{22} -m_{33})\chi_-(M)+\bar{D}(M)\chi_- (M)=0.
\end{align*}
Hence we obtain 
$\cpa{A}{\lambda }{21;2;01;1}{M}=\cpa{B}{\lambda }{21;2;01;1}{M}$.

We have
\begin{align*}
\cpa{A}{\lambda }{21;2;01;2}{M}=
	&(m_{12}-m_{23})\chi_-(M)
	\{m_{12}-m_{11}-(\delta (M)+1)\chid{+}{-1}(M)\}\\
	&=(m_{12}-m_{23})(m_{12}-m_{11})\chi_-(M),\\[3mm]
\cpa{B}{\lambda }{21;2;01;2}{M}=&(m_{12}-m_{23})(m_{12}-m_{11})\chi_-(M).
\end{align*}
Hence we obtain 
$\cpa{A}{\lambda }{21;2;01;2}{M}=\cpa{B}{\lambda }{21;2;01;2}{M}$.

We have
\begin{align*}
&\cpa{A}{\lambda }{21;2;00;1}{M}\\
&=-(m_{11}-m_{22})\bar{C_1}\bkt{M\cgpt{0}{0}{-1}}
\chi_-\bkt{M\cgpt{0}{0}{-1}}-(m_{12}-m_{23})\chi_-(M)(m_{23} -m_{22}-1)\\
&=\left\{ \begin{array}{ll}
0&\text{ if }\delta (M)>-1,\\
-(m_{12}-m_{23})(m_{23} -m_{22}-1)&\text{ if }\delta (M)=-1,\\
-(m_{11}-m_{22})(m_{12}-m_{11}+1)-(m_{12}-m_{23})(m_{23} -m_{22}-1)
&\text{ if }\delta (M)<-1,
\end{array}\right. \\
&\cpa{B}{\lambda }{21;2;00;1}{M}\\
&=-(m_{11}-m_{22}-1)\bar{C_1} (M)\chi_- (M)
-(m_{12}-m_{23}+1) \chid{-}{1}(M)(m_{23} -m_{22})\\ 
&=\left\{\begin{array}{ll}
0&\text{ if }\delta (M)>-1,\\
-(m_{11}-m_{22}-1)(m_{12}-m_{11})&\text{ if }\delta (M)=-1,\\
-(m_{11}-m_{22}-1)(m_{12}-m_{11})-(m_{12}-m_{23}+1)(m_{23} -m_{22})
&\text{ if }\delta (M)<-1.
\end{array}\right.
\end{align*}
Hence we obtain 
$\cpa{A}{\lambda }{21;2;00;1}{M}=\cpa{B}{\lambda }{21;2;00;1}{M}$.

We have
\begin{align*}
\cpa{A}{\lambda }{21;2;00;2}{M}=&-(m_{12}-m_{23})\chi_-(M) 
	(m_{12}-m_{11})\chi_-\bkt{M\cgpt{0}{0}{-1}[-1]}\\
=&-(m_{12}-m_{23})(m_{12}-m_{11})\chid{-}{1}(M)\chi_-(M),\\
\cpa{B}{\lambda }{21;2;00;2}{M}=
	&-(m_{12}-m_{23}) \chid{-}{1}(M)(m_{12}-m_{11})\chi_-(M). 
\end{align*}
Hence we obtain 
$\cpa{A}{\lambda }{21;2;00;2}{M}=\cpa{B}{\lambda }{21;2;00;2}{M}$.

It is trivial that the equations (\ref{eqn:pf_clebsh(1,0,0)}) 
 hold for $(j,k,l)=(1,1,0),\ (0,1,0)$ and $(0,0,0)$. \\

\noindent $\bullet $ the proof of 
$E_{23}\circ i^{\lambda }_{\me_{2}}
=i^{\lambda }_{\me_{2}}\circ E_{23}$. 

We have
\begin{align*}
\cpa{A}{\lambda }{23;2;11;0}{M}=&(m_{13}-m_{12})(m_{22} -m_{33}),\\
\cpa{A}{\lambda }{23;2;11;1}{M}= 
&-(m_{13}-m_{12})\bar{D}\bkt{M\cgpt{1}{0}{0}}\chi_-\bkt{M\cgpt{1}{0}{0}}\\ 
&+\{ m_{13}-m_{12}-\delta (M) \} \chi_-(M)(m_{22} -m_{33}+1),\\
\cpa{A}{\lambda }{23;2;11;2}{M}=&-\{ m_{13}-m_{12}-\delta (M) \} \chi_-(M) 
\bar{D}\bkt{M\cgpt{1}{0}{0}[-1]}\chi_-\bkt{M\cgpt{1}{0}{0}[-1]},\\
\cpa{A}{\lambda }{23;2;01;0}{M}=&-(m_{13}-m_{12})(m_{22} -m_{33}),\\ 
\cpa{A}{\lambda }{23;2;01;1}{M}=
&(m_{13}-m_{12})\bar{C_1}\bkt{M\cgpt{1}{0}{0}}\\ 
&-\{ m_{13}-m_{12}-\delta (M) \} \chi_-(M) (m_{22} -m_{33}+1),\\
\cpa{A}{\lambda }{23;2;01;2}{M}= 
&\{ m_{13}-m_{12}-\delta (M) \} \chi_-(M)\bar{C_1}\bkt{M\cgpt{1}{0}{0}[-1]},\\
\cpa{A}{\lambda }{23;2;00;0}{M}=&-(m_{13}-m_{12})(m_{23} -m_{22}),\\ 
\cpa{A}{\lambda }{23;2;00;1}{M}= 
&-(m_{13}-m_{12})\bar{C_1}\bkt{M\cgpt{1}{0}{0}}\chi_-\bkt{M\cgpt{1}{0}{0}}\\ 
&-\{ m_{13}-m_{12}-\delta (M) \} \chi_-(M) (m_{23} -m_{22}-1),\\
\cpa{A}{\lambda }{23;2;00;2}{M}= 
&-\{ m_{13}-m_{12}-\delta (M) \} \chi_-(M) 
 \bar{C_1}\bkt{M\cgpt{1}{0}{0}[-1]}\chi_-\bkt{M\cgpt{1}{0}{0}[-1]},
\end{align*}
and
\begin{align*}
\cpa{B}{\lambda }{23;2;11;0}{M}=&(m_{13}-m_{12})(m_{22} -m_{33}),\\ 
\cpa{B}{\lambda }{23;2;11;1}{M}=&-(m_{13}-m_{12}+1)\bar{D} (M)\chi_- (M)\\ 
&+\{ m_{13}-m_{12}-\delta (M)-1 \}\chid{-}{1}(M)(m_{22} -m_{33}),\\ 
\cpa{B}{\lambda }{23;2;11;2}{M}=&-\{ m_{13}-m_{12}-\delta (M)\} \chid{-}{1}(M) 
 \bar{D} (M)\chi_- (M),\\ 
\cpa{B}{\lambda }{23;2;01;0}{M}=&-(m_{13}-m_{12})(m_{22} -m_{33}),\\
\cpa{B}{\lambda }{23;2;01;1}{M}=&(m_{13}-m_{12}+1)\bar{C_1} (M)\\ 
&-\{ m_{13}-m_{12}-\delta (M) \} \chi_-(M) (m_{22} -m_{33})-(m_{23} -m_{22}),\\
\cpa{B}{\lambda }{23;2;01;2}{M}=
&\{ m_{13}-m_{12}-\delta (M)+1 \} \chi_-(M) \bar{C_1} (M)
-\bar{C_1} (M)\chi_- (M),\\ 
\cpa{B}{\lambda }{23;2;00;0}{M}=&-(m_{13}-m_{12})(m_{23} -m_{22}),\\ 
\cpa{B}{\lambda }{23;2;00;1}{M}=&-(m_{13}-m_{12}+1)\bar{C_1} (M)\chi_- (M)\\ 
&-\{ m_{13}-m_{12}-\delta (M)-1 \} \chid{-}{1}(M) (m_{23} -m_{22}),\\ 
\cpa{B}{\lambda }{23;2;00;2}{M}= 
&-\{ m_{13}-m_{12}-\delta (M)\}\chid{-}{1}(M)\bar{C_1} (M)\chi_- (M).
\end{align*}

We have
\begin{align*}
\cpa{A}{\lambda }{23;2;11;1}{M}= 
&-(m_{13}-m_{12})(-m_{22}+m_{33}+\delta (M)+1)\chid{-}{1}(M)\\ 
&+\{ m_{13}-m_{12}-\delta (M) \} \chi_-(M)(m_{22} -m_{33}+1),\\
\cpa{B}{\lambda }{23;2;11;1}{M}=
&-(m_{13}-m_{12}+1)(-m_{22}+m_{33}+\delta (M))\chi_- (M)\\ 
&+\{ m_{13}-m_{12}-\delta (M)-1 \}\chid{-}{1}(M)(m_{22} -m_{33}).
\end{align*}
Therefore
\begin{align*}
&\cpa{A}{\lambda }{23;2;11;1}{M}-\cpa{B}{\lambda }{23;2;11;1}{M}\\
&=(m_{13}-m_{12}-m_{22}+m_{33})(\delta (M)+1)(\chi_-(M)-\chid{-}{1}(M))=0.
\end{align*}
Hence we obtain 
$\cpa{A}{\lambda }{23;2;11;1}{M}=\cpa{B}{\lambda }{23;2;11;1}{M}$.

We have
\begin{align*}
\cpa{A}{\lambda }{23;2;11;2}{M}=&-\{ m_{13}-m_{12}-\delta (M) \} \chi_-(M) 
(-m_{22}+m_{33}+\delta (M))\chid{-}{1}(M)\\
=&-\{ m_{13}-m_{12}-\delta (M)\}  \bar{D} (M)\chid{-}{1}(M)\chi_- (M),\\[3mm]
\cpa{B}{\lambda }{23;2;11;2}{M}=&-\{ m_{13}-m_{12}-\delta (M)\}  
 \bar{D} (M)\chid{-}{1}(M)\chi_- (M). 
\end{align*}
Hence we obtain 
$\cpa{A}{\lambda }{23;2;11;2}{M}=\cpa{B}{\lambda }{23;2;11;2}{M}$.

We have
\begin{align*}
\cpa{A}{\lambda }{23;2;01;1}{M}=
&(m_{13}-m_{12})\{m_{23}-m_{22}+(\delta (M)+1)\chi_-(M)\}\\ 
&-\{ m_{13}-m_{12}-\delta (M) \} \chi_-(M) (m_{22} -m_{33}+1),\\[3mm]
\cpa{B}{\lambda }{23;2;01;1}{M}=
&(m_{13}-m_{12}+1)(m_{23}-m_{22}+\delta (M)\chi_-(M))\\ 
&-\{ m_{13}-m_{12}-\delta (M) \} \chi_-(M) (m_{22} -m_{33})-(m_{23} -m_{22})\\
=&(m_{13}-m_{12})\{m_{23}-m_{22}+(\delta (M)+1)\chi_-(M)\}\\ 
&-\{ m_{13}-m_{12}-\delta (M) \} \chi_-(M) (m_{22} -m_{33}+1).
\end{align*}
Hence we obtain  
$\cpa{A}{\lambda }{23;2;01;1}{M}=\cpa{B}{\lambda }{23;2;01;1}{M}$.

We have
\begin{align*}
\cpa{A}{\lambda }{23;2;01;2}{M}
=&\{ m_{13}-m_{12}-\delta (M) \} \chi_-(M)
\{m_{12}-m_{11}-(\delta (M)+1)\chid{+}{-1}(M)\}\\
=&\{ m_{13}-m_{12}-\delta (M) \}(m_{12}-m_{11})\chi_-(M),\\[3mm]
\cpa{B}{\lambda }{23;2;01;2}{M}
=&\{ m_{13}-m_{12}-\delta (M) \} \bar{C_1} (M)\chi_-(M) \\
=&\{ m_{13}-m_{12}-\delta (M) \}(m_{12}-m_{11})\chi_-(M).
\end{align*}
Hence we obtain 
$\cpa{A}{\lambda }{23;2;01;2}{M}=\cpa{B}{\lambda }{23;2;01;2}{M}$.

We have
\begin{align*}
\cpa{A}{\lambda }{23;2;00;1}{M}= 
&-(m_{13}-m_{12})(m_{12}-m_{11}+1)\chid{-}{1}(M)\\ 
&-\{ m_{13}-m_{12}-\delta (M) \} \chi_-(M) (m_{23} -m_{22}-1),\\[3mm]
\cpa{B}{\lambda }{23;2;00;1}{M}=&-(m_{13}-m_{12}+1)(m_{12}-m_{11})\chi_- (M)\\ 
&-\{ m_{13}-m_{12}-\delta (M)-1 \} \chid{-}{1}(M) (m_{23} -m_{22}).
\end{align*}
Therefore 
\begin{align*}
&\cpa{A}{\lambda }{23;2;00;1}{M}-\cpa{B}{\lambda }{23;2;00;1}{M}\\
&=(m_{13}-m_{12}+m_{23} -m_{22})(\delta (M)+1)(\chi_-(M)-\chid{-}{1}(M))=0.
\end{align*}
Hence we obtain 
$\cpa{A}{\lambda }{23;2;00;1}{M}=\cpa{B}{\lambda }{23;2;00;1}{M}$.

We have
\begin{align*}
\cpa{A}{\lambda }{23;2;00;2}{M}= 
&-\{ m_{13}-m_{12}-\delta (M) \} \chi_-(M) 
 (m_{12}-m_{11})\chi_-\bkt{M\cgpt{1}{0}{0}[-1]}\\
=&-\{ m_{13}-m_{12}-\delta (M) \} \chi_-(M) 
 (m_{12}-m_{11})\chid{-}{1}(M),\\[3mm]
\cpa{B}{\lambda }{23;2;00;2}{M}= 
&-\{ m_{13}-m_{12}-\delta (M)\}\chid{-}{1}(M)(m_{12}-m_{11})\chi_- (M).
\end{align*}
Hence $\cpa{A}{\lambda }{23;2;00;2}{M}=\cpa{B}{\lambda }{23;2;00;2}{M}$.

It is trivial that the equations (\ref{eqn:pf_clebsh(1,0,0)}) 
 hold for $(j,k,l)=(1,1,0),\ (0,1,0)$ and $(0,0,0)$. \\

\noindent $\bullet $ the proof of 
$E_{32}\circ i^{\lambda }_{\me_{2}}
=i^{\lambda }_{\me_{2}}\circ E_{32}$. 

We have
\begin{align*}
\cpa{A}{\lambda }{32;2;11;0}{M}=&(m_{22}-m_{33})(m_{22} -m_{33}-1),\\
\cpa{A}{\lambda }{32;2;11;1}{M}= 
&-(m_{22}-m_{33})\bar{D}\bkt{M\cgpt{0}{-1}{0}}\chi_-\bkt{M\cgpt{0}{-1}{0}}\\ 
&+\{ m_{22}-m_{33}+\delta (M) \} \chi_+(M) (m_{22} -m_{33}),\\
\cpa{A}{\lambda }{32;2;11;2}{M}=&-\{ m_{22}-m_{33}+\delta (M) \} \chi_+(M) 
 \bar{D}\bkt{M\cgpt{0}{-1}{0}[-1]}\chi_-\bkt{M\cgpt{0}{-1}{0}[-1]},\\
\cpa{A}{\lambda }{32;2;01;0}{M}=&-(m_{22}-m_{33})(m_{22} -m_{33}-1),\\ 
\cpa{A}{\lambda }{32;2;01;1}{M}= 
&(m_{22}-m_{33})\bar{C_1}\bkt{M\cgpt{0}{-1}{0}}\\ 
&-\{ m_{22}-m_{33}+\delta (M) \} \chi_+(M)(m_{22} -m_{33}),\\
\cpa{A}{\lambda }{32;2;01;2}{M}= 
&\{ m_{22}-m_{33}+\delta (M) \} \chi_+(M)\bar{C_1}\bkt{M\cgpt{0}{-1}{0}[-1]},\\
\cpa{A}{\lambda }{32;2;00;0}{M}=&-(m_{22}-m_{33})(m_{23} -m_{22}+1),\\ 
\cpa{A}{\lambda }{32;2;00;1}{M}= 
&-(m_{22}-m_{33})\bar{C_1}\bkt{M\cgpt{0}{-1}{0}}\chi_-\bkt{M\cgpt{0}{-1}{0}}\\ 
&-\{ m_{22}-m_{33}+\delta (M) \} \chi_+(M)(m_{23} -m_{22}),\\
\cpa{A}{\lambda }{32;2;00;2}{M}=&-\{ m_{22}-m_{33}+\delta (M) \} \chi_+(M) 
 \bar{C_1}\bkt{M\cgpt{0}{-1}{0}[-1]}\chi_-\bkt{M\cgpt{0}{-1}{0}[-1]},
\end{align*}
and
\begin{align*}
\cpa{B}{\lambda }{32;2;11;0}{M}=&(m_{22}-m_{33}-1)(m_{22} -m_{33}),\\ 
\cpa{B}{\lambda }{32;2;11;1}{M}=&-(m_{22}-m_{33})\bar{D} (M)\chi_- (M)\\ 
&+\{ m_{22}-m_{33}+\delta (M)\} \chid{+}{-1}(M) (m_{22} -m_{33}),\\
\cpa{B}{\lambda }{32;2;11;2}{M}=
&-\{ m_{22}-m_{33}+\delta (M)+1\} \chid{+}{-1}(M)\bar{D} (M)\chi_- (M),\\
\cpa{B}{\lambda }{32;2;01;0}{M}=&-(m_{22}-m_{33}-1)(m_{22} -m_{33}),\\ 
\cpa{B}{\lambda }{32;2;01;1}{M}=&(m_{22}-m_{33})\bar{C_1} (M)\\ 
&-\{ m_{22}-m_{33}+\delta (M)-1\} \chi_+(M) (m_{22} -m_{33}),\\ 
\cpa{B}{\lambda }{32;2;01;2}{M}= 
&\{ m_{22}-m_{33}+\delta (M) \} \chi_+(M) \bar{C_1} (M),\\ 
\cpa{B}{\lambda }{32;2;00;0}{M}=
&-(m_{22}-m_{33})(m_{23} -m_{22})-(m_{22} -m_{33}),\\
\cpa{B}{\lambda }{32;2;00;1}{M}= 
&-(m_{22}-m_{33}+1)\bar{C_1} (M)\chi_- (M)\\ 
&-\{ m_{22}-m_{33}+\delta (M)+1 \}\chid{+}{-1}(M) (m_{23} -m_{22})
+ \bar{C_1} (M),\\
\cpa{B}{\lambda }{32;2;00;2}{M}= 
&-\{ m_{22}-m_{33}+\delta (M)+2 \}\chid{+}{-1}(M)\bar{C_1} (M)\chi_- (M).\\ 
\end{align*}

We have
\begin{align*}
\cpa{A}{\lambda }{32;2;11;1}{M}= 
&-(m_{22}-m_{33})(-m_{22}+m_{33}+\delta (M))(1-\chi_+(M))\\ 
&+\{ m_{22}-m_{33}+\delta (M) \} \chi_+(M) (m_{22} -m_{33})\\
=&(m_{22}-m_{33})\{-\bar{D}(M)+2\delta (M)\chi_+(M)\},\\[3mm]
\cpa{B}{\lambda }{32;2;11;1}{M}=
&-(m_{22}-m_{33})(-m_{22}+m_{33}+\delta (M))(1-\chid{+}{-1}(M))\\ 
&+\{ m_{22}-m_{33}+\delta (M)\} \chid{+}{-1}(M) (m_{22} -m_{33})\\
=&(m_{22}-m_{33})\{-\bar{D}(M)+2\delta (M)\chid{+}{-1}(M)\}\\
=&(m_{22}-m_{33})\{-\bar{D}(M)+2\delta (M)\chi_+(M)\}.
\end{align*}
Hence we obtain 
$\cpa{A}{\lambda }{32;2;11;1}{M}=\cpa{B}{\lambda }{32;2;11;1}{M}$.

We have
\begin{align*}
\cpa{A}{\lambda }{32;2;11;2}{M}=&-\{ m_{22}-m_{33}+\delta (M) \}  
 \bar{D}\bkt{M\cgpt{0}{-1}{0}[-1]}\chi_+(M)\chid{-}{-1}(M)=0,\\[3mm]
\cpa{B}{\lambda }{32;2;11;2}{M}=
&-\{ m_{22}-m_{33}+\delta (M)+1\} \bar{D} (M)\chid{+}{-1}(M)\chi_- (M)=0.
\end{align*}
Hence we obtain 
$\cpa{A}{\lambda }{32;2;11;2}{M}=\cpa{B}{\lambda }{32;2;11;2}{M}$.

We have
\begin{align*}
\cpa{A}{\lambda }{32;2;01;1}{M}= 
&(m_{22}-m_{33})\{m_{12}-m_{11}-(\delta (M)-1)\chi_+(M)\}\\ 
&-\{ m_{22}-m_{33}+\delta (M) \} \chi_+(M)(m_{22} -m_{33})\\
=&(m_{22}-m_{33})\{(m_{12}-m_{11})-\chi_+(M)
(m_{22}-m_{33}+2\delta (M)-1)\},\\[3mm]
\cpa{B}{\lambda }{32;2;01;1}{M}=
&(m_{22}-m_{33})\{m_{12}-m_{11}-\delta (M)\chi_+(M)\}\\ 
&-\{ m_{22}-m_{33}+\delta (M)-1\} \chi_+(M) (m_{22} -m_{33})\\
=&(m_{22}-m_{33})\{(m_{12}-m_{11})-\chi_+(M) 
(m_{22}-m_{33}+2\delta (M)-1)\}.
\end{align*}
Hence we obtain 
$\cpa{A}{\lambda }{32;2;01;1}{M}=\cpa{B}{\lambda }{32;2;01;1}{M}$.

We have
\begin{align*}
\cpa{A}{\lambda }{32;2;01;2}{M}= 
&\{ m_{22}-m_{33}+\delta (M) \} 
\chi_+(M)(m_{23}-m_{22}+(\delta (M)-1)\chi_-(M))\\
=&\{ m_{22}-m_{33}+\delta (M) \}(m_{23}-m_{22})\chi_+(M),\\
\cpa{B}{\lambda }{32;2;01;2}{M}= 
&\{ m_{22}-m_{33}+\delta (M) \} (m_{23}-m_{22})\chi_+(M) .
\end{align*}
Hence we obtain 
$\cpa{A}{\lambda }{32;2;01;2}{M}=\cpa{B}{\lambda }{32;2;01;2}{M}$.

We have
\begin{align*}
\cpa{A}{\lambda }{32;2;00;1}{M}= 
&-(m_{22}-m_{33})(m_{12}-m_{11})\chid{-}{-1}(M)\\ 
&-\{ m_{22}-m_{33}+\delta (M) \} (m_{23} -m_{22})\chi_+(M)\\
=&-(m_{22}-m_{33})\bar{C_1}(M)\chid{-}{-1}(M)
-\{ m_{22}-m_{33}+\delta (M) \} \bar{C_1}(M)\chi_+(M)\\
=&-(m_{22}-m_{33})\bar{C_1}(M)(\chid{-}{-1}(M)+\chi_+(M))
-\bar{C_1}(M)\delta (M)\chi_+(M)\\
=&-(m_{22}-m_{33})\bar{C_1}(M)-\bar{C_1}(M)\delta (M)\chi_+(M),\\[3mm]
\cpa{B}{\lambda }{32;2;00;1}{M}= 
&-(m_{22}-m_{33}+1)\bar{C_1} (M)\chi_- (M)\\ 
&-\{ m_{22}-m_{33}+\delta (M)+1 \}\bar{C_1}(M)\chid{+}{-1}(M) +\bar{C_1} (M)\\
=&-(m_{22}-m_{33}+1)\bar{C_1} (M)(\chi_- (M)+\chid{+}{-1}(M))+\bar{C_1} (M)\\
&-\bar{C_1}(M)\delta (M)\chid{+}{-1}(M)\\
=&-(m_{22}-m_{33})\bar{C_1}(M)-\bar{C_1}(M)\delta (M)\chi_+(M).
\end{align*}
Hence we obtain 
$\cpa{A}{\lambda }{32;2;00;1}{M}=\cpa{B}{\lambda }{32;2;00;1}{M}$.

We have
\begin{align*}
\cpa{A}{\lambda }{32;2;00;2}{M}=&-\{ m_{22}-m_{33}+\delta (M) \}  
 \bar{C_1}\bkt{M\cgpt{0}{-1}{0}[-1]}\chi_+(M)\chid{-}{-1}(M)=0,\\[3mm]
\cpa{B}{\lambda }{32;2;00;2}{M}= 
&-\{ m_{22}-m_{33}+\delta (M)+2 \}\bar{C_1} (M)\chid{+}{-1}(M)\chi_- (M)=0.
\end{align*}
Hence we obtain 
$\cpa{A}{\lambda }{32;2;00;2}{M}=\cpa{B}{\lambda }{32;2;00;2}{M}$.

It is trivial that the equations (\ref{eqn:pf_clebsh(1,0,0)}) 
 hold for $(j,k,l)=(1,1,0),\ (0,1,0)$ and $(0,0,0)$. \\

At last, we check the equations \ref{eqn:pf_clebsh(1,0,0)} 
for $i=1$, that is, the case of formula $1$ by direct computation.

\noindent $\bullet $ the proof of 
$E_{12}\circ i^{\lambda }_{\me_{1}}
=i^{\lambda }_{\me_{1}}\circ E_{12}$.

We have
\begin{align*}
\cpa{A}{\lambda }{12;1;11;0}{M}=
&(m_{12}-m_{11})(m_{13} -m_{12})(m_{22}-m_{33}),\\
\cpa{A}{\lambda }{12;1;11;1}{M}=
&-(m_{12}-m_{11})\bar{E}\bkt{M\cgpt{0}{0}{1}}\\ 
&+(m_{23}-m_{22})\chi_+(M)(m_{13} -m_{12}+1)(m_{22}-m_{33}+1), \\
\cpa{A}{\lambda }{12;1;11;2}{M}= 
&-(m_{23}-m_{22})\chi_+(M)\bar{E}\bkt{M\cgpt{0}{0}{1}[-1]},\\
\cpa{A}{\lambda }{12;1;01;0}{M}= 
&-(m_{12}-m_{11})(m_{13} -m_{12})(m_{22}-m_{33}),\\ 
\cpa{A}{\lambda }{12;1;01;1}{M}= 
&(m_{12}-m_{11})\bar{F}\bkt{M\cgpt{0}{0}{1}}\\ 
&-(m_{23}-m_{22})\chi_+(M)(m_{13} -m_{12}+1)(m_{22}-m_{33}+1),\\
\cpa{A}{\lambda }{12;1;01;2}{M}= 
&-(m_{12}-m_{11})C_2\bkt{M\cgpt{0}{0}{1}}\chi_+\bkt{M\cgpt{0}{0}{1}}\\ 
&+(m_{23}-m_{22})\chi_+(M)\bar{F}\bkt{M\cgpt{0}{0}{1}[-1]},\\
\cpa{A}{\lambda }{12;1;01;3}{M}= 
&-(m_{23}-m_{22})\chi_+(M)C_2\bkt{M\cgpt{0}{0}{1}[-1]}
\chi_+\bkt{M\cgpt{0}{0}{1}[-1]},\\
\cpa{A}{\lambda }{12;1;00;0}{M}= 
&-(m_{12}-m_{11})(m_{13} -m_{12})(m_{13}-m_{22}+1),\\ 
\cpa{A}{\lambda }{12;1;00;1}{M}= 
&(m_{12}-m_{11})C_2\bkt{M\cgpt{0}{0}{1}}\\ 
&-(m_{23}-m_{22})\chi_+(M)(m_{13} -m_{12}+1)(m_{13}-m_{22}),\\
\cpa{A}{\lambda }{12;1;00;2}{M}= 
&(m_{23}-m_{22})\chi_+(M)C_2\bkt{M\cgpt{0}{0}{1}[-1]},
\end{align*}
and
\begin{align*}
\cpa{B}{\lambda }{12;1;11;0}{M}= 
&(m_{12}-m_{11}+1)(m_{13} -m_{12})(m_{22}-m_{33})
-(m_{13} -m_{12})(m_{22}-m_{33}),\\
\cpa{B}{\lambda }{12;1;11;1}{M}= 
&-(m_{12}-m_{11})\bar{E} (M)\\ 
&+(m_{23}-m_{22}+1) \chi_+(M)(m_{13} -m_{12})(m_{22}-m_{33})+\bar{F}(M),\\
\cpa{B}{\lambda }{12;1;11;2}{M}= 
&-(m_{23}-m_{22})\chi_+(M)\bar{E}(M)-C_2 (M)\chi_+ (M),\\
\cpa{B}{\lambda }{12;1;01;0}{M}= 
&-(m_{12}-m_{11})(m_{13} -m_{12})(m_{22}-m_{33}),\\ 
\cpa{B}{\lambda }{12;1;01;1}{M}= 
&(m_{12}-m_{11}-1)\bar{F} (M)\\ 
&-(m_{23}-m_{22}+1)\chid{+}{1}(M)(m_{13} -m_{12})(m_{22}-m_{33}),\\ 
\cpa{B}{\lambda }{12;1;01;2}{M}= 
&-(m_{12}-m_{11}-2)C_2 (M)\chi_+ (M)
+(m_{23}-m_{22}) \chid{+}{1}(M)\bar{F} (M),\\ 
\cpa{B}{\lambda }{12;1;01;3}{M}= 
&-(m_{23}-m_{22}-1)\chid{+}{1}(M)C_2 (M)\chi_+(M),\\ 
\cpa{B}{\lambda }{12;1;00;0}{M}= 
&-(m_{12}-m_{11})(m_{13} -m_{12})(m_{13}-m_{22}+1),\\ 
\cpa{B}{\lambda }{12;1;00;1}{M}= 
&(m_{12}-m_{11}-1)C_2 (M)\\ 
&-(m_{23}-m_{22})\chi_+(M)(m_{13} -m_{12})(m_{13}-m_{22}+1),\\ 
\cpa{B}{\lambda }{12;1;00;2}{M}= 
&(m_{23}-m_{22}-1) \chi_+(M)C_2 (M). 
\end{align*}

We have
\begin{align*}
&\cpa{A}{\lambda }{12;1;11;1}{M}-\cpa{B}{\lambda }{12;1;11;1}{M}\\
&=-(m_{12}-m_{11})\{(m_{12}-m_{23})
+(m_{13}-m_{33}-m_{12}+m_{22}+1)\chi_+(M)\}\\ 
&\hphantom{=}+(m_{13}-m_{33}-m_{12}+m_{22}+1)(m_{23}-m_{22})\chi_+(M)\\
&\hphantom{=}-(m_{13} -m_{12})(m_{22}-m_{33})\chi_+(M)-\bar{F}(M)\\
&=-(m_{13}-m_{33}-m_{12}+m_{22}+1)\delta (M)\chi_+(M)\\
&\hphantom{=}-(m_{13} -m_{12})(m_{22}-m_{33})\chi_+(M)
-(m_{12}-m_{11})(m_{12}-m_{23})-\bar{F}(M)\\
&=-\{(m_{12}-m_{11})(m_{12}-m_{23})-(m_{12}-m_{22})\delta (M)\chi_+(M)\}\\
&\hphantom{=}-\chi_+(M)\{ (m_{13} -m_{12})(m_{22}-m_{33})
+(m_{13}-m_{33}+1)\delta (M)\}-\bar{F}(M)\\
&=0.
\end{align*}
Hence we obtain 
$\cpa{A}{\lambda }{12;1;11;1}{M}=\cpa{B}{\lambda }{12;1;11;1}{M}$.
Here we use the relations
\begin{align}
\bar{E}\bkt{M\cgpt{0}{0}{1}}=
\bar{E}(M)+(m_{12}-m_{23})+(m_{13}-m_{33}-m_{12}+m_{22}+1)\chi_+(M).
\end{align}

We have
\begin{align*}
\cpa{A}{\lambda }{12;1;11;2}{M}=
&-(m_{23}-m_{22})(\bar{E}(M)+C_1(M))\chi_+(M)\\
=&-(m_{23}-m_{22})\chi_+(M)\bar{E}(M)-C_1(M)(m_{23}-m_{22})\chi_+(M)\\
=&\cpa{B}{\lambda }{12;1;11;2}{M}.
\end{align*}
Hence we obtain 
$\cpa{A}{\lambda }{12;1;11;2}{M}=\cpa{B}{\lambda }{12;1;11;2}{M}$. 
Here we use the relation
\begin{align}
\bar{E}\bkt{M\cgpt{0}{0}{1}[-1]}\chi_+(M)=(\bar{E}(M)+C_1(M))\chi_+(M).
\end{align}

We have
\begin{align*}
&\cpa{A}{\lambda }{12;1;01;1}{M}-\cpa{B}{\lambda }{12;1;01;1}{M}\\
&=(m_{12}-m_{11})\{(m_{12}-m_{23})
-(m_{13} -m_{12})(m_{22}-m_{33})\chid{+}{1}(M)\\
&\hphantom{=}+(m_{13} -m_{12}+1)(m_{22}-m_{33}+1)\chi_+(M)\}+\bar{F}(M)\\
&\hphantom{=}-(m_{23}-m_{22})\chi_+(M)(m_{13} -m_{12}+1)(m_{22}-m_{33}+1)\\
&\hphantom{=}+(m_{23}-m_{22}+1)\chid{+}{1}(M)(m_{13} -m_{12})(m_{22}-m_{33})\\
&=(m_{12}-m_{11})(m_{12}-m_{23})
-(m_{13} -m_{12})(m_{22}-m_{33})(\delta (M)-1)\chid{+}{1}(M)\\
&\hphantom{=}+(m_{13} -m_{12}+1)(m_{22}-m_{33}+1)\delta (M)\chi_+(M)
+\bar{F}(M)\\
&=\{(m_{12}-m_{11})(m_{12}-m_{23})-(m_{12}-m_{22})\delta (M)\chi_+(M)\}\\
&\hphantom{=}+\chi_+(M)\{(m_{13} -m_{12})(m_{22}-m_{33})
+(m_{13}-m_{33}+1)\delta (M)\}
+\bar{F}(M)\\
&=0.
\end{align*}
Hence we obtain 
$\cpa{A}{\lambda }{12;1;01;1}{M}=\cpa{B}{\lambda }{12;1;01;1}{M}$. 
Here we use the relations
\begin{align}
\bar{F}\bkt{M\cgpt{0}{0}{1}}=&
\bar{F}(M)+(m_{12}-m_{23})
-(m_{13} -m_{12})(m_{22}-m_{33})\chid{+}{1}(M)
\label{eqn:pf_clebsh(1,0,0)_F1}\\
&+(m_{13} -m_{12}+1)(m_{22}-m_{33}+1)\chi_+(M).\nonumber
\end{align}

We have
\begin{align*}
\cpa{A}{\lambda }{12;1;01;2}{M}
=&(m_{23}-m_{22})\{-(m_{12}-m_{11})(m_{11}-m_{22}+1)\chid{+}{1}(M)\\
&+\bar{F}\bkt{M\cgpt{0}{0}{1}[-1]}\chi_+(M)\},\\
\cpa{B}{\lambda }{12;1;01;2}{M}
=&(m_{23}-m_{22})\{-(m_{12}-m_{11}-2)(m_{11}-m_{22})\chi_+ (M)\\
&+\bar{F} (M)\chid{+}{1}(M)\}.
\end{align*}
By the relation
\begin{align}
\bar{F}\bkt{M\cgpt{0}{0}{1}[-1]}\chi_+(M)
=&\bar{F} (M)\chid{+}{1}(M)+(m_{12}-m_{11})(m_{11}-m_{22}+1)\chid{+}{1}(M)\\
&-(m_{12}-m_{11}-2)(m_{11}-m_{22})\chi_+ (M),\nonumber
\end{align}
we have $\cpa{A}{\lambda }{12;1;01;2}{M}-\cpa{B}{\lambda }{12;1;01;2}{M}=0$. 
Hence we obtain 
$\cpa{A}{\lambda }{12;1;01;2}{M}=\cpa{B}{\lambda }{12;1;01;2}{M}$.

We have
\begin{align*}
\cpa{A}{\lambda }{12;1;01;3}{M}
=&-(m_{23}-m_{22})\chi_+(M)C_1(M)(m_{23}-m_{22}-1)
\chi_+\bkt{M\cgpt{0}{0}{1}[-1]}\\
=&-\bar{C_1}(M)\chi_+(M)C_1(M)(m_{23}-m_{22}-1)\chid{+}{1}(M)\\
=&\cpa{B}{\lambda }{12;1;01;3}{M}.
\end{align*}
Hence we obtain 
$\cpa{A}{\lambda }{12;1;01;3}{M}=\cpa{B}{\lambda }{12;1;01;3}{M}$.

We have
\begin{align*}
&\cpa{A}{\lambda }{12;1;00;1}{M}-\cpa{B}{\lambda }{12;1;00;1}{M}\\
&=(m_{12}-m_{11})\{(m_{12}-m_{22})\chi_+(M)-(m_{12}-m_{23})\}+C_2 (M)\\
&\hphantom{=}-(m_{23}-m_{22})\chi_+(M)(m_{12}-m_{22})\\
&=C_2 (M)-(m_{12}-m_{23})(m_{12}-m_{11})+(m_{12}-m_{22})\delta (M)\chi_+(M)\\
&=0. 
\end{align*}
Hence we obtain 
$\cpa{A}{\lambda }{12;1;00;1}{M}=\cpa{B}{\lambda }{12;1;00;1}{M}$. 
Here we use the relations
\begin{align}
C_2\bkt{M\cgpt{0}{0}{1}}
=C_2(M)+(m_{12}-m_{22})\chi_+(M)-(m_{12}-m_{23}).
\end{align} 

We have
\begin{align*}
&\cpa{A}{\lambda }{12;1;00;2}{M}\\
&=(m_{11}-m_{22})(m_{23}-m_{22})(m_{23}-m_{22}-1) \chi_+(M)C_2 (M)\\
&=\cpa{B}{\lambda }{12;1;00;2}{M}. 
\end{align*}
Hence we obtain 
$\cpa{A}{\lambda }{12;1;00;2}{M}=\cpa{B}{\lambda }{12;1;00;2}{M}$.

It is trivial that the equations (\ref{eqn:pf_clebsh(1,0,0)}) 
 hold for $(j,k,l)=(1,1,0),\ (0,1,0)$ and $(0,0,0)$. \\

\noindent $\bullet $ the proof of 
$E_{21}\circ i^{\lambda }_{\me_{1}}
=i^{\lambda }_{\me_{1}}\circ E_{21}$

We have
\begin{align*}
\cpa{A}{\lambda }{21;1;11;0}{M}=
&(m_{11}-m_{22})(m_{13} -m_{12})(m_{22}-m_{33}),\\
\cpa{A}{\lambda }{21;1;11;1}{M}= 
&-(m_{11}-m_{22})\bar{E}\bkt{M\cgpt{0}{0}{-1}}\\ 
&+(m_{12}-m_{23})\chi_-(M)(m_{13} -m_{12}+1)(m_{22}-m_{33}+1),\\
\cpa{A}{\lambda }{21;1;11;2}{M}= 
&-(m_{12}-m_{23})\chi_-(M)\bar{E}\bkt{M\cgpt{0}{0}{-1}[-1]},\\
\cpa{A}{\lambda }{21;1;01;0}{M}= 
&-(m_{11}-m_{22})(m_{13} -m_{12})(m_{22}-m_{33}),\\ 
\cpa{A}{\lambda }{21;1;01;1}{M}= 
&(m_{11}-m_{22})\bar{F}\bkt{M\cgpt{0}{0}{-1}}\\ 
&-(m_{12}-m_{23})\chi_-(M)(m_{13} -m_{12}+1)(m_{22}-m_{33}+1),\\
\cpa{A}{\lambda }{21;1;01;2}{M}= 
&-(m_{11}-m_{22})C_2\bkt{M\cgpt{0}{0}{-1}}\chi_+\bkt{M\cgpt{0}{0}{-1}}\\ 
&+(m_{12}-m_{23})\chi_-(M)\bar{F}\bkt{M\cgpt{0}{0}{-1}[-1]},\\
\cpa{A}{\lambda }{21;1;01;3}{M}=&-(m_{12}-m_{23})\chi_-(M)
C_2\bkt{M\cgpt{0}{0}{-1}[-1]}\chi_+\bkt{M\cgpt{0}{0}{-1}[-1]},\\
\cpa{A}{\lambda }{21;1;00;0}{M}= 
&-(m_{11}-m_{22})(m_{13} -m_{12})(m_{13}-m_{22}+1),\\ 
\cpa{A}{\lambda }{21;1;00;1}{M}= 
&(m_{11}-m_{22})C_2\bkt{M\cgpt{0}{0}{-1}}\\ 
&-(m_{12}-m_{23})\chi_-(M) (m_{13} -m_{12}+1)(m_{13}-m_{22}),\\
\cpa{A}{\lambda }{21;1;00;2}{M}= 
&(m_{12}-m_{23})\chi_-(M)C_2\bkt{M\cgpt{0}{0}{-1}[-1]},
\end{align*}
and
\begin{align*}
\cpa{B}{\lambda }{21;1;11;0}{M}= 
&(m_{11}-m_{22})(m_{13} -m_{12})(m_{22}-m_{33}),\\ 
\cpa{B}{\lambda }{21;1;11;1}{M}= 
&-(m_{11}-m_{22}-1)\bar{E}(M)\\ 
&+(m_{12}-m_{23})\chi_-(M)(m_{13} -m_{12})(m_{22}-m_{33}),\\ 
\cpa{B}{\lambda }{21;1;11;2}{M}= 
&-(m_{12}-m_{23}-1)\chi_-(M)\bar{E}(M),\\ 
\cpa{B}{\lambda }{21;1;01;0}{M}= 
&-(m_{11}-m_{22}+1)(m_{13} -m_{12})(m_{22}-m_{33})
+(m_{13} -m_{12})(m_{22}-m_{33}),\\
\cpa{B}{\lambda }{21;1;01;1}{M}= 
&(m_{11}-m_{22})\bar{F}(M)\\ 
&-(m_{12}-m_{23}) \chid{-}{-1}(M)(m_{13} -m_{12})(m_{22}-m_{33})-\bar{E}(M),\\
\cpa{B}{\lambda }{21;1;01;2}{M}= 
&-(m_{11}-m_{22}-1)C_2 (M)\chi_+ (M)\\ 
&+(m_{12}-m_{23}-1)\chid{-}{-1}(M)\bar{F}(M),\\ 
\cpa{B}{\lambda }{21;1;01;3}{M}= 
&-(m_{12}-m_{23}-2) \chid{-}{-1}(M)C_2 (M)\chi_+(M),\\ 
\cpa{B}{\lambda }{21;1;00;0}{M}= 
&-(m_{11}-m_{22})(m_{13} -m_{12})(m_{13}-m_{22}+1),\\ 
\cpa{B}{\lambda }{21;1;00;1}{M}=&(m_{11}-m_{22}-1)C_2 (M)\\ 
&-(m_{12}-m_{23}) \chi_-(M)(m_{13} -m_{12})(m_{13}-m_{22}+1),\\ 
\cpa{B}{\lambda }{21;1;00;2}{M}= 
&(m_{12}-m_{23}-1) \chi_-(M)C_2(M). 
\end{align*}

We have
\begin{align*}
&\cpa{A}{\lambda }{21;1;11;1}{M}-\cpa{B}{\lambda }{21;1;11;1}{M}\\
&=(m_{11}-m_{22})(m_{12}-m_{23})\chi_-(M)+\bar{E}(M)\chid{+}{-1}(M)\\
&\hphantom{=}+(m_{13}-m_{33}-m_{12}+m_{22}+1)(m_{12}-m_{23})
\chi_-(M)-\bar{E}(M)\\
&=(m_{12}-m_{23})(m_{13}-m_{33}+1-m_{12}+m_{11})\chi_-(M)
+\bar{E}(M)(\chid{+}{-1}(M)-1)\\
&=\bar{E}(M)(\chi_-(M)+\chid{+}{-1}(M)-1)=0.
\end{align*}
Hence we obtain 
$\cpa{A}{\lambda }{21;1;11;1}{M}=\cpa{B}{\lambda }{21;1;11;1}{M}$. 
Here we use the relations
\begin{align}
(m_{11}-m_{22})\bar{E}\bkt{M\cgpt{0}{0}{-1}}
=&(m_{11}-m_{22})\{\bar{E}(M)-(m_{12}-m_{23})\chi_-(M)\}\\
&-\bar{E}(M)\chid{+}{-1}(M).\nonumber
\end{align}

We have
\begin{align*}
\cpa{A}{\lambda }{21;1;11;2}{M}= 
&-(m_{12}-m_{23})\chi_-(M)(m_{12}-m_{23}-1)
\{ m_{13}-m_{33}+1+m_{12}-m_{11}\}\\
=&\cpa{B}{\lambda }{21;1;11;2}{M}.
\end{align*}
Hence we obtain 
$\cpa{A}{\lambda }{21;1;11;2}{M}=\cpa{B}{\lambda }{21;1;11;2}{M}$.

We have
\begin{align*}
&\cpa{A}{\lambda }{21;1;01;1}{M}-\cpa{B}{\lambda }{21;1;01;1}{M}\\
&=(m_{11}-m_{22})\{-(m_{13} -m_{12})(m_{22}-m_{33})\chid{-}{-1}(M)\\
&\hphantom{=}+(m_{13} -m_{12}+1)(m_{22}-m_{33}+1)\chi_-(M)
-(m_{13} -m_{23}-m_{33}+m_{22}+1)\}\\
&\hphantom{=}-(m_{12}-m_{23})\chi_-(M)(m_{13} -m_{12}+1)(m_{22}-m_{33}+1)\\
&\hphantom{=}+(m_{12}-m_{23}) \chid{-}{-1}(M)(m_{13} -m_{12})(m_{22}-m_{33})
+\bar{E}(M)\\
&=-\delta (M)\chi_-(M)(m_{13} -m_{12}+1)(m_{22}-m_{33}+1)\\
&\hphantom{=}+\delta (M) \chid{-}{-1}(M)(m_{13} -m_{12})(m_{22}-m_{33})\\
&\hphantom{=}-(m_{11}-m_{22})(m_{13} -m_{23}-m_{33}+m_{22}+1)+\bar{E}(M)\\
&=-(m_{11}-m_{22})(m_{13} -m_{23}-m_{33}+m_{22}+1)\\
&\hphantom{=}-(m_{13} -m_{33}-m_{12}+m_{22}+1)\delta (M)\chi_-(M)+\bar{E}(M)\\
&=0.
\end{align*}
Hence we obtain 
$\cpa{A}{\lambda }{21;1;01;1}{M}=\cpa{B}{\lambda }{21;1;01;1}{M}$. 
Here we use the relations
\begin{align}
\bar{F}\bkt{M\cgpt{0}{0}{-1}}=&
\bar{F}(M)-(m_{13} -m_{12})(m_{22}-m_{33})\chid{-}{-1}(M)\\
&+(m_{13} -m_{12}+1)(m_{22}-m_{33}+1)\chi_-(M)\nonumber \\
&-(m_{13} -m_{23}-m_{33}+m_{22}+1),\nonumber \\
\bar{E}(M)=&(m_{11}-m_{22})(m_{13} -m_{23}-m_{33}+m_{22}+1)\\
&+(m_{13} -m_{33}-m_{12}+m_{22}+1)\delta (M)\chi_-(M)\nonumber .
\end{align}

We have
\begin{align*}
\cpa{A}{\lambda }{21;1;01;2}{M}= 
&-(m_{11}-m_{22})(m_{11}-m_{22}-1)(m_{23}-m_{22})\chid{+}{-1}(M)\\ 
&-(m_{12}-m_{23})(m_{12}-m_{23}-1)(m_{12}-m_{11})\chi_-(M)\\
=&-C_2(M)(C_1(M)-1)\chid{+}{-1}(M)-C_2(M)(C_1(M)-1)\chi_-(M)\\
=&-C_2(M)(C_1(M)-1),\\
\cpa{B}{\lambda }{21;1;01;2}{M}= 
&-(C_1(M)-1)C_2 (M)\chi_+ (M)-(C_1(M)-1)C_2 (M)\chid{-}{-1}(M)\\
=&-C_2(M)(C_1(M)-1), 
\end{align*}
Hence we obtain 
$\cpa{A}{\lambda }{21;1;01;2}{M}=\cpa{B}{\lambda }{21;1;01;2}{M}$.
Here we use the relation
\begin{align}
\bar{F}(M)\chid{-}{r}(M)=-C_2(M)\chid{-}{r}(M)\quad (r\geq -1).
\label{eqn:pf_clebsh(1,0,0)_FC2}
\end{align}

We have
\begin{align*}
\cpa{A}{\lambda }{21;1;01;3}{M}=&-(m_{12}-m_{23})
C_2\bkt{M\cgpt{0}{0}{-1}[-1]}\chi_-(M)\chid{+}{-1}(M)=0,\\
\cpa{B}{\lambda }{21;1;01;3}{M}= 
&-(m_{12}-m_{23}-2) C_2 (M)\chid{-}{-1}(M)\chi_+(M)=0,
\end{align*}
Hence we obtain 
$\cpa{A}{\lambda }{21;1;01;3}{M}=\cpa{B}{\lambda }{21;1;01;3}{M}$.

We have
\begin{align*}
&\cpa{A}{\lambda }{21;1;00;1}{M}-\cpa{B}{\lambda }{21;1;00;1}{M}\\
&=(m_{11}-m_{22})\{(m_{12}-m_{22})\chi_-(M)-(m_{23}-m_{22})\}+C_2(M)\\
&\hphantom{=}-(m_{12}-m_{23})(m_{12}-m_{22})\chi_-(M)\\
&=-(m_{11}-m_{22})(m_{23}-m_{22})-(m_{12}-m_{22})\delta (M)\chi_-(M)+C_2(M)\\
&=0.
\end{align*}
Hence we obtain 
$\cpa{A}{\lambda }{21;1;00;1}{M}=\cpa{B}{\lambda }{21;1;00;1}{M}$.
Here we use the relations
\begin{align}
C_2\bkt{M\cgpt{0}{0}{-1}}=C_2(M)+(m_{12}-m_{22})\chi_-(M)-(m_{23}-m_{22}).
\end{align}

We have
\begin{align*}
\cpa{A}{\lambda }{21;1;00;2}{M}= 
&(m_{12}-m_{23})(m_{12}-m_{23}-1)(m_{12}-m_{11})\chi_-(M)\\
=&\cpa{B}{\lambda }{21;1;00;2}{M}.
\end{align*}
Hence we obtain 
$\cpa{A}{\lambda }{21;1;00;2}{M}=\cpa{B}{\lambda }{21;1;00;2}{M}$.

It is trivial that the equations (\ref{eqn:pf_clebsh(1,0,0)}) 
 hold for $(j,k,l)=(1,1,0),\ (0,1,0)$ and $(0,0,0)$. \\

\noindent $\bullet $ the proof of 
$E_{23}\circ i^{\lambda }_{\me_{1}}
=i^{\lambda }_{\me_{1}}\circ E_{23}$

We have
\begin{align*}
\cpa{A}{\lambda }{23;1;11;0}{M}= 
&(m_{13}-m_{12})(m_{13} -m_{12}-1)(m_{22}-m_{33}),\\
\cpa{A}{\lambda }{23;1;11;1}{M}= 
&-(m_{13}-m_{12})\bar{E}\bkt{M\cgpt{1}{0}{0}}\\ 
&+\{ m_{13}-m_{12}-\delta (M) \}\chi_-(M) (m_{13} -m_{12})(m_{22}-m_{33}+1),\\
\cpa{A}{\lambda }{23;1;11;2}{M}= 
&-\{ m_{13}-m_{12}-\delta (M)\}\chi_-(M)\bar{E}\bkt{M\cgpt{1}{0}{0}[-1]},\\
\cpa{A}{\lambda }{23;1;01;0}{M}= 
&-(m_{13}-m_{12})(m_{13} -m_{12}-1)(m_{22}-m_{33}),\\ 
\cpa{A}{\lambda }{23;1;01;1}{M}= 
&(m_{13}-m_{12})\bar{F}\bkt{M\cgpt{1}{0}{0}}\\ 
&-\{ m_{13}-m_{12}-\delta (M) \}\chi_-(M)(m_{13} -m_{12})(m_{22}-m_{33}+1),\\
\cpa{A}{\lambda }{23;1;01;2}{M}= 
&-(m_{13}-m_{12})C_2\bkt{M\cgpt{1}{0}{0}}\chi_+\bkt{M\cgpt{1}{0}{0}}\\ 
&+\{ m_{13}-m_{12}-\delta (M) \} \chi_-(M)\bar{F}\bkt{M\cgpt{1}{0}{0}[-1]},\\
\cpa{A}{\lambda }{23;1;01;3}{M}= 
&-\{ m_{13}-m_{12}-\delta (M) \} \chi_-(M) 
C_2\bkt{M\cgpt{1}{0}{0}[-1]}\chi_+\bkt{M\cgpt{1}{0}{0}[-1]},\\
\cpa{A}{\lambda }{23;1;00;0}{M}= 
&-(m_{13}-m_{12})(m_{13} -m_{12}-1)(m_{13}-m_{22}+1),\\ 
\cpa{A}{\lambda }{23;1;00;1}{M}= 
&(m_{13}-m_{12})C_2\bkt{M\cgpt{1}{0}{0}}\\ 
&-\{ m_{13}-m_{12}-\delta (M) \} \chi_-(M)(m_{13} -m_{12})(m_{13}-m_{22}),\\
\cpa{A}{\lambda }{23;1;00;2}{M}= 
&\{ m_{13}-m_{12}-\delta (M) \} \chi_-(M)C_2\bkt{M\cgpt{1}{0}{0}[-1]},
\end{align*}
and
\begin{align*}
\cpa{B}{\lambda }{23;1;11;0}{M}= 
&(m_{13}-m_{12}-1)(m_{13} -m_{12})(m_{22}-m_{33}),\\ 
\cpa{B}{\lambda }{23;1;11;1}{M}=&-(m_{13}-m_{12})\bar{E} (M)\\ 
&+\{ m_{13}-m_{12}-\delta (M)-1\}\chi_-(M)(m_{13} -m_{12})(m_{22}-m_{33}),\\ 
\cpa{B}{\lambda }{23;1;11;2}{M}= 
&-\{ m_{13}-m_{12}-\delta (M)\}\chi_-(M)\bar{E} (M),\\ 
\cpa{B}{\lambda }{23;1;01;0}{M}= 
&-(m_{13}-m_{12}-1)(m_{13} -m_{12})(m_{22}-m_{33}),\\
\cpa{B}{\lambda }{23;1;01;1}{M}=&(m_{13}-m_{12})\bar{F} (M)\\ 
&-\{ m_{13}-m_{12}-\delta (M)\}\chid{-}{-1}(M)(m_{13} -m_{12})(m_{22}-m_{33})\\
&-(m_{13} -m_{12})(m_{13}-m_{22}+1),\\
\cpa{B}{\lambda }{23;1;01;2}{M}= 
&-(m_{13}-m_{12}+1)C_2 (M)\chi_+ (M)\\ 
&+\{ m_{13}-m_{12}-\delta (M)+1 \}\chid{-}{-1}(M) \bar{F} (M)+C_2 (M),\\ 
\cpa{B}{\lambda }{23;1;01;3}{M}= 
&-\{ m_{13}-m_{12}-\delta (M)+2 \}\chid{-}{-1}(M)C_2 (M)\chi_+ (M),\\ 
\cpa{B}{\lambda }{23;1;00;0}{M}= 
&-(m_{13}-m_{12}-1)(m_{13} -m_{12})(m_{13}-m_{22}+1),\\ 
\cpa{B}{\lambda }{23;1;00;1}{M}=&(m_{13}-m_{12})C_2 (M)\\ 
&-\{ m_{13}-m_{12}-\delta (M)-1 \}\chi_-(M)
(m_{13} -m_{12})(m_{13}-m_{22}+1),\\ 
\cpa{B}{\lambda }{23;1;00;2}{M}= 
&\{ m_{13}-m_{12}-\delta (M)\}\chi_-(M) C_2 (M).
\end{align*}

We have
\begin{align*}
\cpa{A}{\lambda }{23;1;11;1}{M}-\cpa{B}{\lambda }{23;1;11;1}{M}
=&-(m_{13}-m_{12})(m_{13}+m_{23}-m_{33}-2m_{12}+m_{11})\chi_-(M)\\
&+(m_{13} -m_{12})\chi_-(M)(m_{13}-m_{12}-
\delta (M)+m_{22}-m_{33})\\
=&0.
\end{align*}
Hence we obtain 
$\cpa{A}{\lambda }{23;1;11;1}{M}=\cpa{B}{\lambda }{23;1;11;1}{M}$. 
Here we use the relation
\begin{align}
\bar{E}\bkt{M\cgpt{1}{0}{0}}
=\bar{E}(M)+(m_{13}+m_{23}-m_{33}-2m_{12}+m_{11})\chi_-(M).
\end{align}

By the relation 
\begin{align}
\bar{E}\bkt{M\cgpt{1}{0}{0}[-1]}\chi_-(M)=\bar{E} (M)\chi_-(M),
\end{align}
Hence we obtain
\begin{align*}
\cpa{A}{\lambda }{23;1;11;2}{M}=\cpa{B}{\lambda }{23;1;11;2}{M}.
\end{align*}

We have
\begin{align*}
&\cpa{A}{\lambda }{23;1;01;1}{M}-\cpa{B}{\lambda }{23;1;01;1}{M}\\
&=(m_{13}-m_{12})\{(m_{13}-m_{12}-\delta (M))\chi_-(M)\\
&\hphantom{=}-(m_{13}-m_{12})(m_{22}-m_{33})(\chid{-}{-1}-\chi_-(M))
+(m_{13}-m_{22}+1)\}\\
&\hphantom{=}-\{ m_{13}-m_{12}-\delta (M) \}\chi_-(M)
	(m_{13} -m_{12})(m_{22}-m_{33}+1)\\
&\hphantom{=}+\{ m_{13}-m_{12}-\delta (M)\}\chid{-}{-1}(M)
	(m_{13} -m_{12})(m_{22}-m_{33})\\
&\hphantom{=}+(m_{13} -m_{12})(m_{13}-m_{22}+1)\\
&=(m_{13} -m_{12})(m_{22}-m_{33})\delta (M)(\chi_-(M)-\chid{-}{-1}(M))\\
&=0.
\end{align*}
Hence we obtain 
$\cpa{A}{\lambda }{23;1;01;1}{M}=\cpa{B}{\lambda }{23;1;01;1}{M}$. 
Here we use the relation
\begin{align}
\bar{F}\bkt{M\cgpt{1}{0}{0}}=&\bar{F}(M)+(m_{13}-m_{12}-\delta (M))\chi_-(M)\\
&-(m_{13}-m_{12})(m_{22}-m_{33})(\chid{-}{-1}-\chi_-(M))
+(m_{13}-m_{22}+1).\nonumber
\end{align}

We have
\begin{align*}
\cpa{A}{\lambda }{23;1;01;2}{M}
=&-(m_{13}-m_{12})(m_{11}-m_{22})(m_{23}-m_{22})\chid{+}{-1}(M)\\
&-\{ m_{13}-m_{12}-\delta (M) \} (m_{12}-m_{23})(m_{12}-m_{11})\chi_-(M)\\
=&-(m_{13}-m_{12})C_2(M)\chid{+}{-1}(M)
-\{ m_{13}-m_{12}-\delta (M) \}C_2(M)\chi_-(M)\\
=&-(m_{13}-m_{12})C_2(M)+\delta (M)C_2(M)\chi_-(M),\\
\cpa{B}{\lambda }{23;1;01;2}{M}= 
&-(m_{13}-m_{12}+1)C_2 (M)\chi_+ (M)\\ 
&-\{ m_{13}-m_{12}-\delta (M)+1 \}C_2 (M)\chid{-}{-1}(M) +C_2 (M)\\
=&-(m_{13}-m_{12}+1)C_2 (M)+C_2 (M)\delta (M)\chid{-}{-1}(M)+C_2 (M)\\
=&-(m_{13}-m_{12})C_2(M)+\delta (M)C_2(M)\chi_-(M).
\end{align*}
Hence we obtain 
$\cpa{A}{\lambda }{23;1;01;2}{M}=\cpa{B}{\lambda }{23;1;01;2}{M}$.
Here we use the relation (\ref{eqn:pf_clebsh(1,0,0)_FC2}).

We have
\begin{align*}
\cpa{A}{\lambda }{23;1;01;3}{M}= 
&-\{ m_{13}-m_{12}-\delta (M) \} C_2\bkt{M\cgpt{1}{0}{0}[-1]}
\chi_-(M)\chid{+}{-1}(M)=0,\\
\cpa{B}{\lambda }{23;1;01;3}{M}= 
&-\{ m_{13}-m_{12}-\delta (M)+2 \}C_2 (M)\chid{-}{-1}(M)\chi_+ (M)=0.
\end{align*}
Hence we obtain 
$\cpa{A}{\lambda }{23;1;01;3}{M}=\cpa{B}{\lambda }{23;1;01;3}{M}$.

We have
\begin{align*}
&\cpa{A}{\lambda }{23;1;00;1}{M}-\cpa{B}{\lambda }{23;1;00;1}{M}\\
&=(m_{13}-m_{12})(m_{12}-m_{22}+1+\delta (M))\chi_-(M)\\
&\hphantom{=}-\{ m_{13}-m_{12}-\delta (M) \} 
	\chi_-(M)(m_{13} -m_{12})(m_{13}-m_{22})\\
&\hphantom{=}+\{ m_{13}-m_{12}-\delta (M)-1 \}
	\chi_-(M)(m_{13} -m_{12})(m_{13}-m_{22}+1)\\
&=0.
\end{align*}
Hence we obtain 
$\cpa{A}{\lambda }{23;1;00;1}{M}=\cpa{B}{\lambda }{23;1;00;1}{M}$. 
Here we use the relation
\begin{align}
C_2\bkt{M\cgpt{1}{0}{0}}=C_2(M)+(m_{12}-m_{22}+1+\delta (M))\chi_-(M).
\end{align}

We have
\begin{align*}
&\cpa{A}{\lambda }{23;1;00;2}{M}\\
&=\{ m_{13}-m_{12}-\delta (M)\}\chi_-(M) C_2 (M)\\
&=\cpa{B}{\lambda }{23;1;00;2}{M}.
\end{align*}
Hence we obtain 
$\cpa{A}{\lambda }{23;1;00;2}{M}=\cpa{B}{\lambda }{23;1;00;2}{M}$. 

It is trivial that the equations (\ref{eqn:pf_clebsh(1,0,0)}) 
 hold for $(j,k,l)=(1,1,0),\ (0,1,0)$ and $(0,0,0)$. \\

\noindent $\bullet $ the proof of 
$E_{32}\circ i^{\lambda }_{\me_{1}}
=i^{\lambda }_{\me_{1}}\circ E_{32}$

We have
\begin{align*}
\cpa{A}{\lambda }{32;1;11;0}{M}= 
&(m_{22}-m_{33})(m_{13} -m_{12})(m_{22}-m_{33}-1),\\
\cpa{A}{\lambda }{32;1;11;1}{M}= 
&-(m_{22}-m_{33})\bar{E}\bkt{M\cgpt{0}{-1}{0}}\\ 
&+\{ m_{22}-m_{33}+\delta (M) \} \chi_+(M) (m_{13} -m_{12}+1)(m_{22}-m_{33}),\\
\cpa{A}{\lambda }{32;1;11;2}{M}= 
&-\{ m_{22}-m_{33}+\delta (M) \} \chi_+(M) \bar{E}\bkt{M\cgpt{0}{-1}{0}[-1]},\\
\cpa{A}{\lambda }{32;1;01;0}{M}= 
&-(m_{22}-m_{33})(m_{13} -m_{12})(m_{22}-m_{33}-1),\\ 
\cpa{A}{\lambda }{32;1;01;1}{M}= 
&(m_{22}-m_{33})\bar{F}\bkt{M\cgpt{0}{-1}{0}}\\ 
&-\{ m_{22}-m_{33}+\delta (M) \} \chi_+(M)(m_{13} -m_{12}+1)(m_{22}-m_{33}),\\
\cpa{A}{\lambda }{32;1;01;2}{M}= 
&-(m_{22}-m_{33})C_2\bkt{M\cgpt{0}{-1}{0}}\chi_+\bkt{M\cgpt{0}{-1}{0}}\\ 
&+\{ m_{22}-m_{33}+\delta (M) \} \chi_+(M)\bar{F}\bkt{M\cgpt{0}{-1}{0}[-1]},\\
\cpa{A}{\lambda }{32;1;01;3}{M}=&-\{ m_{22}-m_{33}+\delta (M) \} \chi_+(M)
C_2\bkt{M\cgpt{0}{-1}{0}[-1]}\chi_+\bkt{M\cgpt{0}{-1}{0}[-1]},\\
\cpa{A}{\lambda }{32;1;00;0}{M}= 
&-(m_{22}-m_{33})(m_{13} -m_{12})(m_{13}-m_{22}+2),\\ 
\cpa{A}{\lambda }{32;1;00;1}{M}= 
&(m_{22}-m_{33})C_2\bkt{M\cgpt{0}{-1}{0}}\\ 
&-\{ m_{22}-m_{33}+\delta (M) \} \chi_+(M)
(m_{13} -m_{12}+1)(m_{13}-m_{22}+1),\\
\cpa{A}{\lambda }{32;1;00;2}{M}= 
&\{ m_{22}-m_{33}+\delta (M) \} \chi_+(M)C_2\bkt{M\cgpt{0}{-1}{0}[-1]},
\end{align*}
and
\begin{align*}
\cpa{B}{\lambda }{32;1;11;0}{M}= 
&(m_{22}-m_{33}-1)(m_{13} -m_{12})(m_{22}-m_{33}),\\ 
\cpa{B}{\lambda }{32;1;11;1}{M}= 
&-(m_{22}-m_{33})\bar{E} (M)\\ 
&+\{ m_{22}-m_{33}+\delta (M)-1\}\chi_+(M)(m_{13} -m_{12})(m_{22}-m_{33}),\\
\cpa{B}{\lambda }{32;1;11;2}{M}= 
&-\{ m_{22}-m_{33}+\delta (M)\}\chi_+(M) \bar{E} (M),\\
\cpa{B}{\lambda }{32;1;01;0}{M}= 
&-(m_{22}-m_{33}-1)(m_{13} -m_{12})(m_{22}-m_{33}),\\ 
\cpa{B}{\lambda }{32;1;01;1}{M}= 
&(m_{22}-m_{33})\bar{F} (M)\\ 
&-\{ m_{22}-m_{33}+\delta (M)-2\}\chid{+}{1}(M)
(m_{13} -m_{12})(m_{22}-m_{33}),\\ 
\cpa{B}{\lambda }{32;1;01;2}{M}= 
&-(m_{22}-m_{33}+1)C_2 (M)\chi_+ (M)\\ 
&+\{ m_{22}-m_{33}+\delta (M)-1 \}\chid{+}{1}(M)\bar{F} (M),\\ 
\cpa{B}{\lambda }{32;1;01;3}{M}= 
&-\{ m_{22}-m_{33}+\delta (M)\}\chid{+}{1}(M)C_2 (M)\chi_+ (M),\\ 
\cpa{B}{\lambda }{32;1;00;0}{M}= 
&-(m_{22}-m_{33})(m_{13} -m_{12})(m_{13}-m_{22}+1)
-(m_{13} -m_{12})(m_{22}-m_{33}),\\
\cpa{B}{\lambda }{32;1;00;1}{M}=&(m_{22}-m_{33}+1)C_2 (M)\\ 
&-\{ m_{22}-m_{33}+\delta (M)\}\chi_+(M)(m_{13} -m_{12})(m_{13}-m_{22}+1)
+ \bar{F} (M),\\
\cpa{B}{\lambda }{32;1;00;2}{M}= 
&\{ m_{22}-m_{33}+\delta (M)+1\}\chi_+(M) C_2 (M)-C_2 (M)\chi_+ (M).
\end{align*}

We have
\begin{align*}
&\cpa{A}{\lambda }{32;1;11;1}{M}-\cpa{B}{\lambda }{32;1;11;1}{M}\\
&=-(m_{22}-m_{33})(m_{13}-m_{33}-m_{12}+m_{22}+\delta (M))\chi_+(M)\\
&\hphantom{=}+\{ m_{22}-m_{33}+\delta (M) \} 
	\chi_+(M) (m_{13} -m_{12}+1)(m_{22}-m_{33})\\
&\hphantom{=}-\{ m_{22}-m_{33}+\delta (M)-1\}
	\chi_+(M)(m_{13} -m_{12})(m_{22}-m_{33})\\
&=0.
\end{align*}
Hence we obtain 
$\cpa{A}{\lambda }{32;1;11;1}{M}=\cpa{B}{\lambda }{32;1;11;1}{M}$. 
Here we use the relation
\begin{align}
\bar{E}\bkt{M\cgpt{0}{-1}{0}}
=\bar{E}(M)+(m_{13}-m_{33}-m_{12}+m_{22}+\delta (M))\chi_+(M).
\end{align}

By the relation 
\begin{align}
\bar{E}\bkt{M\cgpt{0}{-1}{0}[-1]}\chi_+(M)=\bar{E} (M)\chi_+(M),
\end{align}
we obtain
\begin{align*}
\cpa{A}{\lambda }{32;1;11;2}{M}=\cpa{B}{\lambda }{32;1;11;2}{M}.
\end{align*}

We have
\begin{align*}
&\cpa{A}{\lambda }{32;1;01;1}{M}-\cpa{B}{\lambda }{32;1;01;1}{M}\\
&=(m_{22}-m_{33})\{-(m_{13}-m_{12})(m_{22}-m_{33}-1)\chid{+}{1}(M)\\
&\hphantom{=}+(m_{13}-m_{12}+1)(m_{22}-m_{33}+1)\chi_+(M)
	+(\delta (M)-1)\chi_+(M)\\
&\hphantom{=}-( m_{22}-m_{33}+\delta (M) )\chi_+(M)(m_{13} -m_{12}+1)\\
&\hphantom{=}+( m_{22}-m_{33}+\delta (M)-2)\chid{+}{1}(M)(m_{13} -m_{12})\}\\
&=(m_{22}-m_{33})(m_{13} -m_{12})(\delta (M)-1)(\chid{+}{1}(M)-\chi_+(M))\\
&=0.
\end{align*}
Hence we obtain 
$\cpa{A}{\lambda }{32;1;01;1}{M}=\cpa{B}{\lambda }{32;1;01;1}{M}$. 
Here we use the relation
\begin{align}
\bar{F}\bkt{M\cgpt{0}{-1}{0}}=&
\bar{F}(M)-(m_{13}-m_{12})(m_{22}-m_{33}-1)\chid{+}{1}\\
&+\{(m_{13}-m_{12}+1)(m_{22}-m_{33}+1)+\delta (M)-1\}\chi_+(M).\nonumber
\end{align}

We have
\begin{align*}
&\cpa{A}{\lambda }{32;1;01;2}{M}-\cpa{B}{\lambda }{32;1;01;2}{M}\\
&=( m_{22}-m_{33}+\delta (M) )
(\bar{F}\bkt{M\cgpt{0}{-1}{0}[-1]}\chi_+(M)-\bar{F}(M)\chid{+}{1}(M))\\
&\hphantom{=}+\bar{F}(M)\chid{+}{1}(M)+(m_{22}-m_{33}+1)C_2 (M)\chi_+ (M)
-(m_{22}-m_{33})C_2\bkt{M\cgpt{0}{-1}{0}}\chid{+}{1}(M)\\
&=( m_{22}-m_{33}+\delta (M) )
\{(m_{13}-m_{22}+1)\chid{+}{1}(M)+C_2(M)\chid{+}{1}(M)\}\\
&\hphantom{=}-(\delta (M)-1)C_2 (M)\chi_+ (M)+\bar{F}(M)\chid{+}{1}(M)
-(m_{22}-m_{33})C_2\bkt{M\cgpt{0}{-1}{0}}\chid{+}{1}(M)\\
&=( m_{22}-m_{33}+\delta (M) )(m_{13}-m_{22}+1)\chid{+}{1}(M)\\
&\hphantom{=}-(m_{22}-m_{33})(C_2\bkt{M\cgpt{0}{-1}{0}}-C_2(M))\chid{+}{1}(M)
+C_2 (M)\chid{+}{1}(M)+\bar{F}(M)\chid{+}{1}(M)\\
&=\chid{+}{1}(M)\{C_2 (M)+(m_{13}-m_{12})(m_{22}-m_{33})
+(m_{13}-m_{33}+1)\delta (M)+\bar{F}(M)\chid{+}{1}(M)\}\\
&=0.
\end{align*}
Hence we obtain 
$\cpa{A}{\lambda }{32;1;01;2}{M}=\cpa{B}{\lambda }{32;1;01;2}{M}$. 
Here we use the relations 
\begin{align}
\bar{F}\bkt{M\cgpt{0}{-1}{0}[-1]}\chi_+(M)=&
\bar{F}(M)\chid{+}{1}(M)+(m_{13}-m_{22}+1)\chid{+}{1}(M)\\
&+C_2(M)(\chid{+}{1}(M)-\chi_+(M))\nonumber ,\\
C_2\bkt{M\cgpt{0}{-1}{0}}\chid{+}{1}(M)
=&(C_2(M)+m_{12}-m_{22}+1-\delta (M))\chid{+}{1}(M) .\nonumber
\end{align}

We have
\begin{align*}
\cpa{A}{\lambda }{32;1;01;3}{M}=&-\{ m_{22}-m_{33}+\delta (M) \} \chi_+(M)
(m_{11}-m_{22})(m_{23}-m_{22})\chid{+}{1}(M)\\
=&\cpa{B}{\lambda }{32;1;01;3}{M}.
\end{align*}
Hence we obtain 
$\cpa{A}{\lambda }{32;1;01;3}{M}=\cpa{B}{\lambda }{32;1;01;3}{M}$. 

We have
\begin{align*}
&\cpa{A}{\lambda }{32;1;00;1}{M}-\cpa{B}{\lambda }{32;1;00;1}{M}\\
&= (m_{22}-m_{33})(m_{12}-m_{22}+1-\delta (M))\chi_+(M)-C_2 (M)\\
&\hphantom{=}-\{ m_{22}-m_{33}+\delta (M) \} 
	\chi_+(M)(m_{13}-m_{22}+1)- \bar{F} (M)\\
&=-C_2 (M)-\chi_+(M)\{(m_{13}-m_{12})(m_{22}-m_{33})
	+(m_{13}-m_{33}+1)\delta (M) \}- \bar{F} (M)\\
&=0.
\end{align*}
Hence we obtain 
$\cpa{A}{\lambda }{32;1;00;1}{M}=\cpa{B}{\lambda }{32;1;00;1}{M}$. 
Here we use the relation
\begin{align}
C_2\bkt{M\cgpt{0}{-1}{0}}=C_2(M)+(m_{12}-m_{22}+1-\delta (M))\chi_+(M).
\end{align}

We have
\begin{align*}
\cpa{A}{\lambda }{32;1;00;2}{M}=\cpa{B}{\lambda }{32;1;00;2}{M}.\\
\end{align*}
Hence we obtain 
$\cpa{A}{\lambda }{32;1;00;2}{M}=\cpa{B}{\lambda }{32;1;00;2}{M}$. 

It is trivial that the equations (\ref{eqn:pf_clebsh(1,0,0)}) 
 hold for $(j,k,l)=(1,1,0),\ (0,1,0)$ and $(0,0,0)$. \\


\end{proof}

\subsection{Irreducible decompositions of 
$V_{\lambda }\otimes_\mC V_{2\me_{1}}$ and 
$V_{\lambda }\otimes_\mC V_{-2\me_{3}}$}
\label{subsec:clebsh(2,0,0)}
For a vector space $W$, we denote by $\id_W$ the identity map of $W$. 
We denote $\lambda \pm (\me_i+\me_j)\quad (1\leq i\leq j\leq 3)$ 
by $\lambda [\pm ij]$ for the sake of simplicity.
Generically the tensor product $V_{\lambda }\otimes_\mC V_{2\me_{1}}$ 
has six irreducible components: 
$V_{\lambda \epe{i}{j}},\ 1\leq i\leq j\leq 3$. 
Here some components may vanish. 

For $1\leq i\leq j\leq 3$, let $i^{\lambda }_{\me_{i}+\me_{j}}$ be 
a non-zero generator of 
$\Hom_K (V_{\lambda \epe{i}{j}},V_{\lambda }\otimes_\mC V_{2\me_{1}})$, 
which is unique up to scalar multiple 
if $V_{\lambda }\otimes_\mC V_{2\me_{1}}$ has 
a non-zero $\tau_{\lambda \epe{i}{j}}$-isotypic component. 

\begin{lem}\label{lem:clebsh}
\textit
We define a linear map $P_{\me_{1}}\colon  
V_{\me_{1}}\otimes_\mC V_{\me_{1}}\to V_{2\me_{1}}$ by
\begin{align*}
P_{\me_{1}}\bkt{f\gpt{1}{0}{0}{1}{0}{1} 
	\otimes f\gpt{1}{0}{0}{1}{0}{1}} 
	=&f\gpt{2}{0}{0}{2}{0}{2}, &
P_{\me_{1}}\bkt{f\gpt{1}{0}{0}{1}{0}{1} 
	\otimes f\gpt{1}{0}{0}{1}{0}{0}} 
	=&f\gpt{2}{0}{0}{2}{0}{1}, \\
P_{\me_{1}}\bkt{f\gpt{1}{0}{0}{1}{0}{1} 
	\otimes f\gpt{1}{0}{0}{0}{0}{0}} 
	=&f\gpt{2}{0}{0}{1}{0}{1}, &
P_{\me_{1}}\bkt{f\gpt{1}{0}{0}{1}{0}{0} 
	\otimes f\gpt{1}{0}{0}{1}{0}{1}} 
	=&f\gpt{2}{0}{0}{2}{0}{1}, \\
P_{\me_{1}}\bkt{f\gpt{1}{0}{0}{1}{0}{0} 
	\otimes f\gpt{1}{0}{0}{1}{0}{0}} 
	=&f\gpt{2}{0}{0}{2}{0}{0}, &
P_{\me_{1}}\bkt{f\gpt{1}{0}{0}{1}{0}{0} 
	\otimes f\gpt{1}{0}{0}{0}{0}{0}} 
	=&f\gpt{2}{0}{0}{1}{0}{0}, \\
P_{\me_{1}}\bkt{f\gpt{1}{0}{0}{0}{0}{0} 
	\otimes f\gpt{1}{0}{0}{1}{0}{1}} 
	=&f\gpt{2}{0}{0}{1}{0}{1}, &
P_{\me_{1}}\bkt{f\gpt{1}{0}{0}{0}{0}{0} 
	\otimes f\gpt{1}{0}{0}{1}{0}{0}} 
	=&f\gpt{2}{0}{0}{1}{0}{0}, \\
P_{\me_{1}}\bkt{f\gpt{1}{0}{0}{0}{0}{0} 
	\otimes f\gpt{1}{0}{0}{0}{0}{0}} 
	=&f\gpt{2}{0}{0}{0}{0}{0}. 
\end{align*}
Then $P_{\me_{1}}$ is a non-zero generator of 
$\Hom_K (V_{\me_{1}}\otimes_\mC V_{\me_{1}},V_{2\me_{1}})$. 
\end{lem}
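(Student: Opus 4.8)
The plan is to check by a direct computation that $P_{\me_1}$ is a $\g \gl(3,\mC )$-homomorphism, and then to pin down the dimension of the relevant $\Hom$-space so that being a non-zero homomorphism forces $P_{\me_1}$ to be a generator. First I would list the monomial bases explicitly. Writing $u_1,u_2,u_3$ for the three G-patterns of type $\me_1$, their weights are $\me_1,\me_2,\me_3$; writing $w_1,\dots ,w_6$ for the six G-patterns of type $2\me_1$ in the order appearing in Lemma \ref{lem:K-action}, their weights are $2\me_1,\ \me_1+\me_2,\ \me_1+\me_3,\ 2\me_2,\ \me_2+\me_3,\ 2\me_3$. Inspecting the defining values shows that $P_{\me_1}$ carries each $u_i\otimes u_j$ to the basis vector of $V_{2\me_1}$ whose weight is $\mathrm{wt}(u_i)+\mathrm{wt}(u_j)$; in particular $P_{\me_1}$ is symmetric and preserves weights. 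The latter already gives the commutation with the Cartan generators $E_{mm}$ ($1\le m\le 3$) for free, since on a weight vector of weight $\gamma$ both $E_{mm}\circ P_{\me_1}$ and $P_{\me_1}\circ E_{mm}$ act by the scalar $\gamma_m$.

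The core of the argument is the compatibility with the four simple root vectors $E_{12},E_{21},E_{23},E_{32}$. For each one I would evaluate $E_{mn}\circ P_{\me_1}$ and $P_{\me_1}\circ E_{mn}$ on all nine tensors $u_i\otimes u_j$. On the source the action is the Leibniz rule $E_{mn}(u_i\otimes u_j)=(E_{mn}u_i)\otimes u_j+u_i\otimes (E_{mn}u_j)$, and the action on the three-dimensional $V_{\me_1}$ obtained from Proposition \ref{prop:action_on_GZ-basis} is a single weight shift in each case (for example $E_{12}$ sends $u_2\mapsto u_1$ and annihilates $u_1,u_3$). On the target I would apply the same proposition to $w_1,\dots ,w_6$. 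Since every pattern occurring here has $m_{23}=m_{22}=m_{33}=0$, the characteristic-function terms in the formulas collapse and each computation reduces to comparing a few small integers. A representative case is $u_2\otimes u_2$ under $E_{12}$: the source side gives $P_{\me_1}\bkt{u_1\otimes u_2+u_2\otimes u_1}=2w_2$, while the target side gives $E_{12}w_4=2w_2$ because $E_{12}f\gpt{2}{0}{0}{2}{0}{0}=2\,f\gpt{2}{0}{0}{2}{0}{1}$.

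Having disposed of $E_{12}$, the remaining three cases are entirely analogous: again most of the nine tensors are killed on at least one side, and the surviving coefficients match. (If one wishes to economize, the identities for $E_{21}$ and $E_{32}$ can instead be deduced from those for $E_{12}$ and $E_{23}$ via the automorphism $\omega$ of Proposition \ref{prop:sym_GZ-basis}, which swaps raising and lowering operators and is compatible with the duality $V_{2\me_1}\simeq V_{-2\me_3}$ of Lemma \ref{lem:K-action}.) This establishes $P_{\me_1}\in\Hom_K\bkt{V_{\me_1}\otimes_\mC V_{\me_1},V_{2\me_1}}$, and $P_{\me_1}\neq 0$ is clear since its image contains a basis of $V_{2\me_1}$. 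To finish, the Clebsch-Gordan rule gives the multiplicity-free decomposition $V_{\me_1}\otimes_\mC V_{\me_1}\simeq V_{2\me_1}\oplus V_{\me_1+\me_2}$, so Schur's lemma yields $\dim_\mC\Hom_K\bkt{V_{\me_1}\otimes_\mC V_{\me_1},V_{2\me_1}}=1$; hence the non-zero element $P_{\me_1}$ is a generator.

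The only real obstacle is bookkeeping, not conceptual content: $P_{\me_1}$ is visibly the multiplication map realizing the Cartan product $\mathrm{Sym}^2V_{\me_1}\cong V_{2\me_1}$, so a homomorphism is guaranteed to exist, and the work lies entirely in confirming that the Gelfand-Zelevinsky normalization of the monomial basis makes the stated constants come out exactly equal rather than merely proportional. Careful handling of the shift symbols $M\cgpt{\cdot}{\cdot}{\cdot}$ and $[-1]$ in Proposition \ref{prop:action_on_GZ-basis} is what needs attention.
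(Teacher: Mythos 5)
Your proof is correct, but it takes a genuinely different route from the paper's. The paper does not verify equivariance generator by generator; instead it specializes Proposition \ref{prop:clebsh(1,0,0)} to $\lambda =\me_1$ to write down the injectors $i^{\me_1}_{\me_1}$ and $i^{\me_1}_{\me_2}$ explicitly, and then checks by direct computation that $P_{\me_1}\circ i^{\me_1}_{\me_1}=-6\hs \id_{V_{2\me_1}}$ and $P_{\me_1}\circ i^{\me_1}_{\me_2}=0$; since $V_{\me_1}\otimes_\mC V_{\me_1}\simeq V_{2\me_1}\oplus V_{\me_1+\me_2}$ is multiplicity-free, these two identities force $P_{\me_1}$ to be $-6$ times the equivariant projection onto the $V_{2\me_1}$-component, hence a non-zero generator. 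You instead verify the intertwining property directly against the Cartan generators and the four simple root vectors via Proposition \ref{prop:action_on_GZ-basis} and then invoke Schur's lemma together with the same multiplicity-free decomposition. Both arguments are valid: the paper's is shorter on the page because it recycles the laboriously established Formulas 1 and 2 of Proposition \ref{prop:clebsh(1,0,0)}, while yours is self-contained modulo the Gelfand--Zelevinsky action formulas, at the cost of $4\times 9$ small evaluations. One caveat on your sketch: the claim that the characteristic-function terms collapse because $m_{23}=m_{22}=m_{33}=0$ holds for $E_{12}$, $E_{21}$ and $E_{23}$ (there $\delta (M)=m_{12}-m_{11}\geq 0$ kills $\chi_-$ and the factor $m_{23}-m_{22}$ vanishes), but not for $E_{32}$, where the surviving contribution is exactly the second term $\delta (M)\chi_+(M)f\bkt{M\cgpt{0}{-1}{0}[-1]}$ and the first term vanishes --- e.g.\ $E_{32}$ sends $f\gpt{2}{0}{0}{2}{0}{0}$ to $2f\gpt{2}{0}{0}{1}{0}{0}$ through that term alone. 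The verification still goes through and matches the source side, but that is the one operator where the bookkeeping does not trivialize in the way you describe.
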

\begin{proof}
Since $V_{\me_1}\otimes_\mC V_{\me_1}\simeq V_{2\me_1}\oplus V_{\me_1+\me_2}$, 
in order to prove this lemma, 
it suffices to check $P_{\me_1}\circ i^{\me_1}_{\me_2}=0$ and 
that $P_{\me_1}\circ i^{\me_1}_{\me_1}$ agree with the identity map 
up to scalar multiple. 
To consider the case $\lambda =\me_1$ in Proposition \ref{prop:clebsh(1,0,0)}, 
we obtain the explicit expressions of $i^{\me_1}_{\me_1}$ and 
$i^{\me_1}_{\me_2}$ in terms of the monomial basis as follows: 
\begin{align*}
i^{\me_1}_{\me_1}\bkt{f\gpt{2}{0}{0}{2}{0}{2}}=&
-6f\gpt{1}{0}{0}{1}{0}{1}\otimes f\gpt{1}{0}{0}{1}{0}{1},\\
i^{\me_1}_{\me_1}\bkt{f\gpt{2}{0}{0}{2}{0}{1}}=&
-3\left\{f\gpt{1}{0}{0}{1}{0}{1}\otimes f\gpt{1}{0}{0}{1}{0}{0}
+f\gpt{1}{0}{0}{1}{0}{0}\otimes f\gpt{1}{0}{0}{1}{0}{1}\right\}, \\
i^{\me_1}_{\me_1}\bkt{f\gpt{2}{0}{0}{2}{0}{0}}=&
-6f\gpt{1}{0}{0}{1}{0}{0}\otimes f\gpt{1}{0}{0}{1}{0}{0},\\
i^{\me_1}_{\me_1}\bkt{f\gpt{2}{0}{0}{1}{0}{1}}=&
-3\left\{f\gpt{1}{0}{0}{1}{0}{1}\otimes f\gpt{1}{0}{0}{0}{0}{0}
+f\gpt{1}{0}{0}{0}{0}{0}\otimes f\gpt{1}{0}{0}{1}{0}{1}\right\},\\
i^{\me_1}_{\me_1}\bkt{f\gpt{2}{0}{0}{1}{0}{0}}=&
-3\left\{f\gpt{1}{0}{0}{1}{0}{0}\otimes f\gpt{1}{0}{0}{0}{0}{0}
+f\gpt{1}{0}{0}{0}{0}{0}\otimes f\gpt{1}{0}{0}{1}{0}{0}\right\},\\
i^{\me_1}_{\me_1}\bkt{f\gpt{2}{0}{0}{0}{0}{0}}=&
-6f\gpt{1}{0}{0}{0}{0}{0}\otimes f\gpt{1}{0}{0}{0}{0}{0}, \\[3mm]
i^{\me_1}_{\me_2}\bkt{f\gpt{1}{1}{0}{1}{1}{1}}=&
f\gpt{1}{0}{0}{1}{0}{0}\otimes f\gpt{1}{0}{0}{1}{0}{1}
-f\gpt{1}{0}{0}{1}{0}{1}\otimes f\gpt{1}{0}{0}{1}{0}{0}, \\
i^{\me_1}_{\me_2}\bkt{f\gpt{1}{1}{0}{1}{0}{1}}=&
f\gpt{1}{0}{0}{0}{0}{0}\otimes f\gpt{1}{0}{0}{1}{0}{1}
-f\gpt{1}{0}{0}{1}{0}{1}\otimes f\gpt{1}{0}{0}{0}{0}{0}, \\
i^{\me_1}_{\me_2}\bkt{f\gpt{1}{1}{0}{1}{0}{0}}=&
f\gpt{1}{0}{0}{0}{0}{0}\otimes f\gpt{1}{0}{0}{1}{0}{0}
-f\gpt{1}{0}{0}{1}{0}{0}\otimes f\gpt{1}{0}{0}{0}{0}{0}.
\end{align*}
By direct computation, we can easily check 
$P_{\me_1}\circ i^{\me_1}_{\me_1}=-6\id_{V_{2\me_1}}$ 
and $P_{\me_1}\circ i^{\me_1}_{\me_2}=0$.
\end{proof}

By a composition of the projectors in Lemma \ref{lem:clebsh} 
and the injectors in Proposition \ref{prop:clebsh(1,0,0)}, 
we obtain following formulas.

\begin{prop}\label{prop:clebsh(2,0,0)}
\textit
For $1\leq i\leq j\leq 3$ and a G-pattern $M$ of type $\lambda \epe{i}{j}$, 
the image of the monomial basis $f(M)$ by 
the injector $i^{\lambda }_{\me_{i}+\me_{j}}\colon  
V_{\lambda \epe{i}{j}}\to V_{\lambda }\otimes_\mC V_{2\me_{1}}$
 is given by 
\[
i^{\lambda }_{\me_{i}+\me_{j}} (f(M)) 
=\sum_{0\leq k\leq l\leq 2}
\left\{ \sum_{m=0}^{\cpr{i}{j}{k}{l}}
\cp{i}{j}{M}{l}{k}{m} 
f\bkt{M\gpt{}{-\me_{i}-\me_{j}}{}{0}{-l}{-k} [-m]} \right\} 
\otimes f\gpt{2}{0}{0}{l}{0}{k} .
\]
In the right hand side of the above formula, we put $f(M')=0$ 
if $M'$ is a triangular array which does not 
satisfy the condition (\ref{cdn:G-pattern}) of G-patterns.

The explicit expressions of the coefficients are given by 
the following formulas.

\noindent {\bf Formula 1:} The coefficients of 
the injector $i^{\lambda }_{2\me_{1}}\colon  
V_{\lambda \epe{1}{1}}\to V_{\lambda }\otimes_\mC V_{2\me_{1}}$ 
are given as follows: \\
$(\cpr{1}{1}{2}{2},\cpr{1}{1}{1}{2},\cpr{1}{1}{1}{1},
\cpr{1}{1}{0}{2},\cpr{1}{1}{0}{1},\cpr{1}{1}{0}{0})
=(2,3,2,4,3,2)$ and
\begin{align*}
\cp{1}{1}{M}{2}{2}{0}=&(m_{13} -m_{12})(m_{13} -m_{12}-1)
	(m_{22}-m_{33})(m_{22}-m_{33}-1),\\
\cp{1}{1}{M}{2}{2}{1}=&-2(m_{13}-m_{12})(m_{22}-m_{33})(\bar{E}(M)-C_1(M)),\\
\cp{1}{1}{M}{2}{2}{2}=&\bar{E} (M)(C_1 (M)-1)(m_{13} -m_{33}-\bar{C_1} (M)),\\
\cp{1}{1}{M}{2}{1}{0}=&-2(m_{13} -m_{12})(m_{13} -m_{12}-1)
	(m_{22}-m_{33})(m_{22}-m_{33}-1),\\
\cp{1}{1}{M}{2}{1}{1}=&2(m_{13} -m_{12})(m_{22}-m_{33})\left\{ \bar{F}
	\bkt{M\gpt{-1}{0}{0}{0}{-1}{-1}}+\bar{E}(M)\right\},\\
\cp{1}{1}{M}{2}{1}{2}=&-2\left\{\bar{E} (M)\bar{F}
	\bkt{M\gpt{-1}{0}{0}{-1}{0}{-1}} \right.\\
	&\left.+(m_{13} -m_{12})(m_{22}-m_{33})
	C_1 (M)(\bar{C_1} (M)+1)\chi_+ (M)\right\} ,\\
\cp{1}{1}{M}{2}{1}{3}=&2(C_1(M)-1)\bar{C_1} (M)\bar{E} (M)\chi_+ (M),\\
\cp{1}{1}{M}{1}{1}{0}=&-2(m_{13} -m_{12})(m_{13} -m_{12}-1)
	(m_{22}-m_{33})(m_{13}-m_{22}+1),\\
\cp{1}{1}{M}{1}{1}{1}=&2(m_{13} -m_{12})\left\{(m_{22}-m_{33})C_1(M)
	(\bar{C_1}(M)+1)+(m_{13}-m_{22})\bar{E}(M) \right\} ,\\
\cp{1}{1}{M}{1}{1}{2}=&-2\bar{E}(M)(C_1(M)-1)\bar{C_1}(M),\\
\cp{1}{1}{M}{2}{0}{0}=&(m_{13} -m_{12})(m_{13} -m_{12}-1)
	(m_{22}-m_{33})(m_{22}-m_{33}-1),\\
\cp{1}{1}{M}{2}{0}{1}=&-(m_{13} -m_{12})(m_{22}-m_{33})
	\bkt{\bar{F} (M) + \bar{F}\bkt{M\gpt{-1}{0}{0}{0}{-1}{0}} },\\
\cp{1}{1}{M}{2}{0}{2}=&(m_{13} -m_{12}+1)(m_{22}-m_{33}+1)
	C_2 (M)\chi_+ (M) \\
	&+(m_{13} -m_{12})(m_{22}-m_{33})(C_1 (M)+1)(\bar{C_1} (M)+1)
	\chid{+}{1} (M)\\
	&+\bar{F} (M)\bar{F}\bkt{M\gpt{-1}{0}{0}{-1}{0}{0}} ,\\
\cp{1}{1}{M}{2}{0}{3}=&-C_2 (M) \left\{\chid{+}{1} (M)\bar{F} (M) 
	+\chi_+ (M) \bar{F}\bkt{M\gpt{-1}{0}{0}{-2}{1}{0}} \right\},\\
\cp{1}{1}{M}{2}{0}{4}=&C_2 (M) (C_1 (M)-1)(\bar{C_1} (M)-1)\chid{+}{1}(M),\\
\cp{1}{1}{M}{1}{0}{0}=&2(m_{13} -m_{12})(m_{13} -m_{12}-1)
	(m_{22}-m_{33})(m_{13}-m_{22}+1),\\
\cp{1}{1}{M}{1}{0}{1}=&-2(m_{13}-m_{12})\left\{ (m_{13}-m_{22})
	\bar{F}(M)\right. 
	\left. +(m_{22}-m_{33})C_2\bkt{M\gpt{-1}{0}{0}{0}{-1}{0}}\right\} ,\\
\cp{1}{1}{M}{1}{0}{2}=&2 \left\{ C_2\bkt{M\gpt{-1}{0}{0}{-1}{0}{0}}
	\bar{F}(M)\right. \\
	&\left. +\chi_+(M)(m_{13}-m_{12}+1)(m_{13}-m_{22}-1)
	C_2 (M) \right\} ,\\
\cp{1}{1}{M}{1}{0}{3}=&-2C_2 (M)(C_1(M)-1)(\bar{C_1} (M)-1) \chi_+ (M),\\
\cp{1}{1}{M}{0}{0}{0}=&(m_{13} -m_{12})(m_{13} -m_{12}-1)
	(m_{13}-m_{22}+1)(m_{13}-m_{22}),\\
\cp{1}{1}{M}{0}{0}{1}=&-2(m_{13} -m_{12})(m_{13}-m_{22})C_2(M),\\
\cp{1}{1}{M}{0}{0}{2}=&C_2 (M)(C_1(M)-1)(\bar{C_1} (M)-1) .
\end{align*}

\noindent {\bf Formula 2:} The coefficients of 
the injector $i^{\lambda }_{2\me_{2}}\colon  
V_{\lambda \epe{2}{2}}\to V_{\lambda }\otimes_\mC V_{2\me_{1}}$ 
are given as follows:\\
$(\cpr{2}{2}{2}{2},\cpr{2}{2}{1}{2},\cpr{2}{2}{1}{1},
\cpr{2}{2}{0}{2},\cpr{2}{2}{0}{1},\cpr{2}{2}{0}{0})
=(2,2,2,2,2,2)$ and
\begin{align*}
\cp{2}{2}{M}{2}{2}{0}=&(m_{22}-m_{33})(m_{22}-m_{33}-1),\\
\cp{2}{2}{M}{2}{2}{1}=&-(m_{22}-m_{33})\{ \bar{D} (M)\chi_-(M)
	+(\bar{D}(M)+2)\chid{-}{1}(M)\} ,\\
\cp{2}{2}{M}{2}{2}{2}=&\bar{D} (M)(\bar{D}(M)+1)\chid{-}{1}(M),\\
\cp{2}{2}{M}{2}{1}{0}=&-2(m_{22}-m_{33})(m_{22}-m_{33}-1),\\
\cp{2}{2}{M}{2}{1}{1}=&2(m_{22}-m_{33})\{ \bar{C_1} (M)
	+(\bar{D}(M)+1)\chi_-(M)\} ,\\
\cp{2}{2}{M}{2}{1}{2}=&-2\bar{C_1}(M)\bar{D} (M)\chi_-(M),\\
\cp{2}{2}{M}{1}{1}{0}=&-2(m_{22}-m_{33})(m_{23}-m_{22}),\\
\cp{2}{2}{M}{1}{1}{1}=&2\{ \bar{D}(M)(m_{23}-m_{22}-1)\chi_-(M)
	-(m_{22}-m_{33})(\bar{C_1}(M)+1)\chid{-}{1}(M) \},\\
\cp{2}{2}{M}{1}{1}{2}=&2\bar{C_1}(M)\bar{D}(M)\chid{-}{1}(M),\\
\cp{2}{2}{M}{2}{0}{0}=&(m_{22}-m_{33})(m_{22}-m_{33}-1),\\
\cp{2}{2}{M}{2}{0}{1}=&-2(m_{22}-m_{33})\bar{C_1}(M),\\
\cp{2}{2}{M}{2}{0}{2}=&\bar{C_1}(M)(\bar{C_1}(M)-1),\\
\cp{2}{2}{M}{1}{0}{0}=&2(m_{22}-m_{33})(m_{23}-m_{22}),\\
\cp{2}{2}{M}{1}{0}{1}=&2\bar{C_1}(M) \{ (m_{22}-m_{33})\chi_-(M)
	-(m_{23}-m_{22}-1)\} ,\\
\cp{2}{2}{M}{1}{0}{2}=&-2\bar{C_1}(M)(\bar{C_1}(M)-1)\chi_-(M),\\
\cp{2}{2}{M}{0}{0}{0}=&(m_{23}-m_{22})(m_{23}-m_{22}-1),\\
\cp{2}{2}{M}{0}{0}{1}=&\bar{C_1}(M)\{ (m_{23}-m_{22}-2)\chi_-(M)
	+(m_{23}-m_{22})\chid{-}{1}(M)\} ,\\
\cp{2}{2}{M}{0}{0}{2}=&\bar{C_1}(M)(\bar{C_1}(M)-1)\chid{-}{1}(M).
\end{align*}

\noindent {\bf Formula 3:} The coefficients of 
the injector $i^{\lambda }_{2\me_{3}}\colon  
V_{\lambda \epe{3}{3}}\to V_{\lambda }\otimes_\mC V_{2\me_{1}}$ 
are given as follows:\\
$(\cpr{3}{3}{2}{2},\cpr{3}{3}{1}{2},\cpr{3}{3}{1}{1},
\cpr{3}{3}{0}{2},\cpr{3}{3}{0}{1},\cpr{3}{3}{0}{0})
=(0,1,0,2,1,0)$ and
\begin{align*}
\cp{3}{3}{M}{2}{2}{0}=&1,&
\cp{3}{3}{M}{2}{1}{0}=&-2,\\
\cp{3}{3}{M}{2}{1}{1}=&-2\chi_+(M),&
\cp{3}{3}{M}{1}{1}{0}=&2,\\
\cp{3}{3}{M}{2}{0}{0}=&1,&
\cp{3}{3}{M}{2}{0}{1}=&\chid{+}{1}(M)+\chi_+(M),\\ 
\cp{3}{3}{M}{2}{0}{2}=&\chid{+}{1}(M),&
\cp{3}{3}{M}{1}{0}{0}=&-2,\\
\cp{3}{3}{M}{1}{0}{1}=&-2\chi_+(M),&
\cp{3}{3}{M}{0}{0}{0}=&1.
\end{align*}

\noindent {\bf Formula 4:} The coefficients of 
the injector $i^{\lambda }_{\me_{1}+\me_{2}}\colon  
V_{\lambda \epe{1}{2}}\to V_{\lambda }\otimes_\mC V_{2\me_{1}}$ 
are given as follows\\
$(\cpr{1}{2}{2}{2},\cpr{1}{2}{1}{2},\cpr{1}{2}{1}{1},
\cpr{1}{2}{0}{2},\cpr{1}{2}{0}{1},\cpr{1}{2}{0}{0})
=(2,2,2,3,2,2)$ and
\begin{align*}
\cp{1}{2}{M}{2}{2}{0}=&(m_{13} -m_{12})(m_{22}-m_{33})(m_{22}-m_{33}-1),\\
\cp{1}{2}{M}{2}{2}{1}=&-(m_{22}-m_{33})\left\{\bar{E} (M)
	+\chi_- (M) (m_{13} -m_{12})(\bar{D} (M)+1)\right\} ,\\
\cp{1}{2}{M}{2}{2}{2}=&\bar{D} (M) \bar{E} (M) \chi_- (M),\\
\cp{1}{2}{M}{2}{1}{0}=&-2(m_{13}-m_{12})(m_{22}-m_{33})(m_{22}-m_{33}-1),\\
\cp{1}{2}{M}{2}{1}{1}=&(m_{22}-m_{33})\left\{\bar{E}(M)+\bar{F}(M)\right. \\
	&\left. +(m_{13}-m_{12})\left\{ \bar{C_1}(M)+1+\bar{D}(M)(1-\chi_+(M)) 
	\right\} \right\},\\
\cp{1}{2}{M}{2}{1}{2}=&-\bar{C_1} (M)\bar{E} (M)-C_2(M)\bkt{ 1-\bar{D} (M)
	+\delta (M) \chi_+ (M)} ,\\
\cp{1}{2}{M}{1}{1}{0}=&(m_{13} -m_{12})(m_{22}-m_{33})
	(2m_{22}-m_{13}-m_{23}-2),\\
\cp{1}{2}{M}{1}{1}{1}=&\bar{E} (M)(m_{23} -m_{22})+
	C_2 (M)(m_{22}-m_{33}+1)\\
	&+(m_{13}-m_{12})\chi_-(M)\left\{ \bar{D}(M)(m_{13}-m_{22}+1)
	-(m_{22}-m_{33})(\bar{C_1} (M)+1) \right\},\\
\cp{1}{2}{M}{1}{1}{2}=&C_2(M)\chi_-(M)(m_{13}-m_{33}+2-\bar{C_1}(M)
	-\bar{D}(M)),\\
\cp{1}{2}{M}{2}{0}{0}=&(m_{13} -m_{12})(m_{22}-m_{33})(m_{22}-m_{33}-1),\\
\cp{1}{2}{M}{2}{0}{1}=&-(m_{22} -m_{33})\left\{ \bar{F} (M) 
	+ (m_{13} -m_{12})(\bar{C_1} (M)+\chi_+ (M))\right\} ,\\
\cp{1}{2}{M}{2}{0}{2}=&(\bar{C_1} (M)+\chi_+ (M)-1)\bar{F} (M)
	+(m_{22} -m_{33} +1)C_2(M)\chi_+ (M),\\
\cp{1}{2}{M}{2}{0}{3}=&-C_2 (M)(\bar{C_1} (M)-1)\chi_+ (M),\\
\cp{1}{2}{M}{1}{0}{0}=&-(m_{13} -m_{12})(m_{22}-m_{33})
	(2m_{22}-m_{13}-m_{23}-2),\\
\cp{1}{2}{M}{1}{0}{1}=&(m_{13} -m_{12})\bar{C_1} (M)
	\left\{ (m_{22}-m_{33})(1-\chi_+ (M))-(m_{13}-m_{22}+1) \right\}\\
	 &-(m_{23}-m_{22})\bar{F} (M)-(m_{22}-m_{33}+1)C_2(M),\\
\cp{1}{2}{M}{1}{0}{2}=&2C_2(M) (\bar{C_1} (M)-1),\\
\cp{1}{2}{M}{0}{0}{0}=&(m_{13} -m_{12})(m_{13}-m_{22}+1)(m_{23}-m_{22}),\\
\cp{1}{2}{M}{0}{0}{1}=&(m_{13}-m_{12})(m_{13}-m_{22}+1)\bar{C_1}(M)\chi_-(M)
	-(m_{23}-m_{22}-1)C_2(M),\\
\cp{1}{2}{M}{0}{0}{2}=&-C_2 (M)(\bar{C_1} (M)-1)\chi_- (M) .
\end{align*}

\noindent {\bf Formula 5:} The coefficients of 
the injector $i^{\lambda }_{\me_{1}+\me_{3}}\colon  
V_{\lambda \epe{1}{3}}\to V_{\lambda }\otimes_\mC V_{2\me_{1}}$ 
are given as follows:\\
$(\cpr{1}{3}{2}{2},\cpr{1}{3}{1}{2},\cpr{1}{3}{1}{1},
\cpr{1}{3}{0}{2},\cpr{1}{3}{0}{1},\cpr{1}{3}{0}{0})
=(1,2,1,3,2,1)$ and
\begin{align*}
\cp{1}{3}{M}{2}{2}{0}=&(m_{13} -m_{12})(m_{22}-m_{33}),\\
\cp{1}{3}{M}{2}{2}{1}=&-\bar{E} (M) ,\\
\cp{1}{3}{M}{2}{1}{0}=&-2(m_{13} -m_{12})(m_{22}-m_{33}),\\
\cp{1}{3}{M}{2}{1}{1}=&\bar{E} (M)+\bar{F} (M)
	-(m_{13}-m_{12})(m_{22}-m_{33})\chi_+(M) ,\\
\cp{1}{3}{M}{2}{1}{2}=&(\bar{E} (M)-C_2(M))\chi_+ (M) ,\\
\cp{1}{3}{M}{1}{1}{0}=&(m_{13} -m_{12})(2m_{22}-m_{13} -m_{33} -1),\\
\cp{1}{3}{M}{1}{1}{1}=&C_2(M) -\bar{E} (M),\\
\cp{1}{3}{M}{2}{0}{0}=&(m_{13} -m_{12})(m_{22}-m_{33}),\\
\cp{1}{3}{M}{2}{0}{1}=&(m_{13}-m_{12})(m_{22}-m_{33})\chid{+}{1}(M)
	-\bar{F}(M),\\
\cp{1}{3}{M}{2}{0}{2}=&C_2 (M)\chi_+ (M)-\bar{F} (M)\chid{+}{1}(M),\\
\cp{1}{3}{M}{2}{0}{3}=&C_2 (M)\chid{+}{1}(M),\\
\cp{1}{3}{M}{1}{0}{0}=&-(m_{13} -m_{12})(2m_{22}-m_{13} -m_{23} -1),\\
\cp{1}{3}{M}{1}{0}{1}=&\bar{F}(M)-C_2(M)
	+(m_{13}-m_{12})(m_{13}-m_{22}+1)\chi_+ (M),\\
\cp{1}{3}{M}{1}{0}{2}=&-2C_2(M)\chi_+(M) ,\\
\cp{1}{3}{M}{0}{0}{0}=&-(m_{13}-m_{12})(m_{13}-m_{22}+1),\\
\cp{1}{3}{M}{0}{0}{1}=&C_2(M).
\end{align*}

\noindent {\bf Formula 6:} The coefficients of 
the injector $i^{\lambda }_{\me_{2}+\me_{3}}\colon  
V_{\lambda \epe{2}{3}}\to V_{\lambda }\otimes_\mC V_{2\me_{1}}$ 
are given as follows:\\
$(\cpr{2}{3}{2}{2},\cpr{2}{3}{1}{2},\cpr{2}{3}{1}{1},
\cpr{2}{3}{0}{2},\cpr{2}{3}{0}{1},\cpr{2}{3}{0}{0})
=(1,1,1,2,1,1)$ and
\begin{align*}
\cp{2}{3}{M}{2}{2}{0}=&m_{22}-m_{33},&
\cp{2}{3}{M}{2}{2}{1}=&-\bar{D}(M)\chi_-(M),\\
\cp{2}{3}{M}{2}{1}{0}=&-2(m_{22}-m_{33}),&
\cp{2}{3}{M}{2}{1}{1}=&\bar{C_1}(M) -(m_{22}-m_{33})+\delta (M)\chi_-(M),\\
\cp{2}{3}{M}{1}{1}{0}=&2m_{22}-m_{23}-m_{33},&
\cp{2}{3}{M}{1}{1}{1}=&-(\bar{C_1}(M)+\bar{D}(M))\chi_-(M),\\
\cp{2}{3}{M}{2}{0}{0}=&m_{22}-m_{33},&
\cp{2}{3}{M}{2}{0}{1}=&-\{\bar{C_1}(M) -(m_{22}-m_{33})\chi_+(M)\},\\
\cp{2}{3}{M}{2}{0}{2}=&-\bar{C_1}(M)\chi_+(M),&
\cp{2}{3}{M}{1}{0}{0}=&-(2m_{22}-m_{23}-m_{33}),\\
\cp{2}{3}{M}{1}{0}{1}=&2\bar{C_1}(M),&
\cp{2}{3}{M}{0}{0}{0}=&-(m_{23}-m_{22}),\\
\cp{2}{3}{M}{0}{0}{1}=&-\bar{C_1}(M)\chi_-(M).
\end{align*}
\end{prop}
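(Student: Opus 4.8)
The plan is to obtain each injector $i^{\lambda }_{\me_{i}+\me_{j}}$ as the explicit composite of maps already determined in the paper, exactly as announced just before the statement. Since $V_{2\me_{1}}$ is the image of the projector $P_{\me_{1}}\colon V_{\me_{1}}\otimes_\mC V_{\me_{1}}\to V_{2\me_{1}}$ of Lemma \ref{lem:clebsh}, and the one-box injectors $i^{\mu }_{\me_{a}}\colon V_{\mu +\me_{a}}\to V_{\mu }\otimes_\mC V_{\me_{1}}$ are given by Proposition \ref{prop:clebsh(1,0,0)}, I would set
\[
\Psi^{\lambda }_{ij}=(\id_{V_{\lambda }}\otimes P_{\me_{1}})\circ (i^{\lambda }_{\me_{i}}\otimes \id_{V_{\me_{1}}})\circ i^{\lambda +\me_{i}}_{\me_{j}}\colon V_{\lambda \epe{i}{j}}\longrightarrow V_{\lambda }\otimes_\mC V_{2\me_{1}}.
\]
For $i\leq j$ one first checks that, whenever $\lambda \epe{i}{j}$ is dominant, the weight $\lambda +\me_{i}$ with the \emph{smaller} index is again dominant (a short case analysis on the entries of $\lambda$); this is exactly why the intermediate step must pass through $V_{\lambda +\me_{i}}$ rather than $V_{\lambda +\me_{j}}$, and it makes both inner injectors well-defined. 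Each factor is a $K$-homomorphism, so $\Psi^{\lambda }_{ij}$ is one too, and since $P_{\me_{1}}$ annihilates the $V_{\me_{1}+\me_{2}}$-summand of $V_{\me_{1}}\otimes_\mC V_{\me_{1}}$ (this is the content $P_{\me_{1}}\circ i^{\me_{1}}_{\me_{2}}=0$ of Lemma \ref{lem:clebsh}), the image of $\Psi^{\lambda }_{ij}$ indeed lies in $V_{\lambda }\otimes_\mC V_{2\me_{1}}$.

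Next I would invoke uniqueness. As noted before the statement, $V_{\lambda \epe{i}{j}}$ occurs with multiplicity one in $V_{\lambda }\otimes_\mC V_{2\me_{1}}$, so $\Hom_K(V_{\lambda \epe{i}{j}},V_{\lambda }\otimes_\mC V_{2\me_{1}})$ is at most one-dimensional and $\Psi^{\lambda }_{ij}$ is automatically a scalar multiple of the generator $i^{\lambda }_{\me_{i}+\me_{j}}$. It therefore suffices to show $\Psi^{\lambda }_{ij}\neq 0$ and to compute it, after which I would fix the remaining scalar ambiguity simply by declaring $i^{\lambda }_{\me_{i}+\me_{j}}=\Psi^{\lambda }_{ij}$. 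Nonvanishing I would read off from the leading coefficients $\cp{i}{j}{M}{l}{k}{0}$, which are products of non-negative differences of entries of $M$ that are positive for generic $M$, so $\Psi^{\lambda }_{ij}\not\equiv 0$.

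The substance is then the explicit evaluation of $\Psi^{\lambda }_{ij}(f(M))$ for a G-pattern $M$ of type $\lambda \epe{i}{j}$. I would apply the three maps in turn: expand $i^{\lambda +\me_{i}}_{\me_{j}}(f(M))$ by the relevant one of Formulas 1--3 of Proposition \ref{prop:clebsh(1,0,0)} (with $\lambda $ there replaced by $\lambda +\me_{i}$), apply $i^{\lambda }_{\me_{i}}\otimes \id$ to the first tensor factor by the same proposition, and finally collapse the two $V_{\me_{1}}$-factors through $P_{\me_{1}}$ according to the table of Lemma \ref{lem:clebsh}. This last step is the origin of the multiplicities in the answer: a basis vector $f\gpt{2}{0}{0}{l}{0}{k}$ of $V_{2\me_{1}}$ is produced by two ordered pairs of $V_{\me_{1}}$-vectors precisely when $k<l$, which explains the factors of $2$ running through Formulas 1--6. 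Merging the two nested shift-sums into the single sum over $m$, the coefficients appear as sums of products of two one-box coefficients, which then have to be simplified into the stated closed forms built from $\bar{E},\bar{F},C_1,\bar{C_1},C_2,\bar{D}$ and the characteristic functions $\chi_{\pm }$.

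The main obstacle is exactly this simplification. The raw composite produces characteristic functions at shifted patterns such as $M\cgpt{0}{0}{\pm 1}[-1]$, products $\chid{+}{r_1}(M)\chid{-}{r_2}(M)$ and $\chi_{\pm }(M)\chi_{\pm }(M')$, and piecewise-linear factors evaluated at neighbouring patterns, all of which must be reduced to functions of $M$ alone. Here I would rely systematically on Lemma \ref{lem:eqn_G-pat_fct}: the shift relations (\ref{eqn:G-fct003})--(\ref{eqn:G-fct005}) to return every function to argument $M$, the collapse relations (\ref{eqn:G-fct006})--(\ref{eqn:G-fct011}) to merge or kill products of characteristic functions, and (\ref{eqn:pf_clebsh(1,0,0)_003})--(\ref{eqn:pf_clebsh(1,0,0)_002}) to linearize $C_1\chi_{\pm }$ and $\bar{C_1}\chi_{\pm }$. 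This is lengthy but entirely mechanical and is carried out component by component ($1\leq i\leq j\leq 3$) and shift by shift, in the same style as the verification of Proposition \ref{prop:clebsh(1,0,0)}; once the composite description and the multiplicity-one reduction are in place, no further conceptual difficulty remains.
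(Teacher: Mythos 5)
Your overall strategy is the one the paper uses: realize the two-box injector as a composite of two one-box injectors from Proposition \ref{prop:clebsh(1,0,0)} followed by the symmetrizing projector $P_{\me_1}$ of Lemma \ref{lem:clebsh}, check nonvanishing, and grind the resulting double sums down to the stated closed forms with the identities of Lemma \ref{lem:eqn_G-pat_fct}. The one substantive difference is the order of composition: the paper takes $(\id_{V_\lambda}\otimes P_{\me_1})\circ(i^{\lambda}_{\me_j}\otimes\id)\circ i^{\lambda+\me_j}_{\me_i}$, i.e.\ it passes through the intermediate weight $\lambda+\me_j$ and its coefficient formula reads $\sum \cpa{c}{\lambda}{i;kl;m}{M}\,\cpa{c}{\lambda}{j;pq;r}{M\gpt{}{-\me_i}{}{0}{-l}{-k}[-m]}$, whereas your $\Psi^{\lambda}_{ij}$ passes through $\lambda+\me_i$ and swaps the roles of $i$ and $j$ in that product. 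By multiplicity one the two composites agree up to a nonzero scalar, so your route is legitimate in principle (and your observation that $\lambda+\me_i$ is always dominant when $\lambda\epe{i}{j}$ is, while $\lambda+\me_j$ need not be, is a small point in your favour), but the term-by-term sums you would have to simplify are genuinely different from the paper's, and the stated Formulas 1--6 are normalized to the paper's ordering; you would either have to redo the entire simplification for your ordering and then determine the proportionality constant, or simply adopt the paper's order. Your appeal to uniqueness of the generator is harmless but not needed: the paper just \emph{defines} $i^{\lambda}_{\me_i+\me_j}$ to be the composite and verifies it is nonzero. With the ordering aligned, your outline matches the paper's proof.
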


\begin{proof}
For $1\leq i\leq j\leq 3$, let 
$i^{\lambda}_{\me_i+\me_j}\colon  V_{\lambda \epe{i}{j}}
\to V_{\lambda }\otimes_\mC V_{2\me_{1}}$ be
the composite of three $K$-homomorphisms
\begin{align*}
i^{\lambda +\me_j}_{\me_i}\colon & 
V_{\lambda \epe{i}{j}}
\to V_{\lambda +\me_j}\otimes_\mC V_{\me_{1}} \\
i^{\lambda }_{\me_j}\otimes_\mC \id_{V_{\me_{1}}}\colon &
V_{\lambda +\me_j}\otimes_\mC V_{\me_{1}} 
\to V_{\lambda }\otimes_\mC V_{\me_{1}}\otimes_\mC V_{\me_{1}} \\
\id_{V_{\lambda }}\otimes_\mC P_{\me_1}\colon &
V_{\lambda }\otimes_\mC V_{\me_{1}}\otimes_\mC V_{\me_{1}} 
\to V_{\lambda }\otimes_\mC V_{2\me_{1}} .
\end{align*}
Then $i^{\lambda}_{\me_i+\me_j}$ is an element of 
$\Hom_K(V_{\lambda \epe{i}{j}},V_{\lambda }\otimes_\mC V_{2\me_{1}})$. 
By direct computation, we confirm that $i^{\lambda}_{\me_i+\me_j}$ is non-zero 
and obtain the explicit expression of this $K$-homomorphism. 
\begin{align*}
&i^{\lambda}_{\me_i+\me_j}(f(M))=
(\id_{V_{\lambda }}\otimes_\mC P_{\me_1})
\circ (i^{\lambda }_{\me_j}\otimes_\mC \id_{V_{\me_{1}}})
\circ i^{\lambda +\me_j}_{\me_i}(f(M))\\
&=\sum_{0\leq k\leq l\leq 1}
\sum_{m=0}^{\cpra{i}{kl}}
\cpa{c}{\lambda }{i;kl;m}{M} 
(\id_{V_{\lambda }}\otimes_\mC P_{\me_1})
\circ (i^{\lambda }_{\me_j}\otimes_\mC \id_{V_{\me_{1}}})
\left( f\bkt{M\gpt{}{-\me_{i}}{}{0}{-l}{-k} [-m]}  
\otimes f\gpt{1}{0}{0}{l}{0}{k} \right)\\
&=\sum_{0\leq k\leq l\leq 1}
\sum_{m=0}^{\cpra{i}{kl}}
\cpa{c}{\lambda }{i;kl;m}{M} 
\Biggl\{\sum_{0\leq p\leq q\leq 1}
 \sum_{r=0}^{\cpra{j}{pq}}
\cpa{c}{\lambda }{j;pq;r}{M\gpt{}{-\me_{i}}{}{0}{-l}{-k} [-m]} \\
&\hphantom{==}
\times f\bkt{M\gpt{}{-\me_{i}-\me_{j}}{}{0}{-l-q}{-k-p} [-m-r]} \Biggl\} 
\otimes P_{\me_1}\left(f\gpt{1}{0}{0}{q}{0}{p}
\otimes f\gpt{1}{0}{0}{l}{0}{k} \right)\\
&=\sum_{0\leq s\leq t\leq 2}
\left\{ \sum_{u=0}^{R_{[ij;st]}}
\cp{i}{j}{M}{t}{s}{u} 
f\bkt{M\gpt{}{-\me_{i}-\me_{j}}{}{0}{-t}{-s} [-u]} \right\} 
\otimes f\gpt{2}{0}{0}{t}{0}{s}, 
\end{align*}
where
\[
\cp{i}{j}{M}{t}{s}{u}=
\underset{p+k=s,\ q+l=t}{\underset{0\leq k\leq l\leq 1,}
{\sum_{0\leq p\leq q\leq 1,}}}
\hspace{1mm} \underset{m+r=u}{\underset{0\leq r\leq \cpra{j}{pq}.}
{\sum_{0\leq m\leq  \cpra{i}{kl},}}}
\cpa{c}{\lambda }{i;kl;m}{M} 
\cpa{c}{\lambda }{j;pq;r}{M\gpt{}{-\me_{i}}{}{0}{-l}{-k} [-m]}, 
\]
and
\[
R_{[ij;st]}=\max{\{ \cpra{i}{kl}+\cpra{j}{pq}\mid 
0\leq k\leq l\leq 1,\ 0\leq p\leq q\leq 1,\ p+k=s,\ q+l=t\} }.
\]
Now we simplify each coefficients by direct computation. 

First, we compute the case of formula 1.\\
$\bullet $ the case $(s,t)=(2,2)$.

We have
\begin{align*}
\cp{1}{1}{M}{2}{2}{0}&=\cpa{c}{\lambda }{1;11;0}{M} 
\cpa{c}{\lambda }{1;11;0}{M\gpt{-1}{0}{0}{0}{-1}{-1}}\\
&=(m_{13}-m_{12})(m_{22}-m_{33})(m_{13}-m_{12}-1)(m_{22}-m_{33}-1),\\
\cp{1}{1}{M}{2}{2}{1}&=\cpa{c}{\lambda }{1;11;0}{M} 
\cpa{c}{\lambda }{1;11;1}{M\gpt{-1}{0}{0}{0}{-1}{-1}}
+\cpa{c}{\lambda }{1;11;1}{M} 
\cpa{c}{\lambda }{1;11;0}{M\gpt{-1}{0}{0}{0}{-1}{-1}[-1]}\\
&=-(m_{13}-m_{12})(m_{22}-m_{33})\bar{E}\bkt{M\gpt{-1}{0}{0}{0}{-1}{-1}}
-\bar{E}(M)(m_{13}-m_{12})(m_{22}-m_{33})\\
&=-2(m_{13}-m_{12})(m_{22}-m_{33})(\bar{E}(M)-C_1(M)),\\
\cp{1}{1}{M}{2}{2}{2}&=\cpa{c}{\lambda }{1;11;1}{M} 
\cpa{c}{\lambda }{1;11;1}{M\gpt{-1}{0}{0}{0}{-1}{-1}[-1]}\\
&=\bar{E}(M)\bar{E}\bkt{M\gpt{-1}{0}{0}{0}{-1}{-1}[-1]}\\
&=\bar{E}(M)(C_1(M)-1)(m_{13}-m_{33}-\bar{C_1}(M)).
\end{align*}
Here we use the relations:
\begin{gather}
\bar{E}\bkt{M\gpt{-1}{0}{0}{0}{-1}{-1}}=\bar{E}(M)-2C_1(M),
\label{eqn:G-fct201}\\
C_1\bkt{M\gpt{-1}{0}{0}{0}{-1}{-1}}=C_1(M),\quad 
\bar{C_1}\bkt{M\gpt{-1}{0}{0}{0}{-1}{-1}}=\bar{C_1}(M)+1,
\label{eqn:G-fct202}\\
C_1\bkt{M[-1]}=C_1(M)-1,\quad 
\bar{C_1}\bkt{M[-1]}=\bar{C_1}(M)-1.\label{eqn:G-fct203}
\end{gather}
\\
\noindent $\bullet $ the case $(s,t)=(1,2)$.

We have
\begin{align*}
\cp{1}{1}{M}{2}{1}{0}&=\cpa{c}{\lambda }{1;11;0}{M} 
\cpa{c}{\lambda }{1;01;0}{M\gpt{-1}{0}{0}{0}{-1}{-1}}
+\cpa{c}{\lambda }{1;01;0}{M} 
\cpa{c}{\lambda }{1;11;0}{M\gpt{-1}{0}{0}{0}{-1}{0}}\\
&=-2(m_{13}-m_{12})(m_{22}-m_{33})(m_{13}-m_{12}-1)(m_{22}-m_{33}-1),\\
\cp{1}{1}{M}{2}{1}{1}&=\cpa{c}{\lambda }{1;11;0}{M} 
\cpa{c}{\lambda }{1;01;1}{M\gpt{-1}{0}{0}{0}{-1}{-1}}
+\cpa{c}{\lambda }{1;01;1}{M} 
\cpa{c}{\lambda }{1;11;0}{M\gpt{-1}{0}{0}{0}{-1}{0}[-1]}\\
&\hphantom{=}+\cpa{c}{\lambda }{1;01;0}{M} 
\cpa{c}{\lambda }{1;11;1}{M\gpt{-1}{0}{0}{0}{-1}{0}}
+\cpa{c}{\lambda }{1;11;1}{M} 
\cpa{c}{\lambda }{1;01;0}{M\gpt{-1}{0}{0}{0}{-1}{-1}[-1]}\\
&=(m_{13}-m_{12})(m_{22}-m_{33})\bar{F}\bkt{M\gpt{-1}{0}{0}{0}{-1}{-1}}
+\bar{F}(M)(m_{13}-m_{12})(m_{22}-m_{33})\\
&\hphantom{=}+(m_{13}-m_{12})(m_{22}-m_{33})
\bar{E}\bkt{M\gpt{-1}{0}{0}{0}{-1}{0}}
+\bar{E}(M)(m_{13}-m_{12})(m_{22}-m_{33})\\
&=2(m_{13}-m_{12})(m_{22}-m_{33})
\bkt{\bar{F}\bkt{M\gpt{-1}{0}{0}{0}{-1}{-1}}+\bar{E}(M)},\\
\cp{1}{1}{M}{2}{1}{2}&=\cpa{c}{\lambda }{1;11;1}{M} 
\cpa{c}{\lambda }{1;01;1}{M\gpt{-1}{0}{0}{0}{-1}{-1}[-1]}
+\cpa{c}{\lambda }{1;01;1}{M} 
\cpa{c}{\lambda }{1;11;1}{M\gpt{-1}{0}{0}{0}{-1}{0}[-1]}\\
&\hphantom{=}+\cpa{c}{\lambda }{1;11;0}{M} 
\cpa{c}{\lambda }{1;01;2}{M\gpt{-1}{0}{0}{0}{-1}{-1}}
+\cpa{c}{\lambda }{1;01;2}{M} 
\cpa{c}{\lambda }{1;11;0}{M\gpt{-1}{0}{0}{0}{-1}{0}[-2]}\\
&=-\bar{E}(M)\bar{F}\bkt{M\gpt{-1}{0}{0}{0}{-1}{-1}[-1]}
-\bar{F}(M)\bar{E}\bkt{M\gpt{-1}{0}{0}{0}{-1}{0}[-1]}\\
&\hphantom{=}-(m_{13}-m_{12})(m_{22}-m_{33})
C_2\bkt{M\gpt{-1}{0}{0}{0}{-1}{-1}}
\chi_+\bkt{M\gpt{-1}{0}{0}{0}{-1}{-1}}\\
&\hphantom{=}-C_2(M)\chi_+(M)(m_{13}-m_{12}+1)(m_{22}-m_{33}+1)\\
&=-2\bar{E}(M)\bar{F}\bkt{M\gpt{-1}{0}{0}{0}{-1}{-1}[-1]}
-(m_{13}-m_{12})(m_{22}-m_{33})C_1(M)\chi_+(M)\\
&\hphantom{=}+C_2(M)\chi_+(M)(m_{13}-m_{33}+1-m_{12}+m_{22}) \\
&\hphantom{=}-(m_{13}-m_{12})(m_{22}-m_{33})
C_1(M)(\bar{C_1}(M)+1)\chi_+(M)\\
&\hphantom{=}-C_2(M)\chi_+(M)(m_{13}-m_{12}+1)(m_{22}-m_{33}+1)\\
&=-2\biggl\{\bar{E}(M)\bar{F}\bkt{M\gpt{-1}{0}{0}{0}{-1}{-1}[-1]}\\
&\hphantom{=}+(m_{13}-m_{12})(m_{22}-m_{33})C_1(M)(\bar{C_1}(M)+1)\chi_+(M)
\biggl\},\\
\cp{1}{1}{M}{2}{1}{3}&=\cpa{c}{\lambda }{1;11;1}{M} 
\cpa{c}{\lambda }{1;01;2}{M\gpt{-1}{0}{0}{0}{-1}{0}[-1]}
+\cpa{c}{\lambda }{1;01;2}{M} 
\cpa{c}{\lambda }{1;11;1}{M\gpt{-1}{0}{0}{0}{-1}{0}[-2]}\\
&=\bar{E}(M)C_2\bkt{M\gpt{-1}{0}{0}{0}{-1}{0}[-1]}
\chi_+\bkt{M\gpt{-1}{0}{0}{0}{-1}{0}[-1]}\\
&\hphantom{=}+C_2(M)\chi_+(M)\bar{E}\bkt{M\gpt{-1}{0}{0}{0}{-1}{0}[-2]}\\
&=2(C_1(M)-1)\bar{C_1}(M)\bar{E}(M)\chi_+(M).
\end{align*}
Here we use the relations:
\begin{align}
&\bar{F}\bkt{M\gpt{-1}{0}{0}{0}{-1}{-1}}-\bar{F}(M)
=\bar{E}\bkt{M\gpt{-1}{0}{0}{0}{-1}{0}}-\bar{E}(M),
\label{eqn:G-fct204}\\
&\bar{F}(M)\bar{E}\bkt{M\gpt{-1}{0}{0}{0}{-1}{0}[-1]}\label{eqn:G-fct205}\\
&=\bar{E}(M)\bar{F}\bkt{M\gpt{-1}{0}{0}{0}{-1}{-1}[-1]}
+(m_{13}-m_{12})(m_{22}-m_{33})C_1(M)\chi_+(M)\nonumber \\
&\hphantom{=}-C_2(M)\chi_+(M)(m_{13}-m_{33}+1-m_{12}+m_{22}) ,\nonumber \\
&C_2\bkt{M\gpt{-1}{0}{0}{0}{-1}{-1}}\chi_+\bkt{M\gpt{-1}{0}{0}{0}{-1}{-1}}
=C_1(M)(\bar{C_1}(M)+1)\chi_+(M),\label{eqn:G-fct206}\\
&C_2\bkt{M\gpt{-1}{0}{0}{0}{-1}{0}[-1]}
\chi_+\bkt{M\gpt{-1}{0}{0}{0}{-1}{0}[-1]}
=(C_1(M)-1)\bar{C_1}(M)\chi_+(M),\label{eqn:G-fct207}\\
&C_2(M)\chi_+(M)\bar{E}\bkt{M\gpt{-1}{0}{0}{0}{-1}{0}[-2]}
=(C_1(M)-1)\bar{C_1}(M)\bar{E}(M)\chi_+(M).\label{eqn:G-fct208}
\end{align}
\\
\noindent $\bullet $ the case $(s,t)=(1,1)$.

We have
\begin{align*}
\cp{1}{1}{M}{1}{1}{0}&=\cpa{c}{\lambda }{1;11;0}{M} 
\cpa{c}{\lambda }{1;00;0}{M\gpt{-1}{0}{0}{0}{-1}{-1}}
+\cpa{c}{\lambda }{1;00;0}{M} 
\cpa{c}{\lambda }{1;11;0}{M\gpt{-1}{0}{0}{0}{0}{0}}\\
&=-(m_{13}-m_{12})(m_{22}-m_{33})(m_{13}-m_{12}-1)(m_{13}-m_{22}+1)\\
&\hphantom{=}-(m_{13}-m_{12})(m_{13}-m_{22}+1)
(m_{13}-m_{12}-1)(m_{22}-m_{33})\\
&=-2(m_{13}-m_{12})(m_{13}-m_{12}-1)(m_{22}-m_{33})(m_{13}-m_{22}+1),\\
\cp{1}{1}{M}{1}{1}{1}&=\cpa{c}{\lambda }{1;11;0}{M} 
\cpa{c}{\lambda }{1;00;1}{M\gpt{-1}{0}{0}{0}{-1}{-1}}
+\cpa{c}{\lambda }{1;00;1}{M} 
\cpa{c}{\lambda }{1;11;0}{M\gpt{-1}{0}{0}{0}{0}{0}[-1]}\\
&\hphantom{=}+\cpa{c}{\lambda }{1;00;0}{M} 
\cpa{c}{\lambda }{1;11;1}{M\gpt{-1}{0}{0}{0}{0}{0}}
+\cpa{c}{\lambda }{1;11;1}{M} 
\cpa{c}{\lambda }{1;00;0}{M\gpt{-1}{0}{0}{0}{-1}{-1}[-1]}\\
&=(m_{13}-m_{12})(m_{22}-m_{33})C_2\bkt{M\gpt{-1}{0}{0}{0}{-1}{-1}}\\
&\hphantom{=}+C_2(M)(m_{13}-m_{12})(m_{22}-m_{33}+1)\\
&\hphantom{=}+(m_{13}-m_{12})(m_{13}-m_{22}+1)
\bar{E}\bkt{M\gpt{-1}{0}{0}{0}{0}{0}}\\
&\hphantom{=}+\bar{E}(M)(m_{13}-m_{12})(m_{13}-m_{22})\\
&=(m_{13}-m_{12})\big\{ (m_{22}-m_{33})C_1(M)(\bar{C_1}(M)+1)
+C_2(M)(m_{22}-m_{33}+1)\\
&\hphantom{=}+(m_{13}-m_{22}+1)(\bar{E}(M)-C_1(M))
+\bar{E}(M)(m_{13}-m_{22})\big\}\\
&=(m_{13}-m_{12})\big\{ 2(m_{22}-m_{33})C_1(M)(\bar{C_1}(M)+1)\\
&\hphantom{=}+2(m_{13}-m_{22})\bar{E}(M)
+\bar{E}(M)-C_1(M)(m_{13}-m_{33}+1-\bar{C_1}(M))\big\}\\
&=2(m_{13}-m_{12})\big\{ (m_{22}-m_{33})C_1(M)(\bar{C_1}(M)+1)
+(m_{13}-m_{22})\bar{E}(M)\big\},\\
\cp{1}{1}{M}{1}{1}{2}&=\cpa{c}{\lambda }{1;11;1}{M} 
\cpa{c}{\lambda }{1;00;1}{M\gpt{-1}{0}{0}{0}{-1}{-1}[-1]}
+\cpa{c}{\lambda }{1;00;1}{M} 
\cpa{c}{\lambda }{1;11;1}{M\gpt{-1}{0}{0}{0}{0}{0}[-1]}\\
&=-\bar{E}(M)C_2\bkt{M\gpt{-1}{0}{0}{0}{-1}{-1}[-1]}
-C_2(M)\bar{E}\bkt{M\gpt{-1}{0}{0}{0}{0}{0}[-1]}\\
&=-2\bar{E}(M)(C_1(M)-1)\bar{C_1}(M).
\end{align*}
Here we use the relations:
\begin{align}
\bar{E}\bkt{M\gpt{-1}{0}{0}{0}{0}{0}}&=\bar{E}(M)-C_1(M),
\label{eqn:G-fct209}\\
C_2(M)\bar{E}\bkt{M\gpt{-1}{0}{0}{0}{0}{0}[-1]}&
=(C_1(M)-1)\bar{C_1}(M)\bar{E}(M),\label{eqn:G-fct210}
\end{align}
(\ref{eqn:G-fct202}) and (\ref{eqn:G-fct203}).\\

\noindent $\bullet $ the case $(s,t)=(0,2)$.

We have
\begin{align*}
\cp{1}{1}{M}{2}{0}{0}&=\cpa{c}{\lambda }{1;01;0}{M} 
\cpa{c}{\lambda }{1;01;0}{M\gpt{-1}{0}{0}{0}{-1}{0}}\\
&=(m_{13}-m_{12})(m_{22}-m_{33})(m_{13}-m_{12}-1)(m_{22}-m_{33}-1),\\
\cp{1}{1}{M}{2}{0}{1}&=\cpa{c}{\lambda }{1;01;0}{M} 
\cpa{c}{\lambda }{1;01;1}{M\gpt{-1}{0}{0}{0}{-1}{0}}
+\cpa{c}{\lambda }{1;01;1}{M} 
\cpa{c}{\lambda }{1;01;0}{M\gpt{-1}{0}{0}{0}{-1}{0}[-1]}\\
&=-(m_{13}-m_{12})(m_{22}-m_{33})
\bkt{\bar{F}\bkt{M\gpt{-1}{0}{0}{0}{-1}{0}}+\bar{F}(M)},\\
\cp{1}{1}{M}{2}{0}{2}&=\cpa{c}{\lambda }{1;01;0}{M} 
\cpa{c}{\lambda }{1;01;2}{M\gpt{-1}{0}{0}{0}{-1}{0}}
+\cpa{c}{\lambda }{1;01;1}{M} 
\cpa{c}{\lambda }{1;01;1}{M\gpt{-1}{0}{0}{0}{-1}{0}[-1]}\\
&\hphantom{=}+\cpa{c}{\lambda }{1;01;2}{M} 
\cpa{c}{\lambda }{1;01;0}{M\gpt{-1}{0}{0}{0}{-1}{0}[-2]}\\
&=(m_{13}-m_{12})(m_{22}-m_{33})C_2\bkt{M\gpt{-1}{0}{0}{0}{-1}{0}}
\chi_+\bkt{M\gpt{-1}{0}{0}{0}{-1}{0}}\\
&\hphantom{=}+\bar{F}(M)\bar{F}\bkt{M\gpt{-1}{0}{0}{0}{-1}{0}[-1]}
+C_2(M)\chi_+(M)(m_{13}-m_{12}+1)(m_{22}-m_{33}+1)\\
&=(m_{13}-m_{12}+1)(m_{22}-m_{33}+1)C_2(M)\chi_+(M)\\
&\hphantom{=}+(m_{13}-m_{12})(m_{22}-m_{33})(C_1(M)+1)(\bar{C_1}(M)+1)
\chid{+}{1}(M)\\
&\hphantom{=}+\bar{F}(M)\bar{F}\bkt{M\gpt{-1}{0}{0}{0}{-1}{0}[-1]},\\
\cp{1}{1}{M}{2}{0}{3}&=\cpa{c}{\lambda }{1;01;1}{M} 
\cpa{c}{\lambda }{1;01;2}{M\gpt{-1}{0}{0}{0}{-1}{0}[-1]}
+\cpa{c}{\lambda }{1;01;2}{M} 
\cpa{c}{\lambda }{1;01;1}{M\gpt{-1}{0}{0}{0}{-1}{0}[-2]}\\
&=-\bar{F}(M)C_2\bkt{M\gpt{-1}{0}{0}{0}{-1}{0}[-1]}
\chi_+\bkt{M\gpt{-1}{0}{0}{0}{-1}{0}[-1]}\\
&\hphantom{=}-C_2(M)\chi_+(M)\bar{F}\bkt{M\gpt{-1}{0}{0}{0}{-1}{0}[-2]}\\
&=-C_2(M)\bkt{\chid{+}{1}(M)\bar{F}(M)+
\chi_+(M)\bar{F}\bkt{M\gpt{-1}{0}{0}{0}{-1}{0}[-2]}},\\
\cp{1}{1}{M}{2}{0}{4}&=\cpa{c}{\lambda }{1;01;2}{M} 
\cpa{c}{\lambda }{1;01;2}{M\gpt{-1}{0}{0}{0}{-1}{0}[-2]}\\
&=C_2(M)\chi_+(M)C_2\bkt{M\gpt{-1}{0}{0}{0}{-1}{0}[-2]}
\chi_+\bkt{M\gpt{-1}{0}{0}{0}{-1}{0}[-2]}\\
&=C_2(M)(C_1(M)-1)(\bar{C_1}(M)-1)\chid{+}{1}(M).
\end{align*}
Here we use the equations:
\begin{align}
C_2\bkt{M\gpt{-1}{0}{0}{0}{-1}{0}}
\chi_+\bkt{M\gpt{-1}{0}{0}{0}{-1}{0}}
&=(C_1(M)+1)(\bar{C_1}(M)+1)\chid{+}{1}(M),
\label{eqn:G-fct211}
\end{align}
(\ref{eqn:G-fct004}), (\ref{eqn:G-fct010}) and (\ref{eqn:G-fct203}).\\
  
\noindent $\bullet $ the case $(s,t)=(0,1)$.

We have
\begin{align*}
\cp{1}{1}{M}{1}{0}{0}&=\cpa{c}{\lambda }{1;01;0}{M} 
\cpa{c}{\lambda }{1;00;0}{M\gpt{-1}{0}{0}{0}{-1}{0}}
+\cpa{c}{\lambda }{1;00;0}{M} 
\cpa{c}{\lambda }{1;01;0}{M\gpt{-1}{0}{0}{0}{0}{0}}\\
&=(m_{13}-m_{12})(m_{22}-m_{33})(m_{13}-m_{12}-1)(m_{13}-m_{22}+1)\\
&\hphantom{=}+(m_{13}-m_{12})(m_{13}-m_{22}+1)
(m_{13}-m_{12}-1)(m_{22}-m_{33})\\
&=2(m_{13}-m_{12})(m_{13}-m_{12}-1)(m_{22}-m_{33})(m_{13}-m_{22}+1),\\
\cp{1}{1}{M}{1}{0}{1}&=\cpa{c}{\lambda }{1;01;0}{M} 
\cpa{c}{\lambda }{1;00;1}{M\gpt{-1}{0}{0}{0}{-1}{0}}
+\cpa{c}{\lambda }{1;00;1}{M} 
\cpa{c}{\lambda }{1;01;0}{M\gpt{-1}{0}{0}{0}{0}{0}[-1]}\\
&\hphantom{=}+\cpa{c}{\lambda }{1;00;0}{M} 
\cpa{c}{\lambda }{1;01;1}{M\gpt{-1}{0}{0}{0}{0}{0}}
+\cpa{c}{\lambda }{1;01;1}{M} 
\cpa{c}{\lambda }{1;00;0}{M\gpt{-1}{0}{0}{0}{-1}{0}[-1]}\\
&=-(m_{13}-m_{12})(m_{22}-m_{33})C_2\bkt{M\gpt{-1}{0}{0}{0}{-1}{0}}\\
&\hphantom{=}-C_2(M)(m_{13}-m_{12})(m_{22}-m_{33}+1)\\
&\hphantom{=}-(m_{13}-m_{12})(m_{13}-m_{22}+1)
\bar{F}\bkt{M\gpt{-1}{0}{0}{0}{0}{0}}\\
&\hphantom{=}-\bar{F}(M)(m_{13}-m_{12})(m_{13}-m_{22})\\
&=-(m_{13}-m_{12})\Big\{(m_{22}-m_{33})C_2\bkt{M\gpt{-1}{0}{0}{0}{-1}{0}}
+C_2(M)(m_{22}-m_{33}+1)\\
&\hphantom{=}+(m_{13}-m_{22}+1)
\{\bar{F}(M)+(m_{22}-m_{33}+\delta (M))\chi_+(M)\}\\
&\hphantom{=}+\bar{F}(M)(m_{13}-m_{22})\Big\}\\
&=-(m_{13}-m_{12})\Big\{(m_{22}-m_{33})
\Big(C_2\bkt{M\gpt{-1}{0}{0}{0}{-1}{0}}\\
&\hphantom{=}+C_2(M)+(m_{12}-m_{22}+1-\delta (M))\chi_+(M)\Big)
+2(m_{13}-m_{22})\bar{F}(M)+\bar{F}(M)\\
&\hphantom{=}+C_2(M)+\chi_+(M)\{(m_{13}-m_{12})(m_{22}-m_{33})
+(m_{13}-m_{33}+1)\delta (M)\}\Big\}\\
&=-2(m_{13}-m_{12})\Big\{(m_{22}-m_{33})C_2\bkt{M\gpt{-1}{0}{0}{0}{-1}{0}}
+(m_{13}-m_{22})\bar{F}(M)\Big\},\\
\cp{1}{1}{M}{1}{0}{2}&=\cpa{c}{\lambda }{1;01;1}{M} 
\cpa{c}{\lambda }{1;00;1}{M\gpt{-1}{0}{0}{0}{-1}{0}[-1]}
+\cpa{c}{\lambda }{1;00;1}{M} 
\cpa{c}{\lambda }{1;01;1}{M\gpt{-1}{0}{0}{0}{0}{0}[-1]}\\
&\hphantom{=}+\cpa{c}{\lambda }{1;01;2}{M} 
\cpa{c}{\lambda }{1;00;0}{M\gpt{-1}{0}{0}{0}{-1}{0}[-2]}
+\cpa{c}{\lambda }{1;00;0}{M} 
\cpa{c}{\lambda }{1;01;2}{M\gpt{-1}{0}{0}{0}{0}{0}}\\
&=\bar{F}(M)C_2\bkt{M\gpt{-1}{0}{0}{0}{-1}{0}[-1]}
+C_2(M)\bar{F}\bkt{M\gpt{-1}{0}{0}{0}{0}{0}[-1]}\\
&\hphantom{=}+C_2(M)\chi_+(M)(m_{13}-m_{12}+1)(m_{13}-m_{22}-1)\\
&\hphantom{=}+(m_{13}-m_{12})(m_{13}-m_{22}+1)
C_2\bkt{M\gpt{-1}{0}{0}{0}{0}{0}}\chi_+\bkt{M\gpt{-1}{0}{0}{0}{0}{0}}\\
&=\bar{F}(M)C_2\bkt{M\gpt{-1}{0}{0}{0}{-1}{0}[-1]}\\
&\hphantom{=}+C_2(M)\big\{\bar{F}(M)+m_{12}-m_{22}-1+\delta (M)-\chi_+(M)
(m_{13}-m_{12}+\delta (M))\big\}\\
&\hphantom{=}+C_2(M)\chi_+(M)\{(m_{13}-m_{12}+1)(m_{13}-m_{22}-1)\\
&\hphantom{=}+(m_{13}-m_{12})(m_{13}-m_{22}+1)\}\\
&=2\bar{F}(M)C_2\bkt{M\gpt{-1}{0}{0}{0}{-1}{0}[-1]}
-(m_{12}-m_{22}-1+\delta (M))C_2(M)(1-\chi_+(M))\\
&\hphantom{=}+C_2(M)\big\{m_{12}-m_{22}-1+\delta (M)-\chi_+(M)
(m_{13}-m_{12}+\delta (M))\big\}\\
&\hphantom{=}+C_2(M)\chi_+(M)\{(m_{13}-m_{12}+1)(m_{13}-m_{22}-1)\\
&\hphantom{=}+(m_{13}-m_{12})(m_{13}-m_{22}+1)\}\\
&=2\{\bar{F}(M)C_2\bkt{M\gpt{-1}{0}{0}{0}{-1}{0}[-1]}\\
&\hphantom{=}+C_2(M)\chi_+(M)(m_{13}-m_{12}+1)(m_{13}-m_{22}-1)\},\\
\cp{1}{1}{M}{1}{0}{3}&=\cpa{c}{\lambda }{1;00;1}{M} 
\cpa{c}{\lambda }{1;01;2}{M\gpt{-1}{0}{0}{0}{0}{0}[-1]}
+\cpa{c}{\lambda }{1;01;2}{M} 
\cpa{c}{\lambda }{1;00;1}{M\gpt{-1}{0}{0}{0}{-1}{0}[-2]}\\
&=-C_2(M)C_2\bkt{M\gpt{-1}{0}{0}{0}{0}{0}[-1]}
\chi_+\bkt{M\gpt{-1}{0}{0}{0}{0}{0}[-1]}\\
&\hphantom{=}-C_2(M)\chi_+(M)C_2\bkt{M\gpt{-1}{0}{0}{0}{-1}{0}[-2]}\\
&=-2C_2(M)(C_1(M)-1)(\bar{C_1}(M)-1)\chi_+(M).
\end{align*}
Here we use the relations:
\begin{align}
\bar{F}\bkt{M\gpt{-1}{0}{0}{0}{0}{0}}&=
\bar{F}(M)+(m_{22}-m_{33}+\delta (M))\chi_+(M),
\label{eqn:G-fct212}\\
C_2\bkt{M\gpt{-1}{0}{0}{0}{-1}{0}}&=
C_2(M)+(m_{12}-m_{22}+1-\delta (M))\chi_+(M),\label{eqn:G-fct213}\\
&=(C_1(M)+\chi_+(M))(\bar{C_1}(M)+\chi_+(M)),\nonumber \\
C_2\bkt{M\gpt{-1}{0}{0}{0}{0}{0}}&=C_2(M),
\label{eqn:G-fct214}\\
\bar{F}\bkt{M\gpt{-1}{0}{0}{0}{0}{0}}
&=\bar{F}(M)+m_{12}-m_{22}-1+\delta (M)\label{eqn:G-fct215}\\
&\hphantom{=}-\chi_+(M)(m_{13}-m_{12}+\delta (M)),\nonumber \\
C_2\bkt{M\gpt{-1}{0}{0}{0}{-1}{0}[-1]}&=
C_2(M)-(m_{12}-m_{22}-1+\delta (M))(1-\chi_+(M)),
\label{eqn:G-fct216}\\
\bar{F}(M)(1-\chi_+(M))&=-C_2(M)(1-\chi_+(M))\label{eqn:G-fct217},
\end{align}
(\ref{eqn:G-fct004}) and (\ref{eqn:G-fct203}). \\

\noindent $\bullet $ the case $(s,t)=(0,0)$.

We have
\begin{align*}
\cp{1}{1}{M}{0}{0}{0}&=\cpa{c}{\lambda }{1;00;0}{M} 
\cpa{c}{\lambda }{1;00;0}{M\gpt{-1}{0}{0}{0}{0}{0}}\\
&=(m_{13}-m_{12})(m_{13}-m_{22}+1)(m_{13}-m_{12}-1)(m_{13}-m_{22}),\\
\cp{1}{1}{M}{0}{0}{1}&=\cpa{c}{\lambda }{1;00;0}{M} 
\cpa{c}{\lambda }{1;00;1}{M\gpt{-1}{0}{0}{0}{0}{0}}
+\cpa{c}{\lambda }{1;00;1}{M} 
\cpa{c}{\lambda }{1;00;0}{M\gpt{-1}{0}{0}{0}{0}{0}[-1]}\\
&=-(m_{13}-m_{12})(m_{13}-m_{22}+1)
C_2\bkt{M\gpt{-1}{0}{0}{0}{0}{0}}\\
&\hphantom{=}-C_2(M)(m_{13}-m_{12})(m_{13}-m_{22}-1)\\
&=-2(m_{13}-m_{12})(m_{13}-m_{22})C_2(M),\\
\cp{1}{1}{M}{0}{0}{2}&=+\cpa{c}{\lambda }{1;00;1}{M} 
\cpa{c}{\lambda }{1;00;1}{M\gpt{-1}{0}{0}{0}{0}{0}[-1]}\\
&=C_2(M)C_2\bkt{M\gpt{-1}{0}{0}{0}{0}{0}[-1]}\\
&=C_2(M)(C_1(M)-1)(\bar{C_1}(M)-1).
\end{align*}
Here we use the relations 
(\ref{eqn:G-fct203}) 
and (\ref{eqn:G-fct214}).\\

Next, we compute the case of formula 2.\\

\noindent $\bullet $ the case $(s,t)=(2,2)$.

We have
\begin{align*}
\cp{2}{2}{M}{2}{2}{0}
=&\cpa{c}{\lambda }{2;11;0}{M} 
\cpa{c}{\lambda }{2;11;0}{M\gpt{0}{-1}{0}{0}{-1}{-1} }
=(m_{22} -m_{33})(m_{22} -m_{33}-1), \\
\cp{2}{2}{M}{2}{2}{1}
=&\cpa{c}{\lambda }{2;11;0}{M} 
\cpa{c}{\lambda }{2;11;1}{M\gpt{0}{-1}{0}{0}{-1}{-1} }
+\cpa{c}{\lambda }{2;11;1}{M} 
\cpa{c}{\lambda }{2;11;0}{M\gpt{0}{-1}{0}{0}{-1}{-1}[-1] }\\
=&-(m_{22} -m_{33})\bar{D}\bkt{M\gpt{0}{-1}{0}{0}{-1}{-1} }
\chi_-\bkt{M\gpt{0}{-1}{0}{0}{-1}{-1} }
-\bar{D} (M)\chi_- (M)(m_{22} -m_{33})\\
=&-(m_{22} -m_{33})\{\bar{D} (M)\chi_- (M)+(\bar{D}(M)+2)\chid{-}{1}(M)\},\\
\cp{2}{2}{M}{2}{2}{2}
=&\cpa{c}{\lambda }{2;11;1}{M} 
\cpa{c}{\lambda }{2;11;1}{M\gpt{0}{-1}{0}{0}{-1}{-1}[-1] } \\
=&\bar{D} (M)\chi_- (M) 
\bar{D}\bkt{M\gpt{0}{-1}{0}{0}{-1}{-1}[-1] }
\chi_-\bkt{M\gpt{0}{-1}{0}{0}{-1}{-1}[-1] } \\
=&\bar{D} (M)(\bar{D} (M)+1)\chid{-}{1}(M). 
\end{align*}
Here we use the relations 
\begin{align}
\bar{D}\bkt{M\gpt{0}{-1}{0}{0}{-1}{-1} }
&=\bar{D}(M)+2,
\end{align}
(\ref{eqn:G-fct005}), (\ref{eqn:G-fct011}) and 
(\ref{eqn:G-fct_pf(2,0,0)001}).\\

\noindent $\bullet $ the case $(s,t)=(1,2)$.

We have
\begin{align*}
\cp{2}{2}{M}{2}{1}{0}
=&\cpa{c}{\lambda }{2;11;0}{M} 
\cpa{c}{\lambda }{2;01;0}{M\gpt{0}{-1}{0}{0}{-1}{-1}} 
+\cpa{c}{\lambda }{2;01;0}{M} 
\cpa{c}{\lambda }{2;11;0}{M\gpt{0}{-1}{0}{0}{-1}{0}} \\
=&-(m_{22} -m_{33})(m_{22} -m_{33}-1) 
-(m_{22} -m_{33}))(m_{22} -m_{33}-1) \\
=&-2(m_{22} -m_{33})(m_{22} -m_{33}-1), \\
\cp{2}{2}{M}{2}{1}{1}
=&\cpa{c}{\lambda }{2;11;0}{M} 
\cpa{c}{\lambda }{2;01;1}{M\gpt{0}{-1}{0}{0}{-1}{-1} } 
+\cpa{c}{\lambda }{2;01;1}{M} 
\cpa{c}{\lambda }{2;11;0}{M\gpt{0}{-1}{0}{0}{-1}{0} [-1]} \\
&+\cpa{c}{\lambda }{2;01;0}{M} 
\cpa{c}{\lambda }{2;11;1}{M\gpt{0}{-1}{0}{0}{-1}{0} } 
+\cpa{c}{\lambda }{2;11;1}{M} 
\cpa{c}{\lambda }{2;01;0}{M\gpt{0}{-1}{0}{0}{-1}{-1} [-1]} \\
=&(m_{22} -m_{33})\bar{C_1}\bkt{M\gpt{0}{-1}{0}{0}{-1}{-1} } 
+\bar{C_1}(M)(m_{22} -m_{33})\\
&+(m_{22} -m_{33})\bar{D}\bkt{M\gpt{0}{-1}{0}{0}{-1}{0} }
\chi_-\bkt{M\gpt{0}{-1}{0}{0}{-1}{0} } 
+\bar{D} (M)\chi_- (M)(m_{22} -m_{33})\\
=&2(m_{22} -m_{33})\{\bar{C_1}(M)+(\bar{D} (M)+1)\chi_- (M)\},\\
\cp{2}{2}{M}{2}{1}{2}
=&\cpa{c}{\lambda }{2;11;1}{M} 
\cpa{c}{\lambda }{2;01;1}{M\gpt{0}{-1}{0}{0}{-1}{-1} [-1]} 
+\cpa{c}{\lambda }{2;01;1}{M} 
\cpa{c}{\lambda }{2;11;1}{M\gpt{0}{-1}{0}{0}{-1}{0} [-1]} \\
=&-\bar{D} (M)\chi_- (M)
\bar{C_1}\bkt{M\gpt{0}{-1}{0}{0}{-1}{-1} [-1]} \\
&-\bar{C_1}(M)\bar{D}\bkt{M\gpt{0}{-1}{0}{0}{-1}{0} [-1]}
\chi_-\bkt{M\gpt{0}{-1}{0}{0}{-1}{0} [-1]} \\
=&-2\bar{C_1}(M)\bar{D} (M)\chi_- (M).
\end{align*}
Here we use the relations 
\begin{align}
\bar{D}\bkt{M\gpt{0}{-1}{0}{0}{-1}{0} }
&=\bar{D}(M)+1,\\
\bar{C_1}\bkt{M\gpt{0}{-1}{0}{0}{-1}{-1} }
&=\bar{C_1}(M)+chi_-(M),\label{eqn:pf(0,2,0)001}\\
\bar{C_1}\bkt{M\gpt{0}{-1}{0}{0}{-1}{-1} [-1]}\chi_- (M)
&=\bar{C_1}(M)\chi_- (M),
\end{align}
(\ref{eqn:G-fct005}) and (\ref{eqn:G-fct_pf(2,0,0)001}).\\

\noindent $\bullet $ the case $(s,t)=(1,1)$.

We have
\begin{align*}
\cp{2}{2}{M}{1}{1}{0}
=&\cpa{c}{\lambda }{2;11;0}{M} 
\cpa{c}{\lambda }{2;00;0}{M\gpt{0}{-1}{0}{0}{-1}{-1}} 
+\cpa{c}{\lambda }{2;00;0}{M} 
\cpa{c}{\lambda }{2;11;0}{M\gpt{0}{-1}{0}{0}{0}{0}} \\
=&-(m_{22} -m_{33})(m_{23} -m_{22}) 
-(m_{23} -m_{22})(m_{22} -m_{33})\\
=&-2(m_{22} -m_{33})(m_{23} -m_{22}), \\
\cp{2}{2}{M}{1}{1}{1}
=&\cpa{c}{\lambda }{2;11;0}{M} 
\cpa{c}{\lambda }{2;00;1}{M\gpt{0}{-1}{0}{0}{-1}{-1}} 
+\cpa{c}{\lambda }{2;00;1}{M} 
\cpa{c}{\lambda }{2;11;0}{M\gpt{0}{-1}{0}{0}{0}{0} [-1]} \\
&+\cpa{c}{\lambda }{2;11;1}{M} 
\cpa{c}{\lambda }{2;00;0}{M\gpt{0}{-1}{0}{0}{-1}{-1} [-1]} 
+\cpa{c}{\lambda }{2;00;0}{M} 
\cpa{c}{\lambda }{2;11;1}{M\gpt{0}{-1}{0}{0}{0}{0}} \\
=&-(m_{22} -m_{33})\bar{C_1}\bkt{M\gpt{0}{-1}{0}{0}{-1}{-1}}
\chi_-\bkt{M\gpt{0}{-1}{0}{0}{-1}{-1}} \\
&-\bar{C_1} (M)\chi_- (M)(m_{22} -m_{33}+1)
+\bar{D} (M)\chi_- (M)(m_{23} -m_{22}-1)\\
&+(m_{23} -m_{22}) \bar{D}\bkt{M\gpt{0}{-1}{0}{0}{0}{0}}
\chi_-\bkt{M\gpt{0}{-1}{0}{0}{0}{0}} \\
=&-(m_{22} -m_{33})(m_{12}-m_{11}+1)\chid{-}{1}(M) 
-(m_{12}-m_{11})\chi_- (M)(m_{22} -m_{33}+1)\\
&+(-m_{22}+m_{33}+\delta (M))\chi_- (M)(m_{23} -m_{22}-1)\\
&+(m_{23} -m_{22}) (-m_{22}+m_{33}+\delta (M)+1)\chid{-}{1}(M) \\
=&-2(m_{22} -m_{33})(m_{12}-m_{11}+1)\chid{-}{1}(M)\\
&+2(-m_{22}+m_{33}+\delta (M))\chi_- (M)(m_{23} -m_{22}-1)\\
&+(m_{23} -m_{33})(\delta (M)+1)(\chid{-}{1}(M)-\chi_- (M))\\
=&2\{\bar{D}(M)(m_{23} -m_{22}-1)\chi_- (M)
-(m_{22} -m_{33})(\bar{C_1}(M)+1)\chid{-}{1}(M)\},\\
\cp{2}{2}{M}{1}{1}{2}
=&\cpa{c}{\lambda }{2;11;1}{M} 
\cpa{c}{\lambda }{2;00;1}{M\gpt{0}{-1}{0}{0}{-1}{-1} [-1]} 
+\cpa{c}{\lambda }{2;00;1}{M} 
\cpa{c}{\lambda }{2;11;1}{M\gpt{0}{-1}{0}{0}{0}{0} [-1]} \\
=&\bar{D} (M)\chi_- (M)\bar{C_1}\bkt{M\gpt{0}{-1}{0}{0}{-1}{-1} [-1]}
\chi_-\bkt{M\gpt{0}{-1}{0}{0}{-1}{-1} [-1]} \\
&+\bar{C_1} (M)\chi_- (M)\bar{D}\bkt{M\gpt{0}{-1}{0}{0}{0}{0} [-1]}
\chi_-\bkt{M\gpt{0}{-1}{0}{0}{0}{0} [-1]} \\
=&\bar{D} (M)(m_{12}-m_{11})\chid{-}{1}(M)
+\bar{C_1} (M)\bar{D}(M)\chid{-}{1}(M)\\
=&2\bar{C_1} (M)\bar{D}(M)\chid{-}{1}(M).
\end{align*}
Here we use the relations
\begin{align}
\bar{D}\bkt{M\gpt{0}{-1}{0}{0}{0}{0}}
&=\bar{D}(M)+1,
\end{align}
(\ref{eqn:G-fct003}), (\ref{eqn:G-fct005}), (\ref{eqn:G-fct011}), 
(\ref{eqn:pf_clebsh(1,0,0)_002}), (\ref{eqn:pf(0,2,0)001}) and 
(\ref{eqn:G-fct_pf(2,0,0)001}).\\

\noindent $\bullet $ the case $(s,t)=(0,2)$.

We have
\begin{align*}
\cp{2}{2}{M}{2}{0}{0}
=&\cpa{c}{\lambda }{2;01;0}{M} 
\cpa{c}{\lambda }{2;01;0}{M\gpt{0}{-1}{0}{0}{-1}{0}} \\
=&(m_{22} -m_{33})(m_{22} -m_{33}-1)\\
\cp{2}{2}{M}{2}{0}{1}
=&\cpa{c}{\lambda }{2;01;0}{M} 
\cpa{c}{\lambda }{2;01;1}{M\gpt{0}{-1}{0}{0}{-1}{0}} 
+\cpa{c}{\lambda }{2;01;1}{M} 
\cpa{c}{\lambda }{2;01;0}{M\gpt{0}{-1}{0}{0}{-1}{0} [-1]} \\
=&-(m_{22} -m_{33}) \bar{C_1}\bkt{M\gpt{0}{-1}{0}{0}{-1}{0}} 
-\bar{C_1}(M)(m_{22} -m_{33}) \\
=&-2(m_{22} -m_{33})\bar{C_1}(M),\\
\cp{2}{2}{M}{2}{0}{2}
=&\cpa{c}{\lambda }{2;01;1}{M} 
\cpa{c}{\lambda }{2;01;1}{M\gpt{0}{-1}{0}{0}{-1}{0} [-1]} \\
=&\bar{C_1}(M)\bar{C_1}\bkt{M\gpt{0}{-1}{0}{0}{-1}{0} [-1]} \\
=&\bar{C_1}(M)(\bar{C_1}(M)-1).
\end{align*}
Here we use the relations
\begin{align}
\bar{C_1}\bkt{M\gpt{0}{-1}{0}{0}{-1}{0}} =\bar{C_1}(M),
\label{eqn:pf(0,2,0)002}
\end{align}
and (\ref{eqn:G-fct203}).\\

\noindent $\bullet $ the case $(s,t)=(0,1)$.

We have
\begin{align*}
\cp{2}{2}{M}{1}{0}{0}
=&\cpa{c}{\lambda }{2;01;0}{M} 
\cpa{c}{\lambda }{2;00;0}{M\gpt{0}{-1}{0}{0}{-1}{0}} 
+\cpa{c}{\lambda }{2;00;0}{M} 
\cpa{c}{\lambda }{2;01;0}{M\gpt{0}{-1}{0}{0}{0}{0}} \\
=&(m_{22} -m_{33}) (m_{23} -m_{22})
+(m_{23} -m_{22}) (m_{22} -m_{33})\\
=&2(m_{22} -m_{33}) (m_{23} -m_{22}),\\
\cp{2}{2}{M}{1}{0}{1}
=&\cpa{c}{\lambda }{2;01;0}{M} 
\cpa{c}{\lambda }{2;00;1}{M\gpt{0}{-1}{0}{0}{-1}{0}} 
+\cpa{c}{\lambda }{2;00;1}{M} 
\cpa{c}{\lambda }{2;01;0}{M\gpt{0}{-1}{0}{0}{0}{0} [-1]} \\
&+\cpa{c}{\lambda }{2;01;1}{M} 
\cpa{c}{\lambda }{2;00;0}{M\gpt{0}{-1}{0}{0}{-1}{0} [-1]} 
+\cpa{c}{\lambda }{2;00;0}{M} 
\cpa{c}{\lambda }{2;01;1}{M\gpt{0}{-1}{0}{0}{0}{0}} \\
=&(m_{22} -m_{33})\bar{C_1}\bkt{M\gpt{0}{-1}{0}{0}{-1}{0}} 
\chi_-\bkt{M\gpt{0}{-1}{0}{0}{-1}{0}} 
+\bar{C_1} (M)\chi_- (M)(m_{22} -m_{33}+1) \\
&-\bar{C_1}(M)(m_{23} -m_{22}-1)
-(m_{23} -m_{22})\bar{C_1}\bkt{M\gpt{0}{-1}{0}{0}{0}{0}} \\
=&(m_{22} -m_{33})\bar{C_1}(M)\chi_-(M)
+\bar{C_1} (M)\chi_- (M)(m_{22} -m_{33}+1) \\
&-\bar{C_1}(M)(m_{23} -m_{22}-1)
-(m_{23} -m_{22})(\bar{C_1}(M)-\chid{+}{-1}(M)) \\
=&2(m_{22} -m_{33})\bar{C_1}(M)\chi_-(M)\\
&-2(m_{23} -m_{22})\bar{C_1}(M)+\bar{C_1}(M)(1+\chi_- (M)+\chid{+}{-1}(M))\\
=&2\bar{C_1}(M)\{(m_{22} -m_{33})\chi_-(M)-(m_{23} -m_{22}-1)\},\\
\cp{2}{2}{M}{1}{0}{2}
=&\cpa{c}{\lambda }{2;00;1}{M} 
\cpa{c}{\lambda }{2;01;1}{M\gpt{0}{-1}{0}{0}{0}{0} [-1]} 
+\cpa{c}{\lambda }{2;01;1}{M} 
\cpa{c}{\lambda }{2;00;1}{M\gpt{0}{-1}{0}{0}{-1}{0} [-1]} \\
=&-\bar{C_1} (M)\chi_- (M) 
\bar{C_1}\bkt{M\gpt{0}{-1}{0}{0}{0}{0} [-1]} \\
&-\bar{C_1}(M)\bar{C_1}\bkt{M\gpt{0}{-1}{0}{0}{-1}{0} [-1]} 
\chi_-\bkt{M\gpt{0}{-1}{0}{0}{-1}{0} [-1]} \\
=&-\bar{C_1} (M)\chi_- (M) (\bar{C_1} (M)-\chid{+}{-1}(M)-1) 
-\bar{C_1}(M)(\bar{C_1}(M)-1)\chi_-(M) \\
=&-2\bar{C_1}(M)(\bar{C_1}(M)-1)\chi_-(M).
\end{align*}
Here we use the equations
\begin{align}
\bar{C_1}\bkt{M\gpt{0}{-1}{0}{0}{0}{0}}&=
\bar{C_1}(M)-\chid{+}{-1}(M),\label{eqn:pf(0,2,0)003}
\end{align}
(\ref{eqn:G-fct005}), (\ref{eqn:G-fct009}), 
(\ref{eqn:G-fct008}), 
(\ref{eqn:pf_clebsh(1,0,0)_001}), 
(\ref{eqn:pf(0,2,0)002}) and 
(\ref{eqn:G-fct203}).\\

\noindent $\bullet $ the case $(s,t)=(0,0)$.

We have
\begin{align*}
\cp{2}{2}{M}{0}{0}{0}
=&\cpa{c}{\lambda }{2;00;0}{M} 
\cpa{c}{\lambda }{2;00;0}{M\gpt{0}{-1}{0}{0}{0}{0}} \\
=&(m_{23} -m_{22}))(m_{23} -m_{22}-1) \\
\cp{2}{2}{M}{0}{0}{1}
=&\cpa{c}{\lambda }{2;00;0}{M} 
\cpa{c}{\lambda }{2;00;1}{M\gpt{0}{-1}{0}{0}{0}{0}}
+\cpa{c}{\lambda }{2;00;1}{M} 
\cpa{c}{\lambda }{2;00;0}{M\gpt{0}{-1}{0}{0}{0}{0} [-1]} \\
=&(m_{23} -m_{22})\bar{C_1}\bkt{M\gpt{0}{-1}{0}{0}{0}{0}}
\chi_-\bkt{M\gpt{0}{-1}{0}{0}{0}{0}}
+\bar{C_1} (M)\chi_- (M) (m_{23} -m_{22}-2)\\
=&\bar{C_1} (M)\{(m_{23} -m_{22}-2)\chi_- (M)
+(m_{23} -m_{22})\chid{-}{1}(M)\},\\
\cp{2}{2}{M}{0}{0}{2}
=&\cpa{c}{\lambda }{2;00;1}{M} 
\cpa{c}{\lambda }{2;00;1}{M\gpt{0}{-1}{0}{0}{0}{0} [-1]} \\
=&\bar{C_1} (M)\chi_- (M)\bar{C_1}\bkt{M\gpt{0}{-1}{0}{0}{0}{0} [-1]}
\chi_-\bkt{M\gpt{0}{-1}{0}{0}{0}{0} [-1]} \\
=&\bar{C_1} (M)(\bar{C_1} (M)-1)\chid{-}{1}(M).
\end{align*}
Here we use the relations 
(\ref{eqn:G-fct005}), (\ref{eqn:G-fct009}), (\ref{eqn:G-fct011}), 
(\ref{eqn:G-fct203}) and (\ref{eqn:pf(0,2,0)003}).\\

Next, we compute the case of formula 3.\\
\noindent $\bullet $ the case $(s,t)=(2,2)$.

We have
\begin{align*}
\cp{3}{3}{M}{2}{2}{0}
=&\cpa{c}{\lambda }{3;11;0}{M} 
\cpa{c}{\lambda }{3;11;0}{M\gpt{0}{0}{-1}{0}{-1}{-1} }=1.\\
\end{align*}

\noindent $\bullet $ the case $(s,t)=(1,2)$.

We have
\begin{align*}
\cp{3}{3}{M}{2}{1}{0}
=&\cpa{c}{\lambda }{3;11;0}{M} 
\cpa{c}{\lambda }{3;01;0}{M\gpt{0}{0}{-1}{0}{-1}{-1}} 
+\cpa{c}{\lambda }{3;01;0}{M} 
\cpa{c}{\lambda }{3;11;0}{M\gpt{0}{0}{-1}{0}{-1}{0}} \\
=&-1-1=-2,\\
\cp{3}{3}{M}{2}{1}{1}
=&\cpa{c}{\lambda }{3;11;0}{M} 
\cpa{c}{\lambda }{3;01;1}{M\gpt{0}{0}{-1}{0}{-1}{-1} } 
+\cpa{c}{\lambda }{3;01;1}{M} 
\cpa{c}{\lambda }{3;11;0}{M\gpt{0}{0}{-1}{0}{-1}{0} [-1]} \\
=&-\chi_+\bkt{M\gpt{0}{0}{-1}{0}{-1}{-1} } -\chi_+ (M)=-2\chi_+(M).
\end{align*}
Here we use the relation (\ref{eqn:G-fct004}).\\

\noindent $\bullet $ the case $(s,t)=(1,1)$.

We have
\begin{align*}
\cp{3}{3}{M}{1}{1}{0}
=&\cpa{c}{\lambda }{3;11;0}{M} 
\cpa{c}{\lambda }{3;00;0}{M\gpt{0}{0}{-1}{0}{-1}{-1}} 
+\cpa{c}{\lambda }{3;00;0}{M} 
\cpa{c}{\lambda }{3;11;0}{M\gpt{0}{0}{-1}{0}{0}{0}} \\
=&1+1=2.\\
\end{align*}

\noindent $\bullet $ the case $(s,t)=(0,2)$.

We have
\begin{align*}
\cp{3}{3}{M}{2}{0}{0}
=&\cpa{c}{\lambda }{3;01;0}{M} 
\cpa{c}{\lambda }{3;01;0}{M\gpt{0}{0}{-1}{0}{-1}{0}}
=1.\\
\cp{3}{3}{M}{2}{0}{1}
=&\cpa{c}{\lambda }{3;01;0}{M} 
\cpa{c}{\lambda }{3;01;1}{M\gpt{0}{0}{-1}{0}{-1}{0}} 
+\cpa{c}{\lambda }{3;01;1}{M} 
\cpa{c}{\lambda }{3;01;0}{M\gpt{0}{0}{-1}{0}{-1}{0} [-1]} \\
=&\chi_+\bkt{M\gpt{0}{0}{-1}{0}{-1}{0}}+\chi_+ (M)
=\chid{+}{1}(M)+\chi_+ (M)\\
\cp{3}{3}{M}{2}{0}{2}
=&\cpa{c}{\lambda }{3;01;1}{M} 
\cpa{c}{\lambda }{3;01;1}{M\gpt{0}{0}{-1}{0}{-1}{0} [-1]}
=\chi_+ (M)\chi_+ \bkt{M\gpt{0}{0}{-1}{0}{-1}{0} [-1]}\\
=&\chid{+}{1}(M).
\end{align*}
Here we use the relation (\ref{eqn:G-fct004}).\\

\noindent $\bullet $ the case $(s,t)=(0,1)$.

We have
\begin{align*}
\cp{3}{3}{M}{1}{0}{0}
=&\cpa{c}{\lambda }{3;01;0}{M} 
\cpa{c}{\lambda }{3;00;0}{M\gpt{0}{0}{-1}{0}{-1}{0}} 
+\cpa{c}{\lambda }{3;00;0}{M} 
\cpa{c}{\lambda }{3;01;0}{M\gpt{0}{0}{-1}{0}{0}{0}} \\
=&-1-1=-2, \\
\cp{3}{3}{M}{1}{0}{1}
=&\cpa{c}{\lambda }{3;01;1}{M} 
\cpa{c}{\lambda }{3;00;0}{M\gpt{0}{0}{-1}{0}{-1}{0} [-1]} 
+\cpa{c}{\lambda }{3;00;0}{M} 
\cpa{c}{\lambda }{3;01;1}{M\gpt{0}{0}{-1}{0}{0}{0}} \\
=&-\chi_+ (M)-\chi_+\bkt{M\gpt{0}{0}{-1}{0}{0}{0}} 
=-2\chi_+ (M).
\end{align*}
Here we use the relation (\ref{eqn:G-fct004}).\\

\noindent $\bullet $ the case $(s,t)=(0,0)$.

We have
\begin{align*}
\cp{3}{3}{M}{0}{0}{0}
=&\cpa{c}{\lambda }{3;00;0}{M} 
\cpa{c}{\lambda }{3;00;0}{M\gpt{0}{0}{-1}{0}{0}{0}} =1.\\
\end{align*}

Next, we compute the case of formula 4.\\
$\bullet $ the case $(s,t)=(2,2)$.

We have
\begin{align*}
\cp{1}{2}{M}{2}{2}{0}=&
\cpa{c}{\lambda }{1;11;0}{M} 
\cpa{c}{\lambda }{2;11;0}{M\gpt{-1}{0}{0}{0}{-1}{-1} } \\
=&(m_{13} -m_{12})(m_{22}-m_{33})(m_{22} -m_{33}-1),\\
\cp{1}{2}{M}{2}{2}{1}=&
\cpa{c}{\lambda }{1;11;0}{M} 
\cpa{c}{\lambda }{2;11;1}{M\gpt{-1}{0}{0}{0}{-1}{-1} }
+\cpa{c}{\lambda }{1;11;1}{M} 
\cpa{c}{\lambda }{2;11;0}{M\gpt{-1}{0}{0}{0}{-1}{-1}[-1] }\\
=&-(m_{13} -m_{12})(m_{22}-m_{33})\bar{D}\bkt{M\gpt{-1}{0}{0}{0}{-1}{-1}}
	\chi_-\bkt{M\gpt{-1}{0}{0}{0}{-1}{-1}}\\
&-\bar{E} (M)(m_{22} -m_{33})\\
=&-(m_{22} -m_{33})\{\bar{E}(M)+(m_{13} -m_{12})(\bar{D}(M)+1)\chi_-(M)\},\\
\cp{1}{2}{M}{2}{2}{2}=&
\cpa{c}{\lambda }{1;11;1}{M} 
\cpa{c}{\lambda }{2;11;1}{M\gpt{-1}{0}{0}{0}{-1}{-1}[-1] } \\
=&\bar{E}(M)\bar{D}\bkt{M\gpt{-1}{0}{0}{0}{-1}{-1}[-1]}
	\chi_-\bkt{M\gpt{-1}{0}{0}{0}{-1}{-1}[-1]} \\
=&\bar{E}(M)\bar{D}(M)\chi_-(M).
\end{align*}
Here we use the relations:
\begin{align}
\bar{D}\bkt{M\gpt{-1}{0}{0}{0}{-1}{-1}}=&\bar{D}(M)+1,\\
\bar{D}\bkt{M[-1]}=&\bar{D}(M)-1,\label{eqn:G-fct_pf(2,0,0)001}
\end{align}
and (\ref{eqn:G-fct005}).\\

\noindent $\bullet $ the case $(s,t)=(1,2)$.

We have
\begin{align*}
\cp{1}{2}{M}{2}{1}{0}
=&\cpa{c}{\lambda }{1;11;0}{M} 
\cpa{c}{\lambda }{2;01;0}{M\gpt{-1}{0}{0}{0}{-1}{-1}} 
+\cpa{c}{\lambda }{1;01;0}{M} 
\cpa{c}{\lambda }{2;11;0}{M\gpt{-1}{0}{0}{0}{-1}{0}} \\
=&-(m_{13} -m_{12})(m_{22}-m_{33})(m_{22} -m_{33}-1)\\
&-(m_{13} -m_{12})(m_{22}-m_{33})(m_{22} -m_{33}-1)\\
=&-2(m_{13} -m_{12})(m_{22}-m_{33})(m_{22} -m_{33}-1),\\
\cp{1}{2}{M}{2}{1}{1}
=&\cpa{c}{\lambda }{1;11;0}{M} 
\cpa{c}{\lambda }{2;01;1}{M\gpt{-1}{0}{0}{0}{-1}{-1} } 
+\cpa{c}{\lambda }{1;01;1}{M} 
\cpa{c}{\lambda }{2;11;0}{M\gpt{-1}{0}{0}{0}{-1}{0} [-1]} \\
&+\cpa{c}{\lambda }{1;01;0}{M} 
\cpa{c}{\lambda }{2;11;1}{M\gpt{-1}{0}{0}{0}{-1}{0} } 
+\cpa{c}{\lambda }{1;11;1}{M} 
\cpa{c}{\lambda }{2;01;0}{M\gpt{-1}{0}{0}{0}{-1}{-1} [-1]} \\
=& (m_{13} -m_{12})(m_{22}-m_{33})\bar{C_1}\bkt{M\gpt{-1}{0}{0}{0}{-1}{-1} }\\
&+\bar{F}(M)(m_{22} -m_{33})\\
&+(m_{13} -m_{12})(m_{22}-m_{33})\bar{D}\bkt{M\gpt{-1}{0}{0}{0}{-1}{0} }
	\chi_-\bkt{M\gpt{-1}{0}{0}{0}{-1}{0} }\\
&+\bar{E} (M)(m_{22} -m_{33})\\
=&(m_{22}-m_{33})\big\{\bar{E} (M)+\bar{F}(M)\\
&+(m_{13} -m_{12})\{\bar{C_1}(M)+1+\bar{D}(M)(1-\chi_+(M))\} \big\},\\
\cp{1}{2}{M}{2}{1}{2}
=&\cpa{c}{\lambda }{1;11;1}{M} 
\cpa{c}{\lambda }{2;01;1}{M\gpt{-1}{0}{0}{0}{-1}{-1} [-1]} 
+\cpa{c}{\lambda }{1;01;1}{M} 
\cpa{c}{\lambda }{2;11;1}{M\gpt{-1}{0}{0}{0}{-1}{0} [-1]} \\
&+\cpa{c}{\lambda }{1;01;2}{M} 
\cpa{c}{\lambda }{2;11;0}{M\gpt{-1}{0}{0}{0}{-1}{0} [-2]} \\
=& -\bar{E} (M)\bar{C_1}\bkt{M\gpt{-1}{0}{0}{0}{-1}{-1} [-1]} \\
&-\bar{F} (M)\bar{D}\bkt{M\gpt{-1}{0}{0}{0}{-1}{0} [-1]}
	\chi_-\bkt{M\gpt{-1}{0}{0}{0}{-1}{0} [-1]} \\
&-C_2 (M)\chi_+ (M)(m_{22} -m_{33}+1)\\
=&-\bar{E}(M)\bar{C_1}(M)-C_2(M)(1-\bar{D}(M)+\delta (M)\chi_+(M)),\\
\cp{1}{2}{M}{2}{1}{3}
=&\cpa{c}{\lambda }{1;01;2}{M} 
\cpa{c}{\lambda }{2;11;1}{M\gpt{-1}{0}{0}{0}{-1}{0} [-2]} \\
=&C_2 (M)\chi_+ (M)\bar{D}\bkt{M\gpt{-1}{0}{0}{0}{-1}{0} [-2]}
	\chi_-\bkt{M\gpt{-1}{0}{0}{0}{-1}{0} [-2]} \\
=&0.
\end{align*}
Here we use the relations:
\begin{align}
\bar{D}\bkt{M\gpt{-1}{0}{0}{0}{-1}{0} }=&\bar{D}(M),\\
\bar{C_1}\bkt{M\gpt{-1}{0}{0}{0}{-1}{-1} }=&\bar{C_1}(M)+1,
\label{eqn:G-fct_pf(2,0,0)002}
\end{align}
(\ref{eqn:G-fct005}), (\ref{eqn:G-fct008}), 
(\ref{eqn:G-fct009}), (\ref{eqn:G-fct_pf(2,0,0)001}), 
(\ref{eqn:pf_clebsh(1,0,0)_FC2}) and (\ref{eqn:G-fct203}).\\

\noindent $\bullet $ the case $(s,t)=(1,1)$.

We have
\begin{align*}
\cp{1}{2}{M}{1}{1}{0}
=&\cpa{c}{\lambda }{1;11;0}{M} 
\cpa{c}{\lambda }{2;00;0}{M\gpt{-1}{0}{0}{0}{-1}{-1}} 
+\cpa{c}{\lambda }{1;00;0}{M} 
\cpa{c}{\lambda }{2;11;0}{M\gpt{-1}{0}{0}{0}{0}{0}} \\
=& -(m_{13} -m_{12})(m_{22}-m_{33})(m_{23} -m_{22}+1) \\
&-(m_{13} -m_{12})(m_{13}-m_{22}+1)(m_{22} -m_{33})\\
=&(m_{13} -m_{12})(m_{22}-m_{33})(2m_{22}-m_{13}-m_{23}-2),\\
\cp{1}{2}{M}{1}{1}{1}
=&\cpa{c}{\lambda }{1;11;0}{M} 
\cpa{c}{\lambda }{2;00;1}{M\gpt{-1}{0}{0}{0}{-1}{-1}} 
+\cpa{c}{\lambda }{1;00;1}{M} 
\cpa{c}{\lambda }{2;11;0}{M\gpt{-1}{0}{0}{0}{0}{0} [-1]} \\
&+\cpa{c}{\lambda }{1;11;1}{M} 
\cpa{c}{\lambda }{2;00;0}{M\gpt{-1}{0}{0}{0}{-1}{-1} [-1]} 
+\cpa{c}{\lambda }{1;00;0}{M} 
\cpa{c}{\lambda }{2;11;1}{M\gpt{-1}{0}{0}{0}{0}{0}} \\
=&-(m_{13} -m_{12})(m_{22}-m_{33})\bar{C_1}\bkt{M\gpt{-1}{0}{0}{0}{-1}{-1}}
	\chi_-\bkt{M\gpt{-1}{0}{0}{0}{-1}{-1}} \\
&+C_2 (M)(m_{22} -m_{33}+1)
	+\bar{E} (M)(m_{23} -m_{22})\\
&+(m_{13} -m_{12})(m_{13}-m_{22}+1)\bar{D}\bkt{M\gpt{-1}{0}{0}{0}{0}{0}}
	\chi_-\bkt{M\gpt{-1}{0}{0}{0}{0}{0}}\\
=&\bar{E} (M)(m_{23} -m_{22})+C_2 (M)(m_{22} -m_{33}+1)\\
&+(m_{13} -m_{12})\chi_-(M)
\{(m_{13}-m_{22}+1)\bar{D}(M)-(m_{22}-m_{33})(\bar{C_1}(M)+1)\},\\
\cp{1}{2}{M}{1}{1}{2}
=&\cpa{c}{\lambda }{1;11;1}{M} 
\cpa{c}{\lambda }{2;00;1}{M\gpt{-1}{0}{0}{0}{-1}{-1} [-1]} 
+\cpa{c}{\lambda }{1;00;1}{M} 
\cpa{c}{\lambda }{2;11;1}{M\gpt{-1}{0}{0}{0}{0}{0} [-1]} \\
=&\bar{E} (M)\bar{C_1}\bkt{M\gpt{-1}{0}{0}{0}{-1}{-1} [-1]}
	\chi_-\bkt{M\gpt{-1}{0}{0}{0}{-1}{-1} [-1]} \\
&-C_2 (M)\bar{D}\bkt{M\gpt{-1}{0}{0}{0}{0}{0} [-1]}
	\chi_-\bkt{M\gpt{-1}{0}{0}{0}{0}{0} [-1]} \\
=&C_2(M)(m_{13}-m_{33}+2-\bar{C_1}(M)-\bar{D}(M)).
\end{align*}
Here we use the relations:
\begin{align}
\bar{D}\bkt{M\gpt{-1}{0}{0}{0}{0}{0}}=\bar{D}(M),
\end{align}
(\ref{eqn:G-fct005}), (\ref{eqn:G-fct_pf(2,0,0)001}), 
(\ref{eqn:G-fct_pf(2,0,0)002}) and (\ref{eqn:G-fct203}).\\

\noindent $\bullet $ the case $(s,t)=(0,2)$.

We have
\begin{align*}
\cp{1}{2}{M}{2}{0}{0}
=&\cpa{c}{\lambda }{1;01;0}{M} 
\cpa{c}{\lambda }{2;01;0}{M\gpt{-1}{0}{0}{0}{-1}{0}} \\
=&(m_{13} -m_{12})(m_{22}-m_{33})(m_{22} -m_{33}-1),\\
\cp{1}{2}{M}{2}{0}{1}
=&\cpa{c}{\lambda }{1;01;0}{M} 
\cpa{c}{\lambda }{2;01;1}{M\gpt{-1}{0}{0}{0}{-1}{0}} 
+\cpa{c}{\lambda }{1;01;1}{M} 
\cpa{c}{\lambda }{2;01;0}{M\gpt{-1}{0}{0}{0}{-1}{0} [-1]} \\
=& -(m_{13} -m_{12})(m_{22}-m_{33})\bar{C_1} \bkt{M\gpt{-1}{0}{0}{0}{-1}{0}} 
	-\bar{F} (M)(m_{22} -m_{33})\\
=&-(m_{22}-m_{33})\{(m_{13} -m_{12})(\bar{C_1}(M)+\chi_+(M))+\bar{F} (M)\},\\
\cp{1}{2}{M}{2}{0}{2}
=&\cpa{c}{\lambda }{1;01;2}{M} 
\cpa{c}{\lambda }{2;01;0}{M\gpt{-1}{0}{0}{0}{-1}{0} [-2]} 
+\cpa{c}{\lambda }{1;01;1}{M} 
\cpa{c}{\lambda }{2;01;1}{M\gpt{-1}{0}{0}{0}{-1}{0} [-1]} \\
=&C_2 (M)\chi_+ (M)(m_{22} -m_{33}+1)
	+\bar{F} (M)\bar{C_1}\bkt{M\gpt{-1}{0}{0}{0}{-1}{0} [-1]} \\
=&(\bar{C_1}(M)+\chi_+(M)-1)\bar{F} (M)+(m_{22} -m_{33}+1)C_2 (M)\chi_+ (M),\\
\cp{1}{2}{M}{2}{0}{3}
=&\cpa{c}{\lambda }{1;01;2}{M} 
\cpa{c}{\lambda }{2;01;1}{M\gpt{-1}{0}{0}{0}{-1}{0} [-2]} \\
=& -C_2 (M)\chi_+ (M)\bar{C_1}\bkt{M\gpt{-1}{0}{0}{0}{-1}{0} [-2]} \\
=&-C_2 (M)(\bar{C_1}(M)-1)\chi_+ (M).
\end{align*}
Here we use the relations:
\begin{align}
\bar{C_1} \bkt{M\gpt{-1}{0}{0}{0}{-1}{0}}=\bar{C_1}(M)+\chi_+(M),
\label{eqn:G-fct_pf(2,0,0)004}
\end{align}
and (\ref{eqn:G-fct203}).\\

\noindent $\bullet $ the case $(s,t)=(0,1)$.

We have
\begin{align*}
\cp{1}{2}{M}{1}{0}{0}
=&\cpa{c}{\lambda }{1;01;0}{M} 
\cpa{c}{\lambda }{2;00;0}{M\gpt{-1}{0}{0}{0}{-1}{0}} 
+\cpa{c}{\lambda }{1;00;0}{M} 
\cpa{c}{\lambda }{2;01;0}{M\gpt{-1}{0}{0}{0}{0}{0}} \\
=& (m_{13} -m_{12})(m_{22}-m_{33})(m_{23} -m_{22}+1) \\
&+ (m_{13} -m_{12})(m_{13}-m_{22}+1)(m_{22} -m_{33})\\
=&-(m_{13} -m_{12})(m_{22}-m_{33})(2m_{22}-m_{13}-m_{23}-2),\\
\cp{1}{2}{M}{1}{0}{1}
=&\cpa{c}{\lambda }{1;01;0}{M} 
\cpa{c}{\lambda }{2;00;1}{M\gpt{-1}{0}{0}{0}{-1}{0}} 
+\cpa{c}{\lambda }{1;00;1}{M} 
\cpa{c}{\lambda }{2;01;0}{M\gpt{-1}{0}{0}{0}{0}{0} [-1]} \\
&+\cpa{c}{\lambda }{1;01;1}{M} 
\cpa{c}{\lambda }{2;00;0}{M\gpt{-1}{0}{0}{0}{-1}{0} [-1]} 
+\cpa{c}{\lambda }{1;00;0}{M} 
\cpa{c}{\lambda }{2;01;1}{M\gpt{-1}{0}{0}{0}{0}{0}} \\
=&(m_{13} -m_{12})(m_{22}-m_{33})\bar{C_1}\bkt{M\gpt{-1}{0}{0}{0}{-1}{0}}
	\chi_-\bkt{M\gpt{-1}{0}{0}{0}{-1}{0}} \\
& -C_2 (M)(m_{22} -m_{33}+1)-\bar{F} (M)(m_{23} -m_{22})\\
& -(m_{13} -m_{12})(m_{13}-m_{22}+1)\bar{C_1}\bkt{M\gpt{-1}{0}{0}{0}{0}{0}} \\
=&(m_{13} -m_{12})\bar{C_1}(M)\{(m_{22}-m_{33})(1-\chi_+(M))
	-(m_{13}-m_{22}+1)\}\\
&-C_2 (M)(m_{22} -m_{33}+1)-\bar{F} (M)(m_{23} -m_{22}),\\
\cp{1}{2}{M}{1}{0}{2}
=&\cpa{c}{\lambda }{1;00;1}{M} 
\cpa{c}{\lambda }{2;01;1}{M\gpt{-1}{0}{0}{0}{0}{0} [-1]} 
+\cpa{c}{\lambda }{1;01;1}{M} 
\cpa{c}{\lambda }{2;00;1}{M\gpt{-1}{0}{0}{0}{-1}{0} [-1]} \\
&+\cpa{c}{\lambda }{1;01;2}{M} 
\cpa{c}{\lambda }{2;00;0}{M\gpt{-1}{0}{0}{0}{-1}{0} [-2]}\\
=& C_2 (M)\bar{C_1}\bkt{M\gpt{-1}{0}{0}{0}{0}{0} [-1]} \\
&-\bar{F} (M)\bar{C_1}\bkt{M\gpt{-1}{0}{0}{0}{-1}{0} [-1]}
	\chi_-\bkt{M\gpt{-1}{0}{0}{0}{-1}{0} [-1]} \\
&+C_2 (M)\chi_+ (M)(m_{23} -m_{22}-1)\\
=& C_2 (M)(\bar{C_1}(M)-1)(1+\chid{-}{-1}(M)+\chi_+ (M)) \\
=& 2C_2 (M)(\bar{C_1}(M)-1),\\
\cp{1}{2}{M}{1}{0}{3}
=&\cpa{c}{\lambda }{1;01;2}{M} 
\cpa{c}{\lambda }{2;00;1}{M\gpt{-1}{0}{0}{0}{-1}{0} [-2]}\\
=&C_2 (M)\chi_+ (M)\bar{C_1}\bkt{M\gpt{-1}{0}{0}{0}{-1}{0} [-2]}
	\chi_-\bkt{M\gpt{-1}{0}{0}{0}{-1}{0} [-2]}\\
=&0.
\end{align*}
Here we use the relations: 
\begin{align}
\bar{C_1}\bkt{M\gpt{-1}{0}{0}{0}{0}{0}}=\bar{C_1}(M),
\label{eqn:G-fct_pf(2,0,0)005}
\end{align}
(\ref{eqn:G-fct005}), (\ref{eqn:G-fct008}), (\ref{eqn:G-fct009}), 
(\ref{eqn:pf_clebsh(1,0,0)_001}), (\ref{eqn:pf_clebsh(1,0,0)_FC2}), 
(\ref{eqn:G-fct203}) and (\ref{eqn:G-fct_pf(2,0,0)004}).\\

\noindent $\bullet $ the case $(s,t)=(0,0)$.

We have
\begin{align*}
\cp{1}{2}{M}{0}{0}{0}=&
\cpa{c}{\lambda }{1;00;0}{M} 
\cpa{c}{\lambda }{2;00;0}{M\gpt{-1}{0}{0}{0}{0}{0}} \\
=&(m_{13} -m_{12})(m_{13}-m_{22}+1)(m_{23} -m_{22}) ,\\
\cp{1}{2}{M}{0}{0}{1}=&
\cpa{c}{\lambda }{1;00;0}{M} 
\cpa{c}{\lambda }{2;00;1}{M\gpt{-1}{0}{0}{0}{0}{0}}
+\cpa{c}{\lambda }{1;00;1}{M} 
\cpa{c}{\lambda }{2;00;0}{M\gpt{-1}{0}{0}{0}{0}{0} [-1]} \\
=&(m_{13} -m_{12})(m_{13}-m_{22}+1)\bar{C_1}\bkt{M\gpt{-1}{0}{0}{0}{0}{0}}
	\chi_-\bkt{M\gpt{-1}{0}{0}{0}{0}{0}}\\
&-C_2 (M)(m_{23} -m_{22}-1)\\
=&(m_{13} -m_{12})(m_{13}-m_{22}+1)\bar{C_1}(M)\chi_-(M)
-C_2 (M)(m_{23} -m_{22}-1),\\
\cp{1}{2}{M}{0}{0}{2}=&
\cpa{c}{\lambda }{1;00;1}{M} 
\cpa{c}{\lambda }{2;00;1}{M\gpt{-1}{0}{0}{0}{0}{0} [-1]} \\
=&-C_2 (M)\bar{C_1}\bkt{M\gpt{-1}{0}{0}{0}{0}{0} [-1]}
	\chi_-\bkt{M\gpt{-1}{0}{0}{0}{0}{0} [-1]} \\
=&-C_2 (M)(\bar{C_1}(M)-1)\chi_-(M).
\end{align*}
Here we use the relations 
(\ref{eqn:G-fct005}), (\ref{eqn:G-fct203}) and 
(\ref{eqn:G-fct_pf(2,0,0)005}).\\

Next, we compute the case of formula 5.\\
\noindent $\bullet $ the case $(s,t)=(2,2)$.

We have
\begin{align*}
\cp{1}{3}{M}{2}{2}{0}
=&\cpa{c}{\lambda }{1;11;0}{M} 
\cpa{c}{\lambda }{3;11;0}{M\gpt{-1}{0}{0}{0}{-1}{-1} } 
=(m_{13} -m_{12})(m_{22}-m_{33}),\\
\cp{1}{3}{M}{2}{2}{1}
=&\cpa{c}{\lambda }{1;11;1}{M} 
\cpa{c}{\lambda }{3;11;0}{M\gpt{-1}{0}{0}{0}{-1}{-1}[-1] }
=-\bar{E} (M).\\
\end{align*}

\noindent $\bullet $ the case $(s,t)=(1,2)$.

We have
\begin{align*}
\cp{1}{3}{M}{2}{1}{0}
=&\cpa{c}{\lambda }{1;11;0}{M} 
\cpa{c}{\lambda }{3;01;0}{M\gpt{-1}{0}{0}{0}{-1}{-1}} 
+\cpa{c}{\lambda }{1;01;0}{M} 
\cpa{c}{\lambda }{3;11;0}{M\gpt{-1}{0}{0}{0}{-1}{0}} \\
=&-(m_{13} -m_{12})(m_{22}-m_{33})-(m_{13} -m_{12})(m_{22}-m_{33})\\
=&-2(m_{13} -m_{12})(m_{22}-m_{33}),\\
\cp{1}{3}{M}{2}{1}{1}
=&\cpa{c}{\lambda }{1;11;0}{M} 
\cpa{c}{\lambda }{3;01;1}{M\gpt{-1}{0}{0}{0}{-1}{-1} } 
+\cpa{c}{\lambda }{1;01;1}{M} 
\cpa{c}{\lambda }{3;11;0}{M\gpt{-1}{0}{0}{0}{-1}{0} [-1]} \\
&+\cpa{c}{\lambda }{1;11;1}{M} 
\cpa{c}{\lambda }{3;01;0}{M\gpt{-1}{0}{0}{0}{-1}{-1} [-1]} \\
=&-(m_{13} -m_{12})(m_{22}-m_{33})\chi_+ (M)+\bar{F}(M)+\bar{E}(M),\\
\cp{1}{3}{M}{2}{1}{2}
=&\cpa{c}{\lambda }{1;11;1}{M} 
\cpa{c}{\lambda }{3;01;1}{M\gpt{-1}{0}{0}{0}{-1}{-1} [-1]} 
+\cpa{c}{\lambda }{1;01;2}{M} 
\cpa{c}{\lambda }{3;11;0}{M\gpt{-1}{0}{0}{0}{-1}{0} [-2]} \\
=&\bar{E} (M)\chi_+ \bkt{M\gpt{-1}{0}{0}{0}{-1}{-1} [-1]}-C_2 (M)\chi_+ (M)\\
=&(\bar{E} (M)-C_2(M))\chi_+ (M).
\end{align*}
Here we use the relation (\ref{eqn:G-fct004}).\\

\noindent $\bullet $ the case $(s,t)=(1,1)$.

We have
\begin{align*}
\cp{1}{3}{M}{1}{1}{0}
=&\cpa{c}{\lambda }{1;11;0}{M} 
\cpa{c}{\lambda }{3;00;0}{M\gpt{-1}{0}{0}{0}{-1}{-1}} 
+\cpa{c}{\lambda }{1;00;0}{M} 
\cpa{c}{\lambda }{3;11;0}{M\gpt{-1}{0}{0}{0}{0}{0}} \\
=&(m_{13} -m_{12})(m_{22}-m_{33})-(m_{13} -m_{12})(m_{13}-m_{22}+1)\\
=&(m_{13} -m_{12})(2m_{22}-m_{13}-m_{33}-1),\\
\cp{1}{3}{M}{1}{1}{1}
=&\cpa{c}{\lambda }{1;00;1}{M} 
\cpa{c}{\lambda }{3;11;0}{M\gpt{-1}{0}{0}{0}{0}{0} [-1]}
+\cpa{c}{\lambda }{1;11;1}{M} 
\cpa{c}{\lambda }{3;00;0}{M\gpt{-1}{0}{0}{0}{-1}{-1} [-1]} \\
=&C_2 (M)-\bar{E} (M).\\
\end{align*}

\noindent $\bullet $ the case $(s,t)=(0,2)$.

We have
\begin{align*}
\cp{1}{3}{M}{2}{0}{0}
=&\cpa{c}{\lambda }{1;01;0}{M} 
\cpa{c}{\lambda }{3;01;0}{M\gpt{-1}{0}{0}{0}{-1}{0}}
=(m_{13} -m_{12})(m_{22}-m_{33}),\\
\cp{1}{3}{M}{2}{0}{1}
=&\cpa{c}{\lambda }{1;01;0}{M} 
\cpa{c}{\lambda }{3;01;1}{M\gpt{-1}{0}{0}{0}{-1}{0}} 
+\cpa{c}{\lambda }{1;01;1}{M} 
\cpa{c}{\lambda }{3;01;0}{M\gpt{-1}{0}{0}{0}{-1}{0} [-1]} \\
=&(m_{13} -m_{12})(m_{22}-m_{33}) \chi_+\bkt{M\gpt{-1}{0}{0}{0}{-1}{0}} 
-\bar{F}(M)\\
=&(m_{13} -m_{12})(m_{22}-m_{33}) \chid{+}{1}(M)-\bar{F}(M),\\
\cp{1}{3}{M}{2}{0}{2}
=&\cpa{c}{\lambda }{1;01;2}{M} 
\cpa{c}{\lambda }{3;01;0}{M\gpt{-1}{0}{0}{0}{-1}{0} [-2]} 
+\cpa{c}{\lambda }{1;01;1}{M} 
\cpa{c}{\lambda }{3;01;1}{M\gpt{-1}{0}{0}{0}{-1}{0} [-1]} \\
=&C_2 (M)\chi_+ (M)
-\bar{F}(M)\chi_+\bkt{M\gpt{-1}{0}{0}{0}{-1}{0} [-1]} \\
=&C_2 (M)\chi_+ (M)-\bar{F}(M)\chid{+}{1}(M), \\
\cp{1}{3}{M}{2}{0}{3}
=&\cpa{c}{\lambda }{1;01;2}{M} 
\cpa{c}{\lambda }{3;01;1}{M\gpt{-1}{0}{0}{0}{-1}{0} [-2]} \\
=&C_2 (M)\chi_+ (M)\chi_+\bkt{M\gpt{-1}{0}{0}{0}{-1}{0} [-2]} \\
=&C_2 (M)\chid{+}{1}(M). 
\end{align*}
Here we use the relations (\ref{eqn:G-fct004}) and (\ref{eqn:G-fct010}).\\

\noindent $\bullet $ the case $(s,t)=(0,1)$.

We have
\begin{align*}
\cp{1}{3}{M}{1}{0}{0}
=&\cpa{c}{\lambda }{1;01;0}{M} 
\cpa{c}{\lambda }{3;00;0}{M\gpt{-1}{0}{0}{0}{-1}{0}} 
+\cpa{c}{\lambda }{1;00;0}{M} 
\cpa{c}{\lambda }{3;01;0}{M\gpt{-1}{0}{0}{0}{0}{0}} \\
=&-(m_{13} -m_{12})(m_{22}-m_{33})+(m_{13} -m_{12})(m_{13}-m_{22}+1)\\
=&-(m_{13} -m_{12})(2m_{22}-m_{13}-m_{33}-1),\\
\cp{1}{3}{M}{1}{0}{1}
=&\cpa{c}{\lambda }{1;00;1}{M} 
\cpa{c}{\lambda }{3;01;0}{M\gpt{-1}{0}{0}{0}{0}{0} [-1]} \\
&+\cpa{c}{\lambda }{1;01;1}{M} 
\cpa{c}{\lambda }{3;00;0}{M\gpt{-1}{0}{0}{0}{-1}{0} [-1]} 
+\cpa{c}{\lambda }{1;00;0}{M} 
\cpa{c}{\lambda }{3;01;1}{M\gpt{-1}{0}{0}{0}{0}{0}} \\
=&-C_2 (M)+\bar{F}(M) 
+(m_{13} -m_{12})(m_{13}-m_{22}+1)\chi_+ \bkt{M\gpt{-1}{0}{0}{0}{0}{0}} \\
=&\bar{F}(M)-C_2 (M)+(m_{13} -m_{12})(m_{13}-m_{22}+1)\chi_+ (M),\\
\cp{1}{3}{M}{1}{0}{2}
=&\cpa{c}{\lambda }{1;00;1}{M} 
\cpa{c}{\lambda }{3;01;1}{M\gpt{-1}{0}{0}{0}{0}{0} [-1]} 
+\cpa{c}{\lambda }{1;01;2}{M} 
\cpa{c}{\lambda }{3;00;0}{M\gpt{-1}{0}{0}{0}{-1}{0} [-2]}\\
=&-C_2 (M)\chi_+ \bkt{M\gpt{-1}{0}{0}{0}{0}{0} [-1]} -C_2 (M)\chi_+ (M)\\
=&-2C_2 (M)\chi_+ (M).
\end{align*}
Here we use the relation (\ref{eqn:G-fct004}).\\

\noindent $\bullet $ the case $(s,t)=(0,0)$.

We have
\begin{align*}
\cp{1}{3}{M}{0}{0}{0}
=&\cpa{c}{\lambda }{1;00;0}{M} 
\cpa{c}{\lambda }{3;00;0}{M\gpt{-1}{0}{0}{0}{0}{0}} 
=-(m_{13} -m_{12})(m_{13}-m_{22}+1),\\
\cp{1}{3}{M}{0}{0}{1}
=&\cpa{c}{\lambda }{1;00;1}{M} 
\cpa{c}{\lambda }{3;00;0}{M\gpt{-1}{0}{0}{0}{0}{0} [-1]}=C_2 (M). \\
\end{align*}

At last, we compute the case of formula 6.\\
\noindent $\bullet $ the case $(s,t)=(2,2)$.

We have
\begin{align*}
\cp{2}{3}{M}{2}{2}{0}
=&\cpa{c}{\lambda }{2;11;0}{M} 
\cpa{c}{\lambda }{3;11;0}{M\gpt{0}{-1}{0}{0}{-1}{-1} }
=m_{22} -m_{33},\\
\cp{2}{3}{M}{2}{2}{1}
=&\cpa{c}{\lambda }{2;11;1}{M} 
\cpa{c}{\lambda }{3;11;0}{M\gpt{0}{-1}{0}{0}{-1}{-1}[-1] }
=-\bar{D} (M)\chi_- (M).\\
\end{align*}

\noindent $\bullet $ the case $(s,t)=(1,2)$.

We have
\begin{align*}
\cp{2}{3}{M}{2}{1}{0}
=&\cpa{c}{\lambda }{2;11;0}{M} 
\cpa{c}{\lambda }{3;01;0}{M\gpt{0}{-1}{0}{0}{-1}{-1}} 
+\cpa{c}{\lambda }{2;01;0}{M} 
\cpa{c}{\lambda }{3;11;0}{M\gpt{0}{-1}{0}{0}{-1}{0}} \\
=&-(m_{22} -m_{33})-(m_{22} -m_{33})=-2(m_{22} -m_{33}),\\
\cp{2}{3}{M}{2}{1}{1}
=&\cpa{c}{\lambda }{2;11;0}{M} 
\cpa{c}{\lambda }{3;01;1}{M\gpt{0}{-1}{0}{0}{-1}{-1} } 
+\cpa{c}{\lambda }{2;01;1}{M} 
\cpa{c}{\lambda }{3;11;0}{M\gpt{0}{-1}{0}{0}{-1}{0} [-1]} \\
&+\cpa{c}{\lambda }{2;11;1}{M} 
\cpa{c}{\lambda }{3;01;0}{M\gpt{0}{-1}{0}{0}{-1}{-1} [-1]} \\
=&-(m_{22} -m_{33})\chi_+\bkt{M\gpt{0}{-1}{0}{0}{-1}{-1} } 
+\bar{C_1}(M)+\bar{D} (M)\chi_- (M)\\
=&\bar{C_1}(M)-(m_{22} -m_{33})(\chi_-(M)+\chid{+}{-1}(M))
+\delta (M)\chi_- (M)\\
=&\bar{C_1}(M)-(m_{22} -m_{33})+\delta (M)\chi_- (M),\\
\cp{2}{3}{M}{2}{1}{2}
=&\cpa{c}{\lambda }{2;11;1}{M} 
\cpa{c}{\lambda }{3;01;1}{M\gpt{0}{-1}{0}{0}{-1}{-1} [-1]} \\
=&\bar{D} (M)\chi_- (M)
\chi_+\bkt{M\gpt{0}{-1}{0}{0}{-1}{-1} [-1]}=0 .
\end{align*}
Here we use the relations (\ref{eqn:G-fct004}), 
(\ref{eqn:G-fct008}) and (\ref{eqn:G-fct009}).\\

\noindent $\bullet $ the case $(s,t)=(1,1)$.

We have
\begin{align*}
\cp{2}{3}{M}{1}{1}{0}
=&\cpa{c}{\lambda }{2;11;0}{M} 
\cpa{c}{\lambda }{3;00;0}{M\gpt{0}{-1}{0}{0}{-1}{-1}} 
+\cpa{c}{\lambda }{2;00;0}{M} 
\cpa{c}{\lambda }{3;11;0}{M\gpt{0}{-1}{0}{0}{0}{0}} \\
=&(m_{22} -m_{33})-(m_{23} -m_{22})
=2m_{22}-m_{23}-m_{33},\\
\cp{2}{3}{M}{1}{1}{1}
=&\cpa{c}{\lambda }{2;00;1}{M} 
\cpa{c}{\lambda }{3;11;0}{M\gpt{0}{-1}{0}{0}{0}{0} [-1]} 
+\cpa{c}{\lambda }{2;11;1}{M} 
\cpa{c}{\lambda }{3;00;0}{M\gpt{0}{-1}{0}{0}{-1}{-1} [-1]} \\
=&-\bar{C_1} (M)\chi_- (M)-\bar{D} (M)\chi_- (M)
=-(\bar{C_1} (M)+\bar{D} (M))\chi_- (M).\\
\end{align*}

\noindent $\bullet $ the case $(s,t)=(0,2)$.

We have
\begin{align*}
\cp{2}{3}{M}{2}{0}{0}
=&\cpa{c}{\lambda }{2;01;0}{M} 
\cpa{c}{\lambda }{3;01;0}{M\gpt{0}{-1}{0}{0}{-1}{0}}
=m_{22} -m_{33},\\
\cp{2}{3}{M}{2}{0}{1}
=&\cpa{c}{\lambda }{2;01;0}{M} 
\cpa{c}{\lambda }{3;01;1}{M\gpt{0}{-1}{0}{0}{-1}{0}} 
+\cpa{c}{\lambda }{2;01;1}{M} 
\cpa{c}{\lambda }{3;01;0}{M\gpt{0}{-1}{0}{0}{-1}{0} [-1]} \\
=&(m_{22} -m_{33})\chi_+\bkt{M\gpt{0}{-1}{0}{0}{-1}{0}} 
-\bar{C_1}(M)
=-\{\bar{C_1}(M)-(m_{22} -m_{33})\chi_+(M)\},\\
\cp{2}{3}{M}{2}{0}{2}
=&\cpa{c}{\lambda }{2;01;1}{M} 
\cpa{c}{\lambda }{3;01;1}{M\gpt{0}{-1}{0}{0}{-1}{0} [-1]} 
=-\bar{C_1}(M)\chi_+\bkt{M\gpt{0}{-1}{0}{0}{-1}{0} [-1]} \\
=&-\bar{C_1}(M)\chi_+(M).
\end{align*}
Here we use the relation (\ref{eqn:G-fct004}).\\

\noindent $\bullet $ the case $(s,t)=(0,1)$.

We have
\begin{align*}
\cp{2}{3}{M}{1}{0}{0}
=&\cpa{c}{\lambda }{2;01;0}{M} 
\cpa{c}{\lambda }{3;00;0}{M\gpt{0}{-1}{0}{0}{-1}{0}} 
+\cpa{c}{\lambda }{2;00;0}{M} 
\cpa{c}{\lambda }{3;01;0}{M\gpt{0}{-1}{0}{0}{0}{0}} \\
=&-(m_{22} -m_{33})+(m_{23} -m_{22})
=-(2m_{22} -m_{23}-m_{33}),\\
\cp{2}{3}{M}{1}{0}{1}
=&\cpa{c}{\lambda }{2;00;1}{M} 
\cpa{c}{\lambda }{3;01;0}{M\gpt{0}{-1}{0}{0}{0}{0} [-1]} \\
&+\cpa{c}{\lambda }{2;01;1}{M} 
\cpa{c}{\lambda }{3;00;0}{M\gpt{0}{-1}{0}{0}{-1}{0} [-1]} 
+\cpa{c}{\lambda }{2;00;0}{M} 
\cpa{c}{\lambda }{3;01;1}{M\gpt{0}{-1}{0}{0}{0}{0}} \\
=&\bar{C_1} (M)\chi_- (M)+\bar{C_1}(M)
+(m_{23} -m_{22})\chi_+ \bkt{M\gpt{0}{-1}{0}{0}{0}{0}} \\
=&\bar{C_1} (M)(1+\chi_- (M)+\chid{+}{-1}(M))=2\bar{C_1} (M),\\
\cp{2}{3}{M}{1}{0}{2}
=&\cpa{c}{\lambda }{2;00;1}{M} 
\cpa{c}{\lambda }{3;01;1}{M\gpt{0}{-1}{0}{0}{0}{0} [-1]} \\
=&\bar{C_1} (M)\chi_- (M)\chi_+\bkt{M\gpt{0}{-1}{0}{0}{0}{0} [-1]} 
=0.
\end{align*}
(\ref{eqn:G-fct004}), (\ref{eqn:G-fct008}), (\ref{eqn:G-fct009}) and 
(\ref{eqn:pf_clebsh(1,0,0)_001}).

\noindent $\bullet $ the case $(s,t)=(0,0)$.

We have
\begin{align*}
\cp{2}{3}{M}{0}{0}{0}
=&\cpa{c}{\lambda }{2;00;0}{M} 
\cpa{c}{\lambda }{3;00;0}{M\gpt{0}{-1}{0}{0}{0}{0}}
=-(m_{23} -m_{22}),\\
\cp{2}{3}{M}{0}{0}{1}
=&\cpa{c}{\lambda }{2;00;1}{M} 
\cpa{c}{\lambda }{3;00;0}{M\gpt{0}{-1}{0}{0}{0}{0} [-1]}
=-\bar{C_1} (M)\chi_- (M).\\
\end{align*}

\end{proof}

For $1\leq i\leq j\leq 3$, we define a linear map 
$i^{\lambda }_{-\me_{i}-\me_{j}}\colon 
V_{\lambda \eme{i}{j}}\to V_{\lambda }\otimes_\mC V_{-2\me_{3}} $ 
by 
\[
i^{\lambda }_{-\me_{i}-\me_{j}}
=(T_{\hat{\lambda }}\otimes_\mC T_{2\me_1})
\circ i^{\widehat{\lambda }}_{\me_{4-j}+\me_{4-i}}
\circ T_{\lambda -\me_{i}-\me_{j}}.
\]
By Proposition \ref{prop:sym_GZ-basis}, 
we see that 
\[
X\circ i^{\lambda }_{-\me_{i}-\me_{j}}
=i^{\lambda }_{-\me_{i}-\me_{j}}\circ \omega^2(X),\quad X\in \g .
\]
Since $\omega^2=\id_{\g}$, $i^{\lambda }_{-\me_{i}-\me_{j}} $
is a non-zero generator of 
$\Hom_K (V_{\lambda \eme{i}{j}},V_{\lambda }\otimes_\mC V_{-2\me_{3}})$, 
which is unique up to scalar multiple. 
So, we obtain the following proposition 
from Proposition \ref{prop:clebsh(2,0,0)} easily.

\begin{prop}\label{prop:clebsh(0,0,-2)}
\textit
For $1\leq i\leq j\leq 3$ and 
G-pattern $M$ of type $\lambda \eme{i}{j}$, 
the image of the monomial basis 
$f(M)$ by the injector $i^{\lambda }_{-\me_{i}-\me_{j}}\colon  
V_{\lambda \eme{i}{j}}\to V_{\lambda }\otimes_\mC V_{-2\me_{3}}$
 is given by 
\[
i^{\lambda }_{-\me_{i}-\me_{j}} (f(M)) 
=\sum_{0\leq k\leq l\leq 2}
\left\{ \sum_{m=0}^{\cpr{4-j\hs }{4-i}{k}{l}}
\cpd{4-j\hs }{4-i}{(\hat{M})}{l}{k}{m} 
f\bkt{M\gpt{}{\me_{i}+\me_{j}}{}{l}{0}{k} [-m]} \right\} 
\otimes f\gpt{0}{0}{-2}{0}{-l}{-k} 
\]
In the right hand side of the above formula, we put $f(M')=0$ 
if $M'$ is a triangular array which does not 
satisfy the condition (\ref{cdn:G-pattern}) of G-patterns.
\end{prop}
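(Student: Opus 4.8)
The discussion preceding the proposition already establishes that $i^{\lambda }_{-\me_{i}-\me_{j}}$ is a nonzero generator of $\Hom_K (V_{\lambda \eme{i}{j}},V_{\lambda }\otimes_\mC V_{-2\me_{3}})$, so the plan is only to produce the explicit formula by evaluating the composite
\[
i^{\lambda }_{-\me_{i}-\me_{j}}=(T_{\hat{\lambda }}\otimes_\mC T_{2\me_1})\circ i^{\widehat{\lambda }}_{\me_{4-j}+\me_{4-i}}\circ T_{\lambda -\me_{i}-\me_{j}}
\]
on a monomial basis vector $f(M)$, one factor at a time. First, Proposition \ref{prop:sym_GZ-basis} gives $T_{\lambda -\me_{i}-\me_{j}}(f(M))=f(\hat{M})$; here I would check the elementary identity $\widehat{\lambda -\me_{i}-\me_{j}}=\hat{\lambda }+\me_{4-j}+\me_{4-i}$ on dominant weights, which guarantees that $\hat{M}$ is a G-pattern of type $\hat{\lambda }\epe{4-j}{4-i}$, exactly the type on which $i^{\widehat{\lambda }}_{\me_{4-j}+\me_{4-i}}$ acts. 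Second, I apply Proposition \ref{prop:clebsh(2,0,0)}, with $\lambda $ replaced by $\hat{\lambda }$ and the index pair taken to be $(4-j,4-i)$, to expand $i^{\widehat{\lambda }}_{\me_{4-j}+\me_{4-i}}(f(\hat{M}))$ as the indicated double sum with coefficients $\cpd{4-j\hs }{4-i}{(\hat{M})}{l}{k}{m}$ and summation range $\cpr{4-j\hs }{4-i}{k}{l}$.

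The essential content is the third step, carrying $T_{\hat{\lambda }}\otimes_\mC T_{2\me_1}$ through this expansion. Each $T$ is linear and, by Proposition \ref{prop:sym_GZ-basis}, sends monomial basis vectors to monomial basis vectors, so the coefficients are transported untouched; what remains is to identify the images of the patterns. The combinatorial key is the rule that if a G-pattern $N'$ is obtained from $N$ by the increment $\gpt{t_1}{t_2}{t_3}{u_1}{u_2}{v}$, then $\hat{N}'$ is obtained from $\hat{N}$ by the reversed and negated increment $\gpt{-t_3}{-t_2}{-t_1}{-u_2}{-u_1}{-v}$. Applying this with $N=\hat{M}$ and using $\hat{\hat{M}}=M$, the top increment $-\me_{4-j}-\me_{4-i}$ is converted into $\me_{i}+\me_{j}$, the increment $\gpt{}{}{}{0}{-l}{-k}$ into $\gpt{}{}{}{l}{0}{k}$, and the abbreviation $[-m]$ is self-dual under this rule, so together they reassemble into $M\gpt{}{\me_{i}+\me_{j}}{}{l}{0}{k}[-m]$; this produces the first tensor factor. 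For the second factor, $T_{2\me_1}$ sends $f\gpt{2}{0}{0}{l}{0}{k}$ to $f\gpt{0}{0}{-2}{0}{-l}{-k}$ directly from the definition of the dual pattern.

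Assembling the three steps reproduces the displayed formula verbatim. The main obstacle is purely the bookkeeping in this third step: keeping straight the reversal-and-negation of increments, its interaction with the shorthand $[-m]$, and the relabeling $i\mapsto 4-j$, $j\mapsto 4-i$ of the index pair, where sign or index slips are easy to make. Once this dictionary between the two families of G-patterns is fixed, no computation beyond Proposition \ref{prop:clebsh(2,0,0)} is needed, since the coefficients $\cpd{4-j\hs }{4-i}{(\hat{M})}{l}{k}{m}$ are literally the coefficients of that proposition read off at $(\hat{\lambda },\hat{M})$ with the relabeled indices.
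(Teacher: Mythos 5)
Your proposal is correct and follows exactly the route the paper intends: the paper defines $i^{\lambda }_{-\me_{i}-\me_{j}}$ as the composite $(T_{\hat{\lambda }}\otimes_\mC T_{2\me_1})\circ i^{\widehat{\lambda }}_{\me_{4-j}+\me_{4-i}}\circ T_{\lambda -\me_{i}-\me_{j}}$ and states that the formula follows ``easily'' from Proposition \ref{prop:clebsh(2,0,0)}, leaving precisely the bookkeeping you carry out (the type identity $\widehat{\lambda -\me_{i}-\me_{j}}=\hat{\lambda }+\me_{4-j}+\me_{4-i}$, the reversal-and-negation of increments under $M\mapsto \hat{M}$, and the self-duality of $[-m]$) unwritten. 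Your verification of these points is accurate, so the proposal matches the paper's argument with the implicit details filled in.
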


\section{$(\g ,K)$-module structure of 
principal series representation of $Sp(3,\mR )$}
\label{sec:structure}

\subsection{Irreducible decomposition of 
$(\pi_{(\sigma ,\nu )}|_K,H_{(\sigma ,\nu)})$}
\label{subsec:peter-weyl}

For an irreducible finite dimensional representation $(\tau ,V)$ of $K$,
we denote by $(\tau^*,V^*)$ its contragradient representation. 
For a basis $\{ v_i\}_{i\in I}$ of $V$, 
we denote by $\{ v_i^*\}_{i\in I}$ 
its dual basis of $V^*$. Let $L^+$ be the set of dominant weights 
of length three.

We set 
\[
L^2_{(M_{\min },\sigma )}(K)=\{ f\in L^2(K)\mid
f(mx)=\sigma (m)f(x)\ \text{for a.e. } m\in M_{\min },\ x\in K\} 
\]
and give $K$-module structure by the right regular action of $K$. 
Then the restriction map 
$r_K \colon H_{(\sigma ,\nu)}\ni f\mapsto f|_K\in L^2_{(M_{\min },\sigma )}(K)$ 
is an isomorphism of $K$-modules. 

$L^2(K)$ has a  $K\times K$-bimodule structure 
by the two sided regular action:
\[
((k_1,k_2)f)(x)=f(k_1^{-1}xk_2),\quad 
x\in K,\ f\in L^2(K),\ (k_1,k_2)\in K\times K,
\]
and a homomorphism $\Phi_\lambda \colon 
V_\lambda^*\otimes_\mC V_\lambda \to L^2(K)$ 
of $K\times K$-bimodules by 
\[
w\otimes v\mapsto (x\mapsto \ip{w}{\tau_\lambda (x)v})
\] 
where $\ip{}{} $ is a canonical pairing on 
$V_\lambda^*\otimes_\mC V_\lambda $.
Then the Peter-Weyl theorem tells that 
\[
\hatoplus_{\lambda \in L^+} \Phi_\lambda \colon 
\hatoplus_{\lambda \in L^+} 
V_\lambda^*\otimes_\mC V_\lambda \to L^2(K)
\] 
is an isomorphism as $K\times K$-bimodules. 
Here $\hatoplus $ means a Hilbert space direct sum. 

Since $L^2_{(M_{\min },\sigma )}(K)\subset L^2(K)$, 
we have an irreducible decomposition of 
$L^2_{(M_{\min },\sigma )}(K)$: 
\[
L^2_{(M_{\min },\sigma )}(K)\simeq \hatoplus_{\lambda \in L^+} 
(V_\lambda^*[\sigma ])\otimes_\mC V_\lambda .
\]
Here $V_\lambda^*[\sigma ]$ is the $\sigma $-isotypic component in 
$(\tau_\lambda^*|_{M_{\min }},V^*)$, i.e., 
\[
V_\lambda^*[\sigma ]=
\bigoplus_{\gamma \equiv (\sigma_1,\sigma_2,\sigma_3)\bmod 2}
V_\lambda^*(\gamma)=
\bigoplus_{M\in G_\sigma (\lambda )}
\mC f(M)^*
\]
where $G_\sigma (\lambda )=\{ M\in G(\lambda )\mid 
\wt{M} \equiv (\sigma_1,\sigma_2,\sigma_3)\bmod 2\} $.

From above arguments, We obtain an isomorphism 
\[
r_K^{-1}\circ \hatoplus_{\lambda \in L^+} \Phi_\lambda \colon 
\hatoplus_{\lambda \in L^+} 
(V_\lambda^*[\sigma ])\otimes_\mC V_\lambda \to 
H_{(\sigma ,\nu )}.
\]
Now we define elementary function $\efct{M}{N}\in H_{(\sigma ,\nu )}$ by 
$\efct{M}{N}=r_K^{-1}\circ \Phi_\lambda (f(M)^*\otimes f(N))$ for 
$M\in G_{\sigma }(\lambda )$ and $N\in G(\lambda )$. 
Let $l(M)$ be the order of the set 
$\{ N\in G(\lambda )\mid 
\wt{N}>\wt{M} \ \text{ or } \ 
\ N=M[k] \ \text{ for some }\ k\geq 0\}$ for a G-pattern $M$ of type $\lambda $. 
For a G-pattern $M$ of type $\lambda $, let 
$\evec{\lambda }{M}$ be a column vector of degree 
$d_\lambda =\dim V_{\lambda }$ 
with its $l(N)$-th component $\efct{M}{N} \ (\ N\in G(\lambda )\ )$, i.e., 
\[
\evec{\lambda }{M} =
\overset{t}{\overset{ }{\overset{ }{\overset{ }{ }}}}\biggl(
s\bkt{M,\gpt{\lambda_1\hs }{\lambda_2\hs }{\lambda_3}{\lambda_1}{\lambda_2}{\lambda_1}}, \cdots ,
\overset{l(N)\text{-th}}{\overset{\downarrow}{ \efct{M}{N}}}
,\cdots ,s\bkt{M,\gpt{\lambda_1\hs }{\lambda_2\hs }{\lambda_3}
{\lambda_2}{\lambda_3}{\lambda_3}}\biggl).
\hphantom{=====}
\]
Moreover we denote by $\langle \evec{\lambda }{M}\rangle $ 
the subspace of $H_{(\sigma ,\nu )}$ generated by the functions in the entries 
of the vector $\evec{\lambda }{M}$, i.e., 
$\langle \evec{\lambda }{M}\rangle =
\bigoplus_{N\in G(\lambda )}\mC \efct{M}{N}
\ \bkt{\ \simeq V_\lambda \ }$. We take the marking 
$\{\efct{M}{N}\}_{N\in G(\lambda )}$ for 
the simple $K$-module $\langle \evec{\lambda }{M}\rangle $.

\begin{prop}
As a unitary representation of $K$, it has an irreducible 
decomposition:
\[
H_{(\sigma ,\nu )}
\simeq \hatoplus_{\lambda \in L^+} 
(V_\lambda^*[\sigma ])\otimes_\mC V_\lambda .
\]
The direct sum 
\[
\bigoplus_{M\in G_{\sigma }(\lambda )}
\langle \evec{\lambda }{M} \rangle 
\]
is the $\tau_{\lambda }$-isotypic component 
$(V_\lambda^*[\sigma ])\otimes_\mC V_\lambda $ 
in $H_{(\sigma ,\nu )}$.
\end{prop}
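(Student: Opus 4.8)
The plan is to assemble the chain of $K$-module isomorphisms already set up above and then make the identification of isotypic components explicit on the level of the distinguished bases. For the first assertion I would note that the restriction map $r_K$ is an isomorphism of $K$-modules because $G=P_{\min }K$ by the Iwasawa decomposition: every $f\in L^2_{(M_{\min },\sigma )}(K)$ extends uniquely to a function on $G$ with the required $(\sigma ,e^{\nu +\rho })$-equivariance under $P_{\min }$, and this extension inverts $r_K$. Composing $r_K$ with the $\sigma$-isotypic restriction of the Peter--Weyl isomorphism $\hatoplus_{\lambda }\Phi_\lambda $ then yields
\[
H_{(\sigma ,\nu )}\;\xrightarrow{\ \sim\ }\;L^2_{(M_{\min },\sigma )}(K)\;\simeq \;\hatoplus_{\lambda \in L^+}(V_\lambda^*[\sigma ])\otimes_\mC V_\lambda ,
\]
which is the desired decomposition as a unitary $K$-representation.

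The key bookkeeping point, and the only step requiring real care, is to track how the two-sided $K\times K$-action interacts with $\Phi_\lambda $. A short computation from $\Phi_\lambda (w\otimes v)(x)=\ip{w}{\tau_\lambda (x)v}$ shows that the right regular action of $K$ on $L^2(K)$ corresponds to $\tau_\lambda $ on the factor $V_\lambda $, while the left action corresponds to $\tau_\lambda^*$ on $V_\lambda^*$. In particular the left $M_{\min }$-equivariance $f(mx)=\sigma (m)f(x)$ defining $L^2_{(M_{\min },\sigma )}(K)$ singles out exactly the $\sigma$-isotypic subspace $V_\lambda^*[\sigma ]$ of $V_\lambda^*$ under $\tau_\lambda^*|_{M_{\min }}$, which is why only $V_\lambda^*[\sigma ]$ survives; this is a brief character computation using that $\sigma $ is $\pm 1$-valued.

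For the second assertion I would use the explicit basis. Since $V_\lambda^*[\sigma ]=\bigoplus_{M\in G_\sigma (\lambda )}\mC f(M)^*$, the relevant summand is
\[
(V_\lambda^*[\sigma ])\otimes_\mC V_\lambda =\bigoplus_{M\in G_\sigma (\lambda )}\bigoplus_{N\in G(\lambda )}\mC \bkt{f(M)^*\otimes f(N)}.
\]
By the very definition $\efct{M}{N}=r_K^{-1}\circ \Phi_\lambda (f(M)^*\otimes f(N))$, the isomorphism $r_K^{-1}\circ \Phi_\lambda $ carries the block $\mC f(M)^*\otimes_\mC V_\lambda $ onto $\bigoplus_{N\in G(\lambda )}\mC \efct{M}{N}=\langle \evec{\lambda }{M}\rangle $, so summing over $M\in G_\sigma (\lambda )$ identifies $\bigoplus_{M\in G_\sigma (\lambda )}\langle \evec{\lambda }{M}\rangle $ with the image of the whole $\lambda$-summand. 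Because the displayed decomposition is the full irreducible $K$-decomposition and distinct $\lambda$ give non-isomorphic $V_\lambda $, this image is precisely the $\tau_\lambda$-isotypic component of $H_{(\sigma ,\nu )}$, with multiplicity $\dim V_\lambda^*[\sigma ]=\#G_\sigma (\lambda )$. Finally, for each fixed $M$ the map $v\mapsto r_K^{-1}\circ \Phi_\lambda (f(M)^*\otimes v)$ is an injective $K$-homomorphism for the right regular action (by Peter--Weyl together with the intertwining property above), so $\langle \evec{\lambda }{M}\rangle \simeq V_\lambda $ and the assignment $f(N)\mapsto \efct{M}{N}$ matches the marking $\{\efct{M}{N}\}_{N\in G(\lambda )}$ with the monomial basis $\{f(N)\}_{N\in G(\lambda )}$, completing the proof.
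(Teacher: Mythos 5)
Your proposal is correct and follows essentially the same route the paper takes: the paper likewise obtains the decomposition by composing the restriction isomorphism $r_K$ with the Peter--Weyl isomorphism $\hatoplus_\lambda \Phi_\lambda$, identifying $V_\lambda^*[\sigma]$ via the left $M_{\min}$-equivariance, and then reading off the $\tau_\lambda$-isotypic component from the elementary functions $\efct{M}{N}$. You merely make explicit a few details (invertibility of $r_K$ from the Iwasawa decomposition, the bimodule bookkeeping) that the paper leaves implicit.
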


\subsection{General setting }
\label{subsec:setting}
The $K$-finite part $H_{(\sigma ,\nu ),K}$ of $H_{(\sigma ,\nu )}$ is 
also a $\gk $-module. 
Because of the Cartan decomposition $\g =\gk \oplus \gp $, 
in order to describe the action of $\g $ or $\g_\mC $ it suffices to 
investigate the action of $\gp $ or $\gp_\mC =\gp_+\oplus \gp_-$. 

For a $K$-type $(\tau_\lambda ,V_\lambda )$ of 
$(\pi_{(\sigma ,\nu )}, H_{(\sigma ,\nu ),K})$ and 
a nonzero $K$-homomorphism $\eta \colon V_\lambda \to H_{(\sigma ,\nu ),K}$,
we define linear map 
$\tilde{\eta }\colon \gp_\mC \otimes_\mC V_\lambda \to H_{(\sigma ,\nu ),K}$ 
by $X\otimes v \mapsto X\cdot \eta (v) $. 
Then $\tilde{\eta }$ is $K$-homomorphism with $\gp_\mC $ endowed with 
the adjoint action $\Ad$ of $K$. 

Since 
\begin{align*}
&V_\lambda \otimes_\mC  \gp_+ \simeq 
V_\lambda \otimes_\mC  V_{2\me_1} \simeq 
\bigoplus_{1\leq i\leq j\leq 3} V_{\lambda \epe{i}{j}}\\ 
&\left( \text{ resp.\ } V_\lambda \otimes_\mC  \gp_- \simeq 
V_\lambda \otimes_\mC  V_{-2\me_3} \simeq 
\bigoplus_{1\leq i\leq j\leq 3} V_{\lambda \eme{i}{j}}
\right) ,
\end{align*}
there are six injective $K$-homomorphisms 
\begin{align*}
&I_{+ij}^\lambda =(\id_{V_\lambda }\otimes_\mC i_{\gp_+})
\circ i_{\me_i+\me_j}^\lambda 
\colon  V_{\lambda \epe{i}{j}} \to V_\lambda \otimes_\mC \gp_+ ,\quad 
1\leq i\leq j\leq 3 \\
&\left( \text{ resp.\ } I_{-ij}^\lambda =(\id_{V_\lambda }
\otimes_\mC i_{\gp_-})
\circ i_{-\me_i-\me_j}^\lambda 
\colon  V_{\lambda \eme{i}{j}} \to V_\lambda \otimes_\mC \gp_-,\quad 
1\leq i\leq j\leq 3 \right)
\end{align*}
for general $\lambda $. 
Then we define $\mC $-linear maps 
\[
\Gamma_{\pm ij}^\lambda \colon 
\Hom_K(V_\lambda ,H_{(\sigma ,\nu ),K})
\to \Hom_K(V_{\lambda [\pm ij]},H_{(\sigma ,\nu ),K})\quad 
,\ 1\leq i\leq j\leq 3
\]
by $\eta \mapsto \tilde{\eta }\circ I_{\pm ij}^\lambda $.

Now we settle two purposes of this paper:
\begin{description}
\item[(i)] Describe the injective $K$-homomorphism $I_{\pm ij}^\lambda $ 
in terms of the monomial basis.
\item[(ii)] Determine the matrix representations of 
the linear homomorphisms $\Gamma_{\pm ij}^\lambda $ 
with respect to the induced basis defined in the next subsection. 
\end{description}
We have already accomplished the first purpose in Lemma \ref{lem:K-action}, 
Proposition \ref{prop:clebsh(2,0,0)} and \ref{prop:clebsh(0,0,-2)}. 
We accomplish the second purpose 
in the subsection \ref{subsec:matrix_rep}. 
As a result, we obtain infinite number of 'contiguous relations', a kind 
of system of differential-difference relations among vectors in 
$H_{(\sigma ,\nu )}[\tau_{\lambda }]$ and 
$H_{(\sigma ,\nu )}[\tau_{\lambda [\pm ij]}]$. 
Here $H_{(\sigma ,\nu )}[\tau ] $ is $\tau$-isotypic component of 
$H_{(\sigma ,\nu )}$.

\subsection{The canonical blocks of elementary functions}
\label{subsec:canonical_blocks}

Let $\eta \colon V_\lambda \to H_{(\sigma ,\nu ),K}$ be a non-zero 
$K$-homomorphism, where $V_\lambda $ is a simple $K-module$ 
with the marking $\{f(M)\}_{M\in G(\lambda )}$. 
Then we identify $\eta $ with the column vector of degree 
$d_\lambda =\dim V_{\lambda }$ whose $l(N)$-th component is $\eta \bkt{f(N)}$ 
for $N\in G(\lambda )$, i.e., 
\[
\overset{t}{\overset{ }{\overset{ }{\overset{ }{ }}}}\biggl(
\eta \bkt{f\gpt{\lambda_1\hs }{\lambda_2\hs }{\lambda_3}{\lambda_1}{\lambda_2}{\lambda_1}}, \cdots ,
\overset{l(N)\text{-th}}{\overset{\downarrow}{ \eta \bkt{f(N)}}}
,\cdots ,\eta \bkt{f\gpt{\lambda_1\hs }{\lambda_2\hs }{\lambda_3}
{\lambda_2}{\lambda_3}{\lambda_3}}\biggl).
\hphantom{=========}
\]
By this identification, 
we identify $\evec{\lambda }{M}$ with
the injective $K$-homomorphism 
\[
V_\lambda \ni f(N)\mapsto \efct{M}{N}\in H_{(\sigma ,\nu ),K},\ 
N\in G(\lambda )
\]
for $M\in G_\sigma (\lambda )$. 
We note that $\{ \evec{\lambda}{M} \}_{M\in G_\sigma (\lambda )} $ 
is a basis of $\Hom_K (V_\lambda ,H_{(\sigma ,\nu ),K})$ and we call 
it \textit{the induced basis from the monomial basis}. 

We define a certain matrix of elementary functions corresponding to 
the induced basis $\{ \evec{\lambda}{M} \}_{M\in G_\sigma (\lambda )} $ 
of $\Hom_K(V_\lambda ,H_{(\sigma ,\nu ),K})$
for each $K$-type $\tau_\lambda $ of 
our principal series representation $\pi_{(\sigma ,\nu )}$.
\begin{defn}
\textit{
For $M\in G_\sigma (\lambda )$, 
let $d_\lambda^\sigma $ and 
$l^\sigma (M)$ be the orders of the set 
$G_\sigma (\lambda )$ and 
$\{ N\in G_\sigma (\lambda )\mid l(M)\leq l(N)\}$, respectively. 
The $d_\lambda \times d_\lambda^\sigma $ matrix $\eblock{\lambda }$ whose 
$l^\sigma (M)$-th column is $\evec{\lambda }{M}$ for 
$M\in G_\sigma (\lambda )$ is called
\textit{the canonical block of elementary functions} 
for $\tau_\lambda  $-isotypic component.
}
\end{defn}

\subsection{The $\gp_\pm $-matrix corresponding to 
$I_{\pm ij}^\lambda $}
\label{subsec:p-matrix}

In this subsection, we define $\gp_\pm $-matrix 
$\pmat{\lambda }{\pm }{ij}$ of size $d_{\lambda [\pm ij]}\times d_\lambda $ 
corresponding to $I_{\pm ij}^\lambda $ 
with respect to the monomial basis. 

\begin{defn}
\textit{
We define a $\gp_\pm $-matrix  
$\pmat{\lambda }{\pm }{ij}$ of size $d_{\lambda [\pm ij]}\times d_\lambda $
as follows.\\
(i) For $1\leq i\leq j\leq 3$, we define a $\gp_+$-matrix 
$\pmat{\lambda }{+}{ij}\in M(d_{\lambda \epe{i}{j}},d_\lambda ,\mC)\otimes \gp_+$ 
by 
\begin{align*}
\pmat{\lambda }{+}{ij}=
&\left\{ \sum_{m=0}^{\cpr{i}{j}{2}{2}}
L^{\lambda }_{+ij}\bkt{\cgpt{0}{-2}{-2} [-m]} \right\} 
\otimes X_{+11} 
+\left\{ \sum_{m=0}^{\cpr{i}{j}{1}{2}}
L^{\lambda }_{+ij}\bkt{\cgpt{0}{-2}{-1} [-m]} \right\} 
\otimes X_{+12} \\
&+\left\{ \sum_{m=0}^{\cpr{i}{j}{1}{1}}
L^{\lambda }_{+ij}\bkt{\cgpt{0}{-1}{-1} [-m]} \right\} 
\otimes X_{+13} 
+\left\{ \sum_{m=0}^{\cpr{i}{j}{0}{2}}
L^{\lambda }_{+ij}\bkt{\cgpt{0}{-2}{0} [-m]} \right\} 
\otimes X_{+22} \\
&+\left\{ \sum_{m=0}^{\cpr{i}{j}{0}{1}}
L^{\lambda }_{+ij}\bkt{\cgpt{0}{-1}{0} [-m]} \right\} 
\otimes X_{+23} 
+\left\{ \sum_{m=0}^{\cpr{i}{j}{0}{0}}
L^{\lambda }_{+ij}\bkt{\cgpt{0}{0}{0} [-m]} \right\} 
\otimes X_{+33} .
\end{align*}
Here $L^{\lambda }_{+ij}\left( \cgpt{0}{-l}{-k}[-m] \right) $ 
is a matrix of size $d_{\lambda \epe{i}{j}}\times d_\lambda $ 
whose $l(M)$-th row is given by  
\[
\left\{
\begin{array}{lll}
(\overbrace{0,\cdots ,0}^{l-1}, \cp{i}{j}{M}{l}{k}{m}, 
\overbrace{0,\cdots ,0}^{d_\lambda -l})
&\left( \ l=l\left(M\gpt{}{-\me_i-\me_j}{}{0}{-l}{-k}[-m]\right) \ \right) \\
&\quad \text{if }M\gpt{}{-\me_i-\me_j}{}{0}{-l}{-k}[-m]
\in G(\lambda )\\
\mathbf{0}&\quad \text{otherwise }
\end{array}
\right. 
\]
for $M\in G(\lambda \epe{i}{j})$.\\
(ii) For $1\leq i\leq j\leq 3$, we define a $\gp_-$-matrix 
$\pmat{\lambda }{-}{ij}\in M(d_{\lambda \eme{i}{j}},d_\lambda ,\mC)\otimes \gp_-$ 
by 
\begin{align*}
\pmat{\lambda }{-}{ij}=
&\left\{ \sum_{m=0}^{\cpr{4-j\hs }{4-i}{2}{2}}
L^{\lambda }_{-ij}\bkt{\cgpt{2}{0}{2} [-m]} \right\} 
\otimes X_{-11} 
-\left\{ \sum_{m=0}^{\cpr{4-j\hs }{4-i}{1}{2}}
L^{\lambda }_{-ij}\bkt{\cgpt{2}{0}{1} [-m]} \right\} 
\otimes X_{-12} \\
&+\left\{ \sum_{m=0}^{\cpr{4-j\hs }{4-i}{1}{1}}
L^{\lambda }_{-ij}\bkt{\cgpt{1}{0}{1} [-m]} \right\} 
\otimes X_{-13} 
+\left\{ \sum_{m=0}^{\cpr{4-j\hs }{4-i}{0}{2}}
L^{\lambda }_{-ij}\bkt{\cgpt{2}{0}{0} [-m]} \right\} 
\otimes X_{-22} \\
&-\left\{ \sum_{m=0}^{\cpr{4-j\hs }{4-i}{0}{1}}
L^{\lambda }_{-ij}\bkt{\cgpt{1}{0}{0} [-m]} \right\} 
\otimes X_{-23} 
+\left\{ \sum_{m=0}^{\cpr{4-j\hs }{4-i}{0}{0}}
L^{\lambda }_{-ij}\bkt{\cgpt{0}{0}{0} [-m]} \right\} 
\otimes X_{-33} .
\end{align*}
Here $L^{\lambda }_{-ij}\left( \cgpt{l}{0}{k} [-m]\right)$ 
is a matrix of size $d_{\lambda \eme{i}{j}}\times d_\lambda $ 
whose $l(M)$-th row is given by  
\[
\left\{
\begin{array}{lll}
(\overbrace{0,\cdots ,0}^{l-1}, \cpd{4-j\hs }{4-i}{(\hat{M})}{l}{k}{m} , 
\overbrace{0,\cdots ,0}^{d_\lambda -l})\ 
&\left(\ l=l\left(M\gpt{}{\me_i+\me_j}{}{l}{0}{k}[-m]\right)\ \right) \\
&\quad \text{if }M\gpt{}{\me_i+\me_j}{}{l}{0}{k}[-m]\in G(\lambda )\\
\mathbf{0}& \quad \text{otherwise }
\end{array}
\right. 
\]
for $M\in G(\lambda \eme{i}{j})$.
}
\end{defn}

Now we define 
$\pmat{\lambda }{\pm }{ij}\evec{\lambda }{M}\in 
\bkt{H_{(\sigma ,\nu ),K}}^{\oplus d_{\lambda [\pm ij]}}
\simeq \mC^{d_{\lambda [\pm ij]}}\otimes_\mC H_{(\sigma ,\nu ),K}$ 
by the action 
\begin{gather*}
(L\otimes X)(v\otimes f)=L(v)\otimes Xf,\\ 
\big(\ L\otimes X\in \Hom_\mC (\mC^{d_\lambda }, 
\mC^{d_{\lambda [\pm ij]}})
\otimes_\mC \gp_\pm ,\quad  
v\otimes f\in \mC^{d_\lambda }\otimes_\mC H_{(\sigma ,\nu ),K}\ \big)
\end{gather*}
for 
\begin{align*}
&\evec{\lambda }{M}\in \bkt{H_{(\sigma ,\nu ),K}}^{\oplus d_\lambda }
\simeq \mC^{d_\lambda }\otimes_\mC H_{(\sigma ,\nu ),K},\\
&\pmat{\lambda }{\pm }{ij}\in M(d_{\lambda [\pm ij]},d_\lambda ,\mC )\otimes_\mC \gp_\pm 
\simeq \Hom_\mC (\mC^{d_\lambda }, \mC^{d_{\lambda [\pm ij]}})
\otimes_\mC \gp_\pm .
\end{align*}
By the definition of $\pmat{\lambda }{\pm }{ij}$, 
we note that the vector $\pmat{\lambda }{\pm }{ij}\evec{\lambda }{M}$ 
is identified with the image of $\evec{\lambda }{M}$ by 
$\Gamma^\lambda_{\pm ij}$.

\subsection{The contiguous relations}
\label{subsec:matrix_rep}

To compute the matrix representation of $\Gamma^\lambda_{\pm ij}$ 
with respect to the induced basis, we prepare the following lemmas. 
\begin{lem}
\label{lem:Iwasawa}
\textit 
The root vectors $X_{\pm ij}\ (0\leq i\leq j\leq 3)$ in $\gp_{\pm }$ 
have the following expressions according to the Iwasawa decomposition of $\g $.
\begin{align*}
& X_{+ij}=\left\{ \begin{array}{ll}
2\sqrt{-1}E_{2e_i}+H_i+\kappa (E_{ii}),
&i=j\\
(E_{e_i-e_j}+\sqrt{-1}E_{e_i+e_j})+\kappa (E_{ji}),
&i<j
\end{array} \right. ,\\
& X_{-ij}=\left\{ \begin{array}{ll}
-2\sqrt{-1}E_{2e_i}+H_i-\kappa (E_{ii}),
&i=j\\
(E_{e_i-e_j}-\sqrt{-1}E_{e_i+e_j})-\kappa (E_{ij}),
&i<j
\end{array} \right. .
\end{align*}
\end{lem}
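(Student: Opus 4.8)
Since every object appearing in the statement --- the root vectors $X_{\pm ij}=p_\pm((E_{ij}+E_{ji})/2)$, the restricted root vectors $E_{2e_i}$ and $E_{e_i\pm e_j}$, the basis $H_i$ of $\ga$, and the isomorphism $\kappa$ --- is given by an explicit $6\times 6$ block matrix, the proof is a direct blockwise verification. The plan is to substitute the explicit matrix forms into the right-hand side of each identity, add them, and compare the four $3\times 3$ blocks against the left-hand side. No structural input beyond the definitions in Section \ref{subsec:groups_and_algebra} is needed.

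The key preparatory step is to make the image $\kappa(E_{kl})$ of an arbitrary matrix unit fully explicit. Recall that $\kappa$ is the inverse of the real isomorphism sending $\left(\begin{smallmatrix}A&B\\-B&A\end{smallmatrix}\right)$ to $A+\sqrt{-1}B$ (with $A$ real antisymmetric, $B$ real symmetric), extended $\mC$-linearly to $\gk_\mC$. Writing $E_{kl}=A+\sqrt{-1}B$ with $A,B$ real forces the antisymmetric part $A=\tfrac12(E_{kl}-E_{lk})$ and $\sqrt{-1}B=\tfrac12(E_{kl}+E_{lk})$, so that
\[
\kappa(E_{kl})=\begin{pmatrix} \tfrac12(E_{kl}-E_{lk}) & -\tfrac{\sqrt{-1}}{2}(E_{kl}+E_{lk}) \\ \tfrac{\sqrt{-1}}{2}(E_{kl}+E_{lk}) & \tfrac12(E_{kl}-E_{lk})\end{pmatrix}.
\]
In particular $\kappa(E_{ii})=\left(\begin{smallmatrix}O_3&-\sqrt{-1}E_{ii}\\ \sqrt{-1}E_{ii}&O_3\end{smallmatrix}\right)$, consistent with $T_i=\kappa(\sqrt{-1}E_{ii})$ fixed in Section \ref{subsec:groups_and_algebra}.

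With this formula in hand I would treat the four cases separately. For $i=j$, using $E_{2e_i}=\left(\begin{smallmatrix}O_3&E_{ii}\\O_3&O_3\end{smallmatrix}\right)$ and $H_i=\left(\begin{smallmatrix}E_{ii}&O_3\\O_3&-E_{ii}\end{smallmatrix}\right)$, the sum $2\sqrt{-1}E_{2e_i}+H_i+\kappa(E_{ii})$ has diagonal blocks $E_{ii}$ and $-E_{ii}$ coming from $H_i$, while the upper-right block $2\sqrt{-1}E_{ii}$ contributed by $E_{2e_i}$ is corrected by the $-\sqrt{-1}E_{ii}$ from $\kappa(E_{ii})$ to give $\sqrt{-1}E_{ii}$, matching $X_{+ii}=p_+(E_{ii})$; the $X_{-ii}$ identity is the same computation with the two signs reversed. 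For $i<j$ I would insert $E_{e_i-e_j}=\left(\begin{smallmatrix}E_{ij}&O_3\\O_3&-E_{ji}\end{smallmatrix}\right)$ and $E_{e_i+e_j}=\left(\begin{smallmatrix}O_3&E_{ij}+E_{ji}\\O_3&O_3\end{smallmatrix}\right)$ together with $\kappa(E_{ji})$ (resp.\ $\kappa(E_{ij})$ for the minus case) and check that each of the four blocks collapses to $\pm\tfrac12(E_{ij}+E_{ji})$ or $\pm\tfrac{\sqrt{-1}}{2}(E_{ij}+E_{ji})$, exactly reproducing $p_\pm((E_{ij}+E_{ji})/2)$.

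The only genuine obstacle is bookkeeping accuracy: pinning down the $\mC$-linear extension of $\kappa$ and its action on the \emph{non}-symmetric matrix units $E_{ji}$, since a single sign slip in $\kappa(E_{kl})$ propagates identically through all four cases. Once the displayed formula for $\kappa(E_{kl})$ is verified, the remaining checks are immediate matrix additions, and the lemma follows.
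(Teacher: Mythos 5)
Your proposal is correct and matches the paper's approach: the paper's proof is simply "these are obtained by direct computation," and your blockwise verification (with the explicit formula for $\kappa(E_{kl})$ obtained by splitting into antisymmetric and symmetric parts) is exactly that computation carried out; I checked all four cases and the blocks match $p_\pm((E_{ij}+E_{ji})/2)$ as claimed.
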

\begin{proof}
These are obtained by direct computation.
\end{proof}

\begin{lem}
\label{lem:rel_clebsh}
\textit{
The coefficients $\cp{i}{j}{M}{l}{k}{m}$ 
in Proposition \ref{prop:clebsh(2,0,0)} 
have the following relations:
\begin{align*}
k_{ij}(M)\hs \cp{i}{j}{M\gpt{}{\me_i+\me_j}{}{0}{2}{2}[m]}{2}{2}{m}=
&(m_{12}-m_{23}+1)\chid{-}{-1}(M)
\cp{i}{j}{M\gpt{}{\me_i+\me_j}{}{0}{2}{2}[m]}{2}{1}{m-1}\\
&+(m_{11}-m_{22}+1)
\cp{i}{j}{M\gpt{}{\me_i+\me_j}{}{0}{2}{2}[m]}{2}{1}{m}\\
&+(C_1(M)+1)
\cp{i}{j}{M\gpt{}{\me_i+\me_j}{}{0}{2}{2}[m]}{1}{1}{m-1}\\
&+(m_{33}-m_{22}-1)
\cp{i}{j}{M\gpt{}{\me_i+\me_j}{}{0}{2}{2}[m]}{1}{1}{m}
\end{align*}
for $1\leq i\leq j\leq 3$, $m\in \mZ$ and $M\in G(\lambda )$. 
Here 
\begin{align*}
\cp{i}{j}{M\gpt{}{\me_i+\me_j}{}{0}{2}{2}[m]}{l}{k}{m}=&0
& \textit{ if } & \cpr{i}{j}{k}{l}<m \ \textit{ or }\ m<0.  
\end{align*}
and
\begin{align*}
&k_{11}(M)=-2m_{11}+2m_{13},  &k_{12}(M)=-2m_{11}+m_{13}+m_{23}-2,\\
&k_{22}(M)=-2m_{11}+2m_{23}-2,&k_{13}(M)=-2m_{11}+m_{13}+m_{33}-3,\\
&k_{33}(M)=-2m_{11}+2m_{33}-4,&k_{23}(m)=-2m_{11}+m_{23}+m_{33}-4.
\end{align*}}
\end{lem}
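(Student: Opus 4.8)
The plan is to recognize the four-term relation as the coefficient form of a single $\gk_\mC\simeq\gl(3,\mC)$-intertwining identity and then to verify that identity from Proposition~\ref{prop:clebsh(2,0,0)}. Write the image of the injector as $i^{\lambda}_{\me_i+\me_j}(f(M'))=\sum_{1\le a\le b\le 3}v_{ab}\otimes X_{+ab}$ with $M'=M\gpt{}{\me_i+\me_j}{}{0}{2}{2}[m]$, where, under the identification $\gp_+\simeq V_{2\me_1}$ of Lemma~\ref{lem:K-action}, the vector $v_{ab}\in V_\lambda$ gathers the terms of the expansion in Proposition~\ref{prop:clebsh(2,0,0)} whose $\gp_+$-factor is $X_{+ab}$ (so $v_{11},v_{12},v_{13}$ correspond to $(k,l)=(2,2),(1,2),(1,1)$). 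Because the top-row and bracket shifts in $M'$ cancel, $c^{\lambda}_{[ij;22;m]}(M')$ is exactly the coefficient of $f(M)$ in $v_{11}$, so the left-hand side is $k_{ij}(M)$ times that coefficient.

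First I would run the pattern arithmetic identifying the right-hand side. A direct check shows that the $X_{+12}$-terms of $i^{\lambda}_{\me_i+\me_j}(f(M'))$ sit at the patterns $M\cgpt{0}{0}{1}$ and $M\cgpt{0}{0}{1}[1]$, and the $X_{+13}$-terms at $M\cgpt{0}{1}{1}$ and $M\cgpt{0}{1}{1}[1]$. Applying $E_{21}$ to $v_{12}$ and $E_{31}$ to $v_{13}$ by Proposition~\ref{prop:action_on_GZ-basis} and equation~(\ref{eqn:b_act_root_vec}), and using the character shift relations (\ref{eqn:G-fct004})--(\ref{eqn:G-fct005}) together with the piecewise-linear description of $C_1$ in Lemma~\ref{lem:eqn_G-pat_fct}, the coefficient of $f(M)$ in $E_{21}v_{12}$ is $(m_{11}-m_{22}+1)c^{\lambda}_{[ij;12;m]}(M')+(m_{12}-m_{23}+1)\chid{-}{-1}(M)\,c^{\lambda}_{[ij;12;m-1]}(M')$, and the coefficient of $f(M)$ in $E_{31}v_{13}$ is $(m_{33}-m_{22}-1)c^{\lambda}_{[ij;11;m]}(M')+(C_1(M)+1)c^{\lambda}_{[ij;11;m-1]}(M')$. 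These are precisely the four terms on the right. Thus the assertion is equivalent to the single identity $k_{ij}(M)\,v_{11}=E_{21}v_{12}+E_{31}v_{13}$, read off in the $f(M)$-coefficient, where $k_{ij}(M)=m_{i3}+m_{j3}-2m_{11}-c_{ij}$ is the eigenvalue of the diagonal operator $(m_{i3}+m_{j3}-c_{ij})-2E_{11}$ on the weight vector $f(M)$.

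The identity $k_{ij}(M)v_{11}=E_{21}v_{12}+E_{31}v_{13}$ is where the intertwining property enters. Comparing the $X_{+12}$- and $X_{+13}$-isotypic components of $X\cdot i^{\lambda}_{\me_i+\me_j}(f(M'))=i^{\lambda}_{\me_i+\me_j}(Xf(M'))$ for $X=E_{21},E_{31}$, and using the adjoint actions $\kappa(E_{21})X_{+11}=2X_{+12}$ and $\kappa(E_{31})X_{+11}=2X_{+13}$ from Table~1, the Leibniz rule expresses $E_{21}v_{12}+E_{31}v_{13}$ through the $X_{+12}$- and $X_{+13}$-components of $i^{\lambda}_{\me_i+\me_j}(E_{21}f(M'))$ and $i^{\lambda}_{\me_i+\me_j}(E_{31}f(M'))$ minus a fixed multiple of $v_{11}$; collecting the $v_{11}$-contributions is what produces the diagonal factor $k_{ij}$. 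This argument is uniform in $(i,j)$, so the payoff of the reduction is that one need not treat the six formulas of Proposition~\ref{prop:clebsh(2,0,0)} separately at the structural level, and the remaining scalar identity is confirmed by substituting the explicit coefficients and reducing with Lemma~\ref{lem:eqn_G-pat_fct}.

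The step I expect to be the real obstacle is the honest bookkeeping behind the second paragraph: the coefficients of Proposition~\ref{prop:clebsh(2,0,0)} carry the factors $\chi_\pm$, $\chid{\pm}{r}$, $C_1$, $\bar{C_1}$ and $\bar{D},\bar{E},\bar{F}$, all of which jump across the hyperplanes $\delta(M)=0,\pm1$, so the shifts $\cgpt{0}{0}{1}$, $\cgpt{0}{1}{1}$ and $[\pm1]$ must be propagated carefully through (\ref{eqn:G-fct003})--(\ref{eqn:G-fct011}) and (\ref{eqn:pf_clebsh(1,0,0)_003})--(\ref{eqn:pf_clebsh(1,0,0)_002}); keeping the cases $\delta(M)>0$, $\delta(M)=0$ and $\delta(M)<0$ mutually consistent is the most error-prone part. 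As a safeguard I would first verify the indexing and sign conventions on Formula~3 ($i=j=3$), where every coefficient is $\pm1$, $\chi_+$ or $\chid{+}{1}$, before grinding through Formulas 1, 2 and 4--6.
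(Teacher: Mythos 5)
Your first two paragraphs are sound, and they reproduce the reduction the paper itself makes: writing $M'=M\gpt{}{\me_i+\me_j}{}{0}{2}{2}[m]$ and unwinding $E_{21}$ and $E_{31}$ on the $X_{+12}$- and $X_{+13}$-components of $i^{\lambda}_{\me_i+\me_j}(f(M'))$ does produce exactly the four terms $(m_{11}-m_{22}+1)$, $(m_{12}-m_{23}+1)\chid{-}{-1}(M)$, $(m_{33}-m_{22}-1)$, $(C_1(M)+1)$ of the right-hand side; this is precisely the paper's passage from the lemma to the shifted coefficient identities it then verifies case by case. The gap is in your third paragraph. Extracting the $X_{+12}$-component of $E_{21}\cdot i^{\lambda}_{\me_i+\me_j}(f(M'))=i^{\lambda}_{\me_i+\me_j}(E_{21}f(M'))$ and the $X_{+13}$-component of the analogous identity for $E_{31}$ gives only $E_{21}v_{12}+E_{31}v_{13}+4v_{11}=\bigl[i^{\lambda}_{\me_i+\me_j}(E_{21}f(M'))\bigr]_{X_{+12}}+\bigl[i^{\lambda}_{\me_i+\me_j}(E_{31}f(M'))\bigr]_{X_{+13}}$. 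The right-hand side is the injector evaluated on the \emph{other} basis vectors $E_{21}f(M')$ and $E_{31}f(M')$; it is not a ``$v_{11}$-contribution,'' and nothing you have said forces it to equal $(k_{ij}(M)+4)v_{11}$. Consequently the ``remaining scalar identity'' you defer to the end is the entire lemma over again, now expressed through the coefficients of Proposition \ref{prop:clebsh(2,0,0)} at the patterns $M'\cgpt{0}{0}{-1}$, $M'\cgpt{0}{0}{-1}[-1]$, $M'\cgpt{0}{-1}{-1}$, $M'\cgpt{0}{-1}{-1}[-1]$; it is no easier than the paper's direct six-case verification, and the claimed uniformity in $(i,j)$ has not actually been gained.

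A structural argument of the kind you are reaching for does exist, but it must be aimed at the $X_{+11}$-slot, not the $X_{+12}$- and $X_{+13}$-slots. The split Casimir $\Omega=\sum_{a,b}E_{ab}\otimes\Ad(\kappa(E_{ba}))$ acts on the irreducible summand $i^{\lambda}_{\me_i+\me_j}(V_{\lambda\epe{i}{j}})$ of $V_\lambda\otimes\gp_+$ by the scalar $\tfrac12\{c(\lambda\epe{i}{j})-c(\lambda)-c(2\me_1)\}$, where $c$ is the Casimir eigenvalue, and a weight count shows that the $X_{+11}$-component of $\Omega\bigl(\sum v_{ab}\otimes X_{+ab}\bigr)$ is exactly $2E_{11}v_{11}+E_{21}v_{12}+E_{31}v_{13}$: the only adjoint actions landing on $X_{+11}$ are $\kappa(E_{aa})X_{+11}=2\delta_{a1}X_{+11}$, $\kappa(E_{12})X_{+12}=X_{+11}$ and $\kappa(E_{13})X_{+13}=X_{+11}$ (the last is misprinted as $0$ in Table 1, as the weights show). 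Since every pattern occurring in $v_{11}$ has first weight component $m_{11}$, this yields $E_{21}v_{12}+E_{31}v_{13}=\bigl(\tfrac12\{c(\lambda\epe{i}{j})-c(\lambda)-c(2\me_1)\}-2m_{11}\bigr)v_{11}$, and the Casimir eigenvalue differences reproduce the stated constants $k_{ij}(M)+2m_{11}$ in all six cases. That route is genuinely shorter than the paper's computation and uniform in $(i,j)$, but it is not the argument you wrote down; as it stands, your proof does not close.
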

\begin{proof}
In order to prove the assertion, it suffices to confirm the equations 
\begin{align}
&k_{ij}\bkt{M\gpt{}{-\me_i-\me_j}{}{0}{-2}{-2}[-m]}\hs \cp{i}{j}{M}{2}{2}{m}
\label{eqn:as1_relcl}\\
&=(m_{12}-m_{23}-m+1+\delta_{2i}+\delta_{2j})
\chid{-}{-1}\bkt{M\gpt{}{-\me_i-\me_j}{}{0}{-2}{-2}[-m]}
\cp{i}{j}{M}{2}{1}{m-1}\nonumber \\
&+(m_{11}-m_{22}-m+1)\cp{i}{j}{M}{2}{1}{m}
+\bkt{C_1\bkt{M\gpt{}{-\me_i-\me_j}{}{0}{-2}{-2}[-m]}+1}
\cp{i}{j}{M}{1}{1}{m-1}\nonumber \\
&+(m_{33}-m_{22}-m+1-\delta_{3i}-\delta_{3j})
\cp{i}{j}{M}{1}{1}{m}\nonumber 
\end{align}
for $1\leq i\leq j\leq 3$, $m\in \mZ$ and $M\in G(\lambda [+ij])$. 
Since
\begin{align*}
k_{ij}\bkt{M\gpt{}{-\me_i-\me_j}{}{0}{-2}{-2}[-m]}
&=k_{ij}(M)+2(1-\delta_{ij}),\\
\chid{-}{-1}\bkt{M\gpt{}{-\me_i-\me_j}{}{0}{-2}{-2}[-m]}
&=\chi_-^{(-1+\delta_{2i}+\delta_{2j})}(M),\\
C_1\bkt{M\gpt{}{-\me_i-\me_j}{}{0}{-2}{-2}[-m]}+1
&=C_1(M)-m+1+\delta_{2i}\cdot \chi_-^{(\delta_{2j})}(M)
+\delta_{2j}\cdot \chi_-(M),
\end{align*}
(\ref{eqn:G-fct001}), (\ref{eqn:G-fct003}) and (\ref{eqn:G-fct005}), 
the equations (\ref{eqn:as1_relcl}) are equivalent to 
the equations 
\begin{align}
&\{k_{ij}(M)+2(1-\delta_{ij})\}\cp{i}{j}{M}{2}{2}{m}
\label{eqn:as2_relcl}\\
&=(m_{12}-m_{23}-m+1+\delta_{2i}+\delta_{2j})
\chi_-^{(-1+\delta_{2i}+\delta_{2j})}(M)
\cp{i}{j}{M}{2}{1}{m-1}\nonumber \\
&\hphantom{=}+(m_{11}-m_{22}-m+1)\cp{i}{j}{M}{2}{1}{m}\nonumber \\
&\hphantom{=}+\bkt{C_1(M)-m+1+\delta_{2i}\cdot \chi_-^{(\delta_{2j})}(M)
+\delta_{2j}\cdot \chi_-(M)}
\cp{i}{j}{M}{1}{1}{m-1}\nonumber \\
&\hphantom{=}+(m_{33}-m_{22}-m+1-\delta_{3i}-\delta_{3j})
\cp{i}{j}{M}{1}{1}{m}\nonumber 
\end{align}
for $1\leq i\leq j\leq 3$, $m\in \mZ$ and $M\in G(\lambda [+ij])$. 

We prove the equations (\ref{eqn:as2_relcl}) by direct computation. \\

\noindent $\bullet $ the proof of the case of the $(i,j)=(1,1)$.

Since $(\cpr{1}{1}{2}{2},\cpr{1}{1}{1}{2},\cpr{1}{1}{1}{1})=(2,3,2)$, 
we have to confirm the following equations: 
\begin{align}
&(m_{11}-m_{22}+1)\cp{1}{1}{M}{2}{1}{0}
+(m_{33}-m_{22}+1)\cp{1}{1}{M}{1}{1}{0}\label{eqn:relcl11_0}\\
&=(-2m_{11}+2m_{13})\cp{1}{1}{M}{2}{2}{0}\nonumber ,\\[2mm]
&(m_{12}-m_{23})\chid{-}{-1}(M)\cp{1}{1}{M}{2}{1}{0}\label{eqn:relcl11_1}
+(m_{11}-m_{22})\cp{1}{1}{M}{2}{1}{1}\\
&+C_1(M)\cp{1}{1}{M}{1}{1}{0}
+(m_{33}-m_{22})\cp{1}{1}{M}{1}{1}{1}\nonumber \\
&=(-2m_{11}+2m_{13})\cp{1}{1}{M}{2}{2}{1}\nonumber ,\\[2mm]
&(m_{12}-m_{23}-1)\chid{-}{-1}(M)\cp{1}{1}{M}{2}{1}{1}\label{eqn:relcl11_2}
+(m_{11}-m_{22}-1)\cp{1}{1}{M}{2}{1}{2} \\
&+\bkt{C_1(M)-1}\cp{1}{1}{M}{1}{1}{1}
+(m_{33}-m_{22}-1)\cp{1}{1}{M}{1}{1}{2}\nonumber \\
&=(-2m_{11}+2m_{13})\cp{1}{1}{M}{2}{2}{2},\nonumber \\[2mm]
&(m_{12}-m_{23}-2)\chid{-}{-1}(M)\cp{1}{1}{M}{2}{1}{2}\label{eqn:relcl11_3}
+(m_{11}-m_{22}-2)\cp{1}{1}{M}{2}{1}{3} \\
&+\bkt{C_1(M)-2}\cp{1}{1}{M}{1}{1}{2}=0\nonumber ,\\[2mm]
&(m_{12}-m_{23}-3)\chid{-}{-1}(M)\cp{1}{1}{M}{2}{1}{3}=0.\label{eqn:relcl11_4}
\\ \nonumber
\end{align}

We have 
\begin{align*}
&(m_{11}-m_{22}+1)\cp{1}{1}{M}{2}{1}{0}
+(m_{33}-m_{22}+1)\cp{1}{1}{M}{1}{1}{0}\\
&=-2(m_{11}-m_{22}+1)(m_{13} -m_{12})(m_{13} -m_{12}-1)
(m_{22}-m_{33})(m_{22}-m_{33}-1)\\
&\hphantom{=}-2(m_{33}-m_{22}+1)(m_{13} -m_{12})(m_{13} -m_{12}-1)
(m_{22}-m_{33})(m_{13}-m_{22}+1)\\
&=-2(m_{11}-m_{13})
(m_{13} -m_{12})(m_{13} -m_{12}-1)(m_{22}-m_{33})(m_{22}-m_{33}-1)\\
&=(-2m_{11}+2m_{13})\cp{1}{1}{M}{2}{2}{0}.
\end{align*}
Hence we obtain the equation (\ref{eqn:relcl11_0}).\\

We have
\begin{align*}
&(m_{12}-m_{23})\chid{-}{-1}(M)\cp{1}{1}{M}{2}{1}{0}
+(m_{11}-m_{22})\cp{1}{1}{M}{2}{1}{1}\\
&=-2(m_{12}-m_{23})\chid{-}{-1}(M)
(m_{13} -m_{12})(m_{13} -m_{12}-1)(m_{22}-m_{33})(m_{22}-m_{33}-1)\\
&\hphantom{=}+2(m_{11}-m_{22})(m_{13} -m_{12})(m_{22}-m_{33})
\left\{ \bar{F}\bkt{M\gpt{-1}{0}{0}{0}{-1}{-1}}+\bar{E}(M)\right\}\\
&=2(m_{13} -m_{12})(m_{22}-m_{33})\Big\{ 
-(m_{12}-m_{23})(m_{13} -m_{12}-1)(m_{22}-m_{33}-1)(1-\chi_+(M))\\
&\hphantom{=}+(m_{11}-m_{22})\big\{-C_1(M)(\bar{C_1}(M)+1)\\
&\hphantom{=}-\chi_+(M)\{(m_{13}-m_{12}-1)(m_{22}-m_{33}-1)
+(m_{13}-m_{33})\delta (M)\}+\bar{E}(M)\big\}\Big\}\\
&=2(m_{13} -m_{12})(m_{22}-m_{33})\Big\{ 
-(m_{12}-m_{23}-\delta (M)\chi_+(M))(m_{13} -m_{12}-1)(m_{22}-m_{33}-1)\\
&\hphantom{=}-(m_{11}-m_{22})C_1(M)(\bar{C_1}(M)+1)
-(m_{11}-m_{22})\chi_+(M)(m_{13}-m_{33})\delta (M)\\
&\hphantom{=}+(m_{11}-m_{22})\bar{E}(M)\Big\}\\
&=2(m_{13} -m_{12})(m_{22}-m_{33})\Big\{ 
-C_1(M)(m_{13} -m_{12}-1)(m_{22}-m_{33}-1)\\
&\hphantom{=}-(m_{11}-m_{22})C_1(M)(\bar{C_1}(M)+1)
-C_1(M)\chi_+(M)(m_{13}-m_{33})\delta (M)+(m_{11}-m_{22})\bar{E}(M)\Big\}\\
&=2(m_{13} -m_{12})(m_{22}-m_{33})\Big\{
-C_1(M)\big\{(m_{13} -m_{12}-1)(m_{22}-m_{33}-1)\\
&\hphantom{=}+(m_{13}-m_{33})\delta (M)\chi_+(M)
+(m_{11}-m_{22})(\bar{C_1}(M)+1)\big\}+(m_{11}-m_{22})\bar{E}(M)\Big\},
\\[2mm]
&C_1(M)\cp{1}{1}{M}{1}{1}{0}+(m_{33}-m_{22})\cp{1}{1}{M}{1}{1}{1} \\
&=-2C_1(M)(m_{13} -m_{12})(m_{13} -m_{12}-1)(m_{22}-m_{33})(m_{13}-m_{22}+1)\\
&\hphantom{=}+2(m_{33}-m_{22})(m_{13} -m_{12})\Big\{(m_{22}-m_{33})C_1(M)
	(\bar{C_1}(M)+1) +(m_{13}-m_{22})\bar{E}(M) \Big\} \\
&=2(m_{13} -m_{12})(m_{22}-m_{33})
\Big\{-C_1(M)\big\{(m_{13} -m_{12}-1)(m_{13}-m_{22}+1)\\
&\hphantom{=}+(m_{22}-m_{33})(\bar{C_1}(M)+1)\big\}
-(m_{13}-m_{22})\bar{E}(M) \Big\}.
\end{align*}
Therefore 
\begin{align*}
&(m_{12}-m_{23})\chid{-}{-1}(M)\cp{1}{1}{M}{2}{1}{0}
+(m_{11}-m_{22})\cp{1}{1}{M}{2}{1}{1}\\
&+C_1(M)\cp{1}{1}{M}{1}{1}{0}
+(m_{33}-m_{22})\cp{1}{1}{M}{1}{1}{1}\nonumber \\
&=2(m_{13} -m_{12})(m_{22}-m_{33})\Big\{
-C_1(M)\big\{(m_{13} -m_{12}-1)(m_{13}-m_{33})\\
&\hphantom{=}+(m_{13}-m_{33})\delta (M)\chi_+(M)
+(m_{11}-m_{33})(\bar{C_1}(M)+1)\big\}+(m_{11}-m_{13})\bar{E}(M)\Big\}\\
&=2(m_{13} -m_{12})(m_{22}-m_{33})\Big\{
-C_1(M)\big\{(m_{13} -m_{11})(m_{13}-m_{33})\\
&\hphantom{=}-(m_{13}-m_{33})(m_{12}-m_{11}-\delta (M)\chi_+(M)+1)
+(m_{11}-m_{33})(\bar{C_1}(M)+1)\big\}\\
&\hphantom{=}+(m_{11}-m_{13})\bar{E}(M)\Big\}\\
&=2(m_{13} -m_{12})(m_{22}-m_{33})\Big\{
(m_{11} -m_{13})C_1(M)(m_{13}-m_{33}-\bar{C_1}(M)-1)\\
&\hphantom{=}+(m_{11}-m_{13})\bar{E}(M)\Big\}\\
&=4(m_{11}-m_{13})(m_{13} -m_{12})(m_{22}-m_{33})(\bar{E}(M)-C_1(M))\\
&=(-2m_{11}+2m_{13})\cp{1}{1}{M}{2}{2}{1}.
\end{align*}
Hence we obtain the equation (\ref{eqn:relcl11_1}). 
Here we use the relations
\begin{align}
\bar{F}\bkt{M\gpt{-1}{0}{0}{0}{-1}{-1}}
&=-C_1(M)(\bar{C_1}(M)+1)\label{eqn:relcl11_pf01}\\
&\hphantom{=}-\chi_+(M)\{(m_{13}-m_{12}-1)(m_{22}-m_{33}-1)
+(m_{13}-m_{33})\delta (M)\},\nonumber
\end{align}
(\ref{eqn:G-fct002}) and (\ref{eqn:G-fct008}).\\

We have
\begin{align*}
&(m_{12}-m_{23}-1)\chid{-}{-1}(M)\cp{1}{1}{M}{2}{1}{1}
+(m_{11}-m_{22}-1)\cp{1}{1}{M}{2}{1}{2} \\
&=2(m_{12}-m_{23}-1)\chid{-}{-1}(M)(m_{13} -m_{12})(m_{22}-m_{33})
\Big\{ \bar{F}\bkt{M\gpt{-1}{0}{0}{0}{-1}{-1}}+\bar{E}(M)\Big\}\\
&\hphantom{=}-2(m_{11}-m_{22}-1)\Big\{\bar{E} (M)\bar{F}
	\bkt{M\gpt{-1}{0}{0}{-1}{0}{-1}} \\
&\hphantom{=}+(m_{13} -m_{12})(m_{22}-m_{33})
	C_1 (M)(\bar{C_1} (M)+1)\chi_+ (M)\Big\}\\
&=2(m_{12}-m_{23}-1)\chid{-}{-1}(M)(m_{13} -m_{12})(m_{22}-m_{33})
\big\{ -C_1(M)(\bar{C_1}(M)+1)+\bar{E}(M)\big\}\\
&\hphantom{=}-2(m_{11}-m_{22}-1)\Big\{\bar{E} (M)
\big\{-(C_1(M)-1)\bar{C_1}(M)
-\chi_+(M)\{(m_{13}-m_{12})(m_{22}-m_{33})\\
&\hphantom{=}+(m_{13}-m_{33})\delta (M)\} \big\}
+(m_{13} -m_{12})(m_{22}-m_{33})
	C_1 (M)(\bar{C_1} (M)+1)\chi_+ (M)\Big\}\\
&=2(C_1 (M)-1)\chid{-}{-1}(M)(m_{13} -m_{12})(m_{22}-m_{33})
\big\{ -C_1(M)(\bar{C_1}(M)+1)+\bar{E}(M)\big\}\\
&\hphantom{=}-2\bar{E} (M)
\big\{-(C_1(M)-1)\bar{C_1}(M)(m_{11}-m_{22}-1)\\
&\hphantom{=}-(C_1 (M)-1)\chi_+(M)\{(m_{13}-m_{12})(m_{22}-m_{33})
+(m_{13}-m_{33})\delta (M)\} \big\}\\
&\hphantom{=}-2(C_1 (M)-1)(m_{13} -m_{12})(m_{22}-m_{33})
	C_1 (M)(\bar{C_1} (M)+1)\chi_+ (M)\\
&=-2(C_1 (M)-1)\Big\{(m_{13} -m_{12})(m_{22}-m_{33})
	C_1(M)(\bar{C_1}(M)+1)(\chid{-}{-1}(M)+\chi_+(M))\\
&\hphantom{=}-\bar{E} (M)\big\{\bar{C_1}(M)(m_{11}-m_{22}-1)
+(m_{13} -m_{12})(m_{22}-m_{33})(\chid{-}{-1}(M)+\chi_+(M))\\
&\hphantom{=}+(m_{13}-m_{33})\delta (M)\chi_+(M) \big\}\Big\}\\
&=-2(C_1 (M)-1)\Big\{(m_{13} -m_{12})(m_{22}-m_{33})C_1(M)(\bar{C_1}(M)+1)\\
&\hphantom{=}-\bar{E} (M)\big\{\bar{C_1}(M)(m_{11}-m_{22}-1)
+(m_{13} -m_{12})(m_{22}-m_{33})\\
&\hphantom{=}+(m_{13}-m_{33})\delta (M)\chi_+(M) \big\}\Big\},\\[2mm]
&(C_1(M)-1)\cp{1}{1}{M}{1}{1}{1}
+(m_{33}-m_{22}-1)\cp{1}{1}{M}{1}{1}{2}\nonumber \\
&=2(C_1(M)-1)(m_{13} -m_{12})\{(m_{22}-m_{33})C_1(M)(\bar{C_1}(M)+1)
+(m_{13}-m_{22})\bar{E}(M) \} \\
&\hphantom{=}-2(m_{33}-m_{22}-1)\bar{E}(M)(C_1(M)-1)\bar{C_1}(M)\\
&=-2(C_1(M)-1)\Big\{-(m_{13} -m_{12})(m_{22}-m_{33})C_1(M)(\bar{C_1}(M)+1)\\
&\hphantom{=}-\bar{E}(M)\{(m_{13} -m_{12})(m_{13}-m_{22})
+(m_{22}-m_{33}+1)\bar{C_1}(M) \Big\}.
\end{align*}
Therefore
\begin{align*}
&(m_{12}-m_{23}-1)\chid{-}{-1}(M)\cp{1}{1}{M}{2}{1}{1}
+(m_{11}-m_{22}-1)\cp{1}{1}{M}{2}{1}{2} \\
&+\bkt{C_1(M)-1}\cp{1}{1}{M}{1}{1}{1}
+(m_{33}-m_{22}-1)\cp{1}{1}{M}{1}{1}{2}\nonumber \\
&=2(C_1 (M)-1)\bar{E} (M)\big\{\bar{C_1}(M)(m_{11}-m_{33})
+(m_{13} -m_{12})(m_{13}-m_{33})\\
&\hphantom{=}+(m_{13}-m_{33})\delta (M)\chi_+(M) \big\}\\
&=2(C_1 (M)-1)\bar{E} (M)\big\{\bar{C_1}(M)(m_{11}-m_{33})
+(m_{13} -m_{11})(m_{13}-m_{33})\\
&\hphantom{=}-(m_{13}-m_{33})(m_{12}-m_{11}-\delta (M)\chi_+(M)) \big\}\\
&=2(m_{13} -m_{11})\bar{E} (M)(C_1 (M)-1)(m_{13}-m_{33}-\bar{C_1}(M))\\
&=(-2m_{11}+2m_{13})\cp{1}{1}{M}{2}{2}{2}. 
\end{align*}
Hence we obtain the equation (\ref{eqn:relcl11_2}). 
Here we use the relations
\begin{align}
\bar{F}\bkt{M\gpt{-1}{0}{0}{-1}{0}{-1}}
&=-(C_1(M)-1)\bar{C_1}(M)\label{eqn:relcl11_pf02}\\
&\hphantom{=}-\chi_+(M)\{(m_{13}-m_{12})(m_{22}-m_{33})
+(m_{13}-m_{33})\delta (M)\},\nonumber
\end{align}
(\ref{eqn:pf_clebsh(1,0,0)}), (\ref{eqn:pf_clebsh(1,0,0)_003}), 
(\ref{eqn:pf_clebsh(1,0,0)_004}), (\ref{eqn:G-fct009}) and 
(\ref{eqn:relcl11_pf01}).\\

We have
\begin{align*}
&(m_{12}-m_{23}-2)\chid{-}{-1}(M)\cp{1}{1}{M}{2}{1}{2}
+(m_{11}-m_{22}-2)\cp{1}{1}{M}{2}{1}{3} \\
&+\bkt{C_1(M)-2}\cp{1}{1}{M}{1}{1}{2}\\
&=-2(m_{12}-m_{23}-2)\chid{-}{-1}(M)\Big\{\bar{E} (M)\bar{F}
	\bkt{M\gpt{-1}{0}{0}{-1}{0}{-1}} \\
&\hphantom{=}+(m_{13} -m_{12})(m_{22}-m_{33})
	C_1 (M)(\bar{C_1} (M)+1)\chi_+ (M)\Big\}\\
&\hphantom{=}+2(m_{11}-m_{22}-2)(C_1(M)-1)\bar{C_1} (M)\bar{E} (M)\chi_+ (M) \\
&\hphantom{=}-2(C_1(M)-2)\bar{E}(M)(C_1(M)-1)\bar{C_1}(M)\\
&=2(C_1(M)-2)\chid{-}{-1}(M)\bar{E} (M)(C_1(M)-1)\bar{C_1}(M)\\
&\hphantom{=}+2(C_1(M)-2)(C_1(M)-1)\bar{C_1} (M)\bar{E} (M)\chi_+ (M) \\
&\hphantom{=}-2(C_1(M)-2)\bar{E}(M)(C_1(M)-1)\bar{C_1}(M)\\
&=2(C_1(M)-2)(C_1(M)-1)\bar{C_1}(M)\bar{E}(M)(\chid{-}{-1}(M)+\chi_+ (M)-1)=0.
\end{align*}
Hence we obtain the equation (\ref{eqn:relcl11_3}). 
Here we obtain the relations (\ref{eqn:G-fct008}), 
(\ref{eqn:G-fct009}), (\ref{eqn:pf_clebsh(1,0,0)_003}), 
(\ref{eqn:pf_clebsh(1,0,0)_004}) and (\ref{eqn:relcl11_pf02}).\\

We have
\begin{align*}
&(m_{12}-m_{23}-3)\chid{-}{-1}(M)\cp{1}{1}{M}{2}{1}{3}\\
&=2(m_{12}-m_{23}-3)(C_1(M)-1)
\bar{C_1}(M)\bar{E}(M)\chid{-}{-1}(M)\chi_+(M)=0.
\end{align*}
Hence we obtain the equation (\ref{eqn:relcl11_4}). 
Here we use the relation (\ref{eqn:G-fct009}).\\

\noindent $\bullet $ the proof of the case of the $(i,j)=(2,2)$.

Since $(\cpr{2}{2}{2}{2},\cpr{2}{2}{1}{2},\cpr{2}{2}{1}{1})=(2,2,2)$, 
we have to confirm the following equations: 
\begin{align}
&(m_{11}-m_{22}+1)\cp{2}{2}{M}{2}{1}{0}
+(m_{33}-m_{22}+1)\cp{2}{2}{M}{1}{1}{0}\label{eqn:relcl22_0}\\
&=(-2m_{11}+2m_{23}-2)\cp{2}{2}{M}{2}{2}{0}\nonumber ,\\[2mm]
&(m_{12}-m_{23}+2)\chid{-}{1}(M)\cp{2}{2}{M}{2}{1}{0}\label{eqn:relcl22_1}
+(m_{11}-m_{22})\cp{2}{2}{M}{2}{1}{1}\\
&+\bkt{C_1(M)+\chid{-}{1}(M)+\chi_-(M)}\cp{2}{2}{M}{1}{1}{0}
+(m_{33}-m_{22})\cp{2}{2}{M}{1}{1}{1}\nonumber \\
&=(-2m_{11}+2m_{23}-2)\cp{2}{2}{M}{2}{2}{1}\nonumber ,\\[2mm]
&(m_{12}-m_{23}+1)\chid{-}{1}(M)\cp{2}{2}{M}{2}{1}{1}\label{eqn:relcl22_2}
+(m_{11}-m_{22}-1)\cp{2}{2}{M}{2}{1}{2} \\
&+\bkt{C_1(M)-1+\chid{-}{1}(M)+\chi_-(M)}\cp{2}{2}{M}{1}{1}{1}
+(m_{33}-m_{22}-1)\cp{2}{2}{M}{1}{1}{2}\nonumber \\
&=(-2m_{11}+2m_{23}-2)\cp{2}{2}{M}{2}{2}{2},\nonumber \\[2mm]
&(m_{12}-m_{23})\chid{-}{1}(M)
\cp{2}{2}{M}{2}{1}{2}\label{eqn:relcl22_3}
+\bkt{C_1(M)-2+\chid{-}{1}(M)+\chi_-(M)}
\cp{2}{2}{M}{1}{1}{2}=0.\\ \nonumber
\end{align}

We have
\begin{align*}
&(m_{11}-m_{22}+1)\cp{2}{2}{M}{2}{1}{0}
+(m_{33}-m_{22}+1)\cp{2}{2}{M}{1}{1}{0}\\
&=-2(m_{11}-m_{22}+1)(m_{22}-m_{33})(m_{22}-m_{33}-1)\\
&\hphantom{=}-2(m_{33}-m_{22}+1)(m_{22}-m_{33})(m_{23}-m_{22})\\
&=-2(m_{11}-m_{23}+1)(m_{22}-m_{33})(m_{22}-m_{33}-1)\\
&=(-2m_{11}+2m_{23}-2)\cp{2}{2}{M}{2}{2}{0}\nonumber .
\end{align*}
Hence we obtain the equation (\ref{eqn:relcl22_0}).\\

We have
\begin{align*}
&(m_{12}-m_{23}+2)\chid{-}{1}(M)\cp{2}{2}{M}{2}{1}{0}
+(m_{11}-m_{22})\cp{2}{2}{M}{2}{1}{1}\\
&+\bkt{C_1(M)+\chid{-}{1}(M)+\chi_-(M)}\cp{2}{2}{M}{1}{1}{0}
+(m_{33}-m_{22})\cp{2}{2}{M}{1}{1}{1}\nonumber \\
&=-2(m_{12}-m_{23}+2)\chid{-}{1}(M)(m_{22}-m_{33})(m_{22}-m_{33}-1)\\
&\hphantom{=}
+2(m_{11}-m_{22})(m_{22}-m_{33})\{ \bar{C_1} (M)+(\bar{D}(M)+1)\chi_-(M)\} \\
&\hphantom{=}
-2\bkt{C_1(M)+\chid{-}{1}(M)+\chi_-(M)}(m_{22}-m_{33})(m_{23}-m_{22})\\
&\hphantom{=}+2(m_{33}-m_{22})\{ \bar{D}(M)(m_{23}-m_{22}-1)\chi_-(M)
-(m_{22}-m_{33})(\bar{C_1}(M)+1)\chid{-}{1}(M) \}\\
&=2(m_{22}-m_{33})\Big\{-(m_{12}-m_{23}+2)(m_{22}-m_{33}-1)\chid{-}{1}(M)\\
&\hphantom{=}+(m_{11}-m_{22})
\{ m_{23}-m_{22}+(\delta (M)+\bar{D}(M)+1)\chi_-(M)\}\\
&\hphantom{=}
-\{m_{11}-m_{22}+(\delta (M)+1)\chi_-(M)+\chid{-}{1}(M)\}(m_{23}-m_{22})\\
&\hphantom{=}-\{ \bar{D}(M)(m_{23}-m_{22}-1)\chi_-(M)
-(m_{22}-m_{33})(m_{12}-m_{11}+1)\chid{-}{1}(M) \}\Big\}\\
&=2(m_{22}-m_{33})\Big\{-(m_{11}-m_{23}+1)(m_{22}-m_{33}-1)\chid{-}{1}(M)\\
&\hphantom{=}+(\delta (M)+1)\{(m_{11}-m_{23})\chi_-(M)+\chid{-}{1}(M)\}
+(m_{11}-m_{23}+1)\bar{D}(M)\chi_-(M)\Big\}\\
&=2(m_{22}-m_{33})
\Big\{(m_{11}-m_{23}+1)(-m_{22}+m_{33}+\delta (M)+2)\chid{-}{1}(M)\\
&\hphantom{=}+(m_{11}-m_{23}+1)\bar{D}(M)\chi_-(M)\Big\}\\
&=2(m_{11}-m_{23}+1)(m_{22}-m_{33})
\{(\bar{D}(M)+2)\chid{-}{1}(M)+\bar{D} (M)\chi_-(M)\}\\
&=(-2m_{11}+2m_{23}-2)\cp{2}{2}{M}{2}{2}{1}\nonumber .
\end{align*}
Hence we obtain the equation (\ref{eqn:relcl22_1}). 
Here we use the relations (\ref{eqn:G-fct001}), (\ref{eqn:G-fct002}), 
(\ref{eqn:G-fct007}) and (\ref{eqn:pf_clebsh(1,0,0)_002}).\\

We have
\begin{align*}
&\bkt{C_1(M)-1+\chid{-}{1}(M)+\chi_-(M)}\cp{2}{2}{M}{1}{1}{1}\\
&=2\bkt{C_1(M)-1+\chid{-}{1}(M)+\chi_-(M)}
\{ \bar{D}(M)(m_{23}-m_{22}-1)\chi_-(M)\\
&\hphantom{=}-(m_{22}-m_{33})(\bar{C_1}(M)+1)\chid{-}{1}(M) \}\\
&=2C_1(M)\bar{D}(M)(m_{23}-m_{22}-1)\chi_-(M)\\
&\hphantom{=}-2\{\bkt{C_1(M)+1}(m_{22}-m_{33})(\bar{C_1}(M)+1)
-\bar{D}(M)(m_{23}-m_{22}-1)\}\chid{-}{1}(M).
\end{align*}
Therefore
\begin{align*}
&(m_{12}-m_{23}+1)\chid{-}{1}(M)\cp{2}{2}{M}{2}{1}{1}
+(m_{11}-m_{22}-1)\cp{2}{2}{M}{2}{1}{2} \\
&+\bkt{C_1(M)-1+\chid{-}{1}(M)+\chi_-(M)}\cp{2}{2}{M}{1}{1}{1}
+(m_{33}-m_{22}-1)\cp{2}{2}{M}{1}{1}{2}\nonumber \\
&=2(m_{12}-m_{23}+1)\chid{-}{1}(M)(m_{22}-m_{33})
\{ \bar{C_1} (M)+(\bar{D}(M)+1)\chi_-(M)\}\\
&\hphantom{=}-2(m_{11}-m_{22}-1)\bar{C_1}(M)\bar{D} (M)\chi_-(M)\\
&\hphantom{=}+2C_1(M)\bar{D}(M)(m_{23}-m_{22}-1)\chi_-(M)\\
&\hphantom{=}-2\{\bkt{C_1(M)+1}(m_{22}-m_{33})(\bar{C_1}(M)+1)
-\bar{D}(M)(m_{23}-m_{22}-1)\}\chid{-}{1}(M)\\
&\hphantom{=}+2(m_{33}-m_{22}-1)\bar{C_1}(M)\bar{D}(M)\chid{-}{1}(M) \\
&=2(m_{12}-m_{23}+1)(m_{22}-m_{33})
( m_{12}-m_{11}+\bar{D}(M)+1)\chid{-}{1}(M)\\
&\hphantom{=}-2(m_{11}-m_{22}-1)(m_{12}-m_{11})\bar{D} (M)\chi_-(M)\\
&\hphantom{=}+2(m_{12}-m_{23})\bar{D}(M)(m_{23}-m_{22}-1)\chi_-(M)\\
&\hphantom{=}-2\{(m_{12}-m_{23}+1)(m_{22}-m_{33})(m_{12}-m_{11}+1)
-\bar{D}(M)(m_{23}-m_{22}-1)\}\chid{-}{1}(M)\\
&\hphantom{=}+2(m_{33}-m_{22}-1)(m_{12}-m_{11})\bar{D}(M)\chid{-}{1}(M) \\
&=2(m_{11}-m_{23})(m_{22}-m_{33})\bar{D}(M)\chid{-}{1}(M)
-2(-m_{22}+m_{33}+\delta (M)+1)\bar{D}(M)\chid{-}{1}(M)\\
&\hphantom{=}-2(m_{11}-m_{23})(\delta (M)+1)\bar{D} (M)\chi_-(M)\\
&=-2(m_{11}-m_{23}+1)(-m_{22}+m_{33}+\delta (M)+1)\bar{D}(M)\chid{-}{1}(M)\\
&=(-2m_{11}+2m_{23}-2)\cp{2}{2}{M}{2}{2}{2}.
\end{align*}
Hence we obtain the equation (\ref{eqn:relcl22_2}). 
Here we use the relations (\ref{eqn:G-fct001}), (\ref{eqn:G-fct002}), 
(\ref{eqn:G-fct007}), (\ref{eqn:G-fct011}), (\ref{eqn:pf_clebsh(1,0,0)_002})
 and (\ref{eqn:pf_clebsh(1,0,0)_004}). \\

We have
\begin{align*}
&(m_{12}-m_{23})\chid{-}{1}(M)\cp{2}{2}{M}{2}{1}{2}
+\bkt{C_1(M)-2+\chid{-}{1}(M)+\chi_-(M)}
\cp{2}{2}{M}{1}{1}{2}\\
&=-2(m_{12}-m_{23})\chid{-}{1}(M)\bar{C_1}(M)\bar{D} (M)\chi_-(M)\\
&\hphantom{=}+2\bkt{C_1(M)-2+\chid{-}{1}(M)+\chi_-(M)}
\bar{C_1}(M)\bar{D}(M)\chid{-}{1}(M)\\
&=-2C_1(M)\bar{C_1}(M)\bar{D} (M)\chid{-}{1}(M)
+2C_1(M)\bar{C_1}(M)\bar{D}(M)\chid{-}{1}(M)=0.
\end{align*}
Hence we obtain the equation (\ref{eqn:relcl22_3}). 
Here we use the relations (\ref{eqn:pf_clebsh(1,0,0)_004}) 
and (\ref{eqn:G-fct011}).\\

\noindent $\bullet $ the proof of the case of the $(i,j)=(3,3)$.

Since $(\cpr{3}{3}{2}{2},\cpr{3}{3}{1}{2},\cpr{3}{3}{1}{1})=(0,1,0)$, 
we have to confirm the following equations: 
\begin{align}
&(m_{11}-m_{22}+1)\cp{3}{3}{M}{2}{1}{0}
+(m_{33}-m_{22}-1)\cp{3}{3}{M}{1}{1}{0}\label{eqn:relcl33_0}\\
&=(-2m_{11}+2m_{33}-4)\cp{3}{3}{M}{2}{2}{0}\nonumber ,\\[2mm]
&(m_{12}-m_{23})\chid{-}{-1}(M)\cp{3}{3}{M}{2}{1}{0}\label{eqn:relcl33_1}
+(m_{11}-m_{22})\cp{3}{3}{M}{2}{1}{1}
+C_1(M)\cp{3}{3}{M}{1}{1}{0}\\
&=(-2m_{11}+2m_{33}-4)\cp{3}{3}{M}{2}{2}{1}\nonumber ,\\[2mm]
&(m_{12}-m_{23}-1)\chid{-}{-1}(M)\cp{3}{3}{M}{2}{1}{1}\label{eqn:relcl33_2}=0.
\\ \nonumber
\end{align}

We have 
\begin{align*}
&(m_{11}-m_{22}+1)\cp{3}{3}{M}{2}{1}{0}
+(m_{33}-m_{22}-1)\cp{3}{3}{M}{1}{1}{0}\\
&=-2(m_{11}-m_{22}+1)+2(m_{33}-m_{22}-1)\\
&=-2m_{11}+2m_{33}-4
=(-2m_{11}+2m_{33}-4)\cp{3}{3}{M}{2}{2}{0}.
\end{align*}
Hence we obtain the equation (\ref{eqn:relcl33_0}).\\

We have
\begin{align*}
&(m_{12}-m_{23})\chid{-}{-1}(M)\cp{3}{3}{M}{2}{1}{0}
+(m_{11}-m_{22})\cp{3}{3}{M}{2}{1}{1}+C_1(M)\cp{3}{3}{M}{1}{1}{0}\\
&=-2(m_{12}-m_{23})\chid{-}{-1}(M)-2(m_{11}-m_{22})\chi_+(M)+2C_1(M)\\
&=-2(m_{12}-m_{23})(1-\chi_+(M))-2(m_{11}-m_{22})\chi_+(M)+2C_1(M)\\
&=-2(m_{12}-m_{23}-\delta (M)\chi_+(M)-C_1(M))=0\nonumber .
\end{align*}
Hence we obtain the equation (\ref{eqn:relcl33_1}).
Here we use the relations (\ref{eqn:G-fct001}) and (\ref{eqn:G-fct008}).\\

We have
\begin{align*}
&(m_{12}-m_{23}-1)\chid{-}{-1}(M)\cp{3}{3}{M}{2}{1}{1}
=-2(m_{12}-m_{23}-1)\chid{-}{-1}(M)\chi_+(M)=0.
\end{align*}
Hence we obtain the equation (\ref{eqn:relcl33_2}).
Here we use the relation (\ref{eqn:G-fct009}).\\

\noindent $\bullet $ the proof of the case of the $(i,j)=(1,2)$.

Since $(\cpr{1}{2}{2}{2},\cpr{1}{2}{1}{2},\cpr{1}{2}{1}{1})=(2,2,2)$, 
we have to confirm the following equations: 
\begin{align}
&(m_{11}-m_{22}+1)\cp{1}{2}{M}{2}{1}{0}
+(m_{33}-m_{22}+1)\cp{1}{2}{M}{1}{1}{0}\label{eqn:relcl12_0}\\
&=(-2m_{11}+m_{13}+m_{23})\cp{1}{2}{M}{2}{2}{0}\nonumber ,\\
\nonumber \\
&(m_{12}-m_{23}+1)\chi_-(M)\cp{1}{2}{M}{2}{1}{0}\label{eqn:relcl12_1}
+(m_{11}-m_{22})\cp{1}{2}{M}{2}{1}{1}\\
&+\bkt{C_1(M)+\chi_-(M)}\cp{1}{2}{M}{1}{1}{0}\nonumber 
+(m_{33}-m_{22})\cp{1}{2}{M}{1}{1}{1} \\
&=(-2m_{11}+m_{13}+m_{23})\cp{1}{2}{M}{2}{2}{1}\nonumber ,\\
\nonumber \\
&(m_{12}-m_{23})\chi_-(M)\cp{1}{2}{M}{2}{1}{1}\label{eqn:relcl12_2}
+(m_{11}-m_{22}-1)\cp{1}{2}{M}{2}{1}{2}\\
&+\bkt{C_1(M)-1+\chi_-(M)}\cp{1}{2}{M}{1}{1}{1}
+(m_{33}-m_{22}-1)\cp{1}{2}{M}{1}{1}{2}\nonumber \\
&=(-2m_{11}+m_{13}+m_{23})\cp{1}{2}{M}{2}{2}{2},\nonumber \\
\nonumber \\
&(m_{12}-m_{23}-1)\chi_-(M)\cp{1}{2}{M}{2}{1}{2}\label{eqn:relcl12_3}
+\bkt{C_1(M)-2+\chi_-(M)}\cp{1}{2}{M}{1}{1}{2}=0.\\ \nonumber
\end{align}

We have 
\begin{align*}
&(m_{11}-m_{22}+1)\cp{1}{2}{M}{2}{1}{0}
+(m_{33}-m_{22}+1)\cp{1}{2}{M}{1}{1}{0}\\
&=-2(m_{11}-m_{22}+1)(m_{13}-m_{12})(m_{22}-m_{33})(m_{22}-m_{33}-1)\\
&\hphantom{=}+(m_{33}-m_{22}+1)(m_{13} -m_{12})(m_{22}-m_{33})
(2m_{22}-m_{13}-m_{23}-2)\\
&=-(2m_{11}-m_{13}-m_{23})
(m_{13}-m_{12})(m_{22}-m_{33})(m_{22}-m_{33}-1)\\
&=(-2m_{11}+m_{13}+m_{23})\cp{1}{2}{M}{2}{2}{0}.
\end{align*}
Hence we obtain the equation (\ref{eqn:relcl12_0}).\\

We have
\begin{align*}
&(m_{11}-m_{22})\cp{1}{2}{M}{2}{1}{1}+(m_{33}-m_{22})\cp{1}{2}{M}{1}{1}{1} \\
&=(m_{11}-m_{22})(m_{22}-m_{33})\big\{\bar{E}(M)\\
&\hphantom{=}-C_2(M)-\chi_+(M)\{(m_{13}-m_{12})(m_{22}-m_{33})
	+(m_{13}-m_{33}+1)\delta (M)\}\\
&\hphantom{=}+(m_{13}-m_{12})\{\bar{C_1}(M)+1+\bar{D}(M)(1-\chi_+(M))\}\big\}\\
&\hphantom{=}
+(m_{33}-m_{22})\Big\{\bar{E} (M)(m_{23} -m_{22})+C_2 (M)(m_{22}-m_{33}+1)\\
&\hphantom{=}+(m_{13}-m_{12})\chi_-(M)\{ \bar{D}(M)(m_{13}-m_{22}+1) 
-(m_{22}-m_{33})(\bar{C_1} (M)+1) \}\Big\} \\
&=(m_{22}-m_{33})\Big\{(m_{11}-m_{23})\bar{E} (M)\\
&\hphantom{=}-(m_{11}-m_{33}+1)C_2(M)
-(m_{11}-m_{22})(m_{13}-m_{33}+1)\delta (M)\chi_+(M)\\
&\hphantom{=}+(m_{13}-m_{12})\big\{(m_{11}-m_{22})\{\bar{C_1}(M)+1+
\bar{D}(M)(1-\chi_+(M))-(m_{22}-m_{33})\chi_+(M)\}\\
&\hphantom{=}-\chi_-(M)\{ \bar{D}(M)(m_{13}-m_{22}+1)
-(m_{22}-m_{33})(\bar{C_1} (M)+1) \}\big\}\Big\}\\
&=(m_{22}-m_{33})\Big\{(2m_{11}-m_{13}-m_{23})\bar{E} (M)
+(m_{13}-m_{11})C_1(M)(m_{13}-m_{33}+1-\bar{C_1}(M))\\
&\hphantom{=}-(m_{11}-m_{33}+1)C_2(M)
-C_1(M)(m_{13}-m_{33}+1)\delta (M)\chi_+(M)\\
&\hphantom{=}+(m_{13}-m_{12})\big\{(m_{11}-m_{22})\{\bar{C_1}(M)+1+
\bar{D}(M)-(-m_{22}+m_{33}+\delta (M))\chi_+(M)\\
&\hphantom{=}-(m_{22}-m_{33})\chi_+(M)\}
-\chi_-(M)\{ \bar{D}(M)(m_{13}-m_{22}+1)
-(m_{22}-m_{33})(\bar{C_1} (M)+1) \}\big\}\Big\}\\
&=(m_{22}-m_{33})\Big\{(2m_{11}-m_{13}-m_{23})\bar{E} (M)\\
&\hphantom{=}+(m_{13}-m_{11}-\bar{C_1}(M)
-\delta (M)\chi_+(M))C_1(M)(m_{13}-m_{33}+1)\\
&\hphantom{=}+(m_{13}-m_{12})\big\{(m_{11}-m_{22})
\{\bar{C_1}(M)+1+\bar{D}(M)-\delta (M)\chi_+(M)\}\\
&\hphantom{=}-\chi_-(M)\{ \bar{D}(M)(m_{13}-m_{22}+1)
-(m_{22}-m_{33})(\bar{C_1} (M)+1) \}\big\}\Big\}\\
&=(m_{22}-m_{33})\Big\{(2m_{11}-m_{13}-m_{23})\bar{E} (M)
+(m_{13}-m_{12})\big\{C_1(M)(m_{13}-m_{33}+1)\\
&\hphantom{=}+(m_{11}-m_{22})
\{\bar{C_1}(M)+1+\bar{D}(M)-\delta (M)\chi_+(M)\}\\
&\hphantom{=}-\chi_-(M)\{ \bar{D}(M)(m_{13}-m_{22}+1)
-(m_{22}-m_{33})(\bar{C_1} (M)+1) \}\big\}\Big\}.
\end{align*}
Therefore
\begin{align*}
&(m_{12}-m_{23}+1)\chi_-(M)\cp{1}{2}{M}{2}{1}{0}
+(m_{11}-m_{22})\cp{1}{2}{M}{2}{1}{1}\\
&+\bkt{C_1(M)+\chi_-(M)}\cp{1}{2}{M}{1}{1}{0}\nonumber 
+(m_{33}-m_{22})\cp{1}{2}{M}{1}{1}{1} \\
&=-2(m_{12}-m_{23}+1)\chi_-(M)(m_{13}-m_{12})(m_{22}-m_{33})(m_{22}-m_{33}-1)\\
&\hphantom{=}+\bkt{C_1(M)+\chi_-(M)}
	(m_{13} -m_{12})(m_{22}-m_{33})(2m_{22}-m_{13}-m_{23}-2)\\
&\hphantom{=}+(m_{22}-m_{33})\Big\{(2m_{11}-m_{13}-m_{23})\bar{E} (M)
+(m_{13}-m_{12})\big\{C_1(M)(m_{13}-m_{33}+1)\\
&\hphantom{=}+(m_{11}-m_{22})
\{\bar{C_1}(M)+1+\bar{D}(M)-\delta (M)\chi_+(M)\}\\
&\hphantom{=}-\chi_-(M)\{ \bar{D}(M)(m_{13}-m_{22}+1)
-(m_{22}-m_{33})(\bar{C_1} (M)+1) \}\big\}\Big\}\\
&=(m_{22}-m_{33})\Big\{(2m_{11}-m_{13}-m_{23})\bar{E} (M)\\
&\hphantom{=}+(m_{13}-m_{12})\big\{
-2(m_{12}-m_{23}+1)(m_{22}-m_{33}-1)\chi_-(M)\\
&\hphantom{=}+\{m_{11}-m_{22}+(\delta (M)+1)\chi_-(M)\}
(2m_{22}-m_{13}-m_{23}-2)\\
&\hphantom{=}+(m_{11}-m_{22}+\delta (M)\chi_-(M))(m_{13}-m_{33}+1)
+(m_{11}-m_{22})\\
&\hphantom{==}\times \{m_{23}-m_{22}+\delta (M)\chi_-(M)
+1+(-m_{22}+m_{33}+\delta (M))-\delta (M)(1-\chi_-(M))\}\\
&\hphantom{=}-\chi_-(M)\{ (-m_{22}+m_{33}+\delta (M))(m_{13}-m_{22}+1)
-(m_{22}-m_{33})(m_{12}-m_{11}+1) \}\big\}\Big\}\\
&=(m_{22}-m_{33})\Big\{(2m_{11}-m_{13}-m_{23})\bar{E} (M)\\
&\hphantom{=}+(2m_{11}-m_{13}-m_{23})(m_{13}-m_{12})\chi_-(M)
(-m_{22}+m_{33}+\delta (M)+1)\Big\}\\
&=(2m_{11}-m_{13}-m_{23})(m_{22}-m_{33})
\{\bar{E} (M)+(m_{13}-m_{12})\chi_-(M)(\bar{D}(M)+1)\}\\
&=(-2m_{11}+m_{13}+m_{23})\cp{1}{2}{M}{2}{2}{1}\nonumber .\\
\end{align*}
Hence we obtain the equation (\ref{eqn:relcl12_1}). 
Here we use the relations (\ref{eqn:G-fct001}), (\ref{eqn:G-fct002}) and 
(\ref{eqn:pf_clebsh(1,0,0)_004}).\\

We have 
\begin{align*}
&(m_{12}-m_{23})\chi_-(M)\cp{1}{2}{M}{2}{1}{1}\\
&=(m_{12}-m_{23})\chi_-(M)(m_{22}-m_{33})\big\{\bar{E}(M)+\bar{F}(M) \\
&\hphantom{=}+(m_{13}-m_{12})\{ \bar{C_1}(M)+1+\bar{D}(M)(1-\chi_+(M)) \} 
\big\}\\
&=(m_{22}-m_{33})\chi_-(M)\big\{(m_{12}-m_{23})(\bar{E}(M)-C_2(M)) \\
&\hphantom{=}+(m_{13}-m_{12})C_1(M)( \bar{C_1}(M)+1+\bar{D}(M) ) \big\}\\[2mm]
&(m_{11}-m_{22}-1)\cp{1}{2}{M}{2}{1}{2}\\
&=-(m_{11}-m_{22}-1)\big\{\bar{C_1} (M)\bar{E} (M)+C_2(M)\bkt{ 1-\bar{D} (M)
+\delta (M) \chi_+ (M)}\big\}\\
&=-(m_{11}-m_{22}-1)\big\{\bar{C_1}(M)\bar{E}(M)+C_2(M)\bkt{m_{22}-m_{33}+1
-\delta (M)\chi_-(M)}\big\}\\[2mm]
&\bkt{C_1(M)-1+\chi_-(M)}\cp{1}{2}{M}{1}{1}{1}\\
&=\bkt{C_1(M)-1+\chi_-(M)}\big\{\bar{E} (M)(m_{23} -m_{22})+
C_2 (M)(m_{22}-m_{33}+1)\\
&\hphantom{=}+(m_{13}-m_{12})\chi_-(M)\{\bar{D}(M)(m_{13}-m_{22}+1)
-(m_{22}-m_{33})(\bar{C_1}(M)+1) \}\big\}\\
&=(m_{11}-m_{22}-1)\big\{\bar{C_1}(M)\bar{E}(M)-\delta (M)\chi_-(M)\bar{E}(M)
+C_2 (M)(m_{22}-m_{33}+1)\big\}\\
&\hphantom{=}+(\delta (M)+1)\chi_-(M)
\big\{\bar{E} (M)(m_{23} -m_{22})+C_2 (M)(m_{22}-m_{33}+1)\big\}\\
&\hphantom{=}+(m_{13}-m_{12})C_1(M)\chi_-(M)\{\bar{D}(M)(m_{13}-m_{22}+1)
-(m_{22}-m_{33})(\bar{C_1}(M)+1) \}\\[2mm]
&(m_{33}-m_{22}-1)\cp{1}{2}{M}{1}{1}{2}\nonumber \\
&=(m_{33}-m_{22}-1)C_2(M)\chi_-(M)(m_{13}-m_{33}+2-\bar{C_1}(M)-\bar{D}(M))\\
&=-(m_{22}-m_{33}+1)\chi_-(M)\{(m_{12}-m_{11})\bar{E}(M)+(1-\bar{D}(M))C_2(M)\}
\end{align*}
Therefore
\begin{align*}
&(m_{11}-m_{22}-1)\cp{1}{2}{M}{2}{1}{2}
+\bkt{C_1(M)-1+\chi_-(M)}\cp{1}{2}{M}{1}{1}{1}\\
&\hphantom{=}+(m_{12}-m_{23})\chi_-(M)\cp{1}{2}{M}{2}{1}{1}
+(m_{33}-m_{22}-1)\cp{1}{2}{M}{1}{1}{2}\\
&=\chi_-(M)\big\{-(m_{11}-m_{23})\bar{E}(M)\bar{D} (M)
+(m_{11}-m_{33})\delta (M)C_2(M)\\
&\hphantom{=}+(m_{13}-m_{33}+1)(m_{13}-m_{12})C_1(M)\bar{D}(M)
-(m_{12}-m_{23})(m_{22}-m_{33})C_2(M)\\
&\hphantom{=}+(m_{22}-m_{33}+1)\bar{D}(M)C_2(M)\big\}\\
&=\chi_-(M)\big\{-(2m_{11}-m_{13}-m_{23})\bar{E}(M)\bar{D} (M)\\
&\hphantom{=}+(m_{11}-m_{13})(m_{12}-m_{23})
(m_{13}-m_{33}+1-m_{12}+m_{11})\bar{D}(M)\\
&\hphantom{=}+(m_{11}-m_{33})\delta (M)(m_{12}-m_{23})(m_{12}-m_{11})\\
&\hphantom{=}+(m_{13}-m_{33}+1)(m_{13}-m_{12})(m_{12}-m_{23})\bar{D}(M)\\
&\hphantom{=}-(m_{12}-m_{23})(m_{22}-m_{33})(m_{12}-m_{23})(m_{12}-m_{11})\\
&\hphantom{=}+(m_{22}-m_{33}+1)(m_{12}-m_{23})(m_{12}-m_{11})\bar{D}(M)\big\}\\
&=\chi_-(M)\big\{-(2m_{11}-m_{13}-m_{23})\bar{E}(M)\bar{D} (M)
+(m_{12}-m_{23})(m_{12}-m_{11})\{(m_{11}-m_{33})\delta (M)\\
&\hphantom{=}-(m_{22}-m_{33})(m_{12}-m_{23})
+(m_{22}-m_{11})\bar{D}(M)\}\big\}\\
&=\chi_-(M)\big\{-(2m_{11}-m_{13}-m_{23})\bar{E}(M)\bar{D} (M)
+(m_{12}-m_{23})(m_{12}-m_{11})\{(m_{11}-m_{33})\delta (M)\\
&\hphantom{=}-(m_{22}-m_{33})(m_{12}-m_{23})
+(m_{22}-m_{11})(-m_{22}+m_{33}-\delta (M))\}\big\}\\
&=\chi_-(M)-(2m_{11}-m_{13}-m_{23})\bar{E}(M)\bar{D} (M)\\
&=(-2m_{11}+m_{13}+m_{23})\cp{1}{2}{M}{2}{2}{2}.
\end{align*}
Hence we obtain the equation \ref{eqn:relcl12_2}. 
Here we use the relations 
(\ref{eqn:G-fct009}), 
(\ref{eqn:G-fct011}), 
(\ref{eqn:G-fct008}), 
(\ref{eqn:pf_clebsh(1,0,0)_FC2}), 
(\ref{eqn:pf_clebsh(1,0,0)_002}) and 
(\ref{eqn:pf_clebsh(1,0,0)_004}). \\

We have
\begin{align*}
&(m_{12}-m_{23}-1)\chi_-(M)\cp{1}{2}{M}{2}{1}{2}\\
&=(m_{12}-m_{23}-1)\chi_-(M)\big\{-\bar{C_1} (M)\bar{E} (M)-C_2(M)
	\bkt{ 1-\bar{D} (M)+\delta (M) \chi_+ (M)}\big\}\\
&=-(C_1(M)-1)\chi_-(M)(m_{13}-m_{33}+2-\bar{C_1}(M)-\bar{D}(M)),\\[2mm]
&\bkt{C_1(M)-2+\chi_-(M)}\cp{1}{2}{M}{1}{1}{2}\\
&=\bkt{C_1(M)-2+\chi_-(M)}
C_2(M)\chi_-(M)(m_{13}-m_{33}+2-\bar{C_1}(M)-\bar{D}(M))\\
&=(C_1(M)-1)C_2(M)\chi_-(M)(m_{13}-m_{33}+2-\bar{C_1}(M)-\bar{D}(M)).
\end{align*}
Therefore
\begin{align*}
&(m_{12}-m_{23}-1)\chi_-(M)\cp{1}{2}{M}{2}{1}{2}
+\bkt{C_1(M)-2+\chi_-(M)}\cp{1}{2}{M}{1}{1}{2}=0.
\end{align*}
Hence we obtain the equation (\ref{eqn:relcl12_3}). 
Here we use the relations (\ref{eqn:G-fct009}), (\ref{eqn:G-fct011}) 
and (\ref{eqn:pf_clebsh(1,0,0)_004}).\\


\noindent $\bullet $ the proof of the case of the $(i,j)=(1,3)$.

Since $(\cpr{1}{3}{2}{2},\cpr{1}{3}{1}{2},\cpr{1}{3}{1}{1})=(1,2,1)$, 
we have to confirm the following equations: 
\begin{align}
&(m_{11}-m_{22}+1)\cp{1}{3}{M}{2}{1}{0}
+(m_{33}-m_{22})\cp{1}{3}{M}{1}{1}{0}\label{eqn:relcl13_0}\\
&=(-2m_{11}+m_{13}+m_{33}-1)\cp{1}{3}{M}{2}{2}{0}\nonumber ,\\[2mm]
&(m_{12}-m_{23})\chid{-}{-1}(M)\cp{1}{3}{M}{2}{1}{0}\label{eqn:relcl13_1}
+(m_{11}-m_{22})\cp{1}{3}{M}{2}{1}{1}\\
&+C_1(M)\cp{1}{3}{M}{1}{1}{0}
+(m_{33}-m_{22}-1)\cp{1}{3}{M}{1}{1}{1}\nonumber \\
&=(-2m_{11}+m_{13}+m_{33}-1)\cp{1}{3}{M}{2}{2}{1}\nonumber ,\\[2mm]
&(m_{12}-m_{23}-1)\chid{-}{-1}(M)\cp{1}{3}{M}{2}{1}{1}\label{eqn:relcl13_2}
+(m_{11}-m_{22}-1)\cp{1}{3}{M}{2}{1}{2}\\
&+\bkt{C_1(M)-1}\cp{1}{3}{M}{1}{1}{1}=0,\nonumber \\[2mm]
&(m_{12}-m_{23}-2)\chid{-}{-1}(M)\cp{1}{3}{M}{2}{1}{2}=0.\label{eqn:relcl13_3}
\end{align}

We have
\begin{align*}
&(m_{11}-m_{22}+1)\cp{1}{3}{M}{2}{1}{0}
+(m_{33}-m_{22})\cp{1}{3}{M}{1}{1}{0}\\
&=-2(m_{11}-m_{22}+1)(m_{13} -m_{12})(m_{22}-m_{33})\\
&\hphantom{=}+(m_{33}-m_{22})(m_{13} -m_{12})(2m_{22}-m_{13} -m_{23} -1)\\
&=-(2m_{11}-m_{13} -m_{23} +1)(m_{13} -m_{12})(m_{22}-m_{33})\\
&=(-2m_{11}+m_{13}+m_{33}-1)\cp{1}{3}{M}{2}{2}{0}.
\end{align*}
Hence we obtain the equation (\ref{eqn:relcl13_0}).\\

We have
\begin{align*}
&(m_{12}-m_{23})\chid{-}{-1}(M)\cp{1}{3}{M}{2}{1}{0}
+(m_{11}-m_{22})\cp{1}{3}{M}{2}{1}{1}\\
&=-2(m_{12}-m_{23})\chid{-}{-1}(M)(m_{13} -m_{12})(m_{22}-m_{33})\\
&\hphantom{=}+(m_{11}-m_{22})\{\bar{E} (M)+\bar{F} (M)
-(m_{13}-m_{12})(m_{22}-m_{33})\chi_+(M) \}\\
&=-2(m_{12}-m_{23})(m_{13} -m_{12})(m_{22}-m_{33})(1-\chi_+(M))
+(m_{11}-m_{22})\big\{\bar{E} (M)-C_2(M)\\
&\hphantom{=}
-\chi_+(M)\{2(m_{13}-m_{12})(m_{22}-m_{33})
+(m_{13}-m_{33}+1)\delta (M) \}\big\}\\
&=-2(m_{13} -m_{12})(m_{22}-m_{33})(m_{12}-m_{23}-\delta (M)\chi_+(M))\\
&\hphantom{=}
+(m_{11}-m_{22})(\bar{E} (M)-C_2(M))
-(m_{11}-m_{22})(m_{13}-m_{33}+1)\delta (M) \chi_+(M)\\
&=-2(m_{13} -m_{12})(m_{22}-m_{33})C_1(M)\\
&\hphantom{=}
+(m_{11}-m_{22})(\bar{E} (M)-C_2(M))
-C_1(M)(m_{13}-m_{33}+1)\delta (M) \chi_+(M)\\
&=-C_1(M)\{(m_{13}-m_{33}+1)\delta (M) \chi_+(M)
+2(m_{13} -m_{12})(m_{22}-m_{33})\}\\
&\hphantom{=}
+(m_{11}-m_{22})(\bar{E} (M)-C_2(M)),\\[2mm]
&C_1(M)\cp{1}{3}{M}{1}{1}{0}
+(m_{33}-m_{22}-1)\cp{1}{3}{M}{1}{1}{1}\\
&=C_1(M)(m_{13} -m_{12})(2m_{22}-m_{13} -m_{33} -1)
+(m_{33}-m_{22}-1)(C_2(M)-\bar{E} (M)).
\end{align*}
Therefore
\begin{align*}
&(m_{12}-m_{23})\chid{-}{-1}(M)\cp{1}{3}{M}{2}{1}{0}
+(m_{11}-m_{22})\cp{1}{3}{M}{2}{1}{1}\\
&+C_1(M)\cp{1}{3}{M}{1}{1}{0}
+(m_{33}-m_{22}-1)\cp{1}{3}{M}{1}{1}{1}\\
&=C_1(M)\{-(m_{13}-m_{33}+1)\delta (M) \chi_+(M)
+(m_{13} -m_{12})(m_{33}-m_{13} -1)\}\\
&\hphantom{=}+(m_{33}-m_{11}-1)(C_2(M)-\bar{E} (M))\\
&=C_1(M)\{-(m_{13}-m_{33}+1)\delta (M) \chi_+(M)
+(m_{13} -m_{12})(m_{33}-m_{13} -1)\\
&\hphantom{=}+(m_{33}-m_{11}-1)\bar{C_1}(M)\}-(m_{33}-m_{11}-1)\bar{E} (M)\\
&=C_1(M)\{(m_{13}-m_{33}+1)(m_{12}-m_{11}-\delta (M)\chi_+(M))
+(-m_{13}+m_{11})(m_{13}-m_{33}+1)\\
&\hphantom{=}+(m_{33}-m_{11}-1)\bar{C_1}(M)\}-(m_{33}-m_{11}-1)\bar{E} (M)\\
&=(-m_{13}+m_{11})C_1(M)(m_{13}-m_{33}+1-\bar{C_1}(M))\}
-(m_{33}-m_{11}-1)\bar{E} (M)\\
&=(2m_{11}-m_{13}-m_{33}+1)\bar{E} (M)\\
&=(-2m_{11}+m_{13}+m_{33}-1)\cp{1}{3}{M}{2}{2}{1}.
\end{align*}
Hence we obtain the equation (\ref{eqn:relcl13_1}). 
Here we use the relations (\ref{eqn:G-fct001}), (\ref{eqn:G-fct002}), 
(\ref{eqn:G-fct008}) and (\ref{eqn:pf_clebsh(1,0,0)_003}). \\

We have
\begin{align*}
&(m_{12}-m_{23}-1)\chid{-}{-1}(M)\cp{1}{3}{M}{2}{1}{1}\\
&=(m_{12}-m_{23}-1)\chid{-}{-1}(M)\{\bar{E} (M)+\bar{F} (M)
-(m_{13}-m_{12})(m_{22}-m_{33})\chi_+(M) \}\\
&=(m_{12}-m_{23}-1)(1-\chi_+(M))(\bar{E} (M)-C_2(M)).
\end{align*}
Therefore
\begin{align*}
&(m_{12}-m_{23}-1)\chid{-}{-1}(M)\cp{1}{3}{M}{2}{1}{1}
+(m_{11}-m_{22}-1)\cp{1}{3}{M}{2}{1}{2}\\
&\hphantom{=}+\bkt{C_1(M)-1}\cp{1}{3}{M}{1}{1}{1}\\
&=(m_{12}-m_{23}-1)(1-\chi_+(M))(\bar{E} (M)-C_2(M))\\
&\hphantom{=}+(m_{11}-m_{22}-1)(\bar{E} (M)-C_2(M))\chi_+ (M) \\
&\hphantom{=}+\bkt{C_1(M)-1}(C_2(M) -\bar{E} (M))\\
&=(m_{12}-m_{23}-\delta (M)\chi_+(M)-1)(\bar{E} (M)-C_2(M))\\
&\hphantom{=}-\bkt{C_1(M)-1}(\bar{E} (M)-C_2(M))=0.
\end{align*}
Hence we obtain the equation (\ref{eqn:relcl13_2}). 
Here we use the relations 
(\ref{eqn:G-fct001}), 
(\ref{eqn:G-fct008}), 
(\ref{eqn:G-fct009}) and 
(\ref{eqn:pf_clebsh(1,0,0)_FC2}).\\

We have
\begin{align*}
&(m_{12}-m_{23}-2)\chid{-}{-1}(M)\cp{1}{3}{M}{2}{1}{2}\\
&=(m_{12}-m_{23}-2)(\bar{E} (M)-C_2(M))\chid{-}{-1}(M)\chi_+ (M)=0.
\end{align*}
Hence we obtain the equation (\ref{eqn:relcl13_3}). 
Here we use the relation (\ref{eqn:G-fct009}).\\

\noindent $\bullet $ the proof of the case of the $(i,j)=(2,3)$.

Since $(\cpr{2}{3}{2}{2},\cpr{2}{3}{1}{2},\cpr{2}{3}{1}{1})=(1,1,1)$, 
we have to confirm the following equations: 
\begin{align}
&(m_{11}-m_{22}+1)\cp{2}{3}{M}{2}{1}{0}
+(m_{33}-m_{22})\cp{2}{3}{M}{1}{1}{0}\label{eqn:relcl23_0}\\
&=(-2m_{11}+m_{23}+m_{33}-2)\cp{2}{3}{M}{2}{2}{0}\nonumber ,\\[2mm]
&(m_{12}-m_{23}+1)\chi_-(M)\cp{2}{3}{M}{2}{1}{0}\label{eqn:relcl23_1}
+(m_{11}-m_{22})\cp{2}{3}{M}{2}{1}{1}\\
&+\bkt{C_1(M)+\chi_-(M)}\cp{2}{3}{M}{1}{1}{0}
+(m_{33}-m_{22}-1)\cp{2}{3}{M}{1}{1}{1}\nonumber \\
&=(-2m_{11}+m_{23}+m_{33}-2)\cp{2}{3}{M}{2}{2}{1}\nonumber ,\\[2mm]
&(m_{12}-m_{23})\chi_-(M)\cp{2}{3}{M}{2}{1}{1}\label{eqn:relcl23_2}
+\bkt{C_1(M)-1+\chi_-(M)}\cp{2}{3}{M}{1}{1}{1}=0.\\ \nonumber
\end{align}

We have
\begin{align*}
&(m_{11}-m_{22}+1)\cp{2}{3}{M}{2}{1}{0}+(m_{33}-m_{22})\cp{2}{3}{M}{1}{1}{0}\\
&=-2(m_{11}-m_{22}+1)(m_{22}-m_{33})+(m_{33}-m_{22})(2m_{22}-m_{23}-m_{33})\\
&=(-2m_{11}+m_{23}+m_{33}-2)(m_{22}-m_{33})\\
&=(-2m_{11}+m_{23}+m_{33}-2)\cp{2}{3}{M}{2}{2}{0}.
\end{align*}
Hence we obtain the equation (\ref{eqn:relcl23_0}).\\

We have
\begin{align*}
&(m_{12}-m_{23}+1)\chi_-(M)\cp{2}{3}{M}{2}{1}{0}
+(m_{11}-m_{22})\cp{2}{3}{M}{2}{1}{1}\\
&+\bkt{C_1(M)+\chi_-(M)}\cp{2}{3}{M}{1}{1}{0}
+(m_{33}-m_{22}-1)\cp{2}{3}{M}{1}{1}{1}\nonumber \\
&=-2(m_{12}-m_{23}+1)\chi_-(M)(m_{22}-m_{33})\\
&\hphantom{=}+(m_{11}-m_{22})
\{\bar{C_1}(M) -(m_{22}-m_{33})+\delta (M)\chi_-(M)\}\\
&\hphantom{=}+\bkt{C_1(M)+\chi_-(M)}(2m_{22}-m_{23}-m_{33})
-(m_{33}-m_{22}-1)(\bar{C_1}(M)+\bar{D}(M))\chi_-(M)\nonumber \\
&=-2(m_{12}-m_{23}+1)(m_{22}-m_{33})\chi_-(M)\\
&\hphantom{=}+(m_{11}-m_{22})(-2m_{22}+m_{23}+m_{33}+2\delta (M)\chi_-(M))\\
&\hphantom{=}
+\bkt{m_{11}-m_{22}+(\delta (M)+1)\chi_-(M)}(2m_{22}-m_{23}-m_{33})\\
&\hphantom{=}-(m_{33}-m_{22}-1)(m_{12}-m_{11}+\bar{D}(M))\chi_-(M)\nonumber \\
&=(2m_{11}-m_{22}-m_{23}+1)(-m_{22}+m_{33}+\delta (M))\chi_-(M)
+(m_{22}-m_{33}+1)\bar{D}(M)\chi_-(M)\nonumber \\
&=(2m_{11}-m_{23}-m_{33}+2)\bar{D}(M)\chi_-(M)\nonumber \\
&=(-2m_{11}+m_{23}+m_{33}-2)\cp{2}{3}{M}{2}{2}{1}\nonumber .
\end{align*}
Hence we obtain the equation (\ref{eqn:relcl23_1}). 
Here we use the relations (\ref{eqn:G-fct001}), (\ref{eqn:G-fct002}) and 
(\ref{eqn:pf_clebsh(1,0,0)_002}).\\

We have
\begin{align*}
&(m_{12}-m_{23})\chi_-(M)\cp{2}{3}{M}{2}{1}{1}
+\bkt{C_1(M)-1+\chi_-(M)}\cp{2}{3}{M}{1}{1}{1}\\
&=(m_{12}-m_{23})\chi_-(M)
\{\bar{C_1}(M) -(m_{22}-m_{33})+\delta (M)\chi_-(M)\}\\
&\hphantom{=}-\bkt{C_1(M)-1+\chi_-(M)}(\bar{C_1}(M)+\bar{D}(M))\chi_-(M)\\
&=(m_{12}-m_{23})\chi_-(M)(\bar{C_1}(M) -m_{22}+m_{33}+\delta (M))
-(m_{12}-m_{23})(\bar{C_1}(M)+\bar{D}(M))\chi_-(M)\\
&=0.
\end{align*}
Hence we obtain the equation (\ref{eqn:relcl23_2}). 
Here we use the relations (\ref{eqn:pf_clebsh(1,0,0)_002}) and 
(\ref{eqn:G-fct011}).

\end{proof}

\begin{thm}
\label{th:main}
For $1\leq i\leq j\leq 3$, we have an following equation 
with the matrix representation 
$R(\Gamma_{\pm ij}^\lambda )\in 
M(d_{\lambda [\pm ij]}^\sigma ,d_\lambda^\sigma ,\mC )$ 
of $\Gamma_{\pm ij}^\lambda $ with respect to the induced basis 
$\{ \evec{\lambda }{M}\}_{M\in G_\sigma (\lambda )}$: 
\begin{equation}
\pmat{\lambda }{\pm }{ij}\eblock{\lambda }
=\eblock{\lambda [\pm ij]}\cdot R(\Gamma_{\pm ij}^\lambda ).
\label{eqn:statement_thm}
\end{equation}
Here $\pmat{\lambda }{\pm }{ij}\eblock{\lambda }$ is 
a matrix of the size $d_{\lambda [\pm ij]}\times d_\lambda^\sigma $, 
whose $l^\sigma (M)$-th column is 
$\pmat{\lambda }{\pm }{ij}\evec{\lambda }{M}$ 
for $M\in G_\sigma (\lambda )$. 

The explicit expression of the matrix $R(\Gamma_{\pm ij}^\lambda )$ is 
given as follows.\\
(i) $\gp_+$-side:\\
$R(\Gamma_{+ij}^\lambda )\quad (1\leq i\leq j\leq 3)$ 
is a matrix of size $d_{\lambda [+ij]}^\sigma \times d_\lambda^\sigma $, 
whose $l^\sigma (M)$-th column is given by 
\begin{align*}
&(\nu_1+\rho_1+\wt{M}_1+k_{ij}(M))\sum_{m=0}^{\cpr{i}{j}{2}{2}}
\cp{i}{j}{M\gpt{}{\me_{i}+\me_{j}}{}{0}{2}{2} [m]}{2}{2}{m} 
\mmu^\sigma_{\lambda [+ij]}\bkt{M\gpt{}{\me_{i}+\me_{j}}{}{0}{2}{2} [m]} \\
&+\sum_{m=0}^{\cpr{i}{j}{0}{2}}
h_{[ij;m]}(\nu_2,M)
\mmu^\sigma_{\lambda [+ij]}\bkt{M\gpt{}{\me_{i}+\me_{j}}{}{0}{2}{0} [m]} \\
&+(\nu_3+\rho_3+\wt{M}_3)\sum_{m=0}^{\cpr{i}{j}{0}{0}}
\cp{i}{j}{M\gpt{}{\me_{i}+\me_{j}}{}{0}{0}{0} [m]}{0}{0}{m} 
\mmu^\sigma_{\lambda [+ij]}\bkt{M\gpt{}{\me_{i}+\me_{j}}{}{0}{0}{0} [m]} 
\end{align*}
for $M\in G_\sigma (\lambda )$. 
Here 
\begin{align*}
h_{[ij;m]}(\nu_2,M)=&(\nu_2+\rho_2+\wt{M}_2)
\cp{i}{j}{M\gpt{}{\me_i+\me_j}{}{0}{2}{0} [m]}{2}{0}{m}\\
&+(m_{22}-m_{33}+1+\delta (M))\chid{+}{-1}(M)
\cp{i}{j}{M\gpt{}{\me_i+\me_j}{}{0}{2}{0} [m]}{1}{0}{m-1}\\
&+(m_{22}-m_{33}+1)
\cp{i}{j}{M\gpt{}{\me_i+\me_j}{}{0}{2}{0} [m]}{1}{0}{m}
\end{align*}
\begin{align*}
\biggl(\ \cp{i}{j}{M\gpt{}{\me_i+\me_j}{}{0}{2}{2}[m]}{l}{k}{m}=&0
\ \textit{ if }\ \cpr{i}{j}{k}{l}<m \textit{ or } \quad m<0.\ \biggl), 
\end{align*}
and 
$\mmu^\sigma_\lambda (M)$ is a column vector of degree 
$d^\sigma_\lambda $ which is defined by 
\[
\mmu^\sigma_\lambda (M)=\left\{\begin{array}{ll}
{}^t(\overbrace{0,\cdots ,0}^{l^\sigma (M)-1},1,
\overbrace{0,\cdots ,0}^{d^\sigma_\lambda-l^\sigma (M)})&
\text{ if }M\in G_\sigma (\lambda),\\
\mathbf{0} & \text{ otherwise }.
\end{array}\right.
\]

\noindent (ii) $\gp_-$-side:\\
$R(\Gamma_{-ij}^\lambda )\quad (1\leq i\leq j\leq 3)$ 
is a matrix of size $d_{\lambda [-ij]}^\sigma \times d_\lambda^\sigma $, 
whose $l^\sigma (M)$-th column is given by the form 
\begin{align*}
&(\nu_1+\rho_1-\wt{M}_1+k_{4-j\hs 4-i}(\hat{M}))\hspace{-1mm}
\sum_{m=0}^{\cpr{4-j\hs }{4-i}{2}{2}}\hspace{-1mm}
\cpd{4-j\hs }{4-i}{\bkt{\hat{M}\gpt{}{\me_{i}+\me_{j}}{}{0}{2}{2}[m]}}
{2}{2}{m} 
\mmu^\sigma_{\lambda [-ij]}\bkt{M\gpt{}{-\me_{i}-\me_{j}}{}{-2}{0}{-2} [m]} \\
&+\sum_{m=0}^{\cpr{4-j\hs }{4-i}{0}{2}}
h_{[4-j\hs 4-i;m]}(\nu_2,\hat{M})
\mmu^\sigma_{\lambda [-ij]}\bkt{M\gpt{}{-\me_{i}-\me_{j}}{}{-2}{0}{0} [m]} \\
&+(\nu_3+\rho_3-\wt{M}_3)\sum_{m=0}^{\cpr{4-j\hs }{4-i}{0}{0}}
\cpd{4-j\hs }{4-i}{\bkt{\hat{M}\gpt{}{\me_{i}+\me_{j}}{}{0}{0}{0}[m]}}
{0}{0}{m} 
\mmu^\sigma_{\lambda [-ij]}\bkt{M\gpt{}{-\me_{i}-\me_{j}}{}{0}{0}{0} [m]} 
\end{align*}
for $M\in G_\sigma (\lambda )$. 
\end{thm}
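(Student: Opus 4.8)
The plan is to obtain $R(\Gamma_{\pm ij}^\lambda)$ directly, exploiting that by the construction of Subsection~\ref{subsec:p-matrix} the column $\pmat{\lambda}{\pm}{ij}\evec{\lambda}{M}$ is, by definition, the vector realizing the image $\Gamma_{\pm ij}^\lambda(\evec{\lambda}{M})$, a $K$-homomorphism $V_{\lambda[\pm ij]}\to H_{(\sigma,\nu),K}$. Since $\{\evec{\lambda[\pm ij]}{M'}\}_{M'\in G_\sigma(\lambda[\pm ij])}$ is a basis of $\Hom_K(V_{\lambda[\pm ij]},H_{(\sigma,\nu),K})$, the asserted identity (\ref{eqn:statement_thm}) is exactly the expansion $\Gamma_{\pm ij}^\lambda(\evec{\lambda}{M})=\sum_{M'}R_{M'M}\evec{\lambda[\pm ij]}{M'}$. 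Applying both sides to $f(M_0)$ for each $M_0\in G_\sigma(\lambda[\pm ij])$ and evaluating the resulting functions at the identity $e\in K$, where $\efct{M'}{M_0}(e)=\ip{f(M')^*}{f(M_0)}=\delta_{M'M_0}$, recovers every entry $R_{M_0 M}=\big(\Gamma_{\pm ij}^\lambda(\evec{\lambda}{M})(f(M_0))\big)(e)$. Thus the whole matrix is read off from values at $e$, and the task reduces to computing the right $\gp_\pm$-action on elementary functions at the base point.

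The evaluation at $e$ is what makes the computation tractable. Writing $\efct{M}{N}(g)=e^{(\nu+\rho)(H(g))}\ip{f(M)^*}{\tau_\lambda(\kappa(g))f(N)}$ for the Iwasawa decomposition $g=n(g)\hs a(g)\hs\kappa(g)$ with $H(g)=\log a(g)$, I differentiate along $X_{\pm ab}$ using the Iwasawa expressions of Lemma~\ref{lem:Iwasawa}. At $e$ the nilpotent part ($E_{2e_a}$ and $E_{e_a\pm e_b}$ in $\gn$) contributes nothing, by left $N_{\min}$-invariance of $f$; the abelian part $H_a$ (present only for the diagonal $X_{\pm aa}$) contributes the scalar $(\nu+\rho)(H_a)=\nu_a+\rho_a$ times $\efct{M}{N}(e)=\delta_{MN}$; and the compact part $\kappa(E_{cd})$ acts by the differentiated right $K$-action, so $(\kappa(E_{cd})\efct{M}{N})(e)=\ip{f(M)^*}{E_{cd}f(N)}$, which is evaluated through Proposition~\ref{prop:action_on_GZ-basis} and (\ref{eqn:a_act_root_vec})--(\ref{eqn:b_act_root_vec}). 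Hence each $(X_{\pm ab}\efct{M}{N})(e)$ is an explicit combination of Kronecker deltas whose coefficients are linear in $\nu+\rho$ and piecewise-linear in the weights.

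Substituting the injector coefficients of Proposition~\ref{prop:clebsh(2,0,0)} (for $\gp_+$) into $\Gamma_{+ij}^\lambda(\evec{\lambda}{M})(f(M_0))=\sum_{ab}\sum_m (\text{coeff})\,X_{+ab}\hs\efct{M}{N}$ and collecting by the target pattern, the result organizes into three groups indexed by the diagonal vectors $X_{+11},X_{+22},X_{+33}$, which alone carry the abelian eigenvalues $\nu_1+\rho_1,\nu_2+\rho_2,\nu_3+\rho_3$. For the $X_{+11}$-group the target $M\gpt{}{\me_i+\me_j}{}{0}{2}{2}[m]$ (forced by $\delta_{MN}$) also receives the compact contributions of $X_{+12}$ through $\kappa(E_{21})$ and of $X_{+13}$ through $\kappa(E_{31})$, weighted by $\cp{i}{j}{M}{2}{1}{m}$ and $\cp{i}{j}{M}{1}{1}{m}$; the Clebsch--Gordan relation Lemma~\ref{lem:rel_clebsh} is precisely the identity bundling this combination into the single factor $k_{ij}(M)$, producing $(\nu_1+\rho_1+\wt{M}_1+k_{ij}(M))$, where $\wt{M}_1$ comes from the diagonal $\kappa(E_{11})$. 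The $X_{+22}$-group similarly collects the $\kappa(E_{32})$-contribution of $X_{+23}$, left explicit as the $\chid{+}{-1}$-corrections inside $h_{[ij;m]}(\nu_2,M)$, while the $X_{+33}$-group is the bare $(\nu_3+\rho_3+\wt{M}_3)$ term. Reading off the coefficient of each $\mmu^\sigma_{\lambda[+ij]}(\cdot)$ gives the stated matrix. The \emph{crux} is exactly this collapse: tracking which root vector lands on which target pattern via the Gelfand--Zelevinsky shifts, and verifying that the weighted compact contributions reduce to the closed form—for the leading group this is the content of Lemma~\ref{lem:rel_clebsh}, and the middle group requires the careful identification of the corrections in $h_{[ij;m]}$. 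This bookkeeping is where essentially all the difficulty lies.

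Finally, the $\gp_-$-side follows from the $\gp_+$-side by the symmetry of Proposition~\ref{prop:sym_GZ-basis}: since the injectors $i^\lambda_{-\me_i-\me_j}$ were defined through $T_{\widehat{\lambda}}\otimes_\mC T_{2\me_1}$, $i^{\widehat{\lambda}}_{\me_{4-j}+\me_{4-i}}$ and $T_{\lambda-\me_i-\me_j}$ with $\omega^2=\id$, transporting the computation replaces $\lambda$ by $\widehat{\lambda}$, the weight $\wt{M}$ by $-\wt{M}$, the pattern $M$ by $\hat{M}$, and the index $(i,j)$ by $(4-j,4-i)$; this reproduces the $\gp_-$-formula, the signs being already built into the definition of $\pmat{\lambda}{-}{ij}$.
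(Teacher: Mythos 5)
Your proposal is correct and follows essentially the same route as the paper's proof: reduce to evaluation at the identity via $\efct{M}{N}(1_6)=\Delta(M,N)$, compute $\{X_{\pm ab}\,\efct{M}{N}\}(1_6)$ through the Iwasawa expressions of Lemma \ref{lem:Iwasawa} (nilpotent part vanishing, abelian part giving $\nu_a+\rho_a$, compact part acting via Proposition \ref{prop:action_on_GZ-basis}), and then collapse the $X_{+11},X_{+12},X_{+13}$ contributions with Lemma \ref{lem:rel_clebsh} into the factor $(\nu_1+\rho_1+\wt{M}_1+k_{ij}(M))$, with the $X_{+22},X_{+23}$ group organized into $h_{[ij;m]}$ and the $\gp_-$-side obtained by the duality built into the definitions. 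This matches the paper's argument in structure and in the identification of the key lemma.
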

\begin{proof}
For $M,N\in G(\lambda )$, we define 
\[
\Delta (M,N)=
\left\{ \begin{array}{ll}
1 & \text{if } M=N ,\\
0 & \text{otherwise}. 
\end{array} \right. 
\]
Since 
\begin{equation}
\label{eqn:fmne}
\efct{M}{N}(1_6)=\ip{f(M)^*}{f(N)}=
\Delta (M,N) ,
\end{equation}
we see that the value at $1_6\in G$ of the vector $\evec{\lambda }{M}$ is 
the $l(M)$-th unit column vector ${}^t(0,\cdots ,0,1,0,\cdots ,0)$
of degree $d_\lambda $. Therefore, we note that it suffices to evaluate 
the both of the equation (\ref{eqn:statement_thm}) at $1_6\in G$. \\
First, we compute $\{X_{+pq}\efct{M}{N}\}(1_6)$ for $1\leq p\leq q\leq 3$. 
Since $\{\efct{M}{N}\}_{N\in G(\lambda )}$ is the monomial basis of 
$\langle \evec{\lambda }{M}\rangle $, we obtain
\begin{align*}
&\{\kappa (E_{pp}) \efct{M}{N}\}(1_6)=\wt{N}_p\efct{M}{N}(1_6)=
	\wt{M}_p\Delta (M,N)\ ,\quad   1\leq p\leq 3, \\[2mm]
&\{\kappa (E_{21}) \efct{M}{N}\}(1_6)\\
&\hphantom{=}=(n_{11}-n_{22})\efct{M}{N\cgpt{0}{0}{-1}}(1_6)
	+(n_{12}-n_{23})\chi_-(N)\efct{M}{N\cgpt{0}{0}{-1}[-1]}(1_6) \\
&\hphantom{=}=(m_{11}-m_{22}+1)\Delta (M\cgpt{0}{0}{1},N)
	+(m_{12}-m_{23}+1)\chid{-}{-1}(M)\Delta (M\cgpt{0}{0}{1}[1],N) \\[2mm]
&\{\kappa (E_{31}) \efct{M}{N}\}(1_6)\\
&\hphantom{=}=(n_{33}-n_{22})\efct{M}{N\cgpt{0}{-1}{-1}}(1_6)
	+C_1(N)\efct{M}{N\cgpt{0}{-1}{-1}[-1]}(1_6)\\
&\hphantom{=}=(m_{33}-m_{22}-1)\Delta (M\cgpt{0}{1}{1},N)
	+(C_1(M)+1)\Delta (M\cgpt{0}{1}{1}[1],N),\\[2mm]
&\{\kappa (E_{32}) \efct{M}{N}\}(1_6)\\
&\hphantom{=}=(n_{22}-n_{33})\efct{M}{N\cgpt{0}{-1}{0}}(1_6)
	+\{ n_{22}-n_{33}+\delta (N) \} \chi_+(N)
	\efct{M}{N\cgpt{0}{-1}{0}[-1]}(1_6)\\
&\hphantom{=}=(m_{22}-m_{33}+1)\Delta (M\cgpt{0}{1}{0},N)
	+\{ m_{22}-m_{33}+1+\delta (M)\} \chid{+}{-1}(N)
	\Delta (M\cgpt{0}{1}{0}[1],N).
\end{align*}
by Proposition \ref{prop:action_on_GZ-basis} and the equations 
(\ref{eqn:a_act_root_vec}), (\ref{eqn:a_act_root_vec}).
Moreover, we obtain 
\begin{align*}
 \{E_\alpha \efct{M}{N}\}(1_6)&=0, && \alpha \in \Sigma^+, \\
 \{H_p\efct{M}{N}\}(1_6)&=(\nu_p+\rho_p)\efct{M}{N}(1_6), && 1\leq p\leq 3
\end{align*}
by the definition of principal series representation. 
By above computations and Iwasawa decomposition in Lemma \ref{lem:Iwasawa}, 
we obtain
\begin{align*}
\{X_{+pp} \efct{M}{N}\}(1_6)=
	&(\nu_p+\rho_p+\wt{M}_p)\Delta (M,N)\ ,\quad   1\leq p\leq 3, \\[2mm]
\{X_{+12} \efct{M}{N}\}(1_6)=
	&(m_{11}-m_{22}+1)\Delta (M\cgpt{0}{0}{1},N)\\
	&+(m_{12}-m_{23}+1)\chid{-}{-1}(M)\Delta (M\cgpt{0}{0}{1}[1],N),\\[2mm]
\{X_{+13} \efct{M}{N}\}(1_6)=&(m_{33}-m_{22}-1)\Delta (M\cgpt{0}{1}{1},N)
	+(C_1(M)+1)\Delta (M\cgpt{0}{1}{1}[1],N),\\[2mm]
\{X_{+23} \efct{M}{N}\}(1_6)=
	&(m_{22}-m_{33}+1)\Delta (M\cgpt{0}{1}{0},N)\\
	&+\{ m_{22}-m_{33}+1+\delta (M)\} \chid{+}{-1}(M)
	\Delta (M\cgpt{0}{1}{0}[1],N).
\end{align*}

Let $\mmu_\lambda (N)$ be a column vector of degree 
$d_\lambda $ which is defined by 
\[
\mmu_\lambda (N)=\left\{\begin{array}{ll}
{}^t(\overbrace{0,\cdots ,0}^{l(N)-1},1,
\overbrace{0,\cdots ,0}^{d_\lambda-l(N)})&
\text{ if }M\in G(\lambda),\\
\mathbf{0} & \text{ otherwise }.
\end{array}\right.
\]
We denote by $X_{+pq}\evec{\lambda }{M}$ the vector of degree 
$d_\lambda$ 
whose $l(N)$-th component is $X_{+pq}\efct{M}{N}$. 
Then we obtain 
\begin{align*}
\{X_{+pp} \evec{\lambda}{M}\}(1_6)
=&(\nu_p+\rho_p+\wt{M}_p )\mmu_\lambda (M)
\quad \text{ for }\ 1\leq p\leq 3,\\
\{X_{+12} \evec{\lambda}{M}\}(1_6)
=&(m_{12}-m_{23}+1)\chid{-}{-1}(M)\mmu_\lambda \bkt{M\cgpt{0}{0}{1}[1]}\\
&+(m_{11}-m_{22}+1)\mmu_\lambda \bkt{M\cgpt{0}{0}{1}},\\
\{X_{+13} \evec{\lambda}{M}\}(1_6)
=&(C_1(M)+1)\mmu_\lambda \bkt{M\cgpt{1}{0}{1}[1]}
+(m_{33}-m_{22}-1)\mmu_\lambda \bkt{M\cgpt{1}{0}{1}},\\
\{X_{+23} \evec{\lambda}{M}\}(1_6)
=&\bkt{m_{22}-m_{33}+1+\delta (M)}\chid{+}{-1}(M)
\mmu_\lambda \bkt{M\cgpt{1}{0}{0}[1]}\\
&+(m_{22}-m_{33}+1)\mmu_\lambda \bkt{M\cgpt{1}{0}{0}}.
\end{align*}

Let us compute $\{\pmat{\lambda }{+}{ij}\evec{\lambda }{M}\}(1_6)$.

Since 
\[
L^{\lambda }_{+ij}\bkt{\cgpt{0}{-l}{-k} [-m]}\cdot \mmu_\lambda (N)
=\cp{i}{j}{N\gpt{}{\me_i+\me_j}{}{0}{l}{k}[m]}{l}{k}{m}
\mmu_{\lambda \epe{i}{j}}\bkt{N\gpt{}{\me_i+\me_j}{}{0}{l}{k}[m]}
\]
for $N\in G(\lambda )$, 
we obtain
\begin{align}
&\left\{ \bkt{ \sum_{m=0}^{\cpr{i}{j}{2}{2}}
L^{\lambda }_{+ij}\bkt{\cgpt{0}{-2}{-2} [-m]}
\otimes X_{+11} }\evec{\lambda }{M}\right\}  (1_6)
\label{eqn:+11_pf_Mth}\\
&= \bkt{ \sum_{m=0}^{\cpr{i}{j}{2}{2}}
L^{\lambda }_{+ij}\bkt{\cgpt{0}{-2}{-2} [-m]}}
\cdot \left\{X_{+11} \evec{\lambda }{M}\right\}  (1_6)\nonumber \\
&=(\nu_1+\rho_1+\wt{M}_1)\sum_{m=0}^{\cpr{i}{j}{2}{2}}
\cp{i}{j}{M\gpt{}{\me_i+\me_j}{}{0}{2}{2}[m]}{2}{2}{m}
\mmu_{\lambda \epe{i}{j}}\bkt{M\gpt{}{\me_i+\me_j}{}{0}{2}{2}[m]},\nonumber \\
\nonumber \\
&\left\{ \bkt{ \sum_{m=0}^{\cpr{i}{j}{1}{2}}
L^{\lambda }_{+ij}\bkt{\cgpt{0}{-2}{-1} [-m]}\otimes X_{+12} }
\evec{\lambda }{M}\right\}  (1_6)
\label{eqn:+12_pf_Mth}\\
&= \bkt{ \sum_{m=0}^{\cpr{i}{j}{1}{2}}
L^{\lambda }_{+ij}\bkt{\cgpt{0}{-2}{-1} [-m]}}
\cdot \left\{X_{+12} \evec{\lambda }{M}\right\}  (1_6)\nonumber \\
&=(m_{12}-m_{23}+1)\chid{-}{-1}(M)
\sum_{m=0}^{\cpr{i}{j}{1}{2}}
\cp{i}{j}{M\gpt{}{\me_i+\me_j}{}{0}{2}{2}[m+1]}{2}{1}{m}
\mmu_\lambda \bkt{M\gpt{}{\me_i+\me_j}{}{0}{2}{2}[m+1]}\nonumber \\
&\hphantom{==}+(m_{11}-m_{22}+1)\sum_{m=0}^{\cpr{i}{j}{1}{2}}
\cp{i}{j}{M\gpt{}{\me_i+\me_j}{}{0}{2}{2}[m]}{2}{1}{m}
\mmu_\lambda \bkt{M\gpt{}{\me_i+\me_j}{}{0}{2}{2}[m]}, \nonumber \\
&=\sum_{m=0}^{\cpr{i}{j}{1}{2}+1}
\biggl\{ (m_{12}-m_{23}+1)\chid{-}{-1}(M)
\cp{i}{j}{M\gpt{}{\me_i+\me_j}{}{0}{2}{2}[m]}{2}{1}{m-1}
\mmu_\lambda \bkt{M\gpt{}{\me_i+\me_j}{}{0}{2}{2}[m]}\nonumber \\
&\hphantom{==}+(m_{11}-m_{22}+1)
\cp{i}{j}{M\gpt{}{\me_i+\me_j}{}{0}{2}{2}[m]}{2}{1}{m}\biggl\}
\mmu_\lambda \bkt{M\gpt{}{\me_i+\me_j}{}{0}{2}{2}[m]}, \nonumber \\
\nonumber \\
&\left\{ \bkt{ \sum_{m=0}^{\cpr{i}{j}{1}{1}}
L^{\lambda }_{+ij}\bkt{\cgpt{0}{-1}{-1} [-m]}
\otimes X_{+13} }\evec{\lambda }{M}\right\}  (1_6)
\label{eqn:+13_pf_Mth}\\
&= \bkt{ \sum_{m=0}^{\cpr{i}{j}{1}{1}}
L^{\lambda }_{+ij}\bkt{\cgpt{0}{-1}{-1} [-m]}}
\cdot \left\{X_{+13} \evec{\lambda }{M}\right\}  (1_6)\nonumber \\
&=(C_1(M)+1)\sum_{m=0}^{\cpr{i}{j}{1}{1}}
\cp{i}{j}{M\gpt{}{\me_i+\me_j}{}{0}{2}{2}[m+1]}{1}{1}{m}
\mmu_\lambda \bkt{M\gpt{}{\me_i+\me_j}{}{0}{2}{2}[m+1]}\nonumber \\
&\hphantom{==}+(m_{33}-m_{22}-1)\sum_{m=0}^{\cpr{i}{j}{1}{1}}
\cp{i}{j}{M\gpt{}{\me_i+\me_j}{}{0}{2}{2}[m]}{2}{1}{m}
\mmu_\lambda \bkt{M\gpt{}{\me_i+\me_j}{}{0}{2}{2}[m]}\nonumber \\
&=\sum_{m=0}^{\cpr{i}{j}{1}{1}+1}\biggl\{(C_1(M)+1)
\cp{i}{j}{M\gpt{}{\me_i+\me_j}{}{0}{2}{2}[m]}{1}{1}{m-1}\nonumber \\
&\hphantom{==}+(m_{33}-m_{22}-1)
\cp{i}{j}{M\gpt{}{\me_i+\me_j}{}{0}{2}{2}[m]}{2}{1}{m}\biggl\}
\mmu_\lambda \bkt{M\gpt{}{\me_i+\me_j}{}{0}{2}{2}[m]}.\nonumber 
\end{align}
By the equations (\ref{eqn:+11_pf_Mth}), (\ref{eqn:+12_pf_Mth}), 
(\ref{eqn:+13_pf_Mth}) and Lemma \ref{lem:rel_clebsh}, we obtain 
\begin{align}
&\left\{ \left( \sum_{m=0}^{\cpr{i}{j}{2}{2}}
L^{\lambda }_{+ij}\bkt{\cgpt{0}{-2}{-2} [-m]}
\otimes X_{+11} 
+\sum_{m=0}^{\cpr{i}{j}{1}{2}}
L^{\lambda }_{+ij}\bkt{\cgpt{0}{-2}{-1} [-m]}
\otimes X_{+12} \right. \right.
\label{eqn:+11+12+13_pf_Mth}\\
&\hphantom{=} \left. \left. +\sum_{m=0}^{\cpr{i}{j}{1}{1}}
L^{\lambda }_{+ij}\bkt{\cgpt{0}{-1}{-1} [-m]}
\otimes X_{+13} \right)
\evec{\lambda }{M}\right\}  (1_6)\nonumber \\
&=(\nu_1+\rho_1+\wt{M}_1+k_{ij}(M))\sum_{m=0}^{\cpr{i}{j}{2}{2}}
\cp{i}{j}{M\gpt{}{\me_i+\me_j}{}{0}{2}{2}[m]}{2}{2}{m}
\mmu_{\lambda \epe{i}{j}}\bkt{M\gpt{}{\me_i+\me_j}{}{0}{2}{2}[m]}.\nonumber
\end{align}
Similarly,
\begin{align}
&\left\{ \bkt{ \sum_{m=0}^{4}
L^{\lambda }_{+11}\bkt{\cgpt{0}{-2}{0} [-m]}
\otimes X_{+22} }\evec{\lambda }{M}\right\}  (1_6)
\label{eqn:+22_pf_Mth}\\
&= \bkt{ \sum_{m=0}^{\cpr{i}{j}{0}{2}}
L^{\lambda }_{+ij}\bkt{\cgpt{0}{-2}{0} [-m]}}
\cdot \left\{X_{+22} \evec{\lambda }{M}\right\}  (1_6)\nonumber \\
&=(\nu_2+\rho_2+\wt{M}_2)\sum_{m=0}^{\cpr{i}{j}{0}{2}}
\cp{i}{j}{M\gpt{}{\me_i+\me_j}{}{0}{2}{0}[m]}{2}{0}{m}
\mmu_{\lambda \epe{i}{j}}\bkt{M\gpt{}{\me_i+\me_j}{}{0}{2}{0}[m]},\nonumber \\
\nonumber \\
&\left\{ \bkt{ \sum_{m=0}^{\cpr{i}{j}{0}{1}}
L^{\lambda }_{+ij}\bkt{\cgpt{0}{-1}{0} [-m]}
\otimes X_{+23} }\evec{\lambda }{M}\right\}  (1_6)
\label{eqn:+23_pf_Mth}\\
&= \bkt{ \sum_{m=0}^{\cpr{i}{j}{0}{1}}
L^{\lambda }_{+ij}\bkt{\cgpt{0}{-1}{0} [-m]}}
\cdot \left\{X_{+23} \evec{\lambda }{M}\right\}  (1_6)\nonumber \\
&=(m_{22}-m_{33}+1+\delta (M))\chid{+}{-1}(M)
\sum_{m=0}^{\cpr{i}{j}{0}{1}}
\cp{i}{j}{M\gpt{}{\me_i+\me_j}{}{0}{2}{0}[m+1]}{1}{0}{m}\nonumber \\
&\hphantom{==}\times 
\mmu_{\lambda \epe{i}{j}}\bkt{M\gpt{}{\me_i+\me_j}{}{0}{2}{0}[m+1]}\nonumber \\
&\hphantom{==}+(m_{22}-m_{33}+1)
\sum_{m=0}^{\cpr{i}{j}{0}{1}}
\cp{i}{j}{M\gpt{}{\me_i+\me_j}{}{0}{2}{0}[m]}{1}{0}{m}
\mmu_{\lambda \epe{i}{j}}\bkt{M\gpt{}{\me_i+\me_j}{}{0}{2}{0}[m]}\nonumber \\
&=\sum_{m=0}^{\cpr{i}{j}{0}{1}+1}
\biggl\{(m_{22}-m_{33}+1+\delta (M))\chid{+}{-1}(M)
\cp{i}{j}{M\gpt{}{\me_i+\me_j}{}{0}{2}{0}[m]}{1}{0}{m-1}\nonumber \\
&\hphantom{==}+(m_{22}-m_{33}+1)
\cp{i}{j}{M\gpt{}{\me_i+\me_j}{}{0}{2}{0}[m]}{1}{0}{m}\biggl\}
\mmu_{\lambda \epe{i}{j}}\bkt{M\gpt{}{\me_i+\me_j}{}{0}{2}{0}[m]}.\nonumber 
\end{align}
By direct computation from the equations (\ref{eqn:+22_pf_Mth}) 
and (\ref{eqn:+23_pf_Mth}), we obtain 
\begin{align}
&\left\{ \bkt{ \sum_{m=0}^{\cpr{i}{j}{0}{2}}
L^{\lambda }_{+ij}\bkt{\cgpt{0}{-2}{0} [-m]}
\otimes X_{+22} 
+\sum_{m=0}^{\cpr{i}{j}{0}{1}}L^{\lambda }_{+ij}\bkt{\cgpt{0}{-1}{0} [-m]}
\otimes X_{+23} }\evec{\lambda }{M}\right\}  (1_6)
\label{eqn:+22+23_pf_Mth}\\
&=\sum_{m=0}^{\cpr{i}{j}{0}{2}}h_{[ij;m]}(\nu_2,M)
\mmu_{\lambda \epe{i}{j}}\bkt{M\gpt{}{\me_i+\me_j}{}{0}{2}{0}[m]}.\nonumber 
\end{align}
Similarly, 
\begin{align}
&\left\{ \bkt{ \sum_{m=0}^{\cpr{i}{j}{0}{0}}
L^{\lambda }_{+ij}\bkt{\cgpt{0}{0}{0} [-m]}
\otimes X_{+33} }\evec{\lambda }{M}\right\}  (1_6)
\label{eqn:+33_pf_Mth}\\
&= \bkt{ \sum_{m=0}^{\cpr{i}{j}{0}{0}}
L^{\lambda }_{+ij}\bkt{\cgpt{0}{0}{0} [-m]}}
\cdot \left\{X_{+33} \evec{\lambda }{M}\right\}  (1_6)\nonumber \\
&=(\nu_3+\rho_3+\wt{M}_3)
\sum_{m=0}^{\cpr{i}{j}{0}{0}}
\cp{i}{j}{M\gpt{}{\me_i+\me_j}{}{0}{0}{0}[m]}{0}{0}{m}
\mmu_{\lambda \epe{i}{j}}\bkt{M\gpt{}{\me_i+\me_j}{}{0}{0}{0}[m]}.\nonumber
\end{align}
Summing up the equations (\ref{eqn:+11+12+13_pf_Mth}), 
(\ref{eqn:+22+23_pf_Mth}) and (\ref{eqn:+33_pf_Mth}), 
we obtain 
\begin{align}
&\{ \pmat{\lambda }{1}{1} \evec{\lambda}{M} \} (1_6)\\
&=(\nu_1+\rho_1+\wt{M}_1+k_{ij}(M))\sum_{m=0}^{\cpr{i}{j}{2}{2}}
\cp{i}{j}{M\gpt{}{\me_i+\me_j}{}{0}{2}{2}[m]}{2}{2}{m}
\mmu_{\lambda \epe{i}{j}}\bkt{M\gpt{}{\me_i+\me_j}{}{0}{2}{2}[m]}\nonumber \\
&\hphantom{=}+\sum_{m=0}^{\cpr{i}{j}{0}{2}}h_{[ij;m]}(\nu_2,M)
\mmu_{\lambda \epe{i}{j}}\bkt{M\gpt{}{\me_i+\me_j}{}{0}{2}{0}[m]}\nonumber \\
&\hphantom{=}+(\nu_3+\rho_3+\wt{M}_3)
\sum_{m=0}^{\cpr{i}{j}{0}{0}}
\cp{i}{j}{M\gpt{}{\me_i+\me_j}{}{0}{0}{0}[m]}{0}{0}{m}
\mmu_{\lambda \epe{i}{j}}\bkt{M\gpt{}{\me_i+\me_j}{}{0}{0}{0}[m]}.\nonumber
\end{align}
By the remark at the beginning of this proof, we obtain the 
assertion of the case of (i). The case of (ii) is treated 
similarly.
\end{proof}

\section{Examples of contiguous relations and their applications}
\label{sec:examples}

Here are some examples of the contiguous relations 
at the peripheral $K$-types. 
Moreover, as their applications, 
we determine the holonomic systems, 
whose solutions are Whittaker functions. 

\subsection{Whittaker functions}

For a unitary character $\xi $ of $N_{\min}$, we denote 
the derivative of $\xi$ by the same letter. 
Since
\[
\gn /[\gn ,\gn ]\simeq \g_{e_1-e_2}\oplus \g_{e_2-e_3}\oplus \g_{2e_3},
\]
$\xi $ is specified by three real numbers $c_{12},c_{23}$ and $c_3$ such that 
\[
\xi (E_{e_1-e_2})=2\pi \sqrt{-1}c_{12},\ 
\xi (E_{e_2-e_3})=2\pi \sqrt{-1}c_{23}\text{ and }
\xi (E_{2e_3})=2\pi \sqrt{-1}c_{3}.
\]
When $c_{12}c_{23}c_{3}\neq 0$, a unitary character $\xi$ of $N_{\min}$ is 
called \textit{non-degenerate}.

For a finite dimensional representation $(\tau ,V)$ of $K$ and a 
non-degenerate unitary character $\xi $ of $N_{\min }$, we consider the space 
$C^\infty_{\xi ,\tau }(N_{\min }\backslash G/K)$ of smooth functions 
$\varphi \colon G\to V_\tau $ with the property
\[
\varphi (ngk)=\xi (n)\tau (k)^{-1}\varphi (g),\ (n,g,k)\in 
N_{\min}\times G\times K.
\]
Here we remark that any functions 
$\varphi \in C^\infty_{\xi ,\tau }(N_{\min }\backslash G/K)$ is 
determined by its restriction $\varphi |_{A_{\min }}$ to $A_{\min }$ 
from the Iwasawa decomposition 
$G=N_{\min }A_{\min }K$ of $G$. $\varphi |_{A_{\min }}$ is called 
the $A_{\min }$-radial part of $\varphi $. 
Also let $C^\infty \Ind_{N_{\min}}^G(\xi )$ be the $C^\infty $-induced 
representation from $\xi$ with the representation space 
\[
C^\infty_\xi (N_{\min}\backslash G)
=\{ \varphi \in C^\infty(G) \mid 
\varphi (ng)=\xi (n)\varphi (g),\ (n,g)\in N_{\min}
\times G\},
\]
on which $G$ acts by right translation. Then the space 
$C^\infty_{\xi ,\tau }(N_{\min }\backslash G/K)$ is isomorphic to 
$\Hom_K(V^*,C^\infty_\xi (N_{\min}\backslash G))$ 
via the correspondence between 
$\iota \in \Hom_K(V^*,C^\infty_\xi (N_{\min}\backslash G))$ 
and $F^{[\iota ]}\in C^\infty_{\xi ,\tau }(N_{\min }\backslash G/K)$ 
given by the relation $\iota (v^*)(g)=\ip{v^*}{F^{[\iota ]}(g)}$ for 
$v^*\in V^*$ and $g\in G$ with canonical pairing $\ip{}{} $ on 
$V^*\times V$.

Let $(\pi ,H_\pi )$ be an irreducible admissible representation of $G$, and 
take a $K$-type $(\tau^*,V^*)$ of $\pi $ with an injective 
$K$-homomorphism $\eta \colon V^*\to H_\pi $. Then, for each element $T$ in the 
intertwining space $\Isp{\xi ,\pi }=
\Hom_{(\g_\mC ,K)}(H_{\pi ,K},C^\infty_\xi (N_{\min}\backslash G))$, 
the relation 
$T(\eta (v^*))(g)=\ip{v^*}{\Phi (T,\eta )(g)}\ (v^*\in V^*,\ g\in G)$ 
determines an element 
$\Phi (T,\eta )\in C^\infty_{\xi ,\tau }(N_{\min }\backslash G/K)$. Here 
$H_{\pi ,K}$ is a subspace of $H_\pi $, consisting of all $K$-finite vectors.
Now we put 
\[
\Wh (\pi ,\xi ,\tau )=\bigcup_{\eta \in \Hom_K(V^*, H_{\pi ,K})} 
\{ \Phi (T,\eta ) \in C^\infty_{\xi ,\tau }(N_{\min }\backslash G/K)\mid 
T\in \Isp{\xi ,\pi }\}
\]
and call $\Wh (\pi ,\xi ,\tau )$ 
\textit{the space of Whittaker functions} for $(\pi ,\xi ,\tau )$. 
We consider the Whittaker functions for the irreducible principal 
series representation $\pi =\pi_{(\sigma ,\nu )}$. 

\subsection{The $\pm$-chilarity matrices}

We define the $\pm $-chilarity matrices as follows.

\begin{defn}
\textit 
The $\pm$-chilarity matrices $m_i(C_\pm )$ for $1\leq i\leq 3$ are defined by 
\begin{align*}
m_1(C_\pm )=&\left( \begin{array}{ccc}
X_{\pm 11} & X_{\pm 12} & X_{\pm 13} \\
X_{\pm 12} & X_{\pm 22} & X_{\pm 23} \\
X_{\pm 13} & X_{\pm 23} & X_{\pm 33} 
\end{array} \right) ,&
m_2(C_\pm )=&\left( \begin{array}{ccc}
M_{\pm 11} & -M_{\pm 12} & M_{\pm 13} \\
-M_{\pm 12} & M_{\pm 22} & -M_{\pm 23} \\
M_{\pm 13} & -M_{\pm 23} & M_{\pm 33} 
\end{array} \right) ,
\end{align*}
and $m_3(C_\pm )=\det (m_1(C_\pm ))$. Here $M_{\pm ij}$ is 
$(i,j)$-minor of the matrix $m_1(C_\pm )$ for each 
$1\leq i\leq j\leq 3$, that is,
\begin{align*}
M_{\pm 11}=&\left| \begin{array}{cc}
 X_{\pm 22} & X_{\pm 23} \\
 X_{\pm 23} & X_{\pm 33} 
\end{array} \right| ,&
M_{\pm 22}=&\left| \begin{array}{cc}
 X_{\pm 11} & X_{\pm 13} \\
 X_{\pm 13} & X_{\pm 33} 
\end{array} \right| ,&
M_{\pm 33}=&\left| \begin{array}{cc}
 X_{\pm 11} & X_{\pm 12} \\
 X_{\pm 12} & X_{\pm 22} 
\end{array} \right| ,\\
M_{\pm 12}=&\left| \begin{array}{cc}
 X_{\pm 12} & X_{\pm 23} \\
 X_{\pm 13} & X_{\pm 33} 
\end{array} \right| ,&
M_{\pm 13}=&\left| \begin{array}{cc}
 X_{\pm 12} & X_{\pm 22} \\
 X_{\pm 13} & X_{\pm 23} 
\end{array} \right| ,&
M_{\pm 23}=&\left| \begin{array}{cc}
 X_{\pm 11} & X_{\pm 12} \\
 X_{\pm 13} & X_{\pm 23} 
\end{array} \right| .
\end{align*}
\end{defn}
Then we can find the following lemma immediately from 
the definition of the chilarity matrices. 
\begin{lem}
\label{lem:chirality_def}
\textit 
For each $1\leq i\leq 3$, the element 
$C_{2i}=\Tr (m_i(C_+)m_i(C_-))$ in $U(\g_\mC )$ is 
invariant under the adjoint action of $K$, that is, 
\[
C_{2i}\in U(\g_\mC )^K=\{ X\in U(\g_\mC )\mid \Ad (k)X=X,\ k\in K \} .
\]
\end{lem}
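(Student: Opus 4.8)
The plan is to deduce the $K$-invariance of each $C_{2i}=\Tr(m_i(C_+)m_i(C_-))$ from explicit transformation laws for the matrices $m_i(C_\pm)$ under $\Ad(K)$. Since $K\cong U(3)$ is connected, it suffices to produce, for $k\in K$ corresponding to $g=g(k)\in GL(3,\mC)$ under the identification behind Lemma \ref{lem:K-action}, the laws
\[
\Ad(k)\,m_1(C_+)={}^t g\,m_1(C_+)\,g,\qquad \Ad(k)\,m_1(C_-)=g^{-1}\,m_1(C_-)\,{}^t g^{-1},
\]
with $\Ad(k)$ applied entrywise. These two laws are mutually inverse-transpose (dual), which is exactly the feature that will make the contraction invariant. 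To establish them I would use two ingredients. First, $\gp_+$ and $\gp_-$ are each abelian ($Sp(3,\mR)/U(3)$ is Hermitian symmetric; concretely $[\g_\beta,\g_{\beta'}]=0$ for $\beta,\beta'\in\Delta^+_n$, and likewise on $-\Delta^+_n$), so $U(\gp_\pm)=\mathrm{Sym}(\gp_\pm)$ are polynomial algebras and the minors $M_{\pm ij}$ and the determinant are unambiguous; thus $m_2(C_\pm)=\mathrm{cof}(m_1(C_\pm))$ and $m_3(C_\pm)=\det m_1(C_\pm)$ make sense inside these commutative algebras. Second, the infinitesimal laws $\mathrm{ad}(\kappa(Z))\,m_1(C_+)={}^t Z\,m_1(C_+)+m_1(C_+)\,Z$ and $\mathrm{ad}(\kappa(Z))\,m_1(C_-)=-Z\,m_1(C_-)-m_1(C_-)\,{}^t Z$ for $Z\in\g\gl(3,\mC)$ can be read directly off the adjoint-action tables in the proof of Lemma \ref{lem:K-action}; this is precisely the content that $\gp_+\simeq V_{2\me_1}$ is $\mathrm{Sym}^2$ of the standard representation and $\gp_-\simeq V_{-2\me_3}$ is $\mathrm{Sym}^2$ of its dual. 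Integrating over the connected group $K$ gives the displayed group laws.

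Next I would propagate these to $m_2$ and $m_3$. Because $\Ad(k)$ is an algebra automorphism preserving each $\mathrm{Sym}(\gp_\pm)$, I may apply the classical identities $\mathrm{cof}(AXB)=\mathrm{cof}(A)\,\mathrm{cof}(X)\,\mathrm{cof}(B)$ and $\det(AXB)=\det A\,\det X\,\det B$ \emph{inside} the commutative algebras, with $A,B$ the scalar matrices ${}^t g,g$. Using $\mathrm{cof}(P)=\det(P)\,{}^t(P^{-1})$ this yields
\[
\Ad(k)\,m_2(C_+)=\det(g)^2\,g^{-1}m_2(C_+)\,{}^t g^{-1},\quad \Ad(k)\,m_2(C_-)=\det(g)^{-2}\,{}^t g\,m_2(C_-)\,g,
\]
together with $\Ad(k)\,m_3(C_+)=\det(g)^2\,m_3(C_+)$ and $\Ad(k)\,m_3(C_-)=\det(g)^{-2}\,m_3(C_-)$.

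Finally I would form the products and check cancellation. Since $g$ has scalar entries, $\det(g)^{\pm2}$ is central and the matrices $g,{}^tg$ may be moved freely past the $U(\g_\mC)$-valued entries; a direct index contraction (the sum over the contracted index produces $\sum_c g_{qc}(g^{-1})_{cr}=\delta_{qr}$, etc.) then shows
\[
\Ad(k)\bigl(m_i(C_+)m_i(C_-)\bigr)=h_i\,\bigl(m_i(C_+)m_i(C_-)\bigr)\,h_i^{-1},
\]
with $h_1={}^t g$, $h_2=g^{-1}$ scalar matrices and $h_3=1$. Conjugation by a scalar matrix preserves the trace, so $\Ad(k)C_{2i}=C_{2i}$ for all $k\in K$, which is the assertion.

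The one genuinely delicate point—and where I would spend the care—is that $\gp_+$ and $\gp_-$ do \emph{not} commute ($[\gp_+,\gp_-]\subseteq\gk_\mC$), so $m_i(C_+)$ and $m_i(C_-)$ have entries in different noncommuting subalgebras and $\Tr(AB)\neq\Tr(BA)$ in general. Hence cyclicity of the trace cannot be invoked; the cancellation must be carried out keeping the product $m_i(C_+)m_i(C_-)$ in its fixed order, with invariance coming solely from the fact that the scalar factor $g$ sitting on the right in the $C_+$-law annihilates the $g^{-1}$ sitting on the left in the $C_-$-law (and analogously ${}^t g^{-1}$ against ${}^t g$ for $i=2$). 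The only commutativity actually used is the internal commutativity of each $\mathrm{Sym}(\gp_\pm)$, which is what legitimizes both the well-definedness of the $m_i$ and the cofactor/determinant identities above.
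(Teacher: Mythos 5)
Your proof is correct, and it supplies precisely the details the paper omits: the paper offers no argument for this lemma beyond the remark that it follows ``immediately from the definition of the chilarity matrices.'' Your transformation laws $\Ad(k)m_1(C_+)={}^t g\,m_1(C_+)\,g$ and $\Ad(k)m_1(C_-)=g^{-1}m_1(C_-)\,{}^t g^{-1}$ check out against Tables 1 and 2 in the proof of Lemma \ref{lem:K-action}, and your care on the two genuinely delicate points --- that commutativity is only available inside each $\mathrm{Sym}(\gp_\pm)$ (so the minors are well defined and the cofactor/determinant multiplicativity applies), while the cancellation between the $C_+$ and $C_-$ factors must be done by scalar index contraction rather than cyclicity of the trace --- is exactly what makes the ``immediate'' claim rigorous.
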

\begin{rem}
\textit{
In the case of $Sp(n,\mR )$, we can define $C_{2i}$ for each $1\leq i\leq n$ 
belonging to $U(\g_\mC )^K$ similarly. The operator $C_{2i}$ is essentially 
same as the so-called Maass shift operator in the classical literature 
\cite{MR0344198}. Also, the chilarity matrices are 
used to construct the Capelli elements for a symmetric pair in 
\cite{Capelli_identities}, recently.}
\end{rem}
Now we consider a system of differential equations which are satisfied by the 
$A_{\min }$-radial part of each element in $\Wh (\pi ,\xi ,\tau )$ 
when $\tau^*$ is a multiplicity one $K$-type of $\pi_{(\sigma ,\lambda )}$. 
Let $\lambda $ be the highest weight of $\tau^*$. 
The elements $C_2,\ C_4$ and $C_6$ in $U(\g_\mC )^K$ defined in 
Lemma \ref{lem:chirality_def} are acting on the space 
$C^\infty_\xi (N_{\min}\backslash G)$ as differential 
operators and acting on the space $H_{(\sigma ,\nu )}$ 
as scalar operators. 
Therefore, each element $\Phi $ in $\Wh (\pi ,\xi ,\tau )$ satisfies 
the following system of differential equations 
\begin{align*}
C_{2i}\Phi =&\chi_{2i,\sigma ,\nu ,\lambda }\Phi \quad (1\leq i\leq 3),
\end{align*}
where $\chi_{2i,\sigma ,\nu ,\lambda }$ is the scalar value 
for the action of the operator $C_{2i}$. 
In the later subsections, we compute those values for peripheral $K$-types.

\subsection{The chilarity operators in the cases of 
$(\sigma_1,\sigma_2,\sigma_3)=(0,0,0),\ (1,1,1)$}
In this subsection, we consider the cases of 
$(\sigma_1,\sigma_2,\sigma_3)=(0,0,0),\ (1,1,1)$. 
We set $M_l=\gpt{l}{l}{l}{l}{l}{l}\in G((l,l,l))$ for $l\in \mZ$. 
Let $\varepsilon_\sigma $ be the integer defined by $\varepsilon $
\[
\varepsilon_\sigma =
\left\{\begin{array}{ll}
0 & \text{ if } (\sigma_1,\sigma_2,\sigma_3)=
(0,0,0),\\
1 & \text{ if } (\sigma_1,\sigma_2,\sigma_3)=
(1,1,1).
\end{array}\right.
\]

Since 
$(\sigma_1,\sigma_2,\sigma_3)=(0,0,0),\ (1,1,1)$, 
there are multiplicity one $K$-types 
$\tau_{(l,l,l)}\ (l\equiv \varepsilon_\sigma \bmod 2)$ 
of $\pi_{(\sigma ,\nu )}$. 
Let us compute the values of 
$\chi_{2i,\sigma ,\nu ,(l,l,l)}\ (1\leq i\leq 3)$. 

\begin{lem}\label{lem:ex1_1}
\textit 
(i) Let $\sigma $ be a character of $M_{\min}$ such that 
$(\sigma_1,\sigma_2,\sigma_3)=(0,0,0),\ (1,1,1)$. 
For $l\equiv \varepsilon_\sigma \bmod 2$, the following equations hold:
\begin{align*}
\pmat{(l-2,l-2,l-2)}{+}{11}\eblock{(l-2,l-2,l-2)}
=&\eblock{(l,l-2,l-2)}\cdot R(\Gamma_{+11}^{(l-2,l-2,l-2)} ),\\
\pmat{(l,l-2,l-2)}{+}{22}\eblock{(l,l-2,l-2)}
=&\eblock{(l,l,l-2)}\cdot R(\Gamma_{+22}^{(l,l-2,l-2)} ),\\
\pmat{(l,l,l-2)}{+}{33}\eblock{(l,l,l-2)}
=&\eblock{(l,l,l)}\cdot R(\Gamma_{+33}^{(l,l,l-2)} ),\\
\pmat{(l,l,l)}{-}{33}\eblock{(l,l,l)}
=&\eblock{(l,l,l-2)}\cdot R(\Gamma_{-33}^{(l,l,l)} ),\\
\pmat{(l,l,l-2)}{-}{22}\eblock{(l,l,l-2)}
=&\eblock{(l,l-2,l-2)}\cdot R(\Gamma_{-22}^{(l,l,l-2)} ),\\
\pmat{(l,l-2,l-2)}{-}{11}\eblock{(l,l-2,l-2)}
=&\eblock{(l-2,l-2,l-2)}\cdot R(\Gamma_{-11}^{(l,l-2,l-2)} ).
\end{align*}
Here
\begin{align*}
&\pmat{(l-2,l-2,l-2)}{+}{11}=12\left(\begin{array}{c}
X_{+11}\\
X_{+12}\\
X_{+13}\\
X_{+22}\\
X_{+23}\\
X_{+33}
\end{array}\right),\quad 
R(\Gamma_{+11}^{(l-2,l-2,l-2)})
=12\left(\begin{array}{c}
\nu_1+l+1\\ \nu_2+l\\ \nu_3+l-1
\end{array}\right),\\[2mm]
&\pmat{(l,l-2,l-2)}{+}{22}=2\left(\begin{array}{cccccc}
X_{+22}&-2X_{+12}&0&X_{+11}&0&0\\
X_{+23}&-X_{+13}&-X_{+12}&0&X_{+11}&0\\
X_{+33}&0&-2X_{+13}&0&0&X_{+11}\\
0&X_{+23}&-X_{+22}&-X_{+13}&X_{+12}&0\\
0&X_{+33}&-X_{+23}&0&-X_{+13}&X_{+12}\\
0&0&0&X_{+33}&-2X_{+23}&X_{+22}
\end{array}\right),\\
&R(\Gamma_{+22}^{(l,l-2,l-2)})
=2\left(\begin{array}{ccc}
\nu_2+l&\nu_1+l-1&0\\
\nu_3+l-1&0&\nu_1+l-1\\
0&\nu_3+l-1&\nu_2+l-2
\end{array}\right),\\[2mm]
&\pmat{(l,l,l-2)}{+}{33}
=\left(\begin{array}{cccccc}
X_{+33}&-2X_{+23}&X_{+22}&2X_{+13}&-2X_{+12}&X_{+11}
\end{array}\right),\\
&R(\Gamma_{+33}^{(l,l,l-2)})
=\left(\begin{array}{ccc}
\nu_3+l-1&\nu_2+l-2&\nu_1+l-3
\end{array}\right),\\
&\pmat{(l,l,l)}{-}{33}=12\left(\begin{array}{c}
X_{-33}\\
-X_{-23}\\
X_{-22}\\
X_{-13}\\
-X_{-12}\\
X_{-11}
\end{array}\right),\quad 
R(\Gamma_{-33}^{(l,l,l)})
=12\left(\begin{array}{c}
\nu_3-l+1\\
\nu_2-l+2\\
\nu_1-l+3
\end{array}\right),\\
&\pmat{(l,l,l-2)}{-}{22}
=2\left(\begin{array}{cccccc}
X_{-22}&2X_{-23}&0&X_{-33}&0&0\\
-X_{-12}&-X_{-13}&0&X_{-23}&X_{-33}&0\\
0&-X_{-12}&-X_{-13}&-X_{-22}&X_{-33}&0\\
X_{-11}&0&0&-2X_{-13}&0&X_{-33}\\
0&X_{-11}&0&X_{-12}&-X_{-13}&X_{-23}\\
0&0&X_{-11}&0&2X_{-12}&X_{-22}
\end{array}\right),\\
&R(\Gamma_{-22}^{(l,l,l-2)})
=2\left(\begin{array}{ccc}
\nu_2-l&\nu_3-l+1&0\\
\nu_1-l+1&0&\nu_3-l+1\\
0&\nu_1-l+1&\nu_2-l+2
\end{array}\right),\\
&\pmat{(l,l-2,l-2)}{-}{11}=\left(\begin{array}{cccccc}
X_{-11}&2X_{-12}&2X_{-13}&X_{-22}&2X_{-23}&X_{-33}
\end{array}\right),\\
&R(\Gamma_{-11}^{(l,l-2,l-2)})
=\left(\begin{array}{ccc}
\nu_1-l-1&\nu_2-l&\nu_3-l+1
\end{array}\right).
\end{align*}
(ii) The elements $C_{2i}\ (i=1,2,3)$ are represented 
by the $\gp_{\pm }$-matrices 
$\pmat{\lambda }{\pm }{kl}$ as follows: 
\begin{align*}
&C_2=\frac{1}{12}\pmat{(l,l,l-2)}{+}{33}\cdot \pmat{(l,l,l)}{-}{33},\quad
C_4=\frac{1}{192}
\pmat{(l,l,l-2)}{+}{33}\cdot \pmat{(l,l-2,l-2)}{+}{22}\cdot 
\pmat{(l,l,l-2)}{-}{22}\cdot \pmat{(l,l,l)}{-}{33},\\
&C_6=\frac{1}{20736}
\pmat{(l,l,l-2)}{+}{33}\cdot \pmat{(l,l-2,l-2)}{+}{22}
\cdot \pmat{(l-2,l-2,l-2)}{+}{11}\cdot 
\pmat{(l,l-2,l-2)}{-}{11}\cdot \pmat{(l,l,l-2)}{-}{22}
\cdot \pmat{(l,l,l)}{-}{33}.
\end{align*}
\end{lem}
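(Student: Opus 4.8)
The plan is to obtain both parts by specializing the general results of Section \ref{sec:structure} to the four weights occurring in the chain $(l-2,l-2,l-2)$, $(l,l-2,l-2)$, $(l,l,l-2)$, $(l,l,l)$, and then to recognize the chirality elements $C_{2i}$ inside the resulting compositions of $\gp_\pm$-matrices.

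For part (i), the six displayed equations are simply the instances of Theorem \ref{th:main} attached to these weights and to the shifts $\pm 11,\pm 22,\pm 33$, so no independent verification of the equalities themselves is needed; the substance is to exhibit the matrices $\pmat{\lambda}{\pm}{ij}$ and $R(\Gamma_{\pm ij}^\lambda)$ explicitly. First I would record the $\sigma$-isotypic structure. Since $\sigma=(\varepsilon_\sigma,\varepsilon_\sigma,\varepsilon_\sigma)$ and $l\equiv\varepsilon_\sigma\bmod 2$, the extreme weights $(l,l,l)$ and $(l-2,l-2,l-2)$ each contribute the single G-pattern $M_l$, resp.\ $M_{l-2}$, giving $d_\lambda^\sigma=1$ there; for each middle weight the parity condition $\wt{M}\equiv\sigma\bmod 2$ retains exactly the three G-patterns whose weight is congruent to $(l,l,l)\bmod 2$, so $d_\lambda^\sigma=3$ (consistent with $\dim V_{(l,l,l-2)}=\dim V_{(l,l-2,l-2)}=6$). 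I would then evaluate the definition of $\pmat{\lambda}{\pm}{ij}$ by inserting the Clebsch--Gordan coefficients of Proposition \ref{prop:clebsh(2,0,0)} (and of Proposition \ref{prop:clebsh(0,0,-2)} on the $\gp_-$ side), which collapse to the small integers displayed, and read off each $R(\Gamma_{\pm ij}^\lambda)$ from the column formulas in Theorem \ref{th:main}, computing $\wt{M}$, $k_{ij}(M)$ and $h_{[ij;m]}(\nu_2,M)$ at the relevant patterns with $\rho=(3,2,1)$. For example at $M=M_{l-2}$ one finds $k_{11}(M)=0$, whence $\nu_1+\rho_1+\wt{M}_1+k_{11}(M)=\nu_1+l+1$, matching the first entry of $R(\Gamma_{+11}^{(l-2,l-2,l-2)})$.

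For part (ii), I would expand $C_{2i}=\Tr(m_i(C_+)m_i(C_-))$ straight from the definition of the chirality matrices. Because $\gp_+$ and $\gp_-$ are each abelian, the $2\times 2$ minors in $m_2(C_\pm)$ and the determinant $m_3(C_\pm)=\det m_1(C_\pm)$ are ordinary commuting determinants, so $C_{2i}$ is a sum of products of $2i$ root vectors in which all $\gp_+$-factors precede all $\gp_-$-factors. The iterated matrix products on the right-hand sides, formed from the matrices of part (i), carry the same $\gp_+$-before-$\gp_-$ ordering, so the two sides can be compared as elements of $U(\g_\mC)$; the invariance $C_{2i}\in U(\g_\mC)^K$ is supplied by Lemma \ref{lem:chirality_def}. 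For $C_2$ the match is immediate: $\pmat{(l,l,l-2)}{+}{33}\cdot\pmat{(l,l,l)}{-}{33}$ expands to $12\,(X_{+11}X_{-11}+2X_{+12}X_{-12}+2X_{+13}X_{-13}+X_{+22}X_{-22}+2X_{+23}X_{-23}+X_{+33}X_{-33})$, which is exactly $12\,\Tr(m_1(C_+)m_1(C_-))$.

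The main obstacle is the verification for $C_4$ and $C_6$. There one must identify the intermediate products $\pmat{(l,l,l-2)}{+}{33}\cdot\pmat{(l,l-2,l-2)}{+}{22}$ and $\pmat{(l,l,l-2)}{-}{22}\cdot\pmat{(l,l,l)}{-}{33}$ (and, for $C_6$, the full six-fold products) with the rows and columns of the minor matrix $m_2(C_\pm)$, resp.\ with the cofactor expansion of $\det m_1(C_\pm)$, where the coupling through the shared sums over the six intermediate G-patterns makes the accounting delicate and fixes the normalizations $\tfrac{1}{192}$ and $\tfrac{1}{20736}$ (which absorb both the explicit prefactors of the matrices and the combinatorial multiplicities from those sums). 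What keeps this tractable is that within each sign block the root vectors commute and no commutator is ever pushed across a $\gp_+\,\gp_-$ boundary, so the required identities reduce to polynomial identities in commuting variables, checkable coefficient by coefficient.
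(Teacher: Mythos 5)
Your proposal is correct and takes essentially the same route as the paper, whose entire proof of this lemma is the single sentence that the assertion follows from Theorem \ref{th:main} (specialized to $(\sigma_1,\sigma_2,\sigma_3)=(0,0,0),(1,1,1)$) by direct computation. Your write-up actually supplies more of the underlying structure than the paper does — the count $d_\lambda^\sigma=1,3,3,1$ along the chain, the evaluation of $k_{ij}(M)$ and the column formulas at the relevant G-patterns, and the observation that the commutativity of $\gp_+$ and of $\gp_-$ reduces the identities for $C_2$, $C_4$, $C_6$ to coefficient comparisons of normal-ordered monomials.
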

\begin{proof}
From Theorem \ref{th:main} in the case of 
$(\sigma_1,\sigma_2,\sigma_3)=(0,0,0),\ (1,1,1),$ 
we obtain the assertion by direct computation.
\end{proof}

From above lemma, we obtain differential equations which 
Whittaker functions satisfy. 
\begin{prop}\label{prop:ex1}
\textit 
Let $\sigma $ be a character of $M_{\min }$ such that 
$(\sigma_1,\sigma_2,\sigma_3)=(0,0,0)$ or $(1,1,1)$, and 
$T$ be an element of the space $\Isp{\xi ,\pi_{(\sigma ,\nu )} }$. 
For $l\equiv \varepsilon_\sigma \mod 2$, we define 
a function $\phi_{T,l}\in C^\infty_\xi (N_{\min}\backslash G)$ by 
the equation $\Phi (T,\evec{(l,l,l)}{M_l})=\phi_{T,l}\otimes f(M_l)^*$. 
Then $\phi_{T,l}$ satisfies following differential equations:
\[
C_{2i}\phi_{T,l}=\chi_{2i,\sigma ,\nu ,(l,l,l)}\phi_{T,l}\quad 
(i=1,2,3).
\]
Here 
\begin{align}
\chi_{2,\sigma ,\nu ,(l,l,l)}
=&\{\nu_1^2-(l-3)^2\}+\{\nu_2^2-(l-2)^2\}+\{\nu_3^2-(l-1)^2\},
\label{eqn:ex1_1}\\
\chi_{4,\sigma ,\nu ,(l,l,l)}
=&\{\nu_1^2-(l-2)^2\}\{\nu_2^2-(l-2)^2\}
+\{\nu_1^2-(l-2)^2\}\{\nu_3^2-(l-1)^2\}\label{eqn:ex1_2} \\
&+\{\nu_2^2-(l-1)^2\}\{\nu_3^2-(l-1)^2\},\nonumber \\
\chi_{6,\sigma ,\nu ,(l,l,l)}
=&\{\nu_1^2-(l-1)^2\}\{\nu_2^2-(l-1)^2\}\{\nu_3^2-(l-1)^2\}.\label{eqn:ex1_3}
\end{align}
\end{prop}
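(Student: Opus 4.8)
The plan is to combine the six contiguous relations of Lemma \ref{lem:ex1_1}(i) with the factorizations of $C_2, C_4, C_6$ in Lemma \ref{lem:ex1_1}(ii), and then simply compose the matrix representations. The key observation is that the operators $C_{2i}$ act as scalars on the irreducible $(\g,K)$-module, so it suffices to track their action on a single $K$-highest-weight line, which we take to be $\langle \evec{(l,l,l)}{M_l}\rangle$. Translating $\Phi(T,\evec{(l,l,l)}{M_l}) = \phi_{T,l}\otimes f(M_l)^*$ through the intertwining operator $T$, I would use that applying $\pmat{\lambda}{\pm}{ij}$ to $\evec{\lambda}{M}$ computes the image under $\Gamma^\lambda_{\pm ij}$, and that the $\gp_\pm$-matrices realize the $C_{2i}$ as composites. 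Hence the scalar $\chi_{2i,\sigma,\nu,(l,l,l)}$ will emerge as a product/sum of the one-dimensional matrix representations $R(\Gamma^\cdot_{\pm\cdot\cdot})$ appearing in the statement of Lemma \ref{lem:ex1_1}(i).

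First I would treat $C_2 = \frac{1}{12}\pmat{(l,l,l-2)}{+}{33}\cdot\pmat{(l,l,l)}{-}{33}$. Applying this to $\evec{(l,l,l)}{M_l}$ and using the last and third relations of Lemma \ref{lem:ex1_1}(i), the action factors as $R(\Gamma^{(l,l,l-2)}_{+33})\cdot R(\Gamma^{(l,l,l)}_{-33})$ up to the normalizing constant. Since both matrix representations are row/column vectors of linear forms in $\nu$ (explicitly $12(\nu_3-l+1,\nu_2-l+2,\nu_1-l+3)^t$ and $(\nu_3+l-1,\nu_2+l-2,\nu_1+l-3)$), the composite is the scalar $\sum_{i}(\nu_i^2-(\text{shift})^2)$, which I expect to reproduce \eqref{eqn:ex1_1} after matching the index shifts. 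The factor $\frac{1}{12}$ cancels the $12$ in $R(\Gamma^{(l,l,l)}_{-33})$. The main bookkeeping is verifying that the shifts $(l-1),(l-2),(l-3)$ align correctly between the $+$ and $-$ sides; this is where the asymmetry of $\rho=(3,2,1)$ enters and must be handled carefully.

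For $C_4$ and $C_6$ the plan is identical but with longer composites. For $C_4 = \frac{1}{192}\,\pmat{(l,l,l-2)}{+}{33}\,\pmat{(l,l-2,l-2)}{+}{22}\,\pmat{(l,l,l-2)}{-}{22}\,\pmat{(l,l,l)}{-}{33}$, applying successively the relations of Lemma \ref{lem:ex1_1}(i) reduces the action on $\evec{(l,l,l)}{M_l}$ to the matrix product $R(\Gamma^{(l,l,l-2)}_{+33})\,R(\Gamma^{(l,l-2,l-2)}_{+22})\,R(\Gamma^{(l,l,l-2)}_{-22})\,R(\Gamma^{(l,l,l)}_{-33})$. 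Here the two middle factors are the $3\times 3$ matrices given explicitly in Lemma \ref{lem:ex1_1}(i), so the computation is a genuine matrix product rather than a scalar; I would carry it out and extract the resulting scalar eigenvalue, which should be the symmetric expression \eqref{eqn:ex1_2}. The $C_6$ case threads through all six relations and produces the fully factored form \eqref{eqn:ex1_3}.

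The last step is to invoke Lemma \ref{lem:chirality_def}, which guarantees $C_{2i}\in U(\g_\mC)^K$, so that each $C_{2i}$ acts as a scalar on the multiplicity-one $K$-type $\tau^*_{(l,l,l)}$ inside the irreducible $\pi_{(\sigma,\nu)}$; since $T$ is $(\g_\mC,K)$-equivariant, the same scalars govern $C_{2i}\phi_{T,l}$ on the Whittaker side. The main obstacle I anticipate is purely computational: correctly aligning the index shifts across the six nested contiguous relations and confirming that the normalizing constants $\frac{1}{12}, \frac{1}{192}, \frac{1}{20736}$ exactly cancel the accumulated factors of $12$ and $2$ coming from the $\gp_\pm$-matrices and the $R(\Gamma)$'s. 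Once the matrix products are evaluated and the scalars read off, the differential equations $C_{2i}\phi_{T,l}=\chi_{2i,\sigma,\nu,(l,l,l)}\phi_{T,l}$ follow immediately from the scalar action together with the factorizations, completing the proof.
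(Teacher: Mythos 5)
Your proposal is correct and follows essentially the same route as the paper: the paper's proof likewise applies the factorizations of $C_2,C_4,C_6$ from Lemma \ref{lem:ex1_1}(ii) to $\evec{(l,l,l)}{M_l}=\efct{M_l}{M_l}$, reads off the scalars as the composites $R(\Gamma_{+33}^{(l,l,l-2)})\cdot R(\Gamma_{-33}^{(l,l,l)})$ (and the longer products for $C_4,C_6$) via the contiguous relations of Lemma \ref{lem:ex1_1}(i), and transfers the resulting eigenvalue equations through the $(\g_\mC,K)$-homomorphism $T$. The only cosmetic difference is that you frame the scalar action via $C_{2i}\in U(\g_\mC)^K$ on the multiplicity-one $K$-type, whereas the paper simply observes that the composed matrix representations are $1\times 1$; both amount to the same computation.
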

\begin{proof}
Let $\chi_{2i,\sigma ,\nu ,(l,l,l)} (i=1,2,3)$ be the complex numbers 
defined by the equations 
(\ref{eqn:ex1_1}), (\ref{eqn:ex1_2}) and (\ref{eqn:ex1_3}). 
From Lemma \ref{lem:ex1_1} and 
$\evec{(l,l,l) }{M_l}=\efct{M_l}{M_l}$, we see that 
\begin{align*}
C_2\efct{M_l}{M_l}=&\frac{1}{12}\efct{M_l}{M_l}
R(\Gamma_{+33}^{(l,l,l-2)})\cdot R(\Gamma_{-33}^{(l,l,l)})
=\chi_{2,\sigma ,\nu ,(l,l,l)}\efct{M_l}{M_l}.
\end{align*}
Similarly, we obtain 
$C_{2i}\efct{M_l}{M_l}=\chi_{2i,\sigma ,\nu ,(l,l,l)}\efct{M_l}{M_l}\ 
(i=2,3)$. 
Since $\phi_{T,l}=T(\efct{M_l}{M_l})$,
we obtain the assertion 
from these equations.
\end{proof}

\subsection{The chilarity operators in the case of 
$(\sigma_1,\sigma_2,\sigma_3)\neq (0,0,0),\ (1,1,1)$}
In this subsection, we consider the cases of 
$(\sigma_1,\sigma_2,\sigma_3)\neq (0,0,0),\ (1,1,1)$. 
Let $\varepsilon_\sigma ,\ \delta_{\sigma ;i}\in \{ 0,1 \} \ (i=1,2,3)$ 
be the integers defined by 
\[
\varepsilon_\sigma =
\left\{\begin{array}{ll}
0 & \text{ if } (\sigma_1,\sigma_2,\sigma_3)=
(1,0,0),\ (0,1,0),\ (0,0,1),\\
1 & \text{ if } (\sigma_1,\sigma_2,\sigma_3)=
(1,1,0),\ (1,0,1),\ (0,1,1),
\end{array}\right.
\text{ and }
\delta_{\sigma ;i} =
\left\{\begin{array}{ll}
0 & \text{ if } \varepsilon_\sigma -\sigma_i=0,\\
1 & \text{ otherwise .} 
\end{array}\right.
\]
In the cases of 
$(\sigma_1,\sigma_2,\sigma_3)\neq (0,0,0),\ (1,1,1)$, 
there are multiplicity one $K$-types 
$\tau_{(l+1,l,l)}$ and $\tau_{(l,l,l-1)}\ 
(l\equiv \varepsilon_\sigma \bmod 2)$
of $\pi_{(\sigma ,\nu )}$. 
Let us compute the values 
$\chi_{2i,\sigma ,\nu ,(l+,l,l)},\ $ 
$\chi_{2i,\sigma ,\nu ,(l,l,l-1)}\ (1\leq i\leq 3)$.

\begin{lem}\label{lem:ex2_1}
\textit 
(i) Let $\sigma $ be a character of $M_{\min}$ such that 
$(\sigma_1,\sigma_2,\sigma_3)\neq (0,0,0),\ (1,1,1)$. 
For $l\equiv \varepsilon_\sigma \bmod 2$, 
the following equations hold:
\begin{align*}
\pmat{(l+1,l-2,-2)}{+}{22}\eblock{(l+1,l-2,-2)}
=&\eblock{(l+1,l,l-2)}\cdot R(\Gamma_{+22}^{(l+1,l-2,-2)} ),\\
\pmat{(l+1,l,l-2)}{+}{33}\eblock{(l+1,l,l-2)}
=&\eblock{(l+1,l,l)}\cdot R(\Gamma_{+33}^{(l+1,l,l-2)} ),\\
\pmat{(l,l,l-3)}{+}{33}\eblock{(l,l,l-3)}
=&\eblock{(l,l,l-1)}\cdot R(\Gamma_{+33}^{(l,l,l-3)} ),\\
\pmat{(l-1,l-2,l-2)}{+}{12}\eblock{(l-1,l-2,l-2)}
=&\eblock{(l,l-1,l-2)}\cdot R(\Gamma_{+12}^{(l-1,l-2,l-2)} ),\\
\pmat{(l,l,l-1)}{+}{13}\eblock{(l,l,l-1)}
=&\eblock{(l+1,l,l)}\cdot R(\Gamma_{+13}^{(l,l,l-1)} ),\\
\pmat{(l-2,l-2,l-3)}{+}{13}\eblock{(l-2,l-2,l-3)}
=&\eblock{(l-1,l-2,l-2)}\cdot R(\Gamma_{+13}^{(l-2,l-2,l-3)} ),\\
\pmat{(l,l-1,l-2)}{+}{23}\eblock{(l,l-1,l-2)}
=&\eblock{(l,l,l-1)}\cdot R(\Gamma_{+23}^{(l,l-1,l-2)} ),\\
\pmat{(l,l-2,l-3)}{+}{23}\eblock{(l,l-2,l-3)}
=&\eblock{(l,l-1,l-2)}\cdot R(\Gamma_{+23}^{(l,l-2,l-3)} ),\\
\pmat{(l+1,l,l-2)}{-}{22}\eblock{(l+1,l,l-2)}
=&\eblock{(l+1,l-2,-2)}\cdot R(\Gamma_{-22}^{(l+1,l,l-2)} ),\\
\pmat{(l+1,l,l)}{-}{33}\eblock{(l+1,l,l)}
=&\eblock{(l+1,l,l-2)}\cdot R(\Gamma_{-33}^{(l+1,l,l)} ),\\
\pmat{(l,l,l-1)}{-}{33}\eblock{(l,l,l-1)}
=&\eblock{(l,l,l-3)}\cdot R(\Gamma_{-33}^{(l,l,l-1)} ),\\
\pmat{(l,l-1,l-2)}{-}{12}\eblock{(l,l-1,l-2)}
=&\eblock{(l-1,l-2,l-2)}\cdot R(\Gamma_{-12}^{(l,l-1,l-2)} ),\\
\pmat{(l+1,l,l)}{-}{13}\eblock{(l+1,l,l)}
=&\eblock{(l,l,l-1)}\cdot R(\Gamma_{-13}^{(l+1,l,l)} ),\\
\pmat{(l-1,l-2,l-2)}{-}{13}\eblock{(l-1,l-2,l-2)}
=&\eblock{(l-2,l-2,l-3)}\cdot R(\Gamma_{-13}^{(l-1,l-2,l-2)} ),\\
\pmat{(l,l,l-1)}{-}{23}\eblock{(l,l,l-1)}
=&\eblock{(l,l-1,l-2)}\cdot R(\Gamma_{-23}^{(l,l,l-1)} ),\\
\pmat{(l,l-1,l-2)}{-}{23}\eblock{(l,l-1,l-2)}
=&\eblock{(l,l-2,l-3)}\cdot R(\Gamma_{-23}^{(l,l-1,l-2)} ).
\end{align*}
The explicit expressions of $\pmat{\lambda }{\pm }{ij}$ and 
$R(\Gamma_{\pm ij}^{\lambda})$ in the above equations are 
given as follows:
\begin{align*}
&\pmat{(l+1,l-2,-2)}{+}{22}=\\
&2{\small \left(\begin{array}{cccccccccc}
X_{+22}\mhs &-2X_{+12}\mhs &0\mhs &X_{+11}\mhs &0\mhs 
&0\mhs &0\mhs &0\mhs &0\mhs &0\\
X_{+23}\mhs &-X_{+13}\mhs &-X_{+12}\mhs &0\mhs &X_{+11}\mhs 
&0\mhs &0\mhs &0\mhs &0\mhs &0\\
X_{+33}\mhs &0\mhs &-2X_{+13}\mhs &0\mhs &0\mhs 
&X_{+11}\mhs &0\mhs &0\mhs &0\mhs &0\\
0\mhs &X_{+22}\mhs &0\mhs &-2X_{+12}\mhs &0\mhs 
&0\mhs &X_{+11}\mhs &0\mhs &0\mhs &0\\
0\mhs &X_{+23}\mhs &-X_{+22}\mhs &-X_{+13}\mhs &X_{+12}\mhs 
&0\mhs &0\mhs &0\mhs &0\mhs &0\\
0\mhs &0\mhs &X_{+22}\mhs &0\mhs &-2X_{+12}\mhs 
&0\mhs &0\mhs &X_{+11}\mhs &0\mhs &0\\
0\mhs &X_{+33}\mhs &-X_{+23}\mhs &0\mhs &-X_{+13}\mhs 
&X_{+12}\mhs &0\mhs &0\mhs &0\mhs &0\\
0\mhs &0\mhs &X_{+23}\mhs &0\mhs &-X_{+13}\mhs 
&-X_{+12}\mhs &0\mhs &0\mhs &X_{+11}\mhs &0\\
0\mhs &0\mhs &X_{+33}\mhs &0\mhs &0\mhs 
&-2X_{+13}\mhs &0\mhs &0\mhs &0\mhs &X_{+11}\\
0\mhs &0\mhs &0\mhs &X_{+23}\mhs &-X_{+22}\mhs 
&0\mhs &-X_{+13}\mhs &X_{+12}\mhs &0\mhs &0\\
0\mhs &0\mhs &0\mhs &X_{+33}\mhs &-2X_{+23}\mhs 
&X_{+22}\mhs &0\mhs &0\mhs &0\mhs &0\\
0\mhs &0\mhs &0\mhs &0\mhs &X_{+23}\mhs &-X_{+22}\mhs 
&0\mhs &-X_{+13}\mhs &X_{+12}\mhs &0\\
0\mhs &0\mhs &0\mhs &0\mhs &X_{+33}\mhs &-X_{+23}\mhs 
&0\mhs &0\mhs &-X_{+13}\mhs &X_{+12}\\
0\mhs &0\mhs &0\mhs &0\mhs &0\mhs &0\mhs 
&X_{+33}\mhs &-2X_{+23}\mhs &X_{+22}&0\\
0\mhs &0\mhs &0\mhs &0\mhs &0\mhs &0\mhs 
&0\mhs &X_{+33}\mhs &-2X_{+23}\mhs &X_{+22}\\
\end{array}\right),}\\
&\pmat{(l+1,l,l-2)}{+}{33}=
{\small \begin{array}{c}
\\ \\ \\ \\ \\ \\ \\ \\ \\ \\ \\ \\ \\ \\
\end{array}^t
\left(\begin{array}{ccc}
X_{+33}&\mhs 0&\mhs 0\\
-2X_{+23}&\mhs 0&\mhs 0\\
X_{+22}&\mhs 0&\mhs 0\\
0&\mhs X_{+33}&\mhs 0\\
2X_{+13}&\mhs -2X_{+23}&\mhs 0\\
0&\mhs -2X_{+23}&\mhs X_{+33}\\
-2X_{+12}&\mhs X_{+22}&\mhs 0\\
0&\mhs X_{+22}&\mhs -2X_{+23}\\
0&\mhs 0&\mhs X_{+22}\\
0&\mhs 2X_{+13}&\mhs 0\\
X_{+11}&\mhs -2X_{+12}&\mhs 0\\
0&\mhs -2X_{+12}&\mhs 2X_{+13}\\
0&\mhs 0&\mhs -2X_{+12}\\
0&\mhs X_{+11}&\mhs 0\\
0&\mhs 0&\mhs X_{+11}
\end{array}\right) },\\
&\pmat{(l,l,l-3)}{+}{33}=
{\small \left(\begin{array}{cccccccccc}
X_{+33}&\mhs -2X_{+23}&\mhs X_{+22}&\mhs 0&\mhs 2X_{+13}
&\mhs -2X_{+12}&\mhs 0&\mhs X_{+11}&\mhs 0&\mhs 0\\
0&\mhs X_{+33}&\mhs -2X_{+23}&\mhs X_{+22}&\mhs 0
&\mhs 2X_{+13}&\mhs -2X_{+12}&\mhs 0&\mhs X_{+11}&\mhs 0\\
0&\mhs 0&\mhs 0&\mhs 0&\mhs X_{+33}
&\mhs -2X_{+23}&\mhs X_{+22}&\mhs 2X_{+13}&\mhs -2X_{+12}&\mhs X_{+11}\\
\end{array}\right)},\\
&\pmat{(l-1,l-2,l-2)}{+}{12}=3
{\small \left(\begin{array}{ccc}
X_{+12}& -X_{+11}& 0\\
X_{+13}& 0& -X_{+11}\\
X_{+22}& -X_{+12}& 0\\
0& X_{+13}& -X_{+12}\\
X_{+23}& -X_{+13}& 0\\
X_{+33}& 0& -X_{+13}\\
0& X_{+23}& -X_{+22}\\
0& X_{+33}& -X_{+23}
\end{array}\right) },\\
&\pmat{(l,l,l-1)}{+}{13}=\pmat{(l-2,l-2,l-3)}{+}{13}
=2
{\small \left(\begin{array}{ccc}
-X_{+13} & X_{+12} & -X_{+11} \\
-X_{+23} & X_{+22} & -X_{+12} \\
-X_{+33} & X_{+23} & -X_{+13} 
\end{array} \right),}\\
&\pmat{(l,l-1,l-2)}{+}{23}=
{\small \left(\begin{array}{cccccccc}
-X_{+23}& X_{+22}& X_{+13}& -2X_{+12}& -X_{+12}& 0& X_{+11}& 0\\
-X_{+33}& X_{+23}& 0& -X_{+13}& X_{+13}& -X_{+12}& 0& X_{+11}\\
0& 0& -X_{+33}& X_{+23}& 2X_{+23}& -X_{+22}& -X_{+13}& X_{+12}
\end{array} \right),}\\
&\pmat{(l,l-2,l-3)}{+}{23}=
{\small \hspace{-5mm}
{\begin{array}{c}
\\ \\ \\ \\ \\ 
\\ \\ \\ \\ \\
\\ \\ \\ \\
\end{array}}^t
\left(\begin{array}{cccccccc}
-X_{+23}& -X_{+33}& 0& 0& 0& 0& 0& 0\\
X_{+22}& X_{+23}& 0& 0& 0& 0& 0& 0\\
X_{+13}& 0& -X_{+23}& -X_{+33}& 0& 0& 0& 0\\
-2X_{+12}& -X_{+13}& X_{+22}& X_{+23}& 0& 0& 0& 0\\
-X_{+12}& X_{+13}& X_{+22}& 2X_{+23}& -X_{+23}& -X_{+33}& 0& 0\\
0& -X_{+12}& 0& -X_{+22}& X_{+22}& X_{+23}& 0& 0\\
0& 0& X_{+13}& 0& 0& 0& -X_{+33}& 0\\
X_{+11}& 0& -2X_{+12}& -X_{+13}& 0& 0& X_{+23}& 0\\
0& 0& -X_{+12}& 0& X_{+13}& 0& 2X_{+23}& -X_{+33}\\
0& X_{+11}& 0& X_{+12}& -2X_{+12}& -X_{+13}& -X_{+22}& X_{+23}\\
0& 0& 0& 0& -X_{+12}& X_{+13}& -X_{+22}& 2X_{+23}\\
0& 0& 0& 0& 0& -X_{+12}& 0& -X_{+22}\\
0& 0& X_{+11}& 0& 0& 0& -X_{+13}& 0\\
0& 0& 0& 0& X_{+11}& 0& X_{+12}& -X_{+13}\\
0& 0& 0& 0& 0& X_{+11}& 0& X_{+12}
\end{array}\right),}
\end{align*}
\pagebreak
\begin{align*}
&\pmat{(l+1,l,l-2)}{-}{22}=\\
&2\hspace{-3mm}{\small 
{\begin{array}{r}
\\ \\ \\ \\ \\ 
\\ \\ \\ \\ \\
\\ \\ \\ \\ 
\end{array}}^t
\left( \begin{array}{cccccccccc}
3X_{-22}&\mhs -2X_{-12}&\mhs 0&\mhs X_{-11}&\mhs 0
&\mhs 0&\mhs 0&\mhs 0&\mhs 0&\mhs 0\\
6X_{-23}&\mhs -2X_{-13}&\mhs -2X_{-12}&\mhs 0&\mhs X_{-11}
&\mhs 0&\mhs 0&\mhs 0&\mhs 0&\mhs 0\\
3X_{-33}&\mhs 0&\mhs -2X_{-13}&\mhs 0&\mhs 0
&\mhs X_{-11}&\mhs 0&\mhs 0&\mhs 0&\mhs 0\\
0&\mhs X_{-22}&\mhs 0&\mhs -2X_{-12}&\mhs 0
&\mhs 0&\mhs 3X_{-11}&\mhs 0&\mhs 0&\mhs 0\\
0&\mhs 4X_{-23}&\mhs -2X_{-22}&\mhs -4X_{-13}&\mhs 0
&\mhs 0&\mhs 0&\mhs 2X_{-11}&\mhs 0&\mhs 0\\
0&\mhs 2X_{-23}&\mhs X_{-22}&\mhs -2X_{-13}&\mhs -2X_{-12}
&\mhs 0&\mhs 0&\mhs 3X_{-11}&\mhs 0&\mhs 0\\
0&\mhs 3X_{-33}&\mhs -2X_{-23}&\mhs 0&\mhs -2X_{-13}
&\mhs 2X_{-12}&\mhs 0&\mhs 0&\mhs X_{-11}&\mhs 0\\
0&\mhs X_{-33}&\mhs 2X_{-23}&\mhs 0&\mhs -2X_{-13}
&\mhs -2X_{-12}&\mhs 0&\mhs 0&\mhs 3X_{-11}&\mhs 0\\
0&\mhs 0&\mhs X_{-33}&\mhs 0&\mhs 0
&\mhs -2X_{-13}&\mhs 0&\mhs 0&\mhs 0&\mhs 3X_{-11}\\
0&\mhs 0&\mhs 0&\mhs 2X_{-23}&\mhs -1X_{-22}
&\mhs 0&\mhs -6X_{-13}&\mhs 2X_{-12}&\mhs 0&\mhs 0\\
0&\mhs 0&\mhs 0&\mhs 3X_{-33}&\mhs -2X_{-23}
&\mhs X_{-22}&\mhs 0&\mhs -2X_{-13}&\mhs 2X_{-12}&\mhs 0\\
0&\mhs 0&\mhs 0&\mhs 2X_{-33}&\mhs 0
&\mhs -2X_{-22}&\mhs 0&\mhs -4X_{-13}&\mhs 4X_{-12}&\mhs 0\\
0&\mhs 0&\mhs 0&\mhs 0&\mhs X_{-33}
&\mhs -2X_{-23}&\mhs 0&\mhs 0&\mhs -2X_{-13}&\mhs 6X_{-12}\\
0&\mhs 0&\mhs 0&\mhs 0&\mhs 0
&\mhs 0&\mhs 3X_{-33}&\mhs -2X_{-23}&\mhs X_{-22}&\mhs 0\\
0&\mhs 0&\mhs 0&\mhs 0&\mhs 0
&\mhs 0&\mhs 0&\mhs X_{-33}&\mhs -2X_{-23}&\mhs 3X_{-22}
\end{array} \right) ,} \\
&\pmat{(l+1,l,l)}{-}{33}=6
\left( \begin{array}{ccc}
4X_{-33} & 0 & 0 \\
-4X_{-23} & 0 & 0 \\
4X_{-22} & 0 & 0 \\
0 & 4X_{-33} & 0 \\
3X_{-13} & -X_{-23} & -X_{-33} \\
-2X_{-13} & -2X_{-23} & 2X_{-33} \\
-3X_{-12} & X_{-22} & X_{-23} \\
X_{-12} & X_{-22} & -3X_{-23} \\
0 & 0 & 4X_{-22} \\
0 & 4X_{-13} & 0 \\
2X_{-11} & -2X_{-12} & -2X_{-13} \\
-X_{-11} & -X_{-12} & 3X_{-13} \\
0 & 0 & -4X_{-12} \\
0 & 4X_{-11} & 0 \\
0 & 0 & 4X_{-11} \\
\end{array}\right),\\
&\pmat{(l,l,l-1)}{-}{33}=24
\left(\begin{array}{ccc}
3X_{-33}& 0& 0\\
-2X_{-23}& X_{-33}& 0\\
X_{-22}& -2X_{-23}& 0\\
0& 3X_{-22}& 0\\
2X_{-13}& 0& X_{-33}\\
-X_{-12}& X_{-13}& -X_{-23}\\
0& -2X_{-12}& X_{-22}\\
X_{-11}& 0& 2X_{-13}\\
0& X_{-11}& -2X_{-12}\\
0& 0& 3X_{-11}
\end{array}\right),\\
&\pmat{(l,l-1,l-2)}{-}{12}=
\left(\begin{array}{cccccccc}
-X_{-12}& -X_{-13}& -X_{-22}& -X_{-23}& -2X_{-23}& -X_{-33}& 0& 0\\
X_{-11}& 0& X_{-12}& -X_{-13}& X_{-13}& 0& -X_{-23}& -X_{-33}\\
0& X_{-11}& 0& 2X_{-12}& X_{-12}& X_{-13}& X_{-22}& X_{-23}\\
\end{array}\right),\\
&\pmat{(l+1,l,l)}{-}{13}=\pmat{(l-1,l-2,l-2)}{-}{13}
=2
\left(\begin{array}{ccc}
-X_{-13}\hspace{-1mm} & -X_{-23}\hspace{-1mm} & -X_{-33} \\
 X_{-12}\hspace{-1mm} &  X_{-22}\hspace{-1mm} &  X_{-23} \\
-X_{-11}\hspace{-1mm} & -X_{-12}\hspace{-1mm} & -X_{-13} 
\end{array} \right),\\
&\pmat{(l,l,l-1)}{-}{23}=
3\left(\begin{array}{ccc}
X_{-23}& X_{-33}& 0\\
-X_{-22}& -X_{-23}& 0\\
-X_{-13}& 0& X_{-33}\\
X_{-12}& X_{-13}& 0\\
0& -X_{-13}& -X_{-23}\\
0& X_{-12}& X_{-22}\\
-X_{-11}& 0& X_{-13}\\
0& -X_{-11}& -X_{-12}
\end{array}\right),\\
&\pmat{(l,l-1,l-2)}{-}{23}=\\
&\left(\begin{array}{cccccccc}
8X_{-23}& 8X_{-33}& 0& 0& 0& 0& 0& 0\\
-8X_{-22}& -8X_{-23}& 0& 0& 0& 0& 0& 0\\
-4X_{-13}& 0& 4X_{-23}& 8X_{-33}& 4X_{-33}& 0& 0& 0\\
6X_{-12}& 6X_{-13}& -2X_{-22}& -2X_{-23}& -4X_{-23}& -2X_{-33}& 0& 0\\
-X_{-12}& -5X_{-13}& -X_{-22}& -5X_{-23}& 2X_{-23}& 3X_{-33}& 0& 0\\
0& 4X_{-12}& 0& 4X_{-22}& -4X_{-22}& -4X_{-23}& 0& 0\\
0& 0& -8X_{-13}& 0& 0& 0& 8X_{-33}& 0\\
-3X_{-11}& 0& 5X_{-12}& 7X_{-13}& 5X_{-13}& 0& -X_{-23}& -X_{-33}\\
X_{-11}& 0& X_{-12}& -5X_{-13}& -7X_{-13}& 0& -5X_{-23}& 3X_{-33}\\
0& -3X_{-11}& 0& -2X_{-12}& 5X_{-12}& 5X_{-13}& X_{-22}& X_{-23}\\
0& 2X_{-11}& 0& 4X_{-12}& 2X_{-12}& -6X_{-13}& 2X_{-22}& -6X_{-23}\\
0& 0& 0& 0& 0& 8X_{-12}& 0& 8X_{-22}\\
0& 0& -8X_{-11}& 0& 0& 0& 8X_{-13}& 0\\
0& 0& 0& -4X_{-11}& -8X_{-11}& 0& -4X_{-12}& 4X_{-13}\\
0& 0& 0& 0& 0& -8X_{-11}& 0& -8X_{-12}
\end{array}\right).
\end{align*}

\noindent $\bullet $ the case of 
$(\sigma_1, \sigma_2, \sigma_3)=(1,0,0),\ (0,1,1)$.
\begin{align*}
&R(\Gamma_{+22}^{(l+1,l-2,-2)} )=
2\left(\begin{array}{ccc}
\nu_2+l&  \nu_1+l-2&  0\\
\nu_3+l-1&  0&  \nu_1+l-2\\
0&  \nu_3+l-1&  \nu_2+l-2\\
0&  0&  -(\nu_2+l-1)\\
\end{array}\right),\\
&R(\Gamma_{+33}^{(l+1,l,l-2)} )=
\left(\begin{array}{cccc}
\nu_3+l-1& \nu_2+l-2& \nu_1+l-4& 0\\
\end{array}\right),\\
&R(\Gamma_{+33}^{(l,l,l-3)} )=
\left(\begin{array}{ccc}
\nu_3+l-1& \nu_2+l-2& \nu_1+l-4
\end{array}\right),\\
&R(\Gamma_{+12}^{(l-1,l-2,l-2)} )=
3\left(\begin{array}{c}
\nu_2+l\\
\nu_3+l-1\\
\end{array}\right),\quad 
R(\Gamma_{+13}^{(l,l,l-1)} )=-2(\nu_1+l),\\
&R(\Gamma_{+13}^{(l-2,l-2,l-3)} )=-2(\nu_1+l-2),\quad 
R(\Gamma_{+23}^{(l,l-1,l-2)} )=
-\left(\begin{array}{ccc}
\nu_3+l-1& \nu_2+l-2
\end{array}\right),\\
&R(\Gamma_{+23}^{(l,l-2,l-3)} )=
\left(\begin{array}{cccc}
\nu_2+l-1& \nu_2+l-2& \nu_1+l-3& 0\\
0& -(\nu_3+l-1)& 0& \nu_1+l-3
\end{array}\right),\\
&R(\Gamma_{-22}^{(l+1,l,l-2)} )=
2\left(\begin{array}{cccc}
3(\nu_2-l)& 3(\nu_3-l+1)& 0& 0\\
\nu_1-l+2& 0& 3(\nu_3-l+1)& 2(\nu_3-l+1)\\
0& \nu_1-l+2& \nu_2-l+4& -2(\nu_2-l)
\end{array}\right),\\
&R(\Gamma_{-33}^{(l+1,l,l)} )=
6\left(\begin{array}{c}
4(\nu_3-l+1)\\
4(\nu_2-l+2)\\
2(\nu_1-l+4)\\
-(\nu_1-l+4)
\end{array}\right),\quad 
R(\Gamma_{-33}^{(l,l,l-1)} )=
24\left(\begin{array}{c}
\nu_3-l+1\\
\nu_2-l+2\\
3(\nu_r-l+4)
\end{array}\right),\\
&R(\Gamma_{-12}^{(l,l-1,l-2)} )=
-\left(\begin{array}{ccc}
\nu_2-l& \nu_3-l+1
\end{array}\right),\quad 
R(\Gamma_{-13}^{(l+1,l,l)} )=-2(\nu_1-l),\\
&R(\Gamma_{-13}^{(l-1,l-2,l-2)} )=-2(\nu_1-l+2),\quad 
R(\Gamma_{-23}^{(l,l,l-1)} )=
3\left(\begin{array}{cc}
\nu_3-l+1\\
\nu_2-l+2
\end{array}\right),\\
&R(\Gamma_{-23}^{(l,l-1,l-2)} )=
\left(\begin{array}{cc}
-2(\nu_2-l)& -2(\nu_3-l+1)\\
-(\nu_2-l+4)& 3(\nu_3-l+1)\\
-8(\nu_1-l+3)& 0\\
0& -8(\nu_1-l+3)
\end{array}\right).
\end{align*}
\noindent $\bullet $ the case of 
$(\sigma_1, \sigma_2, \sigma_3)=(0,1,0),\ (1,0,1)$.
\begin{align*}
&R(\Gamma_{+22}^{(l+1,l-2,-2)} )=
2\left(\begin{array}{ccc}
\nu_2+l+1& \nu_1+l-1& 0\\
\nu_3+l-1& 0& 0\\
0& 0& \nu_1+l-1\\
0& \nu_3+l-1& \nu_2+l-3
\end{array}\right),\\
&R(\Gamma_{+33}^{(l+1,l,l-2)} )=
\left(\begin{array}{cccc}
\nu_3+l-1& \nu_2+l-1& \nu_2+l-3& \nu_1+l-3
\end{array}\right),\\
&R(\Gamma_{+33}^{(l,l,l-3)} )=
\left(\begin{array}{ccc}
\nu_3+l-1& \nu_2+l-3& \nu_1+l-5
\end{array}\right),\\
&R(\Gamma_{+12}^{(l-1,l-2,l-2)} )=
3\left(\begin{array}{c}
-(\nu_1+l)\\
\nu_3+l-1
\end{array}\right),\quad 
R(\Gamma_{+13}^{(l,l,l-1)} )=2(\nu_2+l),\\
&R(\Gamma_{+13}^{(l-2,l-2,l-3)} )=2(\nu_2+l-2),\quad 
R(\Gamma_{+23}^{(l,l-1,l-2)} )=
\left(\begin{array}{cc}
-(\nu_3+l-1)& \nu_1+l-2
\end{array}\right),\\
&R(\Gamma_{+23}^{(l,l-2,l-3)} )=
\left(\begin{array}{cccc}
\nu_2+l-2& \nu_1+l-4& 0& 0\\
0& 0& -(\nu_3+l-1)& -(\nu_2+l-3)
\end{array}\right),\\
&R(\Gamma_{-22}^{(l+1,l,l-2)} )=
2\left(\begin{array}{cccc}
\nu_2-l-1& 3(\nu_3-l+1)& \nu_3-l+1& 0\\
3(\nu_1-l+1)& 0& 0& 3(\nu_3-l+1)\\
0& \nu_1-l+1& 3(\nu_1-l+1)& (\nu_2-l+3)
\end{array}\right),\\
&R(\Gamma_{-33}^{(l+1,l,l)} )=
6\left(\begin{array}{c}
4(\nu_3-l+1)\\
(\nu_2-l)\\
(\nu_2-l+4)\\
4(\nu_1-l+3)
\end{array}\right),\quad 
R(\Gamma_{-33}^{(l,l,l-1)} )=
24\left(\begin{array}{c}
\nu_3-l+1\\
3(\nu_2-l+3)\\
\nu_1-l+5
\end{array}\right),\\
&R(\Gamma_{-12}^{(l,l-1,l-2)} )=
\left(\begin{array}{cc}
\nu_1-l& -(\nu_3-l+1)
\end{array}\right),\quad 
R(\Gamma_{-13}^{(l+1,l,l)} )=2(\nu_2-l),\\
&R(\Gamma_{-13}^{(l-1,l-2,l-2)} )=2(\nu_2-l+2),\quad 
R(\Gamma_{-23}^{(l,l,l-1)} )=
3\left(\begin{array}{c}
\nu_3-l+1\\
-(\nu_1-l+2)
\end{array}\right),\\
&R(\Gamma_{-23}^{(l,l-1,l-2)} )=
\left(\begin{array}{cc}
-8(\nu_2-l+2)& 0\\
-3(\nu_1-l+4)& -(\nu_3-l+1)\\
\nu_1-l+4& 3(\nu_3-l+1)\\
0& 8(\nu_2-l+3)
\end{array}\right).
\end{align*}
\noindent $\bullet $ the case of 
$(\sigma_1, \sigma_2, \sigma_3)=(0,0,1),\ (1,1,0)$.
\begin{align*}
&R(\Gamma_{+22}^{(l+1,l-2,-2)} )=
2\left(\begin{array}{ccc}
-(\nu_2+l-1)& 0& 0\\
\nu_2+l& \nu_1+l-1& 0\\
\nu_3+l& 0& \nu_1+l-1\\
0& \nu_3+l& \nu_2+l-2
\end{array}\right),\\
&R(\Gamma_{+33}^{(l+1,l,l-2)} )=
\left(\begin{array}{cccc}
0& \nu_3+l& \nu_2+l-2& \nu_1+l-3
\end{array}\right),\\
&R(\Gamma_{+33}^{(l,l,l-3)} )=
\left(\begin{array}{ccc}
\nu_3+l-2& \nu_2+l-4& \nu_1+l-5
\end{array}\right),\\
&R(\Gamma_{+12}^{(l-1,l-2,l-2)} )=
-3\left(\begin{array}{c}
\nu_1+l\\
\nu_2+l-1
\end{array}\right),\quad 
R(\Gamma_{+13}^{(l,l,l-1)} )=-2(\nu_3+l),\\
&R(\Gamma_{+13}^{(l-2,l-2,l-3)} )=-2(\nu_3+l-2),\quad
R(\Gamma_{+23}^{(l,l-1,l-2)} )=
\left(\begin{array}{cc}
\nu_2+l-1& \nu_1+l-2
\end{array}\right),\\
&R(\Gamma_{+23}^{(l,l-2,l-3)} )=
\left(\begin{array}{cccc}
-(\nu_3+l-2)& 0& \nu_1+l-4& 0\\
0& -(\nu_3+l-2)& -(\nu_2+l-3)& -(\nu_2+l-4)
\end{array}\right),\\
&R(\Gamma_{-22}^{(l+1,l,l-2)} )=
2\left(\begin{array}{cccc}
-2(\nu_2-l+2)& \nu_2-l-2& \nu_3-l& 0\\
2(\nu_1-l+1)& 3(\nu_1-l+1)& 0& \nu_3-l\\
0& 0& 3(\nu_1-l+1)& 3(\nu_2-l+2)
\end{array}\right),\\
&R(\Gamma_{-33}^{(l+1,l,l)} )=
6\left(\begin{array}{c}
-(\nu_3-l)\\
2(\nu_3-l)\\
4(\nu_2-l+2)\\
4(\nu_1-l+3)
\end{array}\right),\quad 
R(\Gamma_{-33}^{(l,l,l-1)} )=
24\left(\begin{array}{c}
3(\nu_3-l+2)\\
\nu_2-l+4\\
\nu_1-l+5
\end{array}\right),\\
&R(\Gamma_{-12}^{(l,l-1,l-2)} )=
\left(\begin{array}{cc}
\nu_1-l& \nu_2-l+1
\end{array}\right),\quad 
R(\Gamma_{-13}^{(l+1,l,l)} )=-2(\nu_3-l),\\
&R(\Gamma_{-13}^{(l-1,l-2,l-2)} )=-2(\nu_3-l+2),\quad
R(\Gamma_{-23}^{(l,l,l-1)} )=
-3\left(\begin{array}{c}
\nu_2-l+1\\
\nu_1-l+2
\end{array}\right),\\
&R(\Gamma_{-23}^{(l,l-1,l-2)} )=
\left(\begin{array}{cc}
8(\nu_3-l+2)& 0\\
0& 8(\nu_3-l+2)\\
-3(\nu_1-l+4)& \nu_2-l+1\\
2(\nu_1-l+4)& 2(\nu_2-l+5)
\end{array}\right).
\end{align*}
(ii) The elements $C_{2i}\ (i=1,2,3)$ are 
represented by the $\gp_{\pm }$-matrices 
$\pmat{\lambda }{\pm }{kl}$ as follows: 
\begin{align*}
\left(\begin{array}{ccc}
C_2&0&0\\
0&C_2&0\\
0&0&C_2
\end{array}\right)=&
\frac{1}{24}\Big\{\pmat{(l+1,l,l-2)}{+}{33}\cdot \pmat{(l+1,l,l)}{-}{33}
+3\hs \pmat{(l,l,l-1)}{+}{13}\cdot \pmat{(l+1,l,l)}{-}{13}\Big\} \\
=&\frac{1}{72}\Big\{
\pmat{(l,l,l-3)}{+}{33}\cdot \pmat{(l,l,l-1)}{-}{33}
-16\hs \pmat{(l,l-1,l-2)}{+}{23}\cdot \pmat{(l,l,l-1)}{-}{23}\Big\},\\
\left(\begin{array}{ccc}
C_4&0&0\\
0&C_4&0\\
0&0&C_4
\end{array}\right)=
&\frac{1}{1152}\Big\{
\pmat{(l+1,l,l-2)}{+}{33}\cdot \pmat{(l+1,l-2,-2)}{+}{22}\cdot 
\pmat{(l+1,l,l-2)}{-}{22}\cdot \pmat{(l+1,l,l)}{-}{33}\\
&-64\hs \pmat{(l,l,l-1)}{+}{13}\cdot \pmat{(l,l-1,l-2)}{+}{23}\cdot 
\pmat{(l,l,l-1)}{-}{23}\cdot \pmat{(l+1,l,l)}{-}{13}\Big\}\\
=&\frac{1}{72}\Big\{
\pmat{(l,l-1,l-2)}{+}{23}\cdot \pmat{(l-1,l-2,l-2)}{+}{12}\cdot 
\pmat{(l,l-1,l-2)}{-}{12}\cdot \pmat{(l,l,l-1)}{-}{23}\\
&+3\hs \pmat{(l,l-1,l-2)}{+}{23}\cdot 
\pmat{(l,l-2,l-3)}{+}{23}\cdot 
\pmat{(l,l-1,l-2)}{-}{23}\cdot \pmat{(l,l,l-1)}{-}{23}\Big\},\\
\left(\begin{array}{ccc}
C_6&0&0\\
0&C_6&0\\
0&0&C_6
\end{array}\right)=&\frac{1}{144}\Big\{
\pmat{(l,l,l-1)}{+}{13}\cdot \pmat{(l,l-1,l-2)}{+}{23}\cdot 
\pmat{(l-1,l-2,l-2)}{+}{12}\cdot \pmat{(l,l-1,l-2)}{-}{12}\cdot 
\pmat{(l,l,l-1)}{-}{23}\cdot \pmat{(l+1,l,l)}{-}{13}\Big\}\\
=&\frac{1}{144}\Big\{
\pmat{(l,l-1,l-2)}{+}{23}\cdot \pmat{(l-1,l-2,l-2)}{+}{12}
\cdot \pmat{(l-2,l-2,l-3)}{+}{13}\\
&\times \pmat{(l-1,l-2,l-2)}{-}{13}
\cdot \pmat{(l,l-1,l-2)}{-}{12}\cdot \pmat{(l,l,l-1)}{-}{23}\Big\}.
\end{align*}
\end{lem}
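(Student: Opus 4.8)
The plan is to derive both assertions by specializing Theorem \ref{th:main} to the peripheral $K$-types listed, in complete parallel with the already-proved Lemma \ref{lem:ex1_1}, and then to settle part (ii) by an explicit multiplication of the resulting $\gp_{\pm}$-matrices. Throughout I would keep $\sigma$ fixed with $(\sigma_1,\sigma_2,\sigma_3)\neq(0,0,0),(1,1,1)$ and treat $l\equiv\varepsilon_\sigma\bmod 2$ as a free integer parameter, so that all the weights involved have coordinate differences bounded independently of $l$.

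For part (i), the first step is, for each of the sixteen contiguous relations, to enumerate the set $G(\lambda)$ of G-patterns of the relevant source weight $\lambda$ and then to cut it down to $G_\sigma(\lambda)=\{M\in G(\lambda)\mid \wt{M}\equiv(\sigma_1,\sigma_2,\sigma_3)\bmod 2\}$, recording the orderings $l(\cdot)$ and $l^\sigma(\cdot)$ that fix the columns of $\eblock{\lambda}$. Since the weights occurring — for instance $(l+1,l,l-2)$, $(l,l,l-3)$ and $(l,l-1,l-2)$ — are all thin (their $\mathfrak{gl}(3)$-dimensions are the bounded numbers $3,8,10,15,\dots$), the sets $G(\lambda)$ are small and completely explicit, so this is finite bookkeeping. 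This is also the step at which the three sub-cases of $\sigma$ separate: the parity constraint selects different G-patterns according to which coordinates of $\sigma$ are odd, which is exactly why the displayed $R(\Gamma_{\pm ij}^{\lambda})$ have different shapes and entries in the three blocks of the statement. With $G_\sigma(\lambda)$ in hand I would read each matrix $\pmat{\lambda}{+}{ij}$ (resp.\ $\pmat{\lambda}{-}{ij}$) directly off its definition in Section \ref{subsec:p-matrix}, inserting the Clebsch--Gordan coefficients $c^{\lambda}_{[ij;lk;m]}$ from Proposition \ref{prop:clebsh(2,0,0)} (resp.\ their dual versions from Proposition \ref{prop:clebsh(0,0,-2)}); Theorem \ref{th:main} then yields $R(\Gamma_{\pm ij}^{\lambda})$ as the linear-in-$\nu$ expressions displayed, with the coefficients $k_{ij}(M)$ and the functions $h_{[ij;m]}(\nu_2,M)$ entering but, for these thin weights, the sums over $m$ collapsing to one or two terms.

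For part (ii), I would substitute the matrices computed in (i) into each composite on the right-hand side and multiply them out. By Lemma \ref{lem:chirality_def} the operator $C_{2i}=\Tr(m_i(C_+)m_i(C_-))$ is a $K$-invariant degree-$2i$ element of $U(\g_\mC)$, while each composite $\pmat{\cdots}{+}{\cdots}\cdots\pmat{\cdots}{-}{\cdots}$ is, by the action of $M(\,\cdot\,,\mC)\otimes\gp_\pm$ defined in Section \ref{subsec:p-matrix}, a $3\times 3$ matrix (since $d_{(l+1,l,l)}=d_{(l,l,l-1)}=3$) whose entries are degree-$2i$ monomials in the same root vectors $X_{\pm pq}$. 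The point is that this composite does not depend on the principal series at all, so the claim is a clean identity in $M(3,\mC)\otimes U(\g_\mC)$, namely that the composite equals $C_{2i}$ times the identity. For the shortest shifts this is almost transparent: the matrices $\pmat{\lambda}{+}{33}$, $\pmat{\lambda}{+}{13}$ and their $\gp_-$ analogues simply list the entries of $m_1(C_\pm)$, so $C_2=\Tr(m_1(C_+)m_1(C_-))$ is reconstructed directly, whereas $C_4$ and $C_6$ require the minor matrices $m_2(C_\pm)$ and $m_3(C_\pm)$ and hence the longer composites. I would verify the identities by comparing coefficients, with the equality of the two displayed factorizations of each $C_{2i}$ — one threading the $\tau_{(l+1,l,l)}$ tower, one the $\tau_{(l,l,l-1)}$ tower — serving as a built-in consistency check.

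I expect the genuine obstacle to lie in part (ii) rather than in (i). In contrast to Lemma \ref{lem:ex1_1}, where each invariant factored through a single chain of shift operators, here $C_4$ and $C_6$ must be reassembled from \emph{sums} of composites carrying the specific numerical weights ($\tfrac1{24}$, $+3$, $-16$, $-64$, $\tfrac1{1152}$, and so on), and the delicate part is to show that the off-diagonal entries of each composite cancel and that the common diagonal scalar is exactly $C_{2i}$, simultaneously along both towers. Organizing these cancellations is the core computational difficulty, and I would manage it by expanding everything against the explicit tables of $\Ad$ of $\gk_\mC$ on $\gp_\pm$ in Lemma \ref{lem:K-action}, together with the Clebsch--Gordan relations of Lemma \ref{lem:rel_clebsh}, which govern precisely how consecutive raising and lowering matrices compose.
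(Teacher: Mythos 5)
Your proposal matches the paper's own proof, which consists of exactly the same two steps: specializing Theorem \ref{th:main} (with the Clebsch--Gordan data of Propositions \ref{prop:clebsh(2,0,0)} and \ref{prop:clebsh(0,0,-2)}) to these peripheral $K$-types for part (i), and verifying part (ii) by direct multiplication of the resulting $\gp_\pm$-matrices. The paper states this in one sentence and leaves all the bookkeeping implicit, so your more detailed outline of where the work lies is consistent with, and essentially identical to, the intended argument.
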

\begin{proof}
From Theorem \ref{th:main} in the case of 
$(\sigma_1,\sigma_2,\sigma_3)\neq (0,0,0),\ (1,1,1)$, 
we obtain the assertion by direct computation. 
\end{proof}

From above lemma, we obtain differential equations which 
Whittaker functions satisfy. 
For $l\in \mZ $, we set 
\begin{align*}
M_{l;1}^{\small (1)}&=M_l\gpt{1}{0}{0}{1}{0}{1},&
M_{l;2}^{\small (1)}&=M_l\gpt{1}{0}{0}{1}{0}{0},&
M_{l;3}^{\small (1)}&=M_l\gpt{1}{0}{0}{0}{0}{0},\\
M_{l;1}^{\small (2)}&=M_l\gpt{0}{0}{-1}{0}{0}{0},&
M_{l;2}^{\small (2)}&=M_l\gpt{0}{0}{-1}{0}{-1}{0},&
M_{l;3}^{\small (2)}&=M_l\gpt{0}{0}{-1}{0}{-1}{-1}.
\end{align*}
Then $G((l+1,l,l))=\{ M_{l;j}^{\small (1)}\mid j=1,2,3\}$ and 
$G((l,l,l-1))=\{ M_{l;j}^{\small (2)}\mid j=1,2,3\}$.
For $l\equiv \varepsilon_\sigma \mod 2 $,
let $M_l^{(\sigma ;1)}$ and $M_l^{(\sigma ;2)}$ be 
the unique element of $G_\sigma ((l+1,l,l))$ 
and $G_\sigma ((l,l,l-1))$, respectively. 
\begin{prop}\label{prop:ex2}
\textit 
Let $\sigma $ be a character of $M_{\min }$ such that 
$(\sigma_1,\sigma_2,\sigma_3)$ is neither $(0,0,0)$ nor $(1,1,1)$, and 
$T$ be an element of the space $\Isp{\xi ,\pi_{(\sigma ,\nu )} }$. \\
(i) For $l\equiv \varepsilon_\sigma \mod 2$, we define functions
$\phi_{T,l;j}^{(1)}\in C^\infty_\xi (N_{\min}\backslash G)\ (j=1,2,3)$ by 
the equation 
$\Phi (T,S_{(l+1,l,l)}(M_l^{(\sigma ;1)}))=
\sum_{j=1}^3\phi_{T,l;j}^{(1)}\otimes f(M_{l;j}^{(1)})^*.$ 
Then $\phi_{T,l;1}^{(1)},\ \phi_{T,l;2}^{(1)}$ and 
$\phi_{T,l;3}^{(1)}$ satisfy following differential equations:
\begin{align*}
&C_{2i}\phi_{T,l;j}^{(1)}=
\chi_{2i,\sigma ,\nu ,(l+1,l,l)}\phi_{T,l;j}^{(1)},\\
&D^{(+,-)}_{i1}\phi_{T,l;1}^{(1)}+D^{(+,-)}_{i2}\phi_{T,l;2}^{(1)}+
D^{(+,-)}_{i3}\phi_{T,l;3}^{(1)}
=\tilde{\chi}_{2,\sigma ,\nu ,(l+1,l,l)}\phi_{T,l;i}^{(1)}
\end{align*}
for $i,j=1,2,3$. Here
\[
\left( \begin{array}{ccc}
D^{(+,-)}_{11} & D^{(+,-)}_{12} & D^{(+,-)}_{13} \\
D^{(+,-)}_{21} & D^{(+,-)}_{22} & D^{(+,-)}_{23} \\
D^{(+,-)}_{31} & D^{(+,-)}_{32} & D^{(+,-)}_{33}
\end{array} \right)
=m_1(C_{+})m_1(C_{-}),
\]
\begin{align*}
\chi_{2,\sigma ,\nu ,(l+1,l,l)}
=&\{\nu_1^2-(l-3)^2\}
+\{\nu_2^2-(l-2)^2\}
+\{\nu_3^2-(l-1)^2\}-2l+1,\\
\chi_{4,\sigma ,\nu ,(l+1,l,l)}
=&\{\nu_1^2-(l-2)^2-(2l-1)\delta_{\sigma ;1}\}
\{\nu_2^2-(l-2)^2-(2l-1)\delta_{\sigma ;2}\}\\
&+\{\nu_1^2-(l-2)^2-(2l-1)\delta_{\sigma ;1}\}
\{\nu_3^2-(l-1)^2-(2l-1)\delta_{\sigma ;3}\}\\
&+\{\nu_2^2-(l-1)^2-(2l-1)\delta_{\sigma ;2}\}
\{\nu_3^2-(l-1)^2-(2l-1)\delta_{\sigma ;3}\},\\
\chi_{6,\sigma ,\nu ,(l+1,l,l)}
=&\{\nu_1^2-(l-1)^2-(2l-1)\delta_{\sigma ;1}\}
\{\nu_2^2-(l-1)^2-(2l-1)\delta_{\sigma ;2}\}\\
&\times \{\nu_3^2-(l-1)^2-(2l-1)\delta_{\sigma ;3}\},\\
\tilde{\chi}_{2,\sigma ,\nu ,(l+1,l,l)}=&\nu_1^2\delta_{\sigma ;1}
+\nu_2^2\delta_{\sigma ;2}+\nu_3^2\delta_{\sigma ;3}-l^2.
\end{align*}
(ii) For $l\equiv \varepsilon_\sigma \mod 2$, we define functions
$\phi_{T,l;j}^{(2)}\in C^\infty_\xi (N_{\min}\backslash G)\ (j=1,2,3)$ by 
the equation 
$\Phi (T,S_{(l,l,l-1)}(M_l^{(\sigma ;2)}))
=\sum_{j=1}^3\phi_{T,l;j}^{(2)}\otimes f(M_{l;j}^{(2)})^*.$ 
Then $\phi_{T,l;1}^{(2)},\ \phi_{T,l;2}^{(2)}$ and 
$\phi_{T,l;3}^{(2)}$ satisfy following differential equations:
\begin{align*}
&C_{2i}\phi_{T,l;j}^{(2)}=
\chi_{2i,\sigma ,\nu ,(l,l,l-1)}\phi_{T,l;j}^{(2)},\\
&-D^{(-,+)}_{i3}\phi_{T,l;1}^{(2)}+D^{(-,+)}_{i2}\phi_{T,l;2}^{(2)}-
D^{(-,+)}_{i1}\phi_{T,l;3}^{(2)}
=(-1)^i\tilde{\chi}_{2,\sigma ,\nu ,(l,l,l-1)}\phi_{T,l;4-i}^{(2)}
\end{align*}
for $i,j=1,2,3$. 
Here
\[
\left( \begin{array}{ccc}
D^{(-,+)}_{11} & D^{(-,+)}_{12} & D^{(-,+)}_{13} \\
D^{(-,+)}_{21} & D^{(-,+)}_{22} & D^{(-,+)}_{23} \\
D^{(-,+)}_{31} & D^{(-,+)}_{32} & D^{(-,+)}_{33}
\end{array} \right)
=m_1(C_{-})m_1(C_{+}),
\]
\begin{align*}
\chi_{2,\sigma ,\nu ,(l,l,l-1)}
=&\{\nu_1^2-(l-3)^2\}
+\{\nu_2^2-(l-2)^2\}
+\{\nu_3^2-(l-1)^2\}+2l-7,\\
\chi_{4,\sigma ,\nu ,(l,l,l-1)}
=&\{\nu_1^2-(l-2)^2+(2l-5)\delta_{\sigma ;1}\}
\{\nu_2^2-(l-2)^2+(2l-5)\delta_{\sigma ;2}\}\\
&+\{\nu_1^2-(l-2)^2+(2l-5)\delta_{\sigma ;1}\}
\{\nu_3^2-(l-1)^2+(2l-5)\delta_{\sigma ;3}\}\\
&+\{\nu_2^2-(l-1)^2+(2l-5)\delta_{\sigma ;2}\}
\{\nu_3^2-(l-1)^2+(2l-5)\delta_{\sigma ;3}\},\\
\chi_{6,\sigma ,\nu ,(l,l,l-1)}
=&\{\nu_1^2-(l-1)^2+(2l-3)\delta_{\sigma ;1}\}
\{\nu_2^2-(l-1)^2+(2l-3)\delta_{\sigma ;2}\}\\
&\times \{\nu_3^2-(l-1)^2+(2l-3)\delta_{\sigma ;3}\},\\
\tilde{\chi}_{2,\sigma ,\nu ,(l,l,l-1)}=&\nu_1^2\delta_{\sigma ;1}
+\nu_2^2\delta_{\sigma ;2}+\nu_3^2\delta_{\sigma ;3}-l^2.
\end{align*}
\end{prop}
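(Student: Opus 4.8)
The plan is to prove both parts by a single mechanism. Each $C_{2i}$ lies in $U(\g_\mC)^K$ by Lemma \ref{lem:chirality_def}, so on a multiplicity-one $\tau$-isotypic component of $H_{(\sigma ,\nu ),K}$ — which is a single copy of the simple $K$-module $V_\lambda$ — it is a $K$-endomorphism, hence a scalar by Schur's lemma. Since $T\in \Isp{\xi ,\pi_{(\sigma ,\nu )}}$ intertwines the $U(\g_\mC)$-action, any relation the elementary functions $\efct{M}{N}$ satisfy under $U(\g_\mC)$ transports verbatim to their images under $T$, hence to the components $\phi^{(1)}_{T,l;j}$ (resp. $\phi^{(2)}_{T,l;j}$) of the Whittaker function, which by the defining relation of $\Phi(T,\cdot)$ are (after pairing with the dual basis) the $T$-images of the $\efct{M}{N_j}$. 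So it suffices to identify each $C_{2i}$, and the chirality-matrix product $m_1(C_\pm)m_1(C_\mp)$, with a composite of $\gp_\pm$-matrices acting on $\eblock{\lambda}$, and to evaluate the resulting scalar.

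For the Casimir equations I would invoke the factorizations of $C_2,C_4,C_6$ into $\gp_\pm$-matrix products recorded in Lemma \ref{lem:ex2_1}(ii). Applying such a composite to the canonical block $\eblock{\lambda}$ and repeatedly using the contiguous relations of Lemma \ref{lem:ex2_1}(i) — each an instance of Theorem \ref{th:main}, namely $\pmat{\lambda}{\pm}{ij}\eblock{\lambda}=\eblock{\lambda [\pm ij]}R(\Gamma_{\pm ij}^\lambda)$ — collapses each closed $K$-type loop to $\eblock{\lambda}$ times a $\mC$-linear combination of ordered products of the rectangular matrices $R(\Gamma_{\pm ij}^{\lambda'})$. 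Since $\lambda=(l+1,l,l)$ (resp. $(l,l,l-1)$) is multiplicity one, $d^\sigma_\lambda=1$, so this combination is a single scalar; a direct substitution of the case-by-case $R(\Gamma_{\pm ij}^{\lambda'})$ from the lemma then checks it equals $\chi_{2i,\sigma ,\nu ,(l+1,l,l)}$ (resp. $\chi_{2i,\sigma ,\nu ,(l,l,l-1)}$). Transporting these scalar identities through $T$ yields the eigenequations $C_{2i}\phi^{(1)}_{T,l;j}=\chi_{2i,\dots}\phi^{(1)}_{T,l;j}$ and their part (ii) analogues.

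For the first-order chirality system the key observation is algebraic: comparing the explicit entries in Lemma \ref{lem:ex2_1}(i), the $3\times 3$ chirality matrices agree, up to left/right multiplication by a fixed involutive signed-permutation matrix $J$ with $J^2=1_3$, with the standard-representation $\gp_\pm$-matrices $\pmat{(l,l,l-1)}{+}{13}$ and $\pmat{(l+1,l,l)}{-}{13}$. This gives the operator-matrix identity $m_1(C_+)m_1(C_-)=\tfrac14\,\pmat{(l,l,l-1)}{+}{13}\,\pmat{(l+1,l,l)}{-}{13}$. Feeding $\evec{(l+1,l,l)}{M}$ into it and using the two contiguous relations of the $[\pm 13]$-loop collapses it to $\tilde{\chi}_{2,\sigma ,\nu ,(l+1,l,l)}\,\evec{(l+1,l,l)}{M}$ with $\tilde{\chi}_2=\tfrac14\,R(\Gamma_{+13}^{(l,l,l-1)})R(\Gamma_{-13}^{(l+1,l,l)})$; substituting the case-specific values reproduces $\nu_1^2\delta_{\sigma ;1}+\nu_2^2\delta_{\sigma ;2}+\nu_3^2\delta_{\sigma ;3}-l^2$, and applying $T$ gives part (i). Part (ii) is the same loop traversed in the opposite order, so $m_1(C_-)m_1(C_+)$ appears; here the $\gp_-$-injectors are built from the dual patterns $\hat{M}$ through the symmetry of Proposition \ref{prop:sym_GZ-basis}, and it is precisely this dualization that converts $i$ into $4-i$ and inserts the signs $(-1)^i$ together with the $-\pmat{}{-}{\,}$-entries, yielding the stated reversed form.

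The main obstacle I anticipate is computational rather than conceptual: for $C_4$ and especially $C_6$ the $\gp_\pm$-matrix chains pass through several intermediate $K$-types, so the $R(\Gamma_{\pm ij}^{\lambda'})$ involved are genuinely rectangular (the $\pmat{}{+}{22}$ blocks reaching size up to $15$), and one must verify that the ordered matrix products telescope to the fully factored eigenvalues. This has to be done separately for each of the three families $\{(1,0,0),(0,1,1)\}$, $\{(0,1,0),(1,0,1)\}$, $\{(0,0,1),(1,1,0)\}$ using the corresponding $R(\Gamma_{\pm ij}^\lambda)$ tables, and the bookkeeping of the dual-pattern reversal on the $\gp_-$-side in part (ii) is the most error-prone point.
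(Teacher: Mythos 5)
Your proposal is correct and follows essentially the same route as the paper: the paper's own (very terse) proof likewise combines the contiguous relations and $\gp_\pm$-matrix factorizations of $C_{2},C_4,C_6$ from Lemma \ref{lem:ex2_1} with the identities $m_1(C_+)m_1(C_-)=\tfrac14\,\pmat{(l,l,l-1)}{+}{13}\pmat{(l+1,l,l)}{-}{13}$ and its $J$-conjugated counterpart for $m_1(C_-)m_1(C_+)$, evaluates the resulting scalars on the elementary-function vectors, and transports the eigenequations through $T$. Your added Schur-lemma remark and the explicit flagging of the dual-pattern reversal in part (ii) are consistent elaborations of what the paper leaves implicit.
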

\begin{proof}
From the Lemma \ref{lem:ex2_1} and the equations 
\begin{align*}
&m_1(C_+)m_1(C_-)=\frac{1}{4}\pmat{(l,l,l-1)}{+}{13}\pmat{(l+1,l,l)}{-}{13},\\
&m_1(C_-)m_1(C_+)\left(\begin{array}{ccc}
0&0&-1\\
0& 1&0\\
-1&0&0
\end{array}\right)
=\frac{1}{4}\left(\begin{array}{ccc}
0&0&-1\\
0& 1&0\\
-1&0&0
\end{array}\right)
\pmat{(l+1,l,l)}{-}{13}\pmat{(l,l,l-1)}{+}{13},\\
&S_{(l+1,l,l)}(M_l^{(\sigma ;1)})
=\left(\begin{array}{c}
\efct{M_l^{(\sigma ;1)}}{M_{l;1}^{(1)}}\\
\efct{M_l^{(\sigma ;1)}}{M_{l;2}^{(1)}}\\
\efct{M_l^{(\sigma ;1)}}{M_{l;3}^{(1)}}
\end{array}\right),\quad 
S_{(l,l,l-1)}(M_l^{(\sigma ;2)})
=\left(\begin{array}{c}
\efct{M_l^{(\sigma ;2)}}{M_{l;1}^{(2)}}\\
\efct{M_l^{(\sigma ;2)}}{M_{l;2}^{(2)}}\\
\efct{M_l^{(\sigma ;2)}}{M_{l;3}^{(2)}}
\end{array}\right),
\end{align*}
we obtain
\begin{align*}
&C_{2i}\phi_{T,l;j}^{(1)}=
\chi_{2i,\sigma ,\nu ,(l+1,l,l)}\phi_{T,l;j}^{(1)},\quad 
C_{2i}\phi_{T,l;j}^{(2)}=
\chi_{2i,\sigma ,\nu ,(l,l,l-1)}\phi_{T,l;j}^{(2)}\quad (i,j=1,2,3)\\
&\left( \begin{array}{ccc}
D^{(+,-)}_{11} & D^{(+,-)}_{12} & D^{(+,-)}_{13} \\
D^{(+,-)}_{21} & D^{(+,-)}_{22} & D^{(+,-)}_{23} \\
D^{(+,-)}_{31} & D^{(+,-)}_{32} & D^{(+,-)}_{33}
\end{array} \right) \left(\begin{array}{c}
\efct{M_l^{(\sigma ;1)}}{M_{l;1}^{(1)}}\\
\efct{M_l^{(\sigma ;1)}}{M_{l;2}^{(1)}}\\
\efct{M_l^{(\sigma ;1)}}{M_{l;3}^{(1)}}
\end{array}\right)=\tilde{\chi}_{2,\sigma ,\nu ,(l+1,l,l)}
\left(\begin{array}{c}
\efct{M_l^{(\sigma ;1)}}{M_{l;1}^{(1)}}\\
\efct{M_l^{(\sigma ;1)}}{M_{l;2}^{(1)}}\\
\efct{M_l^{(\sigma ;1)}}{M_{l;3}^{(1)}}
\end{array}\right),\\
&\left( \begin{array}{ccc}
D^{(-,+)}_{11} & D^{(-,+)}_{12} & D^{(-,+)}_{13} \\
D^{(-,+)}_{21} & D^{(-,+)}_{22} & D^{(-,+)}_{23} \\
D^{(-,+)}_{31} & D^{(-,+)}_{32} & D^{(-,+)}_{33}
\end{array} \right) \left(\begin{array}{c}
-\efct{M_l^{(\sigma ;2)}}{M_{l;3}^{(2)}}\\
 \efct{M_l^{(\sigma ;2)}}{M_{l;2}^{(2)}}\\
-\efct{M_l^{(\sigma ;2)}}{M_{l;1}^{(2)}}
\end{array}\right)=\tilde{\chi}_{2,\sigma ,\nu ,(l,l,l-1)}
\left(\begin{array}{c}
-\efct{M_l^{(\sigma ;2)}}{M_{l;3}^{(2)}}\\
 \efct{M_l^{(\sigma ;2)}}{M_{l;2}^{(2)}}\\
-\efct{M_l^{(\sigma ;2)}}{M_{l;1}^{(2)}}
\end{array}\right).
\end{align*}
From these equations, we obtain the assertion.
\end{proof}
\begin{rem}
\textit{ 
Since $V_{(l+1,l,l)}$ and $V_{(l,l,l-1)}$ are three dimensional, 
the differential equations obtained from $C_{2i}\ (i=1,2,3)$ do not suffice 
to characterize the Whittaker functions. }
\end{rem}

\subsection{Differential equations}
To obtain the explicit actions of the operators 
$C_{2i},\ D^{(\pm ,\mp )}_{jk}\ (1\leq i,j,k\leq 3)$, 
we may express these operators in the normal order modulo 
$[\gn ,\gn ]$ with respect to the Iwasawa decomposition of $\g$, 
according to the following lemma. 

\begin{lem}\label{lem:Kact_func}
\textit 
(i) Let $\phi_{T,l},\ \phi_{T,l;i}^{(1)},\ \phi_{T,l;i}^{(2)}\ (i=1,2,3)$ be 
the elements of $C^\infty_\xi (N_{\min}\backslash G)$ defined in 
Proposition \ref{prop:ex1} and \ref{prop:ex2}. 
The explicit expressions of the action of $\gk_\mC$ on these functions 
given as follows.\\
$\bullet $ the case of $(\sigma_1,\sigma_2,\sigma_3)=(0,0,0),(1,1,1)$.
\begin{align*}
&\kappa (E_{ii})\phi_{T,l}=l\phi_{T,l}&&(1\leq i\leq 3),\\
&\kappa (E_{jk})\phi_{T,l}=0&&(1\leq j\neq k\leq 3).
\end{align*}
$\bullet $ the case of $(\sigma_1,\sigma_2,\sigma_3)\neq (0,0,0),(1,1,1)$.
\begin{align*}
&\kappa (E_{jj})\phi_{T,l;i}^{(1)}=
(l+\delta_{ij})\phi_{T,l;i}^{(1)}&&(1\leq i,j\leq 3),\\
&\kappa (E_{mn})\phi_{T,l;k}^{(1)}=\delta_{nk} \phi_{T,l;m}^{(1)}
&&(1\leq k\leq 3,\ 1\leq m\neq n\leq 3),
\end{align*}
and
\begin{align*}
&\kappa (E_{jj})\phi_{T,l;i}^{(2)}=
(l-\delta_{4-i\hs j})\phi_{T,l;i}^{(2)}&&(1\leq i,j\leq 3),\\
&\kappa (E_{mn})\phi_{T,l;k}^{(2)}
=(-1)^{m+n+1}\delta_{4-m\hs k} \phi_{T,l;4-n}^{(2)}
&&(1\leq k\leq 3,\ 1\leq m\neq n\leq 3).
\end{align*}
(ii) Let $\phi \in C^\infty_\xi (N_{\min}\backslash G)$. 
For $X\in U(\gn_\mC ),\ Y\in U(\ga_\mC )$ and 
$a\in A_{\min}$, we have the equation
$(\Ad (a^{-1})X)Y\phi (a)=\xi (X)(Y\phi )(a)$. 
In particular, for 
$a=\diag (a_1,a_2,a_3,a_1^{-1},a_2^{-1},a_3^{-1})\in A_{\min}$, 
we have 
\begin{align*}
&H_i\phi (a)=a_i\frac{\partial}{\partial a_i}\phi (a)\quad (1\leq i\leq 3),
&&E_{e_1-e_2}\phi (a)=2\pi \sqrt{-1}c_{12}\frac{a_1}{a_2}\phi (a),\\
&E_{e_2-e_3}\phi (a)=2\pi \sqrt{-1}c_{23}\frac{a_2}{a_3}\phi (a),
&&E_{2e_3}\phi (a)=2\pi \sqrt{-1}c_{3}a_3^2\phi (a),
\end{align*}
and $E_\alpha \phi (a)=0$ for 
$\alpha \in \Sigma^+\backslash \{e_1-e_2,\ e_2-e_3,\ 2e_3\}$.
\end{lem}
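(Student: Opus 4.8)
The plan is to reduce both parts to finite verifications using objects already constructed. For part (i) the starting point is to identify each scalar component as the image under the intertwining operator $T$ of a single elementary function. Unwinding the definition of $\Phi(T,\eta)$ through the canonical pairing and using that the entries of $\evec{\lambda}{M}$ are the functions $\efct{M}{N}$ (together with the relation $\efct{M}{N}(1_6)=\Delta(M,N)$ of (\ref{eqn:fmne})), one obtains $\phi_{T,l}=T(\efct{M_l}{M_l})$, and likewise $\phi_{T,l;j}^{(1)}=T(\efct{M_l^{(\sigma;1)}}{M_{l;j}^{(1)}})$ and $\phi_{T,l;j}^{(2)}=T(\efct{M_l^{(\sigma;2)}}{M_{l;j}^{(2)}})$. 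First I would record this identification explicitly, since it is the bridge between the abstract $K$-module picture and the concrete functions.

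Since $T\in\Isp{\xi,\pi_{(\sigma,\nu)}}$ is a $(\g_\mC,K)$-homomorphism, it intertwines the $\gk_\mC$-action, so $\kappa(E_{pq})\phi=T(\kappa(E_{pq})\,\efct{M}{N})$ for all $1\le p,q\le 3$. The right regular action of $\gk_\mC$ on $\efct{M}{N}=r_K^{-1}\circ\Phi_\lambda(f(M)^*\otimes f(N))$ operates as $\tau_\lambda$ on the second slot $f(N)$, because $\langle\evec{\lambda}{M}\rangle\cong V_\lambda$ is an isomorphism of $K$-modules with marking $\{\efct{M}{N}\}_N$; hence $\kappa(E_{pq})\efct{M}{N}=\efct{M}{E_{pq}f(N)}$, where $E_{pq}f(N)$ is supplied by Proposition \ref{prop:action_on_GZ-basis} for $|p-q|\le 1$ and by the bracket expressions (\ref{eqn:a_act_root_vec}), (\ref{eqn:b_act_root_vec}) for $\{p,q\}=\{1,3\}$. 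It then remains to evaluate these on the few patterns $M_l$, $M_{l;j}^{(1)}$, $M_{l;j}^{(2)}$. For $\lambda=(l,l,l)$ the module is one dimensional: $\delta=0$ forces every $\chi_\pm$ to vanish, so each off-diagonal $E_{pq}f(M_l)=0$, while $E_{pp}f(M_l)=l\,f(M_l)$ reads off the weight, giving the first display. For $\lambda=(l+1,l,l)$ and $\lambda=(l,l,l-1)$ the three weights are exactly the permutations appearing in the statement, and the Gelfand--Zelevinsky formulas send each basis pattern to a neighbouring one (or to zero), which reproduces the permutation-with-shift action.

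For part (ii) the general identity is the engine and the two explicit lists are its specializations. Writing the right regular action as $(X\phi)(a)=\tfrac{d}{dt}\phi(a\exp tX)|_{t=0}$ and using $a\exp(t\,\Ad(a^{-1})X)=\exp(tX)\,a$ together with the fact that $A_{\min}$ normalizes $\gn$, I would show that for $\psi=Y\phi$ with $Y\in U(\ga_\mC)$ one has $\psi(\exp(tX)a)=\xi(\exp tX)\,\psi(a)$: indeed $Y\phi$ stays left $N_{\min}$-equivariant because the right action of $\ga_\mC$ commutes with left translation and $\phi(n\,\cdot)=\xi(n)\phi(\cdot)$. Differentiating yields $(\Ad(a^{-1})X)Y\phi(a)=\xi(X)(Y\phi)(a)$ for $X\in\gn_\mC$, and iteration over a product (each factor preserving left $N_{\min}$-equivariance) extends it to $X\in U(\gn_\mC)$. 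The specializations are then immediate: $\exp(tH_i)$ scales the entry $a_i$ by $e^t$, so $H_i\phi(a)=a_i\partial_{a_i}\phi(a)$ by the chain rule; and for $\alpha\in\Sigma^+$ the identity with $Y=1$ gives $E_\alpha\phi(a)=\xi(\Ad(a)E_\alpha)\phi(a)=e^{\alpha(\log a)}\xi(E_\alpha)\phi(a)$, which is nonzero only for the three simple restricted roots generating $\gn/[\gn,\gn]$, with the stated values of $\xi$.

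The routine but delicate step is the pattern bookkeeping in the three-dimensional cases of part (i): one must track the lower-row and $[-1]$ shifts of each G-pattern carefully, and in particular the minus signs produced by $\bar{C_1}$ in (\ref{eqn:a_act_root_vec}) and (\ref{eqn:b_act_root_vec}) are what account for the factor $(-1)^{m+n+1}$ and the index reversals $4-n$, $4-m$ in the formulas for $\phi_{T,l;k}^{(2)}$. The main conceptual point, namely that the $\gk_\mC$-action passes through $T$ to the monomial basis, is precisely what turns all of this into a direct verification rather than a genuinely new computation.
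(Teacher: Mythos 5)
Your proposal is correct and follows essentially the same route as the paper, whose proof simply cites the definitions of the $\phi$'s and the Gelfand--Zelevinsky action formulas for (i) and declares (ii) obvious; you have merely made the intermediate steps (the identification $\phi=T(\efct{M}{N})$, the $K$-equivariance of $T$, and the conjugation trick $a\exp(t\Ad(a^{-1})X)=\exp(tX)a$) explicit. One small caveat in your bookkeeping remark: on the patterns in question the sign $(-1)^{m+n+1}$ for $E_{31}$ comes from the coefficient $m_{33}-m_{22}=-1$ rather than from the $C_1$-term (which vanishes there), and the vanishing of the off-diagonal action on $f(M_l)$ is due to the vanishing of the coefficients $m_{12}-m_{11}$, etc.\ (and the invalidity of the shifted arrays), not only of $\chi_\pm$ --- but these do not affect the correctness of the finite verification.
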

\begin{proof}
From the definition of 
$\phi_{T,l},\ \phi_{T,l;i}^{(1)},\ \phi_{T,l;i}^{(2)}\ (i=1,2,3)$ and 
Lemma \ref{prop:action_on_GZ-basis}, we obtain the statement (i). 
The statement (ii) is obvious from the definition of 
$C^\infty_\xi (N_{\min}\backslash G)$.
\end{proof}

Moreover, we have the following lemma which is required to get 
the expressions of the elements in $U(\g_\mC )$ in normal order. 
In the following, we denote $X\equiv Y$ for two elements 
$X$ and $Y$ in $U(\g_\mC )$ when $X-Y\in [\gn ,\gn ]U(\g_\mC )$.
\begin{lem}
(i) The root vectors $X_{\pm ij}$ in $\gp_{\pm}$ 
have the following expressions:
\begin{align*}
X_{+ij}&\equiv 
\left\{ \begin{array}{ll}
H_i+\kappa (E_{ii})&(i=j=1,2),\\
E_{e_i-e_j}+\kappa (E_{ji})&((i,j)=(1,2),(2,3)),
\end{array}\right.\\
X_{-ij}&\equiv 
\left\{ \begin{array}{ll}
H_i-\kappa (E_{ii})&(i=j=1,2),\\
E_{e_i-e_j}-\kappa (E_{ij})&((i,j)=(1,2),(2,3)),
\end{array}\right.
\end{align*}
and
$X_{\pm 33}\equiv \pm 2\sqrt{-1}E_{2e_3}+H_3\pm \kappa (E_{33}),\ 
X_{+13}\equiv \kappa (E_{31}),\ X_{-13}\equiv -\kappa (E_{13})$.\\
(ii) Each $(i,j)$-minor $M_{+ij}$ in the matrix $m_2(C_+)$ has 
the following expression.
\begin{align*}
M_{+11}\equiv &(H_2-1)X_{+33}+X_{+33}\kappa (E_{22})
	-E_{e_2-e_3}X_{+23}-X_{+23}\kappa (E_{32}),\\
M_{+22}\equiv &(H_1-1)X_{+33}+X_{+33}\kappa (E_{11})-X_{+13}\kappa (E_{31}),\\
M_{+33}\equiv &(H_1-1)X_{+22}+X_{+22}\kappa (E_{11})
	-E_{e_1-e_2}X_{+12}-X_{+12}\kappa (E_{21}),\\
M_{+12}\equiv &E_{e_1-e_2}X_{+33}+X_{+33}\kappa (E_{21})
	-X_{+23}\kappa (E_{31}),\\
M_{+23}\equiv &(H_1-1)X_{+23}+X_{+23}\kappa (E_{11})-X_{+12}\kappa (E_{31}),\\
M_{+13}\equiv &E_{e_1-e_2}X_{+23}+X_{+23}\kappa (E_{21})
	-X_{+22}\kappa (E_{31}).
\end{align*}
(iii) Each $(i,j)$-minor $M_{-ij}$ in the matrix $m_2(C_-)$ has 
the following expression.
\begin{align*}
M_{-11}\equiv &(H_2-1)X_{-33}-X_{-33}\kappa (E_{22})
	-E_{e_2-e_3}X_{-23}+X_{-23}\kappa (E_{23}),\\
M_{-22}\equiv &(H_1-1)X_{-33}-X_{-33}\kappa (E_{11})+X_{-13}\kappa (E_{13}),\\
M_{-33}\equiv &(H_1-1)X_{-22}-X_{-22}\kappa (E_{11})
	-E_{e_1-e_2}X_{-12}+X_{-12}\kappa (E_{12}),\\
M_{-12}\equiv &E_{e_1-e_2}X_{-33}-X_{-33}\kappa (E_{12})
	+X_{-23}\kappa (E_{13}),\\
M_{-23}\equiv &(H_1-1)X_{-23}-X_{-23}\kappa (E_{11})+X_{-12}\kappa (E_{13}),\\
M_{-13}\equiv &E_{e_1-e_2}X_{-23}-X_{-23}\kappa (E_{12})
	+X_{-22}\kappa (E_{13}).
\end{align*}
\end{lem}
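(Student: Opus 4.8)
The plan is to reduce everything to the exact Iwasawa expressions of Lemma~\ref{lem:Iwasawa} and then work modulo the right ideal $[\gn,\gn]U(\g_\mC)$. The single structural input I would isolate first is that $[\gn,\gn]U(\g_\mC)$ is a \emph{right} ideal, so $\equiv$ is a congruence compatible with right multiplication and every monomial whose left-most factor lies in $[\gn,\gn]$ is $\equiv 0$. Combined with the decomposition $\gn/[\gn,\gn]\simeq \g_{e_1-e_2}\oplus\g_{e_2-e_3}\oplus\g_{2e_3}$, this means that the restricted-root vector $E_\alpha$ of every \emph{non-simple} $\alpha\in\Sigma^+$ — namely $E_{2e_1},E_{2e_2},E_{e_1-e_3}$ and all $E_{e_i+e_j}$ — satisfies $E_\alpha\equiv 0$ whenever it appears in left-most position.

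For part (i) I would simply substitute the formulas of Lemma~\ref{lem:Iwasawa} and delete the non-simple terms: in $X_{\pm ii}$ ($i=1,2$) the summand $\pm 2\sqrt{-1}E_{2e_i}$ drops, in $X_{\pm 33}$ the term $\pm2\sqrt{-1}E_{2e_3}$ survives because $2e_3$ is simple, in $X_{\pm 12}$ and $X_{\pm 23}$ the term $\pm\sqrt{-1}E_{e_i+e_j}$ drops, and in $X_{\pm 13}$ both $E_{e_1-e_3}$ and $E_{e_1+e_3}$ drop so that only the compact term $\kappa(E_{31})$ (resp.\ $-\kappa(E_{13})$) remains. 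This is immediate once the structural remark above is in place.

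For parts (ii) and (iii) I would expand each minor $M_{\pm ij}$ as the $2\times 2$ determinant $ad-bc$ of its non-commuting entries, substitute the reduced forms from (i) into the left factors (legitimate since $\equiv$ is right-compatible) and the exact Iwasawa forms into the right factors, and then discard every monomial carrying a non-simple $\gn$-vector at the left. The surviving terms are normal-ordered by moving each $\kappa(E_{jk})$ to the right past the $X_{\pm pq}$ using the adjoint-action Tables~1 and~2 from the proof of Lemma~\ref{lem:K-action}; the commutators $[\kappa(E_{jk}),X_{\pm pq}]$ are again vectors in $\gp_\pm$, which recombine to give precisely the stated right-hand sides (the diagonal shifts such as $H_i-1$ coming from commutators of the type $[\kappa(E_{i+1\,i}),X_{+i\,i+1}]=X_{+i+1\,i+1}$). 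I would prove the six identities of (ii) directly and then obtain (iii) for free by applying the conjugate-linear automorphism $\sigma$ of $\g_\mC$ given by complex conjugation of matrices: since $\overline{X_{+ij}}=X_{-ij}$ and $\overline{\kappa(E_{jk})}=-\kappa(E_{kj})$, while $H_i$ and the real restricted-root vectors $E_\alpha$ are fixed, and since $\sigma$ preserves products and the right ideal $[\gn,\gn]U(\g_\mC)$, it carries each identity $M_{+ij}\equiv\cdots$ to the corresponding $M_{-ij}\equiv\cdots$.

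The routine part is the bookkeeping, but the genuine obstacle is ensuring that the reduction modulo $[\gn,\gn]U(\g_\mC)$ is actually complete. Because the ideal is only one-sided, a non-simple $\gn$-vector sitting in a non-left-most slot cannot be dropped on sight: it must first be commuted to the left, and this generates further commutators (for instance $[X_{\pm pq},E_\alpha]$ and $[\kappa(E_{jk}),E_\alpha]$) that must be checked to land either in $[\gn,\gn]$ or in the span of the three simple root spaces before the term can be eliminated. Keeping the determinant ordering convention fixed, tracking the constants in these commutators, and verifying that no hidden simple-root contribution survives is where the care is concentrated; this is exactly the step that pins down the lower-order corrections in the final formulas.
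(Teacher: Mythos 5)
Your argument is correct, and for parts (i) and (ii) it is essentially the paper's own route made explicit: the paper treats (i) as immediate from the Iwasawa expressions of Lemma \ref{lem:Iwasawa} and dismisses (ii), (iii) as ``direct computation using the tables,'' while you supply the two observations that make that computation legitimate --- that $[\gn,\gn]U(\g_\mC)$ is a \emph{right} ideal, so only a non-simple restricted root vector in left-most position may be discarded, and that after replacing the left factor of each product by its reduced form the only reordering required is commuting a $\kappa(E_{jk})$ past an $X_{\pm pq}$, whose bracket lands back in $\gp_\pm$ and therefore creates no new $\gn$-contributions; this is exactly why the reduction closes up and the worry in your last paragraph never materializes. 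Your derivation of (iii) from (ii) by the conjugation fixing $\mathfrak{sp}(3,\mR)$ (which sends $X_{+ij}\mapsto X_{-ij}$, $\kappa(E_{jk})\mapsto-\kappa(E_{kj})$ and fixes $H_i$, $E_\alpha$ and the ideal) is a genuine shortcut over the paper's repetition of the computation with Table~2, and it explains the sign pattern in (iii) for free. Two practical cautions. First, Table~1 as printed is not reliable in the $\kappa(E_{13})$, $\kappa(E_{31})$ columns: it records $[\kappa(E_{31}),X_{+13}]=0$ and $[\kappa(E_{31}),X_{+22}]=X_{+33}$, whereas a weight count (or the $V_{2\me_1}$-module structure of Lemma \ref{lem:K-action} together with Proposition \ref{prop:action_on_GZ-basis}) forces $[\kappa(E_{31}),X_{+13}]=X_{+33}$ and $[\kappa(E_{31}),X_{+22}]=0$; the corrected values are precisely what produce the shift $(H_1-1)$ in $M_{+22}$, so you should recompute these brackets rather than read them off the table. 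Second, since the entries of each minor commute but the reduction is only one-sided, reducing the other factor of a product such as $X_{+23}X_{+13}$ yields a congruent but different-looking expression ($E_{e_2-e_3}X_{+13}+X_{+13}\kappa(E_{32})$ instead of $X_{+23}\kappa(E_{31})$); to land on the right-hand sides exactly as stated you must reduce $X_{\pm13}$ whenever it occurs and otherwise the factor of smaller index, which is pure bookkeeping but worth fixing in advance.
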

\begin{proof}
The statement (i) is obvious from Lemma \ref{lem:K-action}. 
The statements (ii), (iii) are obtain by direct computation 
using tables in the proof of Lemma \ref{lem:K-action}.
\end{proof}
By using above lemma and tables in the proof of Lemma \ref{lem:K-action}, 
we have the following expressions of 
the elements $D^{(\pm ,\mp )}_{ij}\ (1\leq i,j\leq 3),\ 
C_{2k}\ (1\leq k\leq 2),\ m_3(C_\pm )$ in normal order:

For $1\leq i\leq 3$, we have
\begin{align*}
D_{1i}^{(+,-)}\equiv &
(H_1-4)X_{-1i}+X_{-1i}\kappa (E_{11})
+E_{e_1-e_2}X_{-2i}+X_{-2i}\kappa (E_{21})
+X_{-3i}\kappa (E_{31}),\\
D_{2i}^{(+,-)}\equiv &
E_{e_1-e_2}X_{-1i}+X_{-1i}\kappa (E_{21})
+(H_2-3+\delta_{1i})X_{-2i}+X_{-2i}\kappa (E_{22})\\
&+E_{e_2-e_3}X_{-3i}+X_{-3i}\kappa (E_{32})-\delta_{2i}X_{11},\\
D_{3i}^{(+,-)}\equiv &
X_{-1i}\kappa (E_{31})
+E_{e_2-e_3}X_{-2i}+X_{-2i}\kappa (E_{32})\\
&+(H_3-1-\delta_{3i}+2\sqrt{-1}E_{2e_3})X_{-3i}+X_{-3i}\kappa (E_{33})
-\delta_{3i}(X_{-11}+X_{-22}),
\end{align*}
and
\begin{align*}
D_{1i}^{(-,+)}\equiv &
(H_1-4)X_{+1i}-X_{+1i}\kappa (E_{11})
+E_{e_1-e_2}X_{+2i}-X_{+2i}\kappa (E_{12})-X_{+3i}\kappa (E_{13}),\\
D_{2i}^{(-,+)}\equiv &
E_{e_1-e_2}X_{+1i}-X_{+1i}\kappa (E_{12})
+(H_2-3+\delta_{1i})X_{+2i}-X_{+2i}\kappa (E_{22})\\
&+E_{e_2-e_3}X_{+3i}-X_{+3i}\kappa (E_{23})-\delta_{2i}X_{+11},\\
D_{3i}^{(-,+)}\equiv &
-X_{+1i}\kappa (E_{13})
+E_{e_2-e_3}X_{+2i}-X_{+2i}\kappa (E_{23})\\
&+(H_3-1-\delta_{3i}-2\sqrt{-1}E_{2e_3})X_{+3i}-X_{+3i}\kappa (E_{33})
-\delta_{3i}(X_{+11}+X_{+22}).
\end{align*}
Here we denote $X_{\pm ji}=X_{\pm ij}\ (1\leq i<j\leq 3)$. 
\begin{align*}
C_2\ \equiv \ 
&(H_1-6)X_{-11}+X_{-11}\kappa (E_{11})+(H_2-4)X_{-22}+X_{-22}\kappa (E_{22})\\
&+(H_3+2\sqrt{-1}E_{2e_3}-2)X_{-33}+X_{-33}\kappa (E_{33})
	+2E_{e_1-e_2}X_{-12}\\
&+2X_{-12}\kappa (E_{21})+2E_{e_2-e_3}X_{-23}+2X_{-23}\kappa (E_{32})
	+2X_{-13}\kappa (E_{31}).\\
C_4\ \equiv \ 
&(H_2-1)
\big\{ (2\sqrt{-1}E_{2e_3}+H_3)M_{-11}+M_{-11}(\kappa (E_{33})-2)\big\} \\
&+\big\{ (2\sqrt{-1}E_{2e_3}+H_3)M_{-11}+M_{-11}(\kappa (E_{33})-2)\big\}
(\kappa (E_{22})-2)\\
&-E^2_{e_2-e_3}M_{-11}-2E_{e_2-e_3}M_{-11}\kappa (E_{32})
-M_{-11}\kappa (E_{32})^2\\
&+(H_1-1)
\big\{ (2\sqrt{-1}E_{2e_3}+H_3)M_{-22}+M_{-22}(\kappa (E_{33})-2)\big\} \\
&+\big\{ (2\sqrt{-1}E_{2e_3}+H_3)M_{-22}+M_{-22}(\kappa (E_{33})-2)\big\}
(\kappa (E_{11})-2)-M_{-22}\kappa (E_{31})^2\\
&+(H_1-1)
\big\{ H_2M_{-33}+M_{-33}(\kappa (E_{22})-2)\big\} \\
&+\big\{ H_2M_{-33}+M_{-33}(\kappa (E_{22})-2)\big\} 
(\kappa (E_{11})-2)\\
&-E^2_{e_1-e_2}M_{-33}-2E_{e_1-e_2}M_{-33}\kappa (E_{21})
-M_{-33}\kappa (E_{21})^2\\
&+2E_{e_1-e_2}
\big\{ (2\sqrt{-1}E_{2e_3}+H_3)M_{-12}+M_{-12}(\kappa (E_{33})-2)\big\} \\
&+2\big\{ (2\sqrt{-1}E_{2e_3}+H_3)M_{-12}+M_{-12}(\kappa (E_{33})-2)\big\} 
\kappa (E_{21})\\
&-2\big\{ (2\sqrt{-1}E_{2e_3}+H_3)M_{-22}+M_{-22}(\kappa (E_{33})-2)\big\} 
-2E_{e_2-e_3}M_{-23}\\
&-2(E_{e_2-e_3}M_{-12}+M_{-12}\kappa (E_{32})-M_{-13})\kappa (E_{31})
+2M_{-33}-2M_{-23}\kappa (E_{32})\\
&+2(H_1-1)(E_{e_2-e_3}M_{-23}+M_{-23}\kappa (E_{32})-M_{-33})\\
&+2(E_{e_2-e_3}M_{-23}+M_{-23}\kappa (E_{32})-M_{-33})(\kappa (E_{11})-2)\\
&-2(E_{e_1-e_2}M_{-23}+M_{-23}\kappa (E_{21}))\kappa (E_{31})\\
&+2E_{e_1-e_2}(E_{e_2-e_3}M_{-13}+M_{-13}\kappa (E_{32}))-2E_{e_2-e_3}M_{-23}\\
&-2(H_2-3)M_{-33}-2M_{-33}\kappa (E_{22})-2M_{-23}\kappa (E_{32})\\
&+2(E_{e_2-e_3}M_{-13}+M_{-13}\kappa (E_{32}))\kappa (E_{21})\\
&-2\big\{ (H_2-2)M_{-13}+M_{-13}\kappa (E_{22})\big\} \kappa (E_{31}),\\
m_3(C_+)\equiv 
&(H_1-2)M_{+11}+M_{+11}\kappa (E_{11})-E_{e_1-e_2}M_{+12}
-M_{+12}\kappa (E_{21})+M_{+13}\kappa (E_{31}),\\
m_3(C_-)\equiv 
&(H_1-2)M_{-11}-M_{-11}\kappa (E_{11})-E_{e_1-e_2}M_{-12}
+M_{-12}\kappa (E_{12})-M_{-13}\kappa (E_{13}).\\
\end{align*}
From above expressions and Lemma \ref{lem:Kact_func}(i), 
We can summarize the explicit actions of the 
operators $C_{2i}\ (1\leq i\leq 3)$ and 
$D_{jk}^{(\pm ,\mp )}\ (1\leq j,k\leq 3)$ on the functions in 
Proposition \ref{prop:ex1}, \ref{prop:ex2}. 
\begin{prop}\label{prop:ex3}
The operators $C_{2i}\ (1\leq i\leq 3)$ and 
$D_{jk}^{(\pm ,\mp )}\ (1\leq j,k\leq 3)$ acting on 
the functions 
$\phi_{T,l},\ \phi_{T,l;i}^{(1)},\ \phi_{T,l;i}^{(2)}\ (i=1,2,3)$ 
in Proposition \ref{prop:ex1}, \ref{prop:ex2} are given 
as follows. \\
$\bullet $ the case of $(\sigma_1,\sigma_2,\sigma_3)=(0,0,0),(1,1,1)$.
\begin{align*}
C_2\phi_{T,l}
=&\{(H_1+l-6)(H_1-l)
+(H_2+l-4)(H_2-l)\\
&\hphantom{=}+(H_3+l-2+2\sqrt{-1}E_{2e_3})
(H_3-l-2\sqrt{-1}E_{2e_3})+2E_{e_1-e_2}^2
+2E_{e_2-e_3}^2\} \phi_{T,l},\\
C_4\phi_{T,l}
=&\Big\{
\{(H_2+l-3)
(H_3+l-2+2\sqrt{-1}E_{2e_3})
-E_{e_2-e_3}^2
\}\\
&\hphantom{==} \times \{(H_2-l-1)
(H_3-l-2\sqrt{-1}E_{2e_3})
-E_{e_2-e_3}^2\} \\
&\hphantom{=}+(H_1+l-5)
(H_3+l-2+2\sqrt{-1}E_{2e_3})
(H_1-l-1)(H_3-l-2\sqrt{-1}E_{2e_3})\\
&\hphantom{=}+\{(H_1+l-5)(H_2+l-4)
-E_{e_1-e_2}^2\} 
\{(H_1-l-1)(H_2-l)
-E_{e_1-e_2}^2\} \\
&\hphantom{=}+2E_{e_1-e_2}^2
(H_3+l-2+2\sqrt{-1}E_{2e_3})
(H_3-l-2\sqrt{-1}E_{2e_3})\\
&\hphantom{=}+2E_{e_1-e_2}^2E_{e_2-e_3}^2
+2E_{e_2-e_3}^2
(H_1+l-5)(H_1-l-1)
\Big\}\phi_{T,l},\\
C_6\phi_{T,l}
=&\{
(H_1+l-4)(H_2+l-3)
(H_3+l-2+2\sqrt{-1}E_{2e_3})\\
&\hphantom{=}-E_{e_2-e_3}^2(H_1+l-4)
-E_{e_1-e_2}^2(H_3+l-2+2\sqrt{-1}E_{2e_3})\}\\
&\times \{
(H_1-l-2)(H_2-l-1)
(H_3-l-2\sqrt{-1}E_{2e_3})\\
&\hphantom{=}-E_{e_2-e_3}^2(H_1-l-2)
-E_{e_1-e_2}^2(H_3-l-2\sqrt{-1}E_{2e_3})\}\phi_{T,l}.
\end{align*}
$\bullet $ the case of $(\sigma_1,\sigma_2,\sigma_3)\neq (0,0,0),(1,1,1)$.
\begin{align*}
&\hspace{-4mm}D_{11}^{(+,-)}\phi_{T,l;1}^{(1)}
+D_{12}^{(+,-)}\phi_{T,l;2}^{(1)}
+D_{13}^{(+,-)}\phi_{T,l;3}^{(1)}\\
=&\{ 
(H_1+l-3)(H_1-l-3)+E_{e_1-e_2}^2
\} \phi_{T,l;1}^{(1)}
+E_{e_1-e_2}
(H_1+H_2-4)\phi_{T,l;2}^{(1)}\\
&+E_{e_1-e_2}E_{e_2-e_3}\phi_{T,l;3}^{(1)},\\
&\hspace{-4mm}D_{21}^{(+,-)}\phi_{T,l;1}^{(1)}+
D_{22}^{(+,-)}\phi_{T,l;2}^{(1)}+
D_{23}^{(+,-)}\phi_{T,l;3}^{(1)}\\
=&E_{e_1-e_2}(H_1+H_2-6)
\phi_{T,l;1}^{(1)}
+\{
(H_2+l-2)(H_2-l-2)+E_{e_1-e_2}^2
+E_{e_2-e_3}^2
\} \phi_{T,l;2}^{(1)}\\
&+E_{e_2-e_3}
(H_2+H_3-2-2\sqrt{-1}E_{2e_3})\phi_{T,l;3}^{(1)},\\
&\hspace{-4mm}D_{31}^{(+,-)}\phi_{T,l;1}^{(1)}+
D_{32}^{(+,-)}\phi_{T,l;2}^{(1)}+
D_{33}^{(+,-)}\phi_{T,l;3}^{(1)}\\
=&+E_{e_1-e_2}E_{e_2-e_3}\phi_{T,l;1}^{(1)}
+E_{e_2-e_3}
(H_2+H_3-4+2\sqrt{-1}E_{2e_3})\phi_{T,l;2}^{(1)}\\
&+\{
(H_3+l-1+2\sqrt{-1}E_{2e_3})(H_3-l-1-2\sqrt{-1}E_{2e_3})
+E_{e_2-e_3}^2\} \phi_{T,l;3}^{(1)},
\end{align*}
\begin{align*}
C_2\phi_{T,l;i}^{(1)}
=&\{(H_1+l-6+\delta_{1i})(H_1-l-\delta_{1i})
+(H_2+l-4+\delta_{2i})(H_2-l-\delta_{2i})\\
&\hphantom{=}+(H_3+l-2+\delta_{3i}+2\sqrt{-1}E_{2e_3})
(H_3-l-\delta_{3i}-2\sqrt{-1}E_{2e_3})\\
&\hphantom{=}+2E_{e_1-e_2}^2
+2E_{e_2-e_3}^2
\} \phi_{T,l;i}^{(1)}\\
&-2\delta_{1i}\{
2\phi_{T,l;1}^{(1)}
-E_{e_1-e_2}\phi_{T,l;2}^{(1)}\} 
-2\delta_{2i}\{
E_{e_1-e_2}\phi_{T,l;1}^{(1)}
+\phi_{T,l;2}^{(1)}
-E_{e_2-e_3}\phi_{T,l;3}^{(1)}
\}\\
&-2\delta_{3i}E_{e_2-e_3}\phi_{T,l;2}^{(1)}, \\
C_4\phi_{T,l;i}^{(1)}
=&\Big\{
\{(H_2+l-3+\delta_{2i})
(H_3+l-2+\delta_{3i}+2\sqrt{-1}E_{2e_3})
-E_{e_2-e_3}^2
\}\\
&\hphantom{==} \times \{(H_2-l-1-\delta_{2i})
(H_3-l-\delta_{3i}-2\sqrt{-1}E_{2e_3})
-E_{e_2-e_3}^2\} \\
&\hphantom{=}+(H_1+l-5+\delta_{1i})
(H_3+l-2+\delta_{3i}+2\sqrt{-1}E_{2e_3})\\
&\hphantom{==} \times 
(H_1-l-1-\delta_{1i})(H_3-l-\delta_{3i}-2\sqrt{-1}E_{2e_3})\\
&\hphantom{=}+\{(H_1+l-5+\delta_{1i})(H_2+l-4+\delta_{2i})
-E_{e_1-e_2}^2\} \\
&\hphantom{==} 
\times \{(H_1-l-1-\delta_{1i})(H_2-l-\delta_{2i})
-E_{e_1-e_2}^2\} \\
&\hphantom{=}+2E_{e_1-e_2}^2
(H_3+l-2+\delta_{3i}+2\sqrt{-1}E_{2e_3})
(H_3-l-\delta_{3i}-2\sqrt{-1}E_{2e_3})\\
&\hphantom{=}+2E_{e_1-e_2}^2E_{e_2-e_3}^2
+2E_{e_2-e_3}^2
(H_1+l-5+\delta_{1i})(H_1-l-1-\delta_{1i})
\Big\}\phi_{T,l;i}^{(1)}\\
&+2\delta_{1i}\Big\{
-\{(H_2+l-4)(H_2-l)\\
&\hphantom{==}+(H_3+l-2+2\sqrt{-1}E_{2e_3})
(H_3-l-2\sqrt{-1}E_{2e_3})+3E_{e_1-e_2}^2
+2E_{e_2-e_3}^2
\}\phi_{T,l;1}^{(1)}\\
&\hphantom{=}+E_{e_1-e_2}
\{(H_3+l-2+2\sqrt{-1}E_{2e_3})(H_3-l-2\sqrt{-1}E_{2e_3})\\
&\hphantom{==}-(H_1-l-1)(H_2-l-1)
+H_1+H_2-6
+E_{e_1-e_2}^2
+E_{e_2-e_3}^2
\}\phi_{T,l;2}^{(1)}\\
&\hphantom{=}-E_{e_1-e_2}E_{e_2-e_3}
(H_1+H_2+H_3-l-6-2\sqrt{-1}E_{2e_3})
\phi_{T,l;3}^{(1)}
\Big\}\\
&+2\delta_{2i}\Big\{
-E_{e_1-e_2}
\{
(H_3+l-2+2\sqrt{-1}E_{2e_3})(H_3-l-2\sqrt{-1}E_{2e_3})\\
&\hphantom{==}-(H_1+l-4)(H_2+l-4)
-(H_1+H_2-4)+E_{e_1-e_2}^2+E_{e_2-e_3}^2
\}\phi_{T,l;1}^{(1)}\\
&\hphantom{=}-\{
(H_1+l-5)(H_1-l-1)
+E_{e_1-e_2}^2
+2E_{e_2-e_3}^2
\}
\phi_{T,l;2}^{(1)}\\
&\hphantom{=}+E_{e_2-e_3}
\{
(H_1+l-5)(H_1-l-1)\\
&\hphantom{==}-(H_2-l-1)(H_3-l-1-2\sqrt{-1}E_{2e_3})
+E_{e_1-e_2}^2+E_{e_2-e_3}^2
\}\phi_{T,l;3}^{(1)}
\Big\}\\
&+2\delta_{3i}\Big\{
E_{e_1-e_2}E_{e_2-e_3}
(H_1+H_2+H_3+l-7+2\sqrt{-1}E_{2e_3})\phi_{T,l;1}^{(1)}\\
&\hphantom{=}+E_{e_2-e_3}
\{(H_2+l-2)(H_3+l-2+2\sqrt{-1}E_{2e_3})\\
&\hphantom{==}-(H_1+l-5)(H_1-l-1)-E_{e_1-e_2}^2-E_{e_2-e_3}^2
\}\phi_{T,l;2}^{(1)}
\Big\} ,\\
C_6\phi_{T,l;i}^{(1)}
=&\{
(H_1+l-4+\delta_{1i})(H_2+l-3+\delta_{2i})
(H_3+l-2+\delta_{3i}+2\sqrt{-1}E_{2e_3})\\
&\hphantom{=}-E_{e_2-e_3}^2(H_1+l-4+\delta_{1i})
-E_{e_1-e_2}^2(H_3+l-2+\delta_{3i}+2\sqrt{-1}E_{2e_3})
\}\\
&\hphantom{=}
\times \{(H_1-l-2-\delta_{1i})(H_2-l-1-\delta_{2i})
(H_3-l-\delta_{3i}-2\sqrt{-1}E_{2e_3})\\
&-E_{e_2-e_3}^2(H_1-l-2-\delta_{1i})
-E_{e_1-e_2}^2(H_3-l-\delta_{3i}-2\sqrt{-1}E_{2e_3})
\}\phi_{T,l;i}^{(1)}\\
&+2\delta_{1i}\Big\{
-2E_{e_1-e_2}^2\{(H_3+l-2+2\sqrt{-1}E_{2e_3})
(H_3-l-2\sqrt{-1}E_{2e_3})+E_{e_2-e_3}^2\}
\phi_{T,l;1}^{(1)}\\
&\hphantom{=}-E_{e_1-e_2}
\{(H_3+l-2+2\sqrt{-1}E_{2e_3})
(H_1-l-2)(H_2-l-2)
(H_3-l-2\sqrt{-1}E_{2e_3})\\
&\hphantom{==}-E_{e_2-e_3}^2(H_3+l-2+2\sqrt{-1}E_{2e_3})(H_1-l-2)\\
&\hphantom{==}
-E_{e_1-e_2}^2(H_3+l-2+2\sqrt{-1}E_{2e_3})(H_3-l-2\sqrt{-1}E_{2e_3})
\}\phi_{T,l;2}^{(1)}\\
&\hphantom{=}+E_{e_1-e_2}E_{e_2-e_3}
\{(H_1-l-2)(H_2-l-1)
(H_3-l-1-2\sqrt{-1}E_{2e_3})\\
&\hphantom{==}-E_{e_2-e_3}^2(H_1-l-2)
-E_{e_1-e_2}^2(H_3-l-1-2\sqrt{-1}E_{2e_3})
\}\phi_{T,l;3}^{(1)}
\Big\}\\
&+2\delta_{2i}\Big\{
E_{e_1-e_2}\{
(H_1+l-3)(H_2+l-3)\\
&\hphantom{===}\times (H_3+l-2+2\sqrt{-1}E_{2e_3})(H_3-l-2\sqrt{-1}E_{2e_3})\\
&\hphantom{==}-E_{e_2-e_3}^2(H_1+l-3)(H_3-l-2-2\sqrt{-1}E_{2e_3})\\
&\hphantom{==}-E_{e_1-e_2}^2(H_3+l-2+2\sqrt{-1}E_{2e_3})
(H_3-l-2\sqrt{-1}E_{2e_3})\}\phi_{T,l;1}^{(1)}\\
&\hphantom{=}-2E_{e_2-e_3}^2(H_1+l-4)
(H_1-l-2)\phi_{T,l;2}^{(1)}\\
&\hphantom{=}-E_{e_2-e_3}(H_1+l-4)
\{(H_1-l-2)(H_2-l-1)
(H_3-l-1-2\sqrt{-1}E_{2e_3})\\
&\hphantom{==}-E_{e_2-e_3}^2(H_1-l-2)
-E_{e_1-e_2}^2(H_3-l-1-2\sqrt{-1}E_{2e_3})
\}\phi_{T,l;3}^{(1)}
\Big\}\\
&+2\delta_{3i}\Big\{
-E_{e_1-e_2}E_{e_2-e_3}\{
(H_1+l-3)(H_2+l-3)
(H_3+l-2+2\sqrt{-1}E_{2e_3})\\
&\hphantom{==}-E_{e_2-e_3}^2(H_1+l-3)
-E_{e_1-e_2}^2(H_3+l-2+2\sqrt{-1}E_{2e_3})
\}\phi_{T,l;1}^{(1)}\\
&\hphantom{=}+E_{e_2-e_3}\{
(H_1+l-4)(H_2+l-2)
(H_3+l-2+2\sqrt{-1}E_{2e_3})\\
&\hphantom{==}-E_{e_2-e_3}^2(H_1+l-4)
-E_{e_1-e_2}^2(H_3+l-2+2\sqrt{-1}E_{2e_3})
\}(H_1-l-2)\phi_{T,l;2}^{(1)}
\Big\} ,
\end{align*}
and
\begin{align*}
&\hspace{-4mm}-D_{11}^{(-,+)}\phi_{T,l;3}^{(2)}
+D_{12}^{(-,+)}\phi_{T,l;2}^{(2)}
-D_{13}^{(-,+)}\phi_{T,l;1}^{(2)}\\
=&-E_{e_1-e_2}E_{e_2-e_3}\phi_{T,l;1}^{(2)}
+E_{e_1-e_2}(H_1+H_2-4)\phi_{T,l;2}^{(2)}\\
&-\{(H_1-l-3)(H_1+l-3)+E_{e_1-e_2}^2\}\phi_{T,l;3}^{(2)},\\
&\hspace{-4mm}-D_{21}^{(-,+)}\phi_{T,l;3}^{(2)}
+D_{22}^{(-,+)}\phi_{T,l;2}^{(2)}
-D_{23}^{(-,+)}\phi_{T,l;1}^{(2)}\\
=&-E_{e_2-e_3}(H_2+H_3-2+2\sqrt{-1}E_{2e_3})\phi_{T,l;1}^{(2)}\\
&+\{(H_2-l-2)(H_2+l-2)+E_{e_1-e_2}^2+E_{e_2-e_3}^2\}\phi_{T,l;2}^{(2)}
-E_{e_1-e_2}(H_1+H_2-6)\phi_{T,l;3}^{(2)},\\
&\hspace{-4mm}-D_{31}^{(-,+)}\phi_{T,l;3}^{(2)}
+D_{32}^{(-,+)}\phi_{T,l;2}^{(2)}
-D_{33}^{(-,+)}\phi_{T,l;1}^{(2)}\\
=&-\{(H_3-l-1-2\sqrt{-1}E_{2e_3})(H_3+l-1+2\sqrt{-1}E_{2e_3})
+E_{e_2-e_3}^2\}\phi_{T,l;1}^{(2)}\\
&+E_{e_2-e_3}(H_2+H_3-4-2\sqrt{-1}E_{2e_3})\phi_{T,l;2}^{(2)}
-E_{e_1-e_2}E_{e_2-e_3}\phi_{T,l;3}^{(2)},
\end{align*}
\begin{align*}
C_2\phi_{T,l;i}^{(2)}=
&\{(H_1+l-6-\delta_{3i})(H_1-l+\delta_{3i})
+(H_2+l-4-\delta_{2i})(H_2-l+\delta_{2i})\\
&\hphantom{=}+(H_3+l-2-\delta_{1i}+2\sqrt{-1}E_{2e_3})
(H_3-l+\delta_{1i}-2\sqrt{-1}E_{2e_3})\\
&\hphantom{=}+2E_{e_1-e_2}^2+2E_{e_2-e_3}^2\}\phi_{T,l;i}^{(2)}\\
&-2\delta_{1i}\{2\delta_{1i}\phi_{T,l;1}^{(2)}
-E_{e_2-e_3}\delta_{1i}\phi_{T,l;2}^{(2)}\}
-2\delta_{2i}\{
E_{e_2-e_3}\phi_{T,l;1}^{(2)}+\phi_{T,l;2}^{(2)}
-E_{e_1-e_2}\phi_{T,l;3}^{(2)}\}\\
&-2\delta_{3i}E_{e_1-e_2}\phi_{T,l;2}^{(2)},\\
C_4\phi_{T,l;i}^{(2)}=
&\Big\{
\{(H_2+l-3-\delta_{2i})(H_3+l-2-\delta_{1i}+2\sqrt{-1}E_{2e_3})
-E_{e_2-e_3}^2\}\\
&\hphantom{==}\times 
\{(H_2-l-1+\delta_{2i})(H_3-l+\delta_{1i}-2\sqrt{-1}E_{2e_3})
-E_{e_2-e_3}^2\}\\
&\hphantom{=}+(H_1+l-5-\delta_{3i})(H_3+l-2-\delta_{1i}+2\sqrt{-1}E_{2e_3})\\
&\hphantom{==}\times 
(H_1-l-1+\delta_{3i})(H_3-l+\delta_{1i}-2\sqrt{-1}E_{2e_3})\\
&\hphantom{=}+\{(H_1+l-5-\delta_{3i})(H_2+l-4-\delta_{2i})-E_{e_1-e_2}^2\}\\
&\hphantom{==}\times 
\{(H_1-l-1+\delta_{3i})(H_2-l+\delta_{2i})-E_{e_1-e_2}^2\}\\
&\hphantom{=}+2E_{e_1-e_2}^2
(H_3+l-2-\delta_{1i}+2\sqrt{-1}E_{2e_3})
(H_3-l+\delta_{1i}-2\sqrt{-1}E_{2e_3})\\
&\hphantom{=}+2E_{e_2-e_3}^2
\{(H_1+l-5-\delta_{3i})(H_1-l-1+\delta_{3i})+E_{e_1-e_2}^2\}
\Big\}\phi_{T,l;i}^{(2)}\\
&+2\delta_{1i}\Big\{
-\{(H_1+l-5)(H_1-l-1)+(H_2+l-3)(H_2-l-1)\\
&\hphantom{==}+2E_{e_1-e_2}^2+3E_{e_2-e_3}^2\}\phi_{T,l;1}^{(2)}\\
&\hphantom{=}+E_{e_2-e_3}
\{(H_1+l-5)(H_1-l-1)\\
&\hphantom{==}-(H_2-l)(H_3-l-2\sqrt{-1}E_{2e_3})\\
&\hphantom{==}-(H_2+H_3-2-2\sqrt{-1}E_{2e_3})+E_{e_1-e_2}^2+E_{e_2-e_3}^2
\}\phi_{T,l;2}^{(2)}\\
&\hphantom{=}+E_{e_1-e_2}E_{e_2-e_3}(H_1+H_2+H_3-l-3-2\sqrt{-1}E_{2e_3})
\phi_{T,l;3}^{(2)}
\Big\}\\
&+2\delta_{2i}\Big\{
-E_{e_2-e_3}\{(H_1+l-5)(H_1-l-1)\\
&\hphantom{==}-(H_2+l-3)(H_3+l-3+2\sqrt{-1}E_{2e_3})\\
&\hphantom{==}+H_2+H_3-4+2\sqrt{-1}E_{2e_3}
+E_{e_1-e_2}^2+E_{e_2-e_3}^2\}\phi_{T,l;1}^{(2)}\\
&\hphantom{=}-\{(H_3+l-2+2\sqrt{-1}E_{2e_3})(H_3-l-2\sqrt{-1}E_{2e_3})\\
&\hphantom{==}+2E_{e_1-e_2}^2+E_{e_2-e_3}^2\}
\phi_{T,l;2}^{(2)}\\
&\hphantom{=}+E_{e_1-e_2}\{-(H_1-l)(H_2-l)\\
&\hphantom{==}+(H_3+l-2+2\sqrt{-1}E_{2e_3})(H_3-l-2\sqrt{-1}E_{2e_3})\\
&\hphantom{==}+E_{e_1-e_2}^2+E_{e_2-e_3}^2\}
\phi_{T,l;3}^{(2)}
\Big\}\\
&+2\delta_{3i}\Big\{
-E_{e_1-e_2}E_{e_2-e_3}(H_1+H_2+H_3+l-9+2\sqrt{-1}E_{2e_3})
\phi_{T,l;1}^{(2)}\\
&\hphantom{=}+E_{e_1-e_2}\{(H_1+l-5)(H_2+l-5)\\
&\hphantom{==}-(H_3+l-2+2\sqrt{-1}E_{2e_3})(H_3-l-2\sqrt{-1}E_{2e_3})\\
&\hphantom{==}-E_{e_1-e_2}^2-E_{e_2-e_3}^2\}
\phi_{T,l;2}^{(2)}\Big\}, \\
C_6\phi_{T,l;i}^{(2)}=
&\{(H_1+l-4-\delta_{3i})(H_2+l-3-\delta_{2i})
(H_3+l-2-\delta_{1i}+2\sqrt{-1}E_{2e_3})\\
&\hphantom{=}-E_{e_2-e_3}^2(H_1+l-4-\delta_{3i})
-E_{e_1-e_2}^2(H_3+l-2-\delta_{1i}+2\sqrt{-1}E_{2e_3})\}\\
&\hphantom{==}\times \{(H_1-l-2+\delta_{3i})(H_2-l-1+\delta_{2i})
(H_3-l+\delta_{1i}-2\sqrt{-1}E_{2e_3})\\
&\hphantom{===}-E_{e_1-e_2}^2(H_3-l+\delta_{1i}-2\sqrt{-1}E_{2e_3})
-E_{e_2-e_3}^2(H_1-l-2+\delta_{3i})\}\phi_{T,l;i}^{(2)}\\
&+2\delta_{1i}\Big\{
-2E_{e_2-e_3}^2\{(H_1+l-4)(H_1-l-2)+E_{e_1-e_2}^2\}\phi_{T,l;1}^{(2)}\\
&\hphantom{=}-E_{e_2-e_3}\{(H_1+l-4)(H_1-l-2)\\
&\hphantom{===}\times (H_2-l)(H_3-l-2\sqrt{-1}E_{2e_3})\\
&\hphantom{==}-E_{e_1-e_2}^2(H_1+l-4)(H_3-l-2\sqrt{-1}E_{2e_3})\\
&\hphantom{==}-E_{e_2-e_3}^2(H_1+l-4)(H_1-l-2)\}\phi_{T,l;2}^{(2)}\\
&\hphantom{=}-E_{e_1-e_2}E_{e_2-e_3}\{(H_1-l-1)(H_2-l-1)
(H_3-l-2\sqrt{-1}E_{2e_3})\\
&\hphantom{==}-E_{e_1-e_2}^2(H_3-l-2\sqrt{-1}E_{2e_3})
-E_{e_2-e_3}^2(H_1-l-1)\}\phi_{T,l;3}^{(2)}
\Big\}\\
&+2\delta_{2i}\Big\{
E_{e_2-e_3}\{(H_1+l-4)(H_2+l-3)\\
&\hphantom{===}\times (H_3+l-3+2\sqrt{-1}E_{2e_3})(H_1-l-2)\\
&\hphantom{==}-E_{e_2-e_3}^2(H_1+l-4)(H_1-l-2)\\
&\hphantom{==}-E_{e_1-e_2}^2(H_3+l-3+2\sqrt{-1}E_{2e_3})(H_1-l)\}
\phi_{T,l;1}^{(2)}\\
&\hphantom{=}-2E_{e_1-e_2}^2(H_3+l-2+2\sqrt{-1}E_{2e_3})
(H_3-l-2\sqrt{-1}E_{2e_3})\phi_{T,l;2}^{(2)}\\
&\hphantom{==}-E_{e_1-e_2}(H_3+l-2+2\sqrt{-1}E_{2e_3})\\
&\hphantom{===}\times \{(H_1-l-1)(H_2-l-1)
(H_3-l-2\sqrt{-1}E_{2e_3})\\
&\hphantom{====}-E_{e_1-e_2}^2(H_3-l-2\sqrt{-1}E_{2e_3})
-E_{e_2-e_3}^2(H_1-l-1)\}\phi_{T,l;3}^{(2)}
\Big\} \\
&+2\delta_{3i}\Big\{
E_{e_1-e_2}E_{e_2-e_3}\{(H_1+l-4)(H_2+l-3)
(H_3+l-3+2\sqrt{-1}E_{2e_3})\\
&\hphantom{==}-E_{e_2-e_3}^2(H_1+l-4)
-E_{e_1-e_2}^2(H_3+l-3+2\sqrt{-1}E_{2e_3})\}
\phi_{T,l;1}^{(2)}\\
&\hphantom{=}+E_{e_1-e_2}\{(H_1+l-4)(H_2+l-4)
(H_3+l-2+2\sqrt{-1}E_{2e_3})\\
&\hphantom{==}-E_{e_2-e_3}^2(H_1+l-4)
-E_{e_1-e_2}^2(H_3+l-2+2\sqrt{-1}E_{2e_3})\}\\
&\hphantom{==}\times (H_3-l-2\sqrt{-1}E_{2e_3})\phi_{T,l;2}^{(2)}
\Big\}.
\end{align*}
\end{prop}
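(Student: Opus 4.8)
The plan is to prove all of the formulas by a single mechanism: reduce each operator $C_{2i}$ and $D^{(+,-)}_{jk}$, $D^{(-,+)}_{jk}$ to normal order with respect to the Iwasawa decomposition $\g =\gn \oplus \ga \oplus \gk$ modulo $[\gn ,\gn ]U(\g_\mC )$, and then let it act on the explicit functions. This reduction is legitimate on the $A_{\min }$-radial part: by Lemma \ref{lem:Kact_func}(ii) a $\gn$-factor contributes only the character value $\xi (X)$, and since $\xi $ is a character of $N_{\min }$ its differential annihilates $[\gn ,\gn ]$, so any summand whose $\gn$-part lies in $[\gn ,\gn ]U(\g_\mC )$ acts as zero. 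Thus the two ingredients I need are already in place: the normal-order expressions for $C_2$, $C_4$, $m_3(C_\pm )$ and for the $D$-operators displayed just above (together with the normal forms of the root vectors $X_{\pm ij}$ and the minors $M_{\pm ij}$), and Lemma \ref{lem:Kact_func}(i), which records the $\gk_\mC $-action $\kappa (E_{mn})$ on $\phi_{T,l}$ and on the triples $\phi^{(1)}_{T,l;i}$, $\phi^{(2)}_{T,l;i}$.

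First I would treat the spherical cases $(\sigma_1,\sigma_2,\sigma_3)=(0,0,0),(1,1,1)$. Here I substitute the normal forms $X_{-ij}\equiv \cdots $ into the expression for $C_2$, act on $\phi_{T,l}$, and use $\kappa (E_{ii})\phi_{T,l}=l\phi_{T,l}$ and $\kappa (E_{jk})\phi_{T,l}=0$ for $j\neq k$; each diagonal block collapses to a product such as $(H_1+l-6)(H_1-l)$, while the off-diagonal blocks give the $E_{e_1-e_2}^2$ and $E_{e_2-e_3}^2$ terms. The same procedure applied to the minor expressions for $C_4$ and to $C_6=m_3(C_+)m_3(C_-)$ (a product of the two scalar determinants) yields the quadratic-in-$H$ product formulas. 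For the remaining cases $(\sigma_1,\sigma_2,\sigma_3)\neq (0,0,0),(1,1,1)$ I repeat the computation on the three-dimensional $K$-types, now using $\kappa (E_{jj})\phi^{(1)}_{T,l;i}=(l+\delta_{ij})\phi^{(1)}_{T,l;i}$ and $\kappa (E_{mn})\phi^{(1)}_{T,l;k}=\delta_{nk}\phi^{(1)}_{T,l;m}$ (and the analogous rules for $\phi^{(2)}$); the index-shifting by the off-diagonal $\kappa (E_{mn})$ is exactly what produces the correction terms proportional to $\delta_{1i}$, $\delta_{2i}$ and $\delta_{3i}$, and the same displayed normal forms then give the $D^{(+,-)}_{jk}$- and $D^{(-,+)}_{jk}$-relations via $m_1(C_+)m_1(C_-)$ and $m_1(C_-)m_1(C_+)$.

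The main obstacle will be the bookkeeping in the degree-four operator $C_4$ and the degree-six operator $C_6$: after substituting the normal forms of the $X_{\pm ij}$ and $M_{\pm ij}$ one must move the resulting $\kappa (E_{mn})$-factors so that they act on the functions, which requires tracking the commutators of $\kappa (E_{mn})$ with $H_i$, $E_{e_j-e_k}$ and $E_{2e_3}$ (equivalently, re-normal-ordering), and in the non-spherical case one must follow the index shifts among the three components $\phi_{T,l;i}$ carefully so that every correction term lands on the correct function with the correct sign. None of this is conceptually hard, but it is voluminous and error-prone, so I would organize it degree by degree and verify at the end that the scalar (non-shift) parts reproduce the eigenvalues $\chi_{2i,\sigma ,\nu ,\lambda }$ already computed by the independent $R$-matrix telescoping of Theorem \ref{th:main} in Propositions \ref{prop:ex1} and \ref{prop:ex2}; this cross-check should catch most sign and ordering slips.
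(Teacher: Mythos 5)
Your proposal is correct and follows essentially the same route as the paper: the paper's proof is precisely a direct computation substituting the displayed normal-order expressions (modulo $[\gn ,\gn ]U(\g_\mC )$, justified because the differential of $\xi $ kills $[\gn ,\gn ]$) into the operators and applying Lemma \ref{lem:Kact_func}, with $C_6$ handled as the composite of the separately recorded actions of $m_3(C_+)$ and $m_3(C_-)$. Your added cross-check against the eigenvalues $\chi_{2i,\sigma ,\nu ,\lambda }$ is sensible but not part of the paper's argument.
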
 
\begin{proof}
The actions of the operators $C_{2i}\ (1\leq i\leq 2),\ 
D_{jk}^{(\pm ,\mp )}\ (1\leq j,k\leq 3)$ and $m_3(C_\pm )$
are obtained by direct computation from the expressions 
in normal order and Lemma \ref{lem:Kact_func}(i). 

We can summarize the explicit actions of the 
operators $C_{2i}\ (1\leq i\leq 3)$ and 
$D_{jk}^{(\pm ,\mp )}\ (1\leq j,k\leq 3)$ on the functions in 
Proposition \ref{prop:ex1}, \ref{prop:ex2}. 
The action of $C_6=m_3(C_+)m_3(C_-)$ is obtained by the composite 
of following actions of $m_3(C_+)$ and $m_3(C_-)$:\\
$\bullet $ the case of $(\sigma_1,\sigma_2,\sigma_3)=(0,0,0),(1,1,1)$.
\begin{align*}
m_3(C_+)\phi_{T,l-2}
=&\{(H_1+l-4)(H_2+l-3)(H_3+l-2+2\sqrt{-1}E_{2e_3})\\
&-E_{e_2-e_3}^2(H_1+l-4)
-E_{e_1-e_2}^2(H_3+l-2+2\sqrt{-1}E_{2e_3})
\}\phi_{T,l-2},\\
m_3(C_-)\phi_{T,l}
=&\{
(H_1-l-2)(H_2-l-1)
(H_3-l-2\sqrt{-1}E_{2e_3})\\
&-E_{e_2-e_3}^2(H_1-l-2)
-E_{e_1-e_2}^2(H_3-l-2\sqrt{-1}E_{2e_3})
\}\phi_{T,l}.
\end{align*}
$\bullet $ the case of $(\sigma_1,\sigma_2,\sigma_3)\neq (0,0,0),(1,1,1)$.
\begin{align*}
m_3(C_+)\phi_{T,l-2;i}^{(1)}
=&\{
(H_1+l-4+\delta_{1i})(H_2+l-3+\delta_{2i})
(H_3+l-2+\delta_{3i}+2\sqrt{-1}E_{2e_3})\\
&\hphantom{=}-E_{e_2-e_3}^2(H_1+l-4+\delta_{1i})
-E_{e_1-e_2}^2(H_3+l-2+\delta_{3i}\\
&\hphantom{=}+2\sqrt{-1}E_{2e_3})\}\phi_{T,l-2;i}^{(1)}\\
&-2\delta_{1i}\{
E_{e_1-e_2}(H_3+l-2+2\sqrt{-1}E_{2e_3})\phi_{T,l-2;2}^{(1)}
-E_{e_1-e_2}E_{e_2-e_3}\phi_{T,l-2;3}^{(1)}\}\\
&-2\delta_{2i}E_{e_2-e_3}
(H_1+l-4)\phi_{T,l-2;3}^{(1)},
\end{align*}
\begin{align*}
m_3(C_-)\phi_{T,l;i}^{(1)}
=&\{
(H_1-l-2-\delta_{1i})(H_2-l-1-\delta_{2i})
(H_3-l-\delta_{3i}-2\sqrt{-1}E_{2e_3})\\
&\hphantom{=}-E_{e_2-e_3}^2(H_1-l-2-\delta_{1i})
-E_{e_1-e_2}^2(H_3-l-\delta_{3i}-2\sqrt{-1}E_{2e_3})
\}\phi_{T,l;i}^{(1)}\\
&+2E_{e_1-e_2}(H_3-l-2\sqrt{-1}E_{2e_3})
\delta_{2i}\phi_{T,l;1}^{(1)}\\
&-2\delta_{3i}\{
E_{e_1-e_2}E_{e_2-e_3}\phi_{T,l;1}^{(1)}
-E_{e_2-e_3}(H_1-l-2)\phi_{T,l;2}^{(1)}\} ,
\end{align*}
and
\begin{align*}
m_3(C_+)\phi_{T,l-2;i}^{(2)}=
&\{(H_1+l-4-\delta_{3i})(H_2+l-3-\delta_{2i})
(H_3+l-2-\delta_{1i}+2\sqrt{-1}E_{2e_3})\\
&\hphantom{=}-E_{e_2-e_3}^2(H_1+l-4-\delta_{3i})\\
&\hphantom{=}-E_{e_1-e_2}^2(H_3+l-2-\delta_{1i}
+2\sqrt{-1}E_{2e_3})\}\phi_{T,l-2;i}^{(2)}\\
&-2\delta_{1i}\{E_{e_2-e_3}(H_1+l-4)\phi_{T,l-2;2}^{(2)}
+E_{e_1-e_2}E_{e_2-e_3}\phi_{T,l-2;3}^{(2)}\} \\
&-2\delta_{2i}E_{e_1-e_2}(H_3+l-2+2\sqrt{-1}E_{2e_3})\phi_{T,l-2;3}^{(2)},\\
m_3(C_-)\phi_{T,l;i}^{(2)}=
&\{(H_1-l-2+\delta_{3i})(H_2-l-1+\delta_{2i})
(H_3-l+1-1+\delta_{1i}-2\sqrt{-1}E_{2e_3})\\
&\hphantom{=}-E_{e_1-e_2}^2(H_3-l+\delta_{1i}-2\sqrt{-1}E_{2e_3})
-E_{e_2-e_3}^2(H_1-l-2+\delta_{3i})\}\phi_{T,l;i}^{(2)}\\
&+2\delta_{2i}E_{e_2-e_3}(H_1-l-2)\phi_{T,l;1}^{(2)}\\
&+2\delta_{3i}\{E_{e_1-e_2}E_{e_2-e_3}\phi_{T,l;1}^{(2)}
+E_{e_1-e_2}(H_3-l-2\sqrt{-1}E_{2e_3})\phi_{T,l;2}^{(2)}\}.
\end{align*}
\end{proof}

To state an explicit form of a holonomic system of 
partial differential equations satisfied by the 
$A$-radial part of each element in 
$\Wh (\pi_{(\sigma ,\nu )},\xi ,\tau )$ for peripheral $K$-type $\tau^*$, 
we introduce the coordinates 
$x=(x_1,x_2,x_3)$ and $y=(y_1,y_2,y_3)$ on $A$ defined by 
\begin{align*}
&x_1=\bkt{\pi c_{12}\frac{a_1}{a_2}}^2,\quad 
x_2=\bkt{\pi c_{23}\frac{a_2}{a_3}}^2,\quad 
x_3=4\pi c_3a_3^2,\\
&y_1=2\pi c_{12}\frac{a_1}{a_2},\quad 
y_2=2\pi c_{23}\frac{a_2}{a_3},\quad 
y_3=4\pi c_3a_3^2,
\end{align*}
for $\diag (a_1,a_2,a_3,a_1^{-1},a_2^{-1},a_3^{-1})\in A$. 
We use the coordinate $x$ for the case of 
$(\sigma_1,\sigma_2,\sigma_3)=(0,0,0),(1,1,1)$, 
and use the coordinate $y$ for the case of
$(\sigma_1,\sigma_2,\sigma_3)\neq (0,0,0),(1,1,1)$. 
Then we have following theorem. 
\begin{thm}\label{th:submain}
\textit 
Let $T$ be an element of the space $\Isp{\xi ,\pi_{(\sigma ,\nu )} }$. 

\noindent (i) If $\sigma $ is a character of $M_{\min }$ 
such that $(\sigma_1,\sigma_2,\sigma_3)=(0,0,0)$ or $(1,1,1)$, then 
there exist multiplicity one $K$-types $\tau_{(l,l,l)}\ 
(l\equiv \varepsilon_\sigma \mod 2)$. 
For $l\equiv \varepsilon_\sigma \mod 2$, 
the Whittaker function 
$\Phi (T,\evec{(l,l,l)}{M_l})=\phi_{T,l}\otimes f(M_l)^*$ 
satisfies the following holonomic system of partial differential equations 
of rank 48:
\begin{align*}
&\{(2\partial_{x_1}+l-6)(2\partial_{x_1}-l)
+(-2\partial_{x_1}+2\partial_{x_2}+l-4)(-2\partial_{x_1}+2\partial_{x_2}-l)\\
&+(-2\partial_{x_2}+2\partial_{x_3}+l-2-x_3)
(-2\partial_{x_2}+2\partial_{x_3}-l+x_3)\\
&-8x_1-8x_2-\chi_{2,\sigma ,\nu ,(l,l,l)}
\} \phi_{T,l}=0,\\[5mm]
&\Big\{
\{(-2\partial_{x_1}+2\partial_{x_2}+l-3)
(-2\partial_{x_2}+2\partial_{x_3}+l-2-x_3)
+4x_2\}\\
&\hphantom{==} \times \{(-2\partial_{x_1}+2\partial_{x_2}-l-1)
(-2\partial_{x_2}+2\partial_{x_3}-l+x_3)
+4x_2\} \\
&\hphantom{=}+(2\partial_{x_1}+l-5)
(-2\partial_{x_2}+2\partial_{x_3}+l-2-x_3)\\
&\hphantom{==} \times 
(2\partial_{x_1}-l-1)(-2\partial_{x_2}+2\partial_{x_3}-l+x_3)\\
&\hphantom{=}+\{(2\partial_{x_1}+l-5)(-2\partial_{x_1}+2\partial_{x_2}+l-4)+4x_1\} \\
&\hphantom{==} \times \{(2\partial_{x_1}-l-1)(-2\partial_{x_1}+2\partial_{x_2}-l)+4x_1\} \\
&\hphantom{=}-8x_1(-2\partial_{x_2}+2\partial_{x_3}+l-2-x_3)
(-2\partial_{x_2}+2\partial_{x_3}-l+x_3)\\
&\hphantom{=}+32x_1x_2
-8x_2(2\partial_{x_1}+l-5)(2\partial_{x_1}-l-1)-\chi_{4,\sigma ,\nu ,(l,l,l)}
\Big\}\phi_{T,l}=0,\\[5mm]
&\Big\{ \{(2\partial_{x_1}+l-4)(-2\partial_{x_1}+2\partial_{x_2}+l-3)
(-2\partial_{x_2}+2\partial_{x_3}+l-2-x_3)\\
&\hphantom{=}+4x_2(2\partial_{x_1}+l-4)
+4x_1(-2\partial_{x_2}+2\partial_{x_3}+l-2-x_3)\}\\
&\times \{(2\partial_{x_1}-l-2)(-2\partial_{x_1}+2\partial_{x_2}-l-1)
(-2\partial_{x_2}+2\partial_{x_3}-l+x_3)\\
&\hphantom{=}+4x_2(2\partial_{x_1}-l-2)
+4x_1(-2\partial_{x_2}+2\partial_{x_3}-l+x_3)\}-\chi_{6,\sigma ,\nu ,(l,l,l)}
\Big\}
\phi_{T,l}=0.
\end{align*}

\noindent (ii) If $\sigma $ is a character of $M_{\min }$ 
such that $(\sigma_1,\sigma_2,\sigma_3)\neq (0,0,0)$ or $(1,1,1)$, 
there exists multiplicity one $K$-types 
$\tau_{(l+1,l,l)},\ \tau_{(l,l,l-1)}\ (l\equiv \varepsilon_\sigma \mod 2)$.

For $l\equiv \varepsilon_\sigma \mod 2$, 
Whittaker function
$\Phi (T,S_{(l+1,l,l)}(M_l^{(\sigma ;1)}))=
\sum_{j=1}^3\phi_{T,l;j}^{(1)}\otimes f(M_{l;j}^{(1)})^*$ 
satisfy the following holonomic system of partial differential equations 
of rank 48:
\begin{align*}
&\{ 
(\partial_{y_1}+l-3)(\partial_{y_1}-l-3)-y_1^2
-\tilde{\chi}_{2,\sigma ,\nu ,(l+1,l,l)}
\} \phi_{T,l;1}^{(1)}\\
&+\sqrt{-1}y_1
(\partial_{y_2}-4)\phi_{T,l;2}^{(1)}
-y_1y_2\phi_{T,l;3}^{(1)}=0,\\[5mm]
&\sqrt{-1}y_1(\partial_{y_2}-6)
\phi_{T,l;1}^{(1)}\\
&+\{
(-\partial_{y_1}+\partial_{y_2}+l-2)(-\partial_{y_1}+\partial_{y_2}-l-2)-y_1^2
-y_2^2-\tilde{\chi}_{2,\sigma ,\nu ,(l+1,l,l)}
\} \phi_{T,l;2}^{(1)}\\
&+\sqrt{-1}y_2
(-\partial_{y_1}+2\partial_{y_3}-2+y_3)\phi_{T,l;3}^{(1)}=0,\\[5mm]
&-y_1y_2\phi_{T,l;1}^{(1)}
+\sqrt{-1}y_2
(-\partial_{y_1}+2\partial_{y_3}-4-y_3)\phi_{T,l;2}^{(1)}\\
&+\{
(-\partial_{y_2}+2\partial_{y_3}+l-1-y_3)
(-\partial_{y_2}+2\partial_{y_3}-l-1+y_3)
-y_2^2-\tilde{\chi}_{2,\sigma ,\nu ,(l+1,l,l)}
\} \phi_{T,l;3}^{(1)}=0,\\[5mm]
&\{(\partial_{y_1}+l-6+\delta_{1i})(\partial_{y_1}-l-\delta_{1i})
+(-\partial_{y_1}+\partial_{y_2}+l-4+\delta_{2i})
(-\partial_{y_1}+\partial_{y_2}-l-\delta_{2i})\\
&+(-\partial_{y_2}+2\partial_{y_3}+l-2+\delta_{3i}-y_3)
(-\partial_{y_2}+2\partial_{y_3}-l-\delta_{3i}+y_3)\\
&-2y_1^2-2y_2^2-\chi_{2,\sigma ,\nu ,(l+1,l,l)}
\} \phi_{T,l;i}^{(1)}
-2\delta_{1i}\{
2\phi_{T,l;1}^{(1)}
-\sqrt{-1}y_1\phi_{T,l;2}^{(1)}\} \\
&-2\delta_{2i}\{
\sqrt{-1}y_1\phi_{T,l;1}^{(1)}
+\phi_{T,l;2}^{(1)}
-\sqrt{-1}y_2\phi_{T,l;3}^{(1)}
\}
-2\delta_{3i}\sqrt{-1}y_2\phi_{T,l;2}^{(1)}=0,\\[5mm]
&\Big\{
\{(-\partial_{y_1}+\partial_{y_2}+l-3+\delta_{2i})
(-\partial_{y_2}+2\partial_{y_3}+l-2+\delta_{3i}-y_3)
+y_2^2
\}\\
&\hphantom{==} \times \{(-\partial_{y_1}+\partial_{y_2}-l-1-\delta_{2i})
(-\partial_{y_2}+2\partial_{y_3}-l-\delta_{3i}+y_3)
+y_2^2\} \\
&\hphantom{=}+(\partial_{y_1}+l-5+\delta_{1i})
(-\partial_{y_2}+2\partial_{y_3}+l-2+\delta_{3i}-y_3)\\
&\hphantom{==} \times 
(\partial_{y_1}-l-1-\delta_{1i})
(-\partial_{y_2}+2\partial_{y_3}-l-\delta_{3i}+y_3)\\
&\hphantom{=}+\{(\partial_{y_1}+l-5+\delta_{1i})
(-\partial_{y_1}+\partial_{y_2}+l-4+\delta_{2i})
+y_1^2\} \\
&\hphantom{==} 
\times \{(\partial_{y_1}-l-1-\delta_{1i})
(-\partial_{y_1}+\partial_{y_2}-l-\delta_{2i})
+y_1^2\} \\
&\hphantom{=}-2y_1^2
(-\partial_{y_2}+2\partial_{y_3}+l-2+\delta_{3i}-y_3)
(-\partial_{y_2}+2\partial_{y_3}-l-\delta_{3i}+y_3)\\
&\hphantom{=}+2y_1^2y_2^2
-2y_2^2
(\partial_{y_1}+l-5+\delta_{1i})(\partial_{y_1}-l-1-\delta_{1i})
-\chi_{4,\sigma ,\nu ,(l+1,l,l)}\Big\}\phi_{T,l;i}^{(1)}\\
&+2\delta_{1i}\Big\{
-\{(-\partial_{y_1}+\partial_{y_2}+l-4)(-\partial_{y_1}+\partial_{y_2}-l)\\
&\hphantom{=}+(-\partial_{y_2}+2\partial_{y_3}+l-2-y_3)
(-\partial_{y_2}+2\partial_{y_3}-l+y_3)-3y_1^2
-2y_2^2
\}\phi_{T,l;1}^{(1)}\\
&\hphantom{=}+\sqrt{-1}y_1
\{(-\partial_{y_2}+2\partial_{y_3}+l-2-y_3)
(-\partial_{y_2}+2\partial_{y_3}-l+y_3)\\
&\hphantom{=}-(\partial_{y_1}-l-1)(-\partial_{y_1}+\partial_{y_2}-l-1)
+\partial_{y_2}-6
-y_1^2
-y_2^2
\}\phi_{T,l;2}^{(1)}\\
&\hphantom{=}+y_1y_2
(2\partial_{y_3}-l-6+y_3)
\phi_{T,l;3}^{(1)}
\Big\}\\
&+2\delta_{2i}\Big\{
-\sqrt{-1}y_1
\{
(-\partial_{y_2}+2\partial_{y_3}+l-2-y_3)(-\partial_{y_2}+2\partial_{y_3}-l+y_3)\\
&\hphantom{=}-(\partial_{y_1}+l-4)(-\partial_{y_1}+\partial_{y_2}+l-4)
-(\partial_{y_2}-4)-y_1^2-y_2^2
\}\phi_{T,l;1}^{(1)}\\
&\hphantom{=}-\{
(\partial_{y_1}+l-5)(\partial_{y_1}-l-1)
-y_1^2
-2y_2^2
\}
\phi_{T,l;2}^{(1)}\\
&\hphantom{=}+\sqrt{-1}y_2
\{
(\partial_{y_1}+l-5)(\partial_{y_1}-l-1)\\
&\hphantom{=}-(-\partial_{y_1}+\partial_{y_2}-l-1)(-\partial_{y_2}+2\partial_{y_3}-l-1+y_3)
-y_1^2-y_2^2
\}\phi_{T,l;3}^{(1)}
\Big\}\\
&+2\delta_{3i}\Big\{
-y_1y_2
(2\partial_{y_3}+l-7-y_3)\phi_{T,l;1}^{(1)}\\
&\hphantom{=}+\sqrt{-1}y_2
\{(-\partial_{y_1}+\partial_{y_2}+l-2)(-\partial_{y_2}+2\partial_{y_3}+l-2-y_3)\\
&\hphantom{=}-(\partial_{y_1}+l-5)(\partial_{y_1}-l-1)+y_1^2+y_2^2
\}\phi_{T,l;2}^{(1)}
\Big\} =0,\\[5mm]
&\Big\{ \{(\partial_{y_1}+l-4+\delta_{1i})
(-\partial_{y_1}+\partial_{y_2}+l-3+\delta_{2i})
(-\partial_{y_2}+2\partial_{y_3}+l-2+\delta_{3i}-y_3)\\
&\hphantom{=}+y_2^2(\partial_{y_1}+l-4+\delta_{1i})
+y_1^2(-\partial_{y_2}+2\partial_{y_3}+l-2+\delta_{3i}-y_3)
\}\\
&\hphantom{=}\times \{(\partial_{y_1}-l-2-\delta_{1i})
(-\partial_{y_1}+\partial_{y_2}-l-1-\delta_{2i})
(-\partial_{y_2}+2\partial_{y_3}-l-\delta_{3i}+y_3)\\
&\hphantom{=}+y_2^2(\partial_{y_1}-l-2-\delta_{1i})
+y_1^2(-\partial_{y_2}+2\partial_{y_3}-l-\delta_{3i}+y_3)
\}-\chi_{6,\sigma ,\nu ,(l+1,l,l)}
\Big\} \phi_{T,l;i}^{(1)}\\
&+2\delta_{1i}\Big\{
2y_1^2\{(-\partial_{y_2}+2\partial_{y_3}+l-2-y_3)
(-\partial_{y_2}+2\partial_{y_3}-l+y_3)-y_2^2\}
\phi_{T,l;1}^{(1)}\\
&\hphantom{=}-\sqrt{-1}y_1
\{(-\partial_{y_2}+2\partial_{y_3}+l-2-y_3)
(\partial_{y_1}-l-2)(-\partial_{y_1}+\partial_{y_2}-l-2)
(-\partial_{y_2}+2\partial_{y_3}-l+y_3)\\
&\hphantom{=}+y_2^2(-\partial_{y_2}+2\partial_{y_3}+l-2-y_3)
(\partial_{y_1}-l-2)\\
&\hphantom{=}+y_1^2(-\partial_{y_2}+2\partial_{y_3}+l-2-y_3)
(-\partial_{y_2}+2\partial_{y_3}-l+y_3)\}\phi_{T,l;2}^{(1)}\\
&\hphantom{=}-y_1y_2
\{(\partial_{y_1}-l-2)(-\partial_{y_1}+\partial_{y_2}-l-1)
(-\partial_{y_2}+2\partial_{y_3}-l-1+y_3)\\
&\hphantom{=}+y_2^2(\partial_{y_1}-l-2)
+y_1^2(-\partial_{y_2}+2\partial_{y_3}-l-1+y_3)
\}\phi_{T,l;3}^{(1)}
\Big\}\\
&+2\delta_{2i}\Big\{
\sqrt{-1}y_1\{
(\partial_{y_1}+l-3)(-\partial_{y_1}+\partial_{y_2}+l-3)\\
&\hphantom{==}\times (-\partial_{y_2}+2\partial_{y_3}+l-2-y_3)
(-\partial_{y_2}+2\partial_{y_3}-l+y_3)\\
&\hphantom{=}+y_2^2(\partial_{y_1}+l-3)
(-\partial_{y_2}+2\partial_{y_3}-l-2+y_3)\\
&\hphantom{=}+y_1^2(-\partial_{y_2}+2\partial_{y_3}+l-2-y_3)
(-\partial_{y_2}+2\partial_{y_3}-l+y_3)\}
\phi_{T,l;1}^{(1)}\\
&\hphantom{=}+2y_2^2(\partial_{y_1}+l-4)
(\partial_{y_1}-l-2)\phi_{T,l;2}^{(1)}\\
&\hphantom{=}-\sqrt{-1}y_2(\partial_{y_1}+l-4)
\{(\partial_{y_1}-l-2)(-\partial_{y_1}+\partial_{y_2}-l-1)
(-\partial_{y_2}+2\partial_{y_3}-l-1+y_3)\\
&\hphantom{=}+y_2^2(\partial_{y_1}-l-2)
+y_1^2(-\partial_{y_2}+2\partial_{y_3}-l-1+y_3)
\}\phi_{T,l;3}^{(1)}
\Big\}\\
&+2\delta_{3i}\Big\{
y_1y_2\{
(\partial_{y_1}+l-3)(-\partial_{y_1}+\partial_{y_2}+l-3)
(-\partial_{y_2}+2\partial_{y_3}+l-2-y_3)\\
&\hphantom{=}+y_2^2(\partial_{y_1}+l-3)
+y_1^2(-\partial_{y_2}+2\partial_{y_3}+l-2-y_3)
\}\phi_{T,l;1}^{(1)}\\
&\hphantom{=}+\sqrt{-1}y_2\{
(\partial_{y_1}+l-4)(-\partial_{y_1}+\partial_{y_2}+l-2)
(-\partial_{y_2}+2\partial_{y_3}+l-2-y_3)\\
&\hphantom{=}+y_2^2(\partial_{y_1}+l-4)
+y_1^2(-\partial_{y_2}+2\partial_{y_3}+l-2-y_3)
\}(\partial_{y_1}-l-2)\phi_{T,l;2}^{(1)}
\Big\}=0,
\end{align*}
for $i=1,2,3$.

For $l\equiv \varepsilon_\sigma \mod 2$, 
Whittaker function
$\Phi (T,S_{(l,l,l-1)}(M_l^{(\sigma ;2)}))
=\sum_{j=1}^3\phi_{T,l;j}^{(2)}\otimes f(M_{l;j}^{(2)})^*$
 satisfy the following holonomic system of partial differential equations 
of rank 48:
\begin{align*}
&y_1y_2\phi_{T,l;1}^{(2)}
+\sqrt{-1}y_1(\partial_{y_2}-4)\phi_{T,l;2}^{(2)}\\
&-\{(\partial_{y_1}-l-3)(\partial_{y_1}+l-3)-y_1^2
-\tilde{\chi}_{2,\sigma ,\nu ,(l,l,l-1)}
\}\phi_{T,l;3}^{(2)}=0,\\[5mm]
&-\sqrt{-1}y_2(-\partial_{y_1}+2\partial_{y_3}-2-y_3)\phi_{T,l;1}^{(2)}\\
&+\{(-\partial_{y_1}+\partial_{y_2}-l-2)(-\partial_{y_1}+\partial_{y_2}+l-2)
-y_1^2-y_2^2-\tilde{\chi}_{2,\sigma ,\nu ,(l,l,l-1)}\}\phi_{T,l;2}^{(2)}\\
&-\sqrt{-1}y_1(\partial_{y_2}-6)\phi_{T,l;3}^{(2)}=0,\\[5mm]
&-\{(-\partial_{y_2}+2\partial_{y_3}-l-1+y_3)
(-\partial_{y_2}+2\partial_{y_3}+l-1-y_3)
-y_2^2-\tilde{\chi}_{2,\sigma ,\nu ,(l,l,l-1)}\}\phi_{T,l;1}^{(2)}\\
&+\sqrt{-1}y_2(-\partial_{y_1}+2\partial_{y_3}-4+y_3)\phi_{T,l;2}^{(2)}
+y_1y_2\phi_{T,l;3}^{(2)},\\[5mm]
&\{(\partial_{y_1}+l-6-\delta_{3i})(\partial_{y_1}-l+\delta_{3i})\\
&\hphantom{=}+(-\partial_{y_1}+\partial_{y_2}+l-4-\delta_{2i})
(-\partial_{y_1}+\partial_{y_2}-l+\delta_{2i})\\
&\hphantom{=}+(-\partial_{y_2}+2\partial_{y_3}+l-2-\delta_{1i}-y_3)
(-\partial_{y_2}+2\partial_{y_3}-l+\delta_{1i}+y_3)\\
&\hphantom{=}-2y_1^2-2y_2^2-\chi_{2,\sigma ,\nu ,(l,l,l-1)}\}
\phi_{T,l;i}^{(2)}\\
&-2\delta_{1i}\{2\phi_{T,l;1}^{(2)}
-\sqrt{-1}y_2\phi_{T,l;2}^{(2)}\}\\
&-2\delta_{2i}\{
\sqrt{-1}y_2\phi_{T,l;1}^{(2)}+\phi_{T,l;2}^{(2)}
-\sqrt{-1}y_1\phi_{T,l;3}^{(2)}\}
-2\delta_{3i}\sqrt{-1}y_1\phi_{T,l;2}^{(2)}=0,\\[5mm]
&\Big\{
\{(-\partial_{y_1}+\partial_{y_2}+l-3-\delta_{2i})
(-\partial_{y_2}+2\partial_{y_3}+l-2-\delta_{1i}-y_3)
+y_2^2\}\\
&\hphantom{==}\times 
\{(-\partial_{y_1}+\partial_{y_2}-l-1+\delta_{2i})
(-\partial_{y_2}+2\partial_{y_3}-l+\delta_{1i}+y_3)
+y_2^2\}\\
&\hphantom{=}+(\partial_{y_1}+l-5-\delta_{3i})
(-\partial_{y_2}+2\partial_{y_3}+l-2-\delta_{1i}-y_3)\\
&\hphantom{==}\times 
(\partial_{y_1}-l-1+\delta_{3i})
(-\partial_{y_2}+2\partial_{y_3}-l+\delta_{1i}+y_3)\\
&\hphantom{=}+\{(\partial_{y_1}+l-5-\delta_{3i})
(-\partial_{y_1}+\partial_{y_2}+l-4-\delta_{2i})+y_1^2\}\\
&\hphantom{==}\times 
\{(\partial_{y_1}-l-1+\delta_{3i})
(-\partial_{y_1}+\partial_{y_2}-l+\delta_{2i})+y_1^2\}\\
&\hphantom{=}-2y_1^2
(-\partial_{y_2}+2\partial_{y_3}+l-2-\delta_{1i}-y_3)
(-\partial_{y_2}+2\partial_{y_3}-l+\delta_{1i}+y_3)\\
&\hphantom{=}-2y_2^2
\{(\partial_{y_1}+l-5-\delta_{3i})(\partial_{y_1}-l-1+\delta_{3i})-y_1^2\}
-\chi_{4,\sigma ,\nu ,(l,l,l-1)}\Big\}\phi_{T,l;i}^{(2)}\\
&+2\delta_{1i}\Big\{
-\{(\partial_{y_1}+l-5)(\partial_{y_1}-l-1)+(-\partial_{y_1}+\partial_{y_2}+l-3)(-\partial_{y_1}+\partial_{y_2}-l-1)\\
&\hphantom{==}-2y_1^2-3y_2^2\}\phi_{T,l;1}^{(2)}\\
&\hphantom{=}+\sqrt{-1}y_2
\{(\partial_{y_1}+l-5)(\partial_{y_1}-l-1)-(-\partial_{y_1}+\partial_{y_2}-l)(-\partial_{y_2}+2\partial_{y_3}-l+y_3)\\
&\hphantom{==}-(-\partial_{y_1}+2\partial_{y_3}-2+y_3)-y_1^2-y_2^2
\}\phi_{T,l;2}^{(2)}\\
&\hphantom{=}-y_1y_2(2\partial_{y_3}-l-3+y_3)
\phi_{T,l;3}^{(2)}
\Big\}\\
&+2\delta_{2i}\Big\{
-\sqrt{-1}y_2\{(\partial_{y_1}+l-5)(\partial_{y_1}-l-1)
-(-\partial_{y_1}+\partial_{y_2}+l-3)
(-\partial_{y_2}+2\partial_{y_3}+l-3-y_3)\\
&\hphantom{==}-\partial_{y_1}+2\partial_{y_3}-4-y_3
-y_1^2-y_2^2\}\phi_{T,l;1}^{(2)}\\
&\hphantom{=}-\{(-\partial_{y_2}+2\partial_{y_3}+l-2-y_3)(-\partial_{y_2}+2\partial_{y_3}-l+y_3)
-2y_1^2-y_2^2\}
\phi_{T,l;2}^{(2)}\\
&\hphantom{=}+\sqrt{-1}y_1\{-(\partial_{y_1}-l)
(-\partial_{y_1}+\partial_{y_2}-l)
\\
&\hphantom{==}+(-\partial_{y_2}+2\partial_{y_3}+l-2-y_3)
(-\partial_{y_2}+2\partial_{y_3}-l+y_3)-y_1^2-y_2^2\}
\phi_{T,l;3}^{(2)}
\Big\}\\
&+2\delta_{3i}\Big\{
y_1y_2(2\partial_{y_3}+l-9-y_3)
\phi_{T,l;1}^{(2)}\\
&\hphantom{=}+\sqrt{-1}y_1\{(\partial_{y_1}+l-5)
(-\partial_{y_1}+\partial_{y_2}+l-5)\\
&\hphantom{==}-(-\partial_{y_2}+2\partial_{y_3}+l-2-y_3)
(-\partial_{y_2}+2\partial_{y_3}-l+y_3)+y_1^2+y_2^2\}
\phi_{T,l;2}^{(2)}\Big\}=0,\\[5mm]
&\Big\{ \{(\partial_{y_1}+l-4-\delta_{3i})
(-\partial_{y_1}+\partial_{y_2}+l-3-\delta_{2i})
(-\partial_{y_2}+2\partial_{y_3}+l-2-\delta_{1i}-y_3)\\
&\hphantom{=}+y_2^2(\partial_{y_1}+l-4-\delta_{3i})
+y_1^2(-\partial_{y_2}+2\partial_{y_3}+l-2-\delta_{1i}-y_3)\}\\
&\hphantom{=}\times \{(\partial_{y_1}-l-2+\delta_{3i})
(-\partial_{y_1}+\partial_{y_2}-l-1+\delta_{2i})
(-\partial_{y_2}+2\partial_{y_3}-l+\delta_{1i}+y_3)\\
&\hphantom{==}+y_1^2(-\partial_{y_2}+2\partial_{y_3}-l+\delta_{1i}+y_3)
+y_2^2(\partial_{y_1}-l-2+\delta_{3i})\}
-\chi_{6,\sigma ,\nu ,(l,l,l-1)}\Big\}\phi_{T,l;i}^{(2)}\\
&+2\delta_{1i}\Big\{
2y_2^2\{(\partial_{y_1}+l-4)(\partial_{y_1}-l-2)-y_1^2\}\phi_{T,l;1}^{(2)}\\
&\hphantom{=}-\sqrt{-1}y_2\{(\partial_{y_1}+l-4)(\partial_{y_1}-l-2)\\
&\hphantom{===}\times (-\partial_{y_1}+\partial_{y_2}-l)
(-\partial_{y_2}+2\partial_{y_3}-l+y_3)\\
&\hphantom{==}+y_1^2(\partial_{y_1}+l-4)
(-\partial_{y_2}+2\partial_{y_3}-l+y_3)\\
&\hphantom{==}+y_2^2(\partial_{y_1}+l-4)
(\partial_{y_1}-l-2)\}\phi_{T,l;2}^{(2)}\\
&\hphantom{=}+y_1y_2\{(\partial_{y_1}-l-1)(-\partial_{y_1}+\partial_{y_2}-l-1)
(-\partial_{y_2}+2\partial_{y_3}-l+y_3)\\
&\hphantom{==}+y_1^2(-\partial_{y_2}+2\partial_{y_3}-l+y_3)
+y_2^2(\partial_{y_1}-l-1)\}\phi_{T,l;3}^{(2)}
\Big\}\\
&+2\delta_{2i}\Big\{
\sqrt{-1}y_2\{(\partial_{y_1}+l-4)(-\partial_{y_1}+\partial_{y_2}+l-3)\\
&\hphantom{===}\times (-\partial_{y_2}+2\partial_{y_3}+l-3-y_3)
(\partial_{y_1}-l-2)\\
&\hphantom{==}+y_2^2(\partial_{y_1}+l-4)(\partial_{y_1}-l-2)\\
&\hphantom{==}+y_1^2(-\partial_{y_2}+2\partial_{y_3}+l-3-y_3)
(\partial_{y_1}-l)\}
\phi_{T,l;1}^{(2)}\\
&\hphantom{=}+2y_1^2(-\partial_{y_2}+2\partial_{y_3}+l-2-y_3)
(-\partial_{y_2}+2\partial_{y_3}-l+y_3)\phi_{T,l;2}^{(2)}\\
&\hphantom{==}-\sqrt{-1}y_1(-\partial_{y_2}+2\partial_{y_3}+l-2-y_3)\\
&\hphantom{===}\times \{(\partial_{y_1}-l-1)
(-\partial_{y_1}+\partial_{y_2}-l-1)
(-\partial_{y_2}+2\partial_{y_3}-l+y_3)\\
&\hphantom{==}+y_1^2(-\partial_{y_2}+2\partial_{y_3}-l+y_3)
+y_2^2(\partial_{y_1}-l-1)\}\phi_{T,l;3}^{(2)}
\Big\} \\
&+2\delta_{3i}\Big\{
-y_1y_2\{(\partial_{y_1}+l-4)(-\partial_{y_1}+\partial_{y_2}+l-3)
(-\partial_{y_2}+2\partial_{y_3}+l-3-y_3)\\
&\hphantom{==}+y_2^2(\partial_{y_1}+l-4)
+y_1^2(-\partial_{y_2}+2\partial_{y_3}+l-3-y_3)\}
\phi_{T,l;1}^{(2)}\\
&\hphantom{=}+\sqrt{-1}y_1\{(\partial_{y_1}+l-4)
(-\partial_{y_1}+\partial_{y_2}+l-4)
(-\partial_{y_2}+2\partial_{y_3}+l-2-y_3)\\
&\hphantom{==}+y_2^2(\partial_{y_1}+l-4)
+y_1^2(-\partial_{y_2}+2\partial_{y_3}+l-2-y_3)\}
(-\partial_{y_2}+2\partial_{y_3}-l+y_3)\phi_{T,l;2}^{(2)}
\Big\}=0,
\end{align*}
for $i=1,2,3$.
\end{thm}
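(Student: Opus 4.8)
The heavy algebraic work has already been done, so the plan is to assemble the statement from the pieces in Propositions~\ref{prop:ex1}, \ref{prop:ex2} and \ref{prop:ex3} by passing to the $A_{\min}$-radial coordinates. First I would recall that, by Lemma~\ref{lem:chirality_def}, the chilarity operators $C_{2i}=\Tr(m_i(C_+)m_i(C_-))\in U(\g_\mC)^K$ act as the scalars $\chi_{2i,\sigma,\nu,\lambda}$ on the relevant multiplicity-one isotypic components, while in case (ii) the off-diagonal operators $D^{(\pm,\mp)}_{jk}$ produce the additional relations with eigenvalue $\tilde{\chi}_{2,\sigma,\nu,\lambda}$. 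These eigenvalue identities are exactly Propositions~\ref{prop:ex1} and \ref{prop:ex2}, themselves deduced from the matrix factorizations of the $C_{2i}$ through the $\gp_\pm$-matrices $\pmat{\lambda}{\pm}{ij}$ and Theorem~\ref{th:main}. Thus the only thing left is to make the left-hand sides explicit as differential operators and read off the systems.

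Next I would take the fully explicit realizations of $C_{2i}$ and $D^{(\pm,\mp)}_{jk}$ applied to the peripheral Whittaker functions $\phi_{T,l}$, $\phi^{(1)}_{T,l;i}$, $\phi^{(2)}_{T,l;i}$, which Proposition~\ref{prop:ex3} already provides in terms of $H_i$, $E_{e_1-e_2}$, $E_{e_2-e_3}$ and $E_{2e_3}$. The remaining step is a change of variables: by Lemma~\ref{lem:Kact_func}(ii), on $C^\infty_\xi(N_{\min}\backslash G)$ the Cartan elements $H_i$ act as $a_i\partial_{a_i}$, the vectors $E_{e_1-e_2}$, $E_{e_2-e_3}$, $E_{2e_3}$ act as multiplication by $2\pi\sqrt{-1}c_{12}a_1/a_2$, $2\pi\sqrt{-1}c_{23}a_2/a_3$, $2\pi\sqrt{-1}c_3 a_3^2$, and every other positive restricted-root vector annihilates the function. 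Substituting the coordinates $x$ (for $(\sigma_1,\sigma_2,\sigma_3)=(0,0,0),(1,1,1)$) or $y$ (otherwise), each $H_i$ turns into the displayed first-order operator in $\partial_{x_i}$ (resp.\ $\partial_{y_i}$) and each $E_\alpha$ into multiplication by the corresponding coordinate; concretely $2\sqrt{-1}E_{2e_3}\mapsto -x_3$ and $2E_{e_1-e_2}^2\mapsto -8x_1$ in the $x$-case, while $E_{e_1-e_2}\mapsto\sqrt{-1}y_1$ and $E_{e_1-e_2}^2\mapsto -y_1^2$ in the $y$-case. Inserting the scalar values $\chi_{2i}$ and $\tilde{\chi}_2$ then yields verbatim the three scalar equations in case (i), and the $D$-relations together with the $C_{2i}$-equations in case (ii).

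Finally I would account for the holonomic rank $48$. This is the order $2^3\cdot 3!=48$ of the Weyl group $W(C_3)$ of $Sp(3,\mR)$, which is the expected generic dimension of the space of Whittaker functionals, hence of the solution space of the system attached to a principal series. I would verify it from the symbols of the three operators $C_2,C_4,C_6$ (of orders $2,4,6$) together with the first-order $D$-relations in case (ii); note, as in the Remark following Proposition~\ref{prop:ex2}, that the $C_{2i}$ alone do not suffice for the three-dimensional $K$-types $V_{(l+1,l,l)}$ and $V_{(l,l,l-1)}$, so the $D$-equations are genuinely needed to bring the rank down to $48$.

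The main obstacle is not conceptual but the sheer volume and precision of the bookkeeping. The delicate points are: composing the large $\gp_\pm$-matrix products that represent $C_{2i}=\Tr(m_i(C_+)m_i(C_-))$ and the $D^{(\pm,\mp)}_{jk}$ while respecting normal ordering modulo $[\gn,\gn]$ (the content underlying Proposition~\ref{prop:ex3}); tracking the constant shifts coming from the $\kappa(E_{jk})$-action on each component, where by Lemma~\ref{lem:Kact_func}(i) the off-diagonal $\kappa(E_{mn})$ permute the three components $\phi^{(\cdot)}_{T,l;i}$ with signs; and performing the change of variables consistently across all six parity cases of $\sigma$. The rank computation, though expected, is the one place where a genuine argument rather than a direct substitution is required.
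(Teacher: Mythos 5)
Your proposal is correct and takes essentially the same route as the paper: the printed proof of Theorem~\ref{th:submain} is literally the one-line assembly of Propositions~\ref{prop:ex1}, \ref{prop:ex2} and \ref{prop:ex3} via the coordinate substitutions of Lemma~\ref{lem:Kact_func}(ii), exactly as you describe. The only divergence is the justification of the rank $48$, which the paper obtains by citing Kostant's theorem that $\dim \Isp{\xi,\pi}=|W(C_3)|=48$ (stated in the remark following the theorem) rather than by examining the symbols of the operators; your Weyl-group count agrees with this.
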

\begin{proof}
We obtain the assertion from Proposition \ref{prop:ex1}, 
\ref{prop:ex2} and \ref{prop:ex3}.
\end{proof}
From the results of Kostant (\cite{MR507800} Theorem 6.8.1), it follows that 
the dimension of the intertwining space $\Isp{\xi ,\pi }$ is $48$. 
Hence, for a multiplicity one $K$-type $\tau$, the dimension of the 
space $\Wh (\pi ,\xi ,\tau )$ of Whittaker functions is also $48$. 
Therefore, every solution of the holonomic systems in the Theorem 
\ref{th:submain} gives an element of in $\Wh (\pi ,\xi ,\tau )|_{A_{\min}}$.

\nocite{pre_Switching_engine_2003}
\def\cprime{$'$}


\end{document}